\documentclass{article}

\usepackage{geometry,chngpage}

\usepackage[english]{babel}
\usepackage{amsmath,amsthm,amssymb}
\usepackage{tikz-cd}
\usepackage{dsfont}
\usepackage{picture,float,graphicx,listings}
\usepackage{hyperref,verbatim}

\usepackage{pgf,tikz}
\usepackage{mathrsfs}
\usetikzlibrary{arrows}

\usepackage{todonotes}

\newtheorem{theorem}{Theorem}[section]
\newtheorem{lemma}[theorem]{Lemma}
\newtheorem{corollary}[theorem]{Corollary}
\newtheorem{proposition}[theorem]{Proposition}
\theoremstyle{remark}
\newtheorem*{remark}{Remark}

\theoremstyle{definition}
\newtheorem{definition}[theorem]{Definition}

\newtheorem{theoremIntro}{Theorem}
\newtheorem{propositionIntro}[theoremIntro]{Proposition}
\newtheorem*{definitionIntro}{Definition}

\newcommand{\Z}{\mathbb{Z}}

\newcommand{\R}{\mathbb{R}}
\newcommand{\C}{\mathbb{C}}

\renewcommand{\P}{\mathbb{P}}

\newcommand{\tend}[2]{\underset{#1 \rightarrow #2}{\longrightarrow}}

\newcommand{\rk}{\mathrm{rk}}
\newcommand{\im}{\mathrm{im}}
\newcommand{\coker}{\mathrm{coker}}
\newcommand{\coim}{\mathrm{coim}}
\newcommand{\Hom}{\mathrm{Hom}}
\newcommand{\End}{\mathrm{End}}

\newcommand{\Aut}{\mathrm{Aut}}
\newcommand{\GL}{\mathrm{GL}}

\newcommand{\tr}{\mathrm{tr}}

\newcommand{\Vol}{\mathrm{Vol}}

\newcommand{\OX}{\mathcal{O}}

\newcommand{\Lie}{\mathrm{Lie}}
\newcommand{\Stab}{\mathrm{Stab}}

\newcommand{\Id}{\mathrm{Id}}
\newcommand{\LC}{\mathrm{LC}}

\let\isanspoint\i
\renewcommand{\i}{\boldsymbol{\mathrm{i}}}
\newcommand{\e}{\boldsymbol{\mathrm{e}}}
\newcommand{\scal}[2]{\left< #1,#2 \right>}
\newcommand{\abs}[1]{\left| #1 \right|}
\newcommand{\norme}[1]{\left\| #1 \right\|}

\newcommand{\E}{\mathcal{E}}
\newcommand{\F}{\mathcal{F}}
\newcommand{\G}{\mathcal{G}}

\newcommand{\mB}{\mathcal{B}}
\newcommand{\mC}{\mathcal{C}}
\newcommand{\mP}{\mathcal{P}}
\newcommand{\mX}{\mathcal{X}}
\newcommand{\mZ}{\mathcal{Z}}
\newcommand{\mU}{\mathcal{U}}
\newcommand{\mL}{\mathcal{L}}
\newcommand{\dbar}{\overline{\partial}}

\newcommand{\ch}{\mathrm{ch}}

\newcommand{\sym}{\mathrm{sym}}

\newcommand{\Lo}{Łojasiewicz}
\newcommand{\Sz}{Székelyhidi}

\renewcommand{\leq}{\leqslant}
\renewcommand{\geq}{\geqslant}

\newcommand{\fonction}[4]{\left\{ \begin{array}{rcl}
		\displaystyle #1 & \longrightarrow & \displaystyle #2\\
		\displaystyle #3 & \longmapsto & \displaystyle #4
	\end{array} \right.}

\title{Perturbations of a Vector Bundle whose \\ Curvature Form Solves a Polynomial Equation}
\author{Rémi Delloque}
\date{}

\begin{document}

\maketitle
\begin{abstract}
    We investigate the behaviour of local perturbations of a wide class of geometric PDEs on holomorphic Hermitian vector bundles over a compact complex manifold. Our main goal is to study the existence of solutions near an initial solution under small deformations of both the holomorphic structure of the bundle and the parameters of the equation.
    
    Inspired by techniques from geometric invariant theory and the moment map framework, under suitable assumptions on the initial solution, we establish a local Kobayashi--Hitchin correspondence. A perturbed bundle admits a solution to the equation if and only if it satisfies a local polystability condition.

    We also show additional results, such as continuity and uniqueness of solutions when they exist, and a local version of the Kempf--Ness theorem. We also provide a local version of the Jordan--Hölder and Harder--Narasimhan filtrations.
\end{abstract}

\tableofcontents

\section{Introduction}

\paragraph{Motivations.}

Let $X$ be a connected compact complex manifold of dimension $n$. Several important complex geometric partial differential equations (PDEs) can be written as a polynomial in its unknown (with respect to the wedge product), a closed $(1,1)$-form. It is the case, for example, of the complex Monge--Ampère equation,
$$
\chi^n = \eta, \qquad \chi > 0, \qquad \chi \in [\chi_0],
$$
the $J$-equation,
$$
c\chi^n - \omega \wedge \chi^{n - 1} = 0, \qquad \chi > 0, \qquad \chi \in [\chi_0],
$$
or the deformed Hermitian Yang--Mills (dHYM) equation,
$$
\Im\left(\e^{-\i\theta}(\omega + \i\chi)^n\right) = 0, \qquad \chi \in [\chi_0].
$$
Moreover, if there is a complex line bundle $L \rightarrow X$ such that $c_1(L) = [\chi_0]$, by Chern--Weil theory, the condition $\chi \in [\chi_0]$ is equivalent to saying that $\chi$ is the reduced curvature form of an integrable Chern connection $\nabla$ on $L$ \textit{i.e.}
$$
\chi = -\frac{1}{2\i\pi}\nabla \circ \nabla \in \Omega^{1,1}(X,\R).
$$

Similarly, the Hermitian Yang--Mills (HYM) equation is
$$
\omega^{n - 1} \wedge \hat{F} = c\omega^n,
$$
where $\hat{F}$ is the reduced curvature form of a given holomorphic Hermitian vector bundle $\E = (E,h,\dbar) \rightarrow X$. With that in mind, a higher rank version of the three equations above has been introduced recently~: by Collins and Yau for the dHYM equation \cite[Paragraph 8.1]{Collins_Yau_2018}, by Pingali for the Monge--Ampère equation \cite{Pingali} and by Takahashi for the $J$-equation \cite{Takahashi}. In general, following \cite{DNST}, when we say that $\E = (E,h,\dbar)$ admits a solution to a polynomial equation in its curvature, we mean that there is a Dolbeault operator $\dbar'$ on the gauge orbit of $\dbar$ such that the curvature of the Chern connection of $(E,h,\dbar')$ solves the equation. Here, we consider the gauge group
$$
\G^\C(E) = \{f \in \Omega^0(X,\End(E))|\forall x \in X, f_x \in \GL(E_x)\},
$$
acting on Dolbeault operators as follows,
$$
f \cdot \dbar = f \circ \dbar \circ f^{-1}.
$$

The ultimate goal with these equations is to be able to give a necessary and sufficient condition for the existence of solutions in terms of numerical invariants. On line bundles, such a condition was found for the Monge--Ampère equation by Yau \cite{Yau} and for the $J$-equation by G. Chen \cite[Theorem 1.1]{Chen} before being improved by Song \cite{Song}. On general vector bundles, the only known numerical condition of this kind concerns the HYM equation and is imposed by geometric invariant theory (GIT). This condition is the slope stability, a result known as the Kobayashi--Hitchin correspondence, proven by Donaldson on surfaces \cite{Donaldson} and by Uhlenbeck and Yau in higher dimensions \cite{Uhlenbeck_Yau}. See \cite{Lübke_Teleman} for a survey.

For the dHYM equation (on line bundles), the expected numerical condition is the Collins--Yau--Jacob conjecture \cite{Collins_Yau_Jacob}. It was solved by G. Chen in the super-critical regime \cite[Theorem 1.7]{Chen} (after a slight reformulation of the conjecture) but it does not hold outside of this regime \cite[Remark 1.10]{Chen}. G. Chen's correspondence was later improved by Chu, Lee and Takahashi \cite[Theorem 1.2]{CLT} and by Ballal in the projective case \cite[Corollary 1]{Ballal}.

For a general polynomial equation on the curvature, especially in higher rank, it seems too ambitious to expect such a numerical condition to hold. The idea of this paper is to start from a Hermitian holomorphic vector bundle $\E_0 \rightarrow X$ whose curvature satisfies a polynomial equation in which the coefficients are closed forms and to study local perturbations of this bundle and of the coefficients of the equation. We study the impact of these deformations on the existence of solution and their behaviour when they exist. We call such equations the \textit{$P$-critical equations}.

This idea is mostly inspired from the work of Dervan, McCarthy and Sektnan \cite{DMS}, and Buchdahl and Schumacher \cite{Buchdahl_Schumacher}. In \cite{Buchdahl_Schumacher}, the equation solved by $\E_0$ is the HYM equation and only the complex structure of the bundle is perturbed. Buchdahl and Schumacher associate to each small perturbation of $\E_0$ a point in a finite dimensional vector space on which the automorphisms group of $\E_0$ acts linearly. The origin represents $\E_0$. It is shown that any small enough perturbation of $\E_0$ admits a solution to the HYM equation if and only if the associated orbit is closed, providing a link between a geometric PDE and a GIT condition. This technique is inspired from \Sz\ on cscK equation \cite{Szkelyhidi}.

In \cite{DMS}, Dervan, McCarthy and Sektnan consider a slope semi-stable bundle $\E$. Thus, the sheaf $\E_0 = \mathrm{Gr}(\E)$ (which is assumed to be locally free) solves the HYM equation with the right choice of complex structure. This means that $\E$ is an infinitesimal perturbation of $\E_0$. In other words, the orbit built by Buchdahl and Schumacher contains the origin in its closure. But the coefficients of the equation vary, and the HYM equation is seen as a "large volume limit" of a wide class of equations~: the $Z$-critical equations. Dervan, McCarthy and Sektnan give a necessary and sufficient numerical condition for $\E$ to admit solutions to these equations when the volume is large enough~: the asymptotic $Z$-stability. This is an asymptotic stability condition for $\E$ with respect to its sub-bundles which destabilise $\E_0$. Dervan, McCarthy and Sektnan generalise a previous work by Leung \cite{Leung} where the asymptotic $Z$-stability corresponds to the Gieseker stability \cite[Sub-section 4.2]{Keller_Scarpa}, \cite[Example 3.13]{Delloque_2025}.

Outside of the large volume limit, there are a few known results on general polynomial equations. In \cite{Keller_Scarpa}, Keller and Scarpa find necessary conditions for the existence of sub-solutions and of solutions on vector bundles of rank $2$ on surfaces ; namely $P$-positivity and $P$-stability \cite[Theorem 1.1]{Keller_Scarpa}. They conjecture that the converse should hold, at least under suitable additional assumptions on the bundle \cite[Conjecture 1.4]{Keller_Scarpa}. In \cite{DNST}, Napame, Scarpa, Tipler and I study $P$-positivity and $P$-stability of equivariant sheaves on $G$-varieties. We show that in some cases, $P$-positivity and $P$-stability are equivalent to their equivariant analogue \cite[Theorem 1.9, Theorem 1.11]{DNST}. We also study how $P$-positivity of bundles is affected by blow-ups of points \cite[Theorem 1.13]{DNST}.

\paragraph{Method.}

Atiyah and Bott noticed that the HYM equation has an infinite dimensional moment map interpretation \cite{Atiyah_Bott}. It is not the only geometric PDE which admits such an interpretation. For example, Fujiki shows a moment map interpretation for the constant scalar curvature (cscK) equation \cite{Fujiki}, see also \cite{Donaldson_97}. X. Chen and Sun use this infinite dimensional moment map interpretation in this context \cite{Chen_Sun} to study small deformations of cscK manifolds. Later, Georgoulas, Robbin and Salamon wrote an excellent survey \cite{GRS} on the properties of moment maps and their flows in a general compact finite dimensional setting. Most of the GIT results of this paper either come from this book, or are proved by adapting a proof from this book.

Dervan and Hallam gave a large class of geometric PDEs that admit a moment map interpretation \cite{Dervan_Hallam}. However, in this class, the closed $(1,1)$-form associated to the moment map may be degenerate. In the case of the $P$-critical equation, the same moment map interpretation exists with the same degeneration issue. In \cite{DMS}, the points (here, the Dolbeault operators) where this $(1,1)$-form is non-degenerate are called sub-solutions. We use the same terminology here. This sub-solution condition is open (with the right topology) so in our local perturbation problem, the $(1,1)$-form will be a Kähler form, yielding a usual moment map interpretation. This sub-solution condition is crucial so we require it in our definition of $P$-critical bundles.

The HYM equations have this particularity that all Dolbeault operators $\dbar$ are sub-solutions. It is probably the reason why they are the most understood among $P$-critical equations. Actually, the Kobayashi--Hitchin correspondence can be seen as an infinite dimensional version of the Hilbert--Mumford criterion in the polystable case \cite[Theorem 12.5]{GRS}.

Moreover, as in \cite{Buchdahl_Schumacher}, we use the Kuranishi slice to see the set of small perturbations of $\E_0$ as a germ of analytic subset of a finite dimensional vector space. This enables us to avoid the complications caused by infinite dimensional constructions. Moreover, we deform the Kuranishi slice as in \cite{Donaldson_08,Szkelyhidi,Sektnan_Tipler,Ortu_Sektnan,Ortu} in the context of the cscK equation or \cite{Clarke_Tipler,Delloque_2024} in the context of the HYM equation, to make sure that the zeroes of the moment map we build yield solutions to the $P$-critical equation.

\paragraph{Main results.}

Let $\E_0 = (E,h,\dbar_0) \rightarrow X$ be a holomorphic Hermitian vector bundle, $\nabla$ its Chern connection and
$$
\hat{F} = -\frac{1}{2\i\pi}\nabla \circ \nabla
$$
its reduced curvature form. Assume that it satisfies a polynomial equation of the form
$$
\sum_{k = 0}^n \zeta_{0,k} \wedge \hat{F}^k = 0,
$$
where for all $k$, $\zeta_{0,k}$ is a closed real $(n - k,n - k)$-form. Assume moreover that the sub-solution condition is verified (see Definition \ref{DEF:Sous-solution P}). We call this equation the \textit{$P_{\zeta_0}$-critical equation} to emphasize the dependency in $\zeta$. We also say that $\E_0$ is $P_{\zeta_0}$-critical. The formalism of $P$-critical equations was first introduced by Scarpa \cite[Sub-section 1.1]{DNST}.

We first show a result which is well-known in the case of the HYM equation.

\begin{propositionIntro}[Proposition \ref{PRO:Polysimplicité}]\label{PRO:1}
    We have an orthogonal and holomorphic decomposition
    $$
    \E_0 = \bigoplus_{k = 1}^l \G_{0,k}^{m_k},
    $$
    where the $m_k \geq 1$ are integers and the $\G_{0,k}$ are simple $P_{\zeta_0}$-critical Hermitian holomorphic vector bundles. Moreover, for all $i \neq j$, $\Hom(\G_{0,i},\G_{0,j}) = 0$.
\end{propositionIntro}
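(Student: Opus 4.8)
The plan is to mimic the classical argument for polystable Hermitian--Einstein bundles, replacing "Hermitian--Einstein" by "$P_{\zeta_0}$-critical". The key structural input is that the $P_{\zeta_0}$-critical equation is Hermite-self-adjoint in a suitable sense, so that if $\E_0$ is $P_{\zeta_0}$-critical, any holomorphic sub-bundle $\F \subset \E_0$ which is preserved by the appropriate operator splits off orthogonally, and both $\F$ and $\F^\perp$ remain $P_{\zeta_0}$-critical. Concretely, I would first argue that the endomorphisms of $\E_0$ that commute with the relevant curvature operator are parallel (this uses that the $(1,1)$-form attached to the sub-solution condition is a Kähler form together with a Bochner/maximum-principle or Weitzenböck-type computation, exactly as in the HYM case), so $\End(\E_0) \cap \ker(\text{the operator})$ is a finite-dimensional $C^*$-algebra of parallel endomorphisms.

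The main step is then linear algebra applied to this $C^*$-algebra $\mathcal{A} = \End(\E_0)^{\nabla}$ of parallel endomorphisms. It is closed under adjoint (because the equation is self-adjoint), hence semisimple; decomposing its identity into minimal central idempotents gives parallel orthogonal projections, and these cut $\E_0$ into parallel --- hence holomorphic and orthogonal --- sub-bundles. Writing $\mathcal{A} \cong \bigoplus_k \mathrm{Mat}_{m_k}(\C)$ and picking a system of matrix units in each block yields the isotypic decomposition $\E_0 = \bigoplus_k \G_{0,k}^{m_k}$ with $\G_{0,k}$ the "multiplicity space" sub-bundle; simplicity of each $\G_{0,k}$ is equivalent to the corresponding block being a full matrix algebra, which holds by construction (the block is all of $\End(\G_{0,k})^\nabla$). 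The orthogonality of distinct blocks and $\Hom(\G_{0,i},\G_{0,j}) = 0$ for $i \neq j$ follow because a nonzero holomorphic map $\G_{0,i} \to \G_{0,j}$ between simple bundles of the same "type" would have to be an isomorphism intertwining the two $P_{\zeta_0}$-critical structures, forcing $i = j$; and any holomorphic map between them, composed appropriately, would give a nonzero element of $\mathcal{A}$ crossing blocks, a contradiction. One also checks each $\G_{0,k}$ inherits the $P_{\zeta_0}$-critical equation (with the induced metric and holomorphic structure) by restricting the equation to the parallel sub-bundle --- the curvature of the sub-bundle is the restriction of $\hat F$ since the splitting is parallel, so the polynomial identity is inherited verbatim, and the sub-solution condition is inherited because it is an open, pointwise-positivity-type condition stable under orthogonal direct sums.

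The hard part, and the place where one must be careful compared to the HYM case, is establishing that endomorphisms lying in the kernel of the relevant (linearised or self-adjoint) curvature operator are genuinely parallel: in the HYM case this is the clean statement that Hermite--Einstein endomorphisms are parallel, but for a general $P_{\zeta_0}$-critical equation the operator is only elliptic in the directions controlled by the Kähler form coming from the sub-solution condition, so one needs that form to be non-degenerate (which is exactly guaranteed by Definition \ref{DEF:Sous-solution P}) and then a positivity argument --- an integration by parts against this Kähler form --- to conclude that such an endomorphism has vanishing covariant derivative. Once parallelism is in hand, everything else is representation theory of finite-dimensional $C^*$-algebras and is routine.
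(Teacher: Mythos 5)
Your proposal is correct and rests on the same key input as the paper's proof, namely Lemma \ref{LEM:Aut réductif}: holomorphic endomorphisms of a $P_{\zeta_0}$-critical bundle are parallel (equivalently, closed under $f \mapsto f^\dagger$), obtained exactly as you describe by combining $\Delta_{\zeta_0,\nabla}\Re(f) = 0$ for holomorphic $f$ with the positivity of $\scal{\cdot}{\cdot}_{L^2(\dbar_0,\zeta_0)}$ granted by the sub-solution condition. The only difference is in packaging: where you invoke the Wedderburn decomposition of the finite-dimensional $*$-algebra of parallel endomorphisms, the paper runs an explicit induction on the rank using the spectral decomposition of a single non-scalar Hermitian holomorphic endomorphism and then regroups isomorphic simple factors; the two are interchangeable.
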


We show that the existence of a sub-solution on a bundle implies the existence of a positive $(n - 1,n - 1)$ class in $X$, see Proposition \ref{PRO:X équilibrée}. In particular, there is a metric on $X$ such that the associated $(1,1)$-form $\omega$ is balanced \textit{i.e.} $d\omega^{n - 1} = 0$. Let $V = H^{0,1}(X,\End(\E_0))$ be the complex finite dimensional space of harmonic $(0,1)$-forms with respect to $\omega$. Then, the automorphisms group of $\E_0$ acts linearly on $V$ and any small deformation of $\E_0$ has a corresponding orbit in $V$. Let $\E_b = (E,\dbar_b)$ be the bundle represented by $b \in V$. Unless it is mentioned, the Dolbeault operator is \textit{a priori} not integrable, so $\E_b$ may not be a holomorphic bundle, but the results of this paper still apply. Let for all $\zeta = \sum_{k = 0}^n \zeta_k$ and all smooth complex vector bundle $F$,
$$
P_\zeta(F) = \sum_{k = 0}^n k![\zeta_k] \cup \ch_k(F).
$$
Chern--Weil theory implies that $P_\zeta(E) = 0$ (see Lemma \ref{LEM:P(E) = 0}). We then define a notion analogous to the slope (semi-)(poly)stability : the \textit{local $P_\zeta$-(semi-)(poly)stability}.

\begin{definitionIntro}[Definition \ref{DEF:Stabilité P}]
    $\E_b$ is locally $P_\zeta$-semi-stable if for all smooth bundles $0 \subsetneq F \subsetneq E$ such that $(F,\dbar_b) \subset \E_b$, $(F,\dbar_0) \subset \E_0$ and $(F^\bot,\dbar_0) \subset \E_0$ are holomorphic embeddings, we have
    \begin{equation}\label{EQ:Stabilité}
        P_\zeta(F) \leq 0.
    \end{equation}
    If is said to be locally $P_\zeta$-stable if (\ref{EQ:Stabilité}) is always strict. It is said to be locally $P_\zeta$-polystable if (\ref{EQ:Stabilité}) is strict or $(F^\bot,\dbar_b) \subset \E_b$ is holomorphic.
\end{definitionIntro}

The next theorem can be seen as a local Kobayashi--Hitchin like correspondence. It confirms a local version of a conjecture of Keller and Scarpa \cite[Conjecture 1.4]{Keller_Scarpa}. Their conjecture asserts that the existence of a sub-solution is related to a kind of positivity with respect to the sub-varieties of $X$, called $Z$-positivity \cite[Definition 1.2]{Keller_Scarpa}. Under this assumption, the existence of a solution is conjectured to be related to a kind of stability with respect to the sub-sheaves of the bundle, called $Z$-stability \cite[Definition 1.2]{Keller_Scarpa}. They use the formalism of $Z$-critical equations but their results apply to any $P$-critical equation.

\begin{theoremIntro}[Theorem \ref{THE:Déformation P-critique}]\label{THE:2}
    If $\zeta$ is close enough to $\zeta_0$ and $b$ is close enough to $0$, $\E_b$ admits a solution to the $P_\zeta$-critical equation close to $\dbar_0$ if and only if it is locally $P_\zeta$-polystable.
\end{theoremIntro}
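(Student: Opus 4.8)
The plan is to reduce the problem to finite-dimensional GIT by means of a deformed Kuranishi slice, in the spirit of Buchdahl--Schumacher \cite{Buchdahl_Schumacher}, \Sz{} \cite{Szkelyhidi} and Dervan--McCarthy--Sektnan \cite{DMS}, and then to match the resulting Hilbert--Mumford criterion with local $P_\zeta$-polystability through a Chern--Weil computation. First I would set up the Hamiltonian picture. By Proposition \ref{PRO:1} the group $G = \Aut(\E_0)$ is reductive, isomorphic to $\prod_k \GL(m_k,\C)$, with maximal compact $K$ and $G = K^\C$. Because the sub-solution condition is open, every Dolbeault operator close to $\dbar_0$ is a sub-solution, so the closed $(1,1)$-form attached to the $P_\zeta$-critical equation restricts to a genuine Kähler form on a neighbourhood of $\dbar_0$ in the space of Dolbeault operators, and there the $P_\zeta$-critical equation is precisely the vanishing of the unitary gauge group moment map. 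The Kuranishi slice identifies the germ of small perturbations of $\E_0$ with a neighbourhood of $0$ in $V = H^{0,1}(X,\End(\E_0))$ (the Kuranishi space in the integrable case), on which $K$ acts linearly, $b$ representing $\E_b$ and $0$ representing $\E_0$; the slice theorem shows that every solution of the $P_\zeta$-critical equation near $\dbar_0$ is gauge-equivalent to one arising from a point of this neighbourhood.

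Second, I would deform the slice. The moment map obtained by naively restricting to the slice does not vanish exactly at the solutions; following \cite{Donaldson_08,Szkelyhidi,Sektnan_Tipler,Ortu_Sektnan,Ortu,Clarke_Tipler,Delloque_2024} one perturbs the slice, tracking the dependence on $\zeta$, so as to absorb the higher-order terms, obtaining for each $\zeta$ close to $\zeta_0$ a finite-dimensional Hamiltonian $K$-space $(\mathcal{K}_\zeta,\omega_\zeta,\mu_\zeta)$ with the property that, for $b$ near $0$, the orbit $G\cdot b$ meets $\mu_\zeta^{-1}(0)$ if and only if $\E_b$ admits a solution to the $P_\zeta$-critical equation near $\dbar_0$. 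I expect this to be the most delicate analytic step: one must check that the perturbed slice stays inside the open sub-solution locus (so that $\omega_\zeta$ is still symplectic), that the moment map identity survives the perturbation, and that the gauge-fixing preserves the correspondence between zeros of $\mu_\zeta$ and honest solutions, all uniformly for $\zeta$ near $\zeta_0$.

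Third, I would invoke the finite-dimensional GIT dictionary. Since the results of \cite{GRS} are stated in a proper setting, one first proves a \emph{local} Kempf--Ness theorem --- by restricting to a small ball and using a \Lo{} inequality to keep the negative gradient flow of $\norme{\mu_\zeta}^2$ inside the germ --- which yields: $G\cdot b$ meets $\mu_\zeta^{-1}(0)$ iff $G\cdot b$ is closed near $0$ iff for every one-parameter subgroup $\lambda$ of $G$ such that $\lim_{t\to 0}\lambda(t)\cdot b$ exists, the Hilbert--Mumford weight $w(\lambda,b)$ is $\geq 0$, with equality only when that limit lies in $G\cdot b$. Finally I would match this with local $P_\zeta$-polystability. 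A one-parameter subgroup of $G = \Aut(\E_0)$ is the same datum as a grading of $E$ into $\dbar_0$-holomorphic sub-bundles with $\dbar_0$-holomorphic orthogonal complements; $\lim_{t\to 0}\lambda(t)\cdot b$ exists precisely when each step $F$ of the induced filtration also yields a holomorphic embedding $(F,\dbar_b)\subset\E_b$; and the key identity is that $w(\lambda,b)$ equals $-c\,P_\zeta(F)$ with $c>0$ for a single-step grading, and a positive combination of such terms in general. This identity is a Chern--Weil computation: the weight is read off at the limit point, a $\dbar_0$-holomorphic direct sum, where $\mu_\zeta$ is given by an explicit curvature integral, and $P_\zeta(E) = 0$ (Lemma \ref{LEM:P(E) = 0}) together with the additivity of $P_\zeta$ in short exact sequences turns that integral into the topological quantity $P_\zeta(F)$. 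The equality case $P_\zeta(F) = 0$ forcing $\lim_{t\to 0}\lambda(t)\cdot b\in G\cdot b$ translates into $(F^\bot,\dbar_b)\subset\E_b$ being holomorphic, which is exactly the extra clause in the definition of local $P_\zeta$-polystability, and combining the three steps proves the theorem. The two genuine obstacles are therefore (i) the $\zeta$-uniform deformation of the Kuranishi slice with its Hamiltonian structure left intact, and (ii) the local Kempf--Ness theorem, since only a germ --- not a projective variety --- is available, which forces the \Lo{}-type control of the moment-map flow; the weight computation itself is bookkeeping once the moment map is in hand.
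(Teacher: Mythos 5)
Your outline follows the paper's strategy essentially step for step: Kuranishi slice, $\zeta$-deformed slice so that zeroes of the finite-dimensional moment map correspond to honest solutions, \Lo-controlled gradient flow of $\norme{\mu_\zeta}^2$ in place of a compactness argument, and the identification of the Hilbert--Mumford weight of a one-parameter subgroup of $\Aut(\E_0)$ with $2\pi\rk(E)P_\zeta(F)$ via Chern--Weil, with one-parameter subgroups matched to admissible filtrations. The two obstacles you single out are indeed where the work lies, and your weight computation and equality-case analysis are the ones the paper carries out.

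Two points in your third step are stated too strongly and would fail as written. First, the chain ``$G\cdot b$ meets $\mu_\zeta^{-1}(0)$ iff $G\cdot b$ is closed near $0$'' is only true at $\zeta=\zeta_0$; for $\zeta\neq\zeta_0$ a locally $P_\zeta$-stable $\E_b$ can perfectly well have $0\in\overline{G\cdot b}\setminus G\cdot b$ (this is exactly the situation exploited in Section \ref{SEC:Exemple avec la J-équation}, where non-split extensions of the simple components of $\E_0$ become solvable for suitable $\zeta$). The orbit-closure characterisation is the content of Theorem \ref{THE:Cas zeta = zeta_0}, not of Theorem \ref{THE:Déformation P-critique}, and the proof must instead run the implication ``polystable $\Rightarrow$ zero of $\mu_\zeta$ on the orbit'' through the convergence of the flow together with the equivariant-arc propositions (the analogues of \cite[Theorems 10.4 and 12.5]{GRS}). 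Second, the \Lo{} inequality is available only at the critical point $(\zeta_0,0)$ of $f_{\zeta_0}$; for $\zeta\neq\zeta_0$ the origin is not a zero of $\mu_\zeta$, so you cannot directly bound the flow by \Lo. The paper gets global existence of the flows for nearby $\zeta$ by a separate continuity-and-compactness argument (Lemma \ref{LEM:Continuité en 0} and Corollary \ref{COR:Flux défini partout}), which itself relies on a Ness-type uniqueness statement for the limits of the $\zeta_0$-flow; this extra layer is not optional. A smaller technical point you omit: the deformed slice $\tilde{\Phi}(\zeta,\cdot)$ is only smooth, not holomorphic, so the pulled-back form is symplectic but not Kähler, and one has to check by hand (Lemma \ref{LEM:Omega tilde positive}) that the associated ``gradient'' is still a descent direction for $\norme{\tilde{\mu}_\zeta}^2$.
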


We also give a characterisation involving a moment map defined on a neighbourhood of $0$ in $V$ which depends continuously on $\zeta$. We don't go into the details of what we mean by "close" here, but it implies in particular that all the solutions we consider are sub-solutions. See Theorem \ref{THE:Déformation P-critique} for more details. The topological considerations in Banach spaces are mainly discussed in Section \ref{SEC:Préliminaires}.

Moreover, the solutions of this equation, when they exist, are locally unique modulo a unitary gauge transform where the unitary gauge group is
$$
\G(E,h) = \{u \in \G^\C(E)|uu^\dagger = \Id_E\} \subset \G^\C(E).
$$
And the $\G(E,h)$-orbit of the solution, when it exists, varies continuously with $\zeta$ and $b$.

\begin{theoremIntro}[Theorem \ref{THE:Unicité solutions}]\label{THE:3}
    If $\zeta$ is close enough to $\zeta_0$ and $\dbar$ and $\dbar'$ are two solutions to the $P_\zeta$-critical equation close enough to $\dbar_0$ and in the same $\G^\C(E)$-orbit, then, they are in the same $\G(E,h)$-orbit.
\end{theoremIntro}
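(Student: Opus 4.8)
The plan is to transfer the statement to the finite-dimensional Kähler quotient model built in the proof of Theorem \ref{THE:2} and there run the uniqueness half of our local Kempf--Ness theorem. Recall from that proof that, for $\zeta$ close to $\zeta_0$, solutions to the $P_\zeta$-critical equation close to $\dbar_0$ are matched, through the deformed Kuranishi slice, with zeros of a moment map $\mu_\zeta \colon U \to \Lie(K)^*$ on a $K$-invariant neighbourhood $U$ of $0$ in $V = H^{0,1}(X,\End(\E_0))$, where $G^\C = \Aut(\E_0) = \prod_k \GL(m_k,\C)$ is reductive by Proposition \ref{PRO:1} and $K = \Aut(\E_0)\cap\G(E,h) = \prod_k U(m_k)$ is its maximal compact; moreover $\G(E,h)$-equivalence of solutions corresponds to $K$-equivalence of the associated points, and, by the slice property, $\G^\C(E)$-equivalence corresponds to $G^\C$-equivalence. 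So I would write $p,p' \in U$ for the points attached to $\dbar$ and $\dbar'$: then $\mu_\zeta(p) = \mu_\zeta(p') = 0$ and $p' = g\cdot p$ for some $g\in G^\C$, and it suffices to prove $p' \in K\cdot p$.

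The core is then purely finite-dimensional GIT, and it is the mechanism behind the local Kempf--Ness theorem. Using the Cartan decomposition $g = k\exp(\i\xi)$ with $k\in K$, $\xi\in\Lie(K)$, and the $K$-equivariance of $\mu_\zeta$, replacing $p'$ by $k^{-1}\cdot p'$ lets me assume $g = \exp(\i\xi)$. Set $\gamma(t) = \exp(\i t\xi)\cdot p$ and $\phi(t) = \scal{\mu_\zeta(\gamma(t))}{\xi}$ on $[0,1]$. The curve $\gamma$ is the flow line of $\mathrm{grad}_{\omega_\zeta}\scal{\mu_\zeta}{\xi}$, so with the moment map identity $d\scal{\mu_\zeta}{\xi} = \iota_{\xi^\sharp}\omega_\zeta$ one gets $\phi'(t) = \norme{\xi^\sharp_{\gamma(t)}}^2 \geq 0$ (for the appropriate sign convention on $\omega_\zeta$), where $\xi^\sharp$ is the vector field generated by $\xi$. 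Hence $\phi$ is monotone; but $\phi(0) = \scal{\mu_\zeta(p)}{\xi} = 0 = \scal{\mu_\zeta(p')}{\xi} = \phi(1)$, so $\phi \equiv 0$, so $\phi' \equiv 0$, so $\xi^\sharp$ vanishes along $\gamma$, in particular $\xi^\sharp_p = 0$. Thus $\exp(\i\xi)\in\Stab_{G^\C}(p)$, whence $p' = g\cdot p = k\cdot p\in K\cdot p$, and the dictionary returns $\dbar' \in \G(E,h)\cdot\dbar$. (Equivalently: the Kempf--Ness functional $t\mapsto\int_0^t\phi$ is convex and has vanishing derivative at both endpoints $p$ and $p'$, forcing $\xi$ to stabilise $p$.)

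The step I expect to be the main obstacle is the local bookkeeping: $\mu_\zeta$, its equivariance, and the formula for $\phi'$ hold only on $U$ (and only for $\zeta$ close to $\zeta_0$, where the open sub-solution condition makes $\omega_\zeta$ Kähler), so I must guarantee that the whole segment $\gamma([0,1])$ stays in $U$. I would first control $g$, hence $\xi$, a priori: modulo $\Stab_{G^\C}(p)$ --- whose compact part is absorbed into $k$ and whose action fixes $p$ --- the representative $g$ may be chosen with $\norme{\xi}$ bounded by a constant times $\mathrm{dist}(p,p')$, using that near $0$ the $G^\C$-action and the Kähler data on $U$ are uniformly close to the linear $G^\C$-representation $V$, for which the statement is classical and global; then, after shrinking the neighbourhoods of $\zeta_0$ and of $0$ uniformly in $\zeta$ (using the continuity of $\mu_\zeta$ in $\zeta$, part of Theorem \ref{THE:2}), the segment remains in $U$ and the argument closes. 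A variant that bypasses the slice would be to argue on the space of Hermitian metrics: two $\G^\C(E)$-equivalent solutions yield two metrics on $(E,\dbar)$ both solving the equation, and along the geodesic $h\,e^{ts}$ the associated Donaldson-type functional has second derivative bounded below precisely by the sub-solution condition, forcing uniqueness modulo $\Aut(E,\dbar)$; but the finite-dimensional route is cleaner given the machinery already in place, so I would pursue that one.
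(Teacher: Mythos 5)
Your overall architecture is the one the paper uses: Proposition \ref{PRO:Zéro dans la tranche déformée} puts both solutions, up to unitary gauge, in the deformed slice as zeroes $b_1,b_2$ of $\tilde{\mu}_\zeta$ lying in the same $G$-orbit, and then Lemma \ref{LEM:Unicité K-orbite zéro mu_zeta} is exactly your Kempf--Ness step: write $b_2 = u\e^{\xi}\cdot b_1$, set $a(t) = \scal{\tilde{\mu}_\zeta(\e^{t\xi}\cdot b_1)}{\i\xi}$, use monotonicity of $a$ together with $a(0)=a(1)=0$ to force $L_{b_1}\xi = 0$. (One small caveat: the relevant $2$-form $\tilde{\Omega}_\zeta$ is not Kähler, only tamed in the sense of Lemma \ref{LEM:Omega tilde positive}, but that positivity is all the monotonicity argument needs.)

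The step that would fail is your fix for keeping the segment $\e^{t\xi}\cdot b_1$, $0\leq t\leq 1$, inside the neighbourhood where $\tilde{\mu}_\zeta$ is defined. You propose to choose the representative $g=\exp(\i\xi)$ with $\norme{\xi}$ bounded by a constant times the distance between the two points, "using that near $0$ the action is close to the linear representation, for which the statement is classical". That a priori bound is false already for linear representations: take $G = \C^*$ acting on $\C^2$ with weights $\pm 1$, $p = (\varepsilon,\varepsilon)$ and $p' = (2\varepsilon,\varepsilon/2) = \e^{\log 2}\cdot p$. Here $\mathrm{Stab}(p)$ is trivial, the distance between $p$ and $p'$ is $\mathrm{O}(\varepsilon)$, yet $\xi = \log 2$ is bounded away from $0$ uniformly in $\varepsilon$. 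So no shrinking of the balls can make $\xi$ small, and the containment of the segment cannot be deduced from smallness of $\xi$. The correct mechanism — Lemma \ref{LEM:Convexité} in the paper — does not bound $\xi$ at all: decomposing $b_1$ into blocks $b_{ij}$ with respect to the eigenspaces of $\xi$, each block norm $\e^{2(\lambda_i-\lambda_j)t}\norme{b_{ij}}^2$ is a convex exponential in $t$, hence bounded on $[0,1]$ by the sum of its endpoint values, giving $\norme{\e^{t\xi}\cdot b_1}^2 \leq \norme{b_1}^2 + \norme{\e^{\xi}\cdot b_1}^2$; so if both endpoints lie in a ball of radius $r/\sqrt{2}$ the whole segment lies in the ball of radius $r$. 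With that substitution your argument closes and coincides with the paper's proof.
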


\begin{theoremIntro}[Theorem \ref{THE:Continuité des opérateurs P-critiques}]\label{THE:4}
    When it exists, the $\G(E,h)$-orbit of any $P_\zeta$-critical operator $\dbar$ in the gauge orbit of $\dbar_b$ varies continuously with respect to $\zeta$ and $b$.
\end{theoremIntro}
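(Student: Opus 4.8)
The plan is to push the statement through the finite-dimensional reduction already used for Theorem \ref{THE:Déformation P-critique}. Recall from its proof that, after the deformation of the Kuranishi slice, the $P_\zeta$-critical operators lying in the $\G^\C(E)$-orbit of $\dbar_b$ and close to $\dbar_0$ correspond, modulo $\G(E,h)$, to the zeros of the moment map $\mu_\zeta$ inside the orbit $\Aut(\E_0)\cdot b \subset V$, modulo the compact group $K := \Aut(\E_0)\cap\G(E,h)$, whose complexification is $\Aut(\E_0)$ by Proposition \ref{PRO:Polysimplicité}; moreover $\zeta \mapsto \mu_\zeta$ is continuous on a fixed neighbourhood $\mathcal{B}$ of $0$ in $V$. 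Equipping the set of $\G(E,h)$-orbits with the quotient topology, the theorem becomes the following statement: if $(\zeta_j,b_j) \to (\zeta_\infty,b_\infty)$ and each $\mu_{\zeta_j}$ has a zero $x_j \in \Aut(\E_0)\cdot b_j$ while $\mu_{\zeta_\infty}$ has a zero $x_\infty \in \Aut(\E_0)\cdot b_\infty$, then one can choose $k_j \in K$ with $k_j\cdot x_j \to x_\infty$.

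The main tool is the Kempf--Ness functional $\Phi_{\zeta,b}$ on the symmetric space $\Aut(\E_0)/K$, a primitive of $\mu_\zeta$ along $\Aut(\E_0)\cdot b$, which is convex along geodesics, whose critical points are exactly the zeros of $\mu_\zeta$ in that orbit, and which depends continuously on $(\zeta,b)$. First I would establish an a priori bound: writing $x_j = g_j\cdot b_j$ and decomposing $g_j$ in polar form modulo the stabiliser of $b_j$, I claim the ``hyperbolic part'' stays bounded as $j \to \infty$. If not, one extracts in the limit a unit vector $\xi_\infty \in \i\,\mathfrak{k}$ (with $\mathfrak{k} = \Lie(K)$) along which the asymptotic slope of $\Phi_{\zeta_\infty,b_\infty}$ is $\leq 0$; by continuity of $\zeta\mapsto\mu_\zeta$ and convexity this slope is a Hilbert--Mumford type weight of $b_\infty$, so by the local $P_{\zeta_\infty}$-polystability of $\E_{b_\infty}$ (which holds by Theorem \ref{THE:Déformation P-critique}, a solution being assumed to exist there) it must vanish and $\xi_\infty$ must stabilise $b_\infty$ --- and, having reduced modulo the stabilisers of the $b_j$ beforehand, this is a genuine contradiction. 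Granted the bound, after replacing $x_j$ by $k_j\cdot x_j$ for suitable $k_j \in K$ and passing to a subsequence we get $x_j \to y$ with, moreover, $y \in \Aut(\E_0)\cdot b_\infty$ (the bounded hyperbolic parts, the $K$-parts and the $b_j$ all converge). By continuity $\mu_{\zeta_\infty}(y) = 0$, so $y$ and $x_\infty$ are two zeros of $\mu_{\zeta_\infty}$ in the same $\Aut(\E_0)$-orbit; Theorem \ref{THE:Unicité solutions} then puts them in the same $K$-orbit, which finishes the argument along the subsequence, hence for the whole sequence.

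The delicate point --- which I expect to be the main obstacle --- is making the a priori bound uniform in the parameters. The $\Aut(\E_0)$-action on $V$ is not proper and local $P_\zeta$-polystability is not an open condition, so there is no implicit function theorem to lean on; worse, the stabiliser of $b$ and the closed orbit $\Aut(\E_0)\cdot b$ can jump as $b \to b_\infty$, so the ``rate of properness'' of $\Phi_{\zeta,b}$ could a priori degenerate. I would handle this by transposing the relevant finite-dimensional statements from \cite{GRS} --- continuity of the zero of the moment map inside a polystable orbit, together with a lower semicontinuity of the destabilising weights --- and checking that they remain valid, with uniform constants, as $(\zeta,b)$ ranges over a small neighbourhood of $(\zeta_0,0)$; alternatively one can pass to the local normal form of the moment map near the zero $x_\infty$, where $\Aut(\E_0)$ acts in a linear model, the stabiliser is reductive, and the nearby polystable orbits together with their moment map zeros become explicit. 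The $b$-dependence is the genuinely new feature compared with the continuity of the zero of a single moment map.
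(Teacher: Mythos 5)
Your reduction to the finite-dimensional picture (zeros of $\mu_\zeta$ in $G \cdot b$ modulo $K$, followed by Theorem \ref{THE:Unicité solutions} to identify the limit) is the right frame, but the central step of your argument — the a priori bound on the hyperbolic parts of the $g_j$ modulo $\Stab(b_j)$ — is not just delicate, it is false, and the contradiction you propose does not close. The obstruction is exactly the stabiliser jumping you flag: if $\Stab(b_\infty)$ is strictly larger than the limit of the $\Stab(b_j)$, then the limiting direction $\xi_\infty$ can perfectly well stabilise $b_\infty$ while each $\xi_j$ is orthogonal to $\Lie(\Stab(b_j))$, so "$\xi_\infty$ stabilises $b_\infty$" contradicts nothing. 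A minimal linear example: $\C^*$ acting on $\C^2$ with weights $(1,-1)$ and flat moment map $\mu(x,y)=|x|^2-|y|^2$; take $b_j=(\epsilon_j,\epsilon_j^2)\to b_\infty=(0,0)$ (polystable, full stabiliser at the limit, trivial stabilisers along the sequence). The moment-map zero in $G\cdot b_j$ sits at $t_j=\epsilon_j^{1/2}$, so the hyperbolic part $\log t_j\to-\infty$ even though $x_j=(\epsilon_j^{3/2},\epsilon_j^{3/2})$ converges to $x_\infty=0$. So the conclusion you want is true there, but your route to it (bounded hyperbolic part $\Rightarrow$ $g_j$ converges $\Rightarrow$ $y=g_\infty\cdot b_\infty$) is unavailable; this is precisely the situation at $b_\infty=0$ in the present paper, where $\Stab(0)=G$. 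A second, independent problem is that the Kempf--Ness functional and its asymptotic slopes require the moment map along entire geodesic rays $t\mapsto\e^{t\xi}\cdot b$, whereas here $\mu_\zeta$ and $\Omega_\zeta$ only exist on a ball that is not $G$-invariant; deciding which rays stay in the ball, and what the "weights" are there, is the content of Propositions \ref{PRO:Convergence exp(txi)b}, \ref{PRO:Existence arc équivariant semi-stable} and \ref{PRO:Existence arc équivariant instable}, which your sketch bypasses.

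For comparison, the paper takes a different route that sidesteps the bound entirely: it works with the negative gradient flow of $\tfrac12\norme{\tilde{\mu}_\zeta}^2$, uses the \Lo\ inequality (via the Marle--Guillemin--Sternberg normal form) to get existence and convergence of the flows uniformly in $(\zeta,b)$ on a fixed compact set (Corollary \ref{COR:Flux défini partout}), a Duistermaat-type continuity of the limit map at fixed $\zeta$ (Proposition \ref{PRO:Duistermaat}), and the Ness-type uniqueness statement (Corollary \ref{COR:Unicité de Ness}). The continuity in $(\zeta_m,b_m)\to(\zeta_1,b_1)$ is then obtained by running the flow at parameter $\zeta_m$ for a large finite time, restarting the flow at the limit parameter $\zeta_1$ from that point, and using that a limit of zeros of $\tilde{\mu}_{\zeta_m}$ is a zero of $\tilde{\mu}_{\zeta_1}$, hence a fixed point of the $\zeta_1$-flow. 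If you want to keep a Kempf--Ness-style argument, the viable repair is the second alternative you mention — the local normal form near $x_\infty$ — but you would then still need the analogue of Proposition \ref{PRO:Convergence exp(txi)b} to interpret the weights inside the ball, and at that point you have essentially rebuilt the paper's Section \ref{SEC:Résultats GIT locaux}.
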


Once more, the topologies we use to define the continuity are given precisely in Theorem \ref{THE:Continuité des opérateurs P-critiques}. Theorems \ref{THE:2} and \ref{THE:4} are powerful tools to build canonical metrics as deformations of an initial special metric. They generalise \cite[Theorem 1.4]{Takahashi} and \cite[Theorem 1.6]{Takahashi} on the $J$-equation, \cite[Theorem 4.1]{Pingali} on the Monge--Ampère equation, \cite[Theorem 1.1]{DMS} on the asymptotic $Z$-critical equations (which are deformations of the HYM equation), \cite[Theorem 1.2]{Clarke_Tipler} and \cite[Theorem 3]{Delloque_2024} on the HYM equation and \cite[Proposition 3.16]{Keller_Scarpa} on the $Z$-critical equation.

Then, in the case where $\zeta = \zeta_0$, we obtain a generalisation of the results of Buchdahl and Schumacher \cite{Buchdahl_Schumacher}. We still assume that $b$ is close to $0$.

\begin{theoremIntro}[Theorem \ref{THE:Cas zeta = zeta_0}]\label{THE:5}
    If $\zeta = \zeta_0$, $\E_b$ is locally $P_{\zeta_0}$-semi-stable. It is locally $P_{\zeta_0}$-polystable if and only if the orbit of $b$ in $V$ with respect to the action of the automorphisms group of $\E_0$ is closed.
\end{theoremIntro}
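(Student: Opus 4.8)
The plan is to deduce the statement from three ingredients: the structure result Proposition~\ref{PRO:Polysimplicité}, Theorem~\ref{THE:Déformation P-critique} in the case $\zeta=\zeta_0$, and the local version of the Kempf--Ness theorem (which also underlies the proof of Theorem~\ref{THE:Déformation P-critique}).

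For the semi-stability assertion, I would first use Proposition~\ref{PRO:Polysimplicité} to write the orthogonal holomorphic splitting $\E_0=\bigoplus_{k=1}^l\G_{0,k}^{m_k}$ into simple $P_{\zeta_0}$-critical bundles with $\Hom(\G_{0,i},\G_{0,j})=0$ for $i\neq j$; Schur's lemma then gives an identification $\End(\E_0)\cong\bigoplus_k\Mat_{m_k}(\C)$, compatible with adjoints since the splitting is orthogonal, so that $G:=\Aut(\E_0)\cong\prod_k\GL_{m_k}(\C)$ is complex reductive and acts linearly on $V$. Let $0\subsetneq F\subsetneq E$ be a smooth sub-bundle as in Definition~\ref{DEF:Stabilité P}. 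The hypotheses $(F,\dbar_0)\subset\E_0$ and $(F^\bot,\dbar_0)\subset\E_0$ make $\dbar_0$ block-diagonal with respect to $E=F\oplus F^\bot$, hence the orthogonal projection $\pi_F$ onto $F$ is a self-adjoint $\dbar_0$-holomorphic idempotent of $\E_0$; under the identification above it reads $\pi_F=\bigoplus_k p_k$ with each $p_k$ an orthogonal projection of $\C^{m_k}$, which yields a holomorphic orthogonal summand $F\cong\bigoplus_k W_k\otimes\G_{0,k}$ with $W_k:=\im p_k$. Since each $\G_{0,k}$ carries a solution to the $P_{\zeta_0}$-critical equation, the Chern--Weil computation of Lemma~\ref{LEM:P(E) = 0} applied to $\G_{0,k}$ gives $P_{\zeta_0}(\G_{0,k})=0$, so additivity of $\ch$ on direct sums forces $P_{\zeta_0}(F)=\sum_k(\dim_\C W_k)\,P_{\zeta_0}(\G_{0,k})=0$. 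Hence $\E_b$ is locally $P_{\zeta_0}$-semi-stable for every $b$; and since the inequality~(\ref{EQ:Stabilité}) is always an equality on such $F$, local $P_{\zeta_0}$-polystability of $\E_b$ amounts to: every $F$ as above which is moreover a $\dbar_b$-holomorphic sub-bundle of $\E_b$ has $(F^\bot,\dbar_b)$ holomorphic in $\E_b$ as well.

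For the characterisation of polystability by closedness of the orbit, I would chain equivalences. By Theorem~\ref{THE:Déformation P-critique} with $\zeta=\zeta_0$, local $P_{\zeta_0}$-polystability of $\E_b$ is equivalent to the existence of a solution to the $P_{\zeta_0}$-critical equation near $\dbar_0$ in the gauge orbit of $\dbar_b$, and, by the moment map formulation in that theorem, to the orbit $G\cdot b$ meeting the zero locus $\mu^{-1}(0)$ of the moment map $\mu$ defined on a neighbourhood of $0$ in $V$. When $\zeta=\zeta_0$, $\dbar_0$ is itself a solution, so $\mu(0)=0$: thus $0$ is a zero of $\mu$ lying at a fixed point of the $G$-action. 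Since $G$ is reductive and acts linearly on the finite-dimensional space $V$, with maximal compact $K=G\cap\G(E,h)$ acting by isometries and $\mu$ a moment map for it, the local Kempf--Ness theorem yields, for $b$ close enough to $0$, that $G\cdot b$ contains a zero of $\mu$ if and only if $G\cdot b$ is closed in $V$. Concatenating the three equivalences gives that $\E_b$ is locally $P_{\zeta_0}$-polystable if and only if $G\cdot b$ is closed, which is the claim.

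The real work is not in this bookkeeping but in the two inputs it orchestrates, and I expect the second to be the main obstacle. The first is the faithful dictionary---set up in the proof of Theorem~\ref{THE:Déformation P-critique}---between the analytic condition ``$\E_b$ admits a solution near $\dbar_0$'' and the finite-dimensional statement ``$G\cdot b\cap\mu^{-1}(0)\neq\emptyset$'', for a moment map $\mu$ that is only a small, non-explicit perturbation of the quadratic model on $V$ and is defined near $0$ only. The second is the local Kempf--Ness theorem for such a $\mu$: its delicate content is precisely that the \emph{global}, algebraic dichotomy ``$G\cdot b$ closed in $V$ or not'' must be detected by this \emph{local} moment map when $b$ is small---one uses that the $G$-action on $V$ is linear, so that closedness of $G\cdot b$ is a scaling-invariant, germ-local property of the orbit at $0$, and that zeros of a moment map sit on closed orbits. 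Granting Theorem~\ref{THE:Déformation P-critique} and the paper's local Kempf--Ness theorem, the theorem then follows from the short chain of equivalences above together with the representation-theoretic description of the admissible $F$'s coming from Proposition~\ref{PRO:Polysimplicité}.
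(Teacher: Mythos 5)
Your first paragraph and the direction ``orbit closed $\Rightarrow$ polystable'' are fine, and in fact your semi-stability argument is more direct than the paper's: you observe that every admissible $F$ is an orthogonal holomorphic summand of $\E_0$, hence a sum of the $\G_{0,k}$ with $P_{\zeta_0}(\G_{0,k})=0$, so $P_{\zeta_0}(F)=0$ and local $P_{\zeta_0}$-semi-stability is automatic; combined with Theorem \ref{THE:Déformation P-critique} and the fact that a closed orbit turns the zero of $\mu_{\zeta_0}$ in $\overline{G\cdot b}\cap B_2$ into a zero in $G\cdot b\cap B_2$, this gives half the theorem. (The paper instead gets semi-stability from the convergence of $\phi_{\zeta_0}(b,\cdot)$ together with the \Lo\ inequality, which forces the limit to be a zero of $\mu_{\zeta_0}$ in the orbit closure; both routes are legitimate once Theorem \ref{THE:Déformation P-critique} and the flow results are in place.)

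The gap is in the converse direction, ``polystable $\Rightarrow$ $G\cdot b$ closed in $V$'', which you dispose of by citing ``the local Kempf--Ness theorem''. Two problems. First, the paper's Theorem \ref{THE:Kempf--Ness} is a homeomorphism between the quotients $B_1/\!/\!_\zeta G$ and $(\mu_\zeta^{-1}\{0\}\cap B_2)/K$; it says nothing about closedness of orbits in $V$, and it is proved \emph{after} (and partly by means of) Theorem \ref{THE:Cas zeta = zeta_0}, so invoking it here is either circular or an appeal to a statement that does not exist in the form you need. Second, the classical Kempf--Ness dichotomy does not apply off the shelf: $\mu_{\zeta_0}$ is not the standard quadratic moment map of the linear $K$-action on $V$ but the moment map of the pulled-back Kähler form $\Omega_{\zeta_0}=\Phi^*\Omega_{\infty,\zeta_0}$, and it is only defined on the ball $B$. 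The general principle ``a zero of a moment map lies on a closed orbit'' would at best give that $G\cdot b_\infty\cap B_2$ is closed in $B_2$; the theorem asserts closedness of the full orbit in $V$, and $\overline{G\cdot b}\setminus G\cdot b$ could a priori consist of points far outside $B_2$. This is exactly where the paper has to work: it takes an arbitrary $b_0\in\overline{G\cdot b}$, uses linearity of the action to scale $b$ and $b_0$ into $B_1$, shows via Proposition \ref{PRO:Convergence exp(txi)b} that local $P_{\zeta_0}$-polystability is preserved along the ray $\R^*b$, and then uses Corollary \ref{COR:Unicité de Ness} together with Proposition \ref{PRO:Duistermaat} to conclude that $\lim_{t\to+\infty}\phi_{\zeta_0}(\varepsilon b_0,t)\in K\cdot b_{\infty,\varepsilon}\subset G\cdot(\varepsilon b)$, whence $\varepsilon b\in\overline{G\cdot(\varepsilon b_0)}$; combined with $\varepsilon b_0\in\overline{G\cdot(\varepsilon b)}$ and the standard fact about reductive linear actions, this forces $b_0\in G\cdot b$. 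None of this is captured by your appeal to scaling invariance of closedness; you would need to supply this argument (or an equivalent one) for the proof to be complete.
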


When they exist, the solutions to the $P_\zeta$-critical equation are built as limits of the negative gradient flow of the moment maps on $V$. We call these flows the \textit{moment map flows}. Still under the assumptions of $\zeta$ close to $\zeta_0$ and $b$ close to $0$, if $\E_b$ is only locally $P_\zeta$-semi-stable, it can be seen as a small deformation of a locally $P_\zeta$-polystable bundle arising from a Jordan--Hölder filtration.

\begin{theoremIntro}[Theorem \ref{THE:FJH admissible}]\label{THE:6}
    If $\E_b$ is locally $P_\zeta$-semi-stable, there is a $b_\infty$ close to $0$, which is unique modulo the action of the group of unitary automorphisms of $\E_0$ such that $\E_b$ is an infinitesimal deformation of $\E_{b_\infty}$ and $\E_{b_\infty}$ is locally $P_\zeta$-polystable.
    
    Moreover, $b_\infty$ can be reached as the limit of the moment map flows and $\E_{b_\infty}$ can be constructed from a Jordan--Hölder filtration of $\E_b$. We call $\mathrm{Gr}(\E_b) = \E_{b_\infty}$ the \textit{graded object of $\E_b$}, which is unique up to an isomorphism.
\end{theoremIntro}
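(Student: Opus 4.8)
The plan is to transport the finite-dimensional moment map picture of \cite{GRS} to the Kuranishi model of deformations of $\E_0$. By Proposition \ref{PRO:Polysimplicité}, $G^\C := \Aut(\E_0) \cong \prod_k \GL(m_k,\C)$ is complex reductive with maximal compact $K := \Aut(\E_0)\cap\G(E,h) \cong \prod_k U(m_k)$, and it acts linearly on $V$. Produced together with Theorem \ref{THE:Déformation P-critique}, there is, for $\zeta$ close to $\zeta_0$, a $K$-invariant neighbourhood $U$ of $0$ in $V$ and a moment map $\mu_\zeta$ on $U$ whose zeros are exactly the points $b'$ for which $\E_{b'}$ is locally $P_\zeta$-polystable. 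The \emph{moment map flow} issued from $b$ is the negative gradient flow of $\|\mu_\zeta\|^2$ for a $K$-invariant Kähler metric on $U$; since the gradient of $\|\mu_\zeta\|^2$ is everywhere tangent to the $G^\C$-orbit, the flow stays inside $G^\C\cdot b$, so $\E_{b(t)}\cong\E_b$ along it.

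First I would check that, for $b$ close enough to $0$, the flow line through $b$ remains in $U$ and converges. Both facts come from the real-analyticity of $\|\mu_\zeta\|^2$ and a \Lo\ inequality: the latter bounds the total length of the flow by a power of $\|\mu_\zeta(b)\|$, which is small because $b$ is close to $0$ and $\mu_\zeta(0)$ is close to $\mu_{\zeta_0}(0)=0$, so a bootstrap argument keeps the flow away from $\partial U$ and forces it to converge to a critical point $b_\infty\in\overline{G^\C\cdot b}\cap U$ of $\|\mu_\zeta\|^2$. The decisive point, and the main obstacle, is to show that when $\E_b$ is locally $P_\zeta$-semi-stable the limit satisfies $\mu_\zeta(b_\infty)=0$, rather than merely being a critical point with $\mu_\zeta(b_\infty)\ne 0$. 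This is the local counterpart of the Kirwan--Ness picture and rests on a local moment--weight inequality: a critical point with $\mu_\zeta\ne 0$ produces, via the one-parameter subgroup generated by $\mu_\zeta(b_\infty)$, a sub-bundle $F$ of $\E_b$ with $(F,\dbar_0),(F^\bot,\dbar_0)\subset\E_0$ and $P_\zeta(F)>0$, contradicting semi-stability. Granting this, $\mu_\zeta(b_\infty)=0$, so $\E_{b_\infty}$ is locally $P_\zeta$-polystable by Theorem \ref{THE:Déformation P-critique}; and since suitable $g\cdot b$ ($g\in G^\C$) approach $b_\infty$ while $\E_{g\cdot b}\cong\E_b$, the bundle $\E_b$ is an infinitesimal deformation of $\E_{b_\infty}$. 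This gives the existence statement and the fact that $b_\infty$ is a limit of moment map flows.

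For the uniqueness of $b_\infty$ modulo unitary automorphisms I would invoke the local Kempf--Ness package underlying Theorems \ref{THE:Unicité solutions} and \ref{THE:Cas zeta = zeta_0}: along $G^\C$-orbits the Kempf--Ness functional is convex, so any two $\mu_\zeta$-zeros in $\overline{G^\C\cdot b}\cap U$ lie in a single $G^\C$-orbit, and two $\mu_\zeta$-zeros in one orbit differ by an element of $K$ (this is precisely Theorem \ref{THE:Unicité solutions}). Hence $b_\infty$ is unique up to $K$ among points with the stated properties, and $\mathrm{Gr}(\E_b):=\E_{b_\infty}$ is well defined up to isomorphism.

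Finally, to identify $\E_{b_\infty}$ with the graded object of a Jordan--Hölder filtration I would proceed by induction on the rank. If $\E_b$ is locally $P_\zeta$-stable it is itself a $\mu_\zeta$-zero and $\mathrm{Gr}(\E_b)=\E_b$. Otherwise pick a saturating sub-bundle $0\subsetneq F\subsetneq E$ with $P_\zeta(F)=0$; the $h$-orthogonal projection $\pi_F$ is a holomorphic idempotent of $H^0(X,\End\E_0)$, and degenerating $\E_b$ along the one-parameter subgroup $t\mapsto\Id+(t-1)\pi_F$ of $G^\C$ splits the extension $0\to(F,\dbar_b)\to\E_b\to\E_b/F\to 0$ in the limit; this limit lies in $\overline{G^\C\cdot b}\cap U$ and, by the weight computation above, is approached by the moment map flow. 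Iterating on $(F,\dbar_b)$ and $\E_b/F$ yields a filtration of $\E_b$ with locally $P_\zeta$-stable graded pieces whose direct sum is a $\mu_\zeta$-zero in $\overline{G^\C\cdot b}\cap U$, hence coincides with $\E_{b_\infty}$ by the uniqueness of the closed orbit. The \Lo\ convergence and this last bookkeeping should be comparatively routine; the real work is the local moment--weight inequality flagged above.
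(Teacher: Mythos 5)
Your overall architecture --- Kuranishi slice, finite-dimensional moment map, negative gradient flow of $\|\mu_\zeta\|^2$, a moment--weight inequality to rule out critical points with $\mu_\zeta\neq 0$ under semi-stability, Ness-type uniqueness, and identification of the limit with the graded object of a Jordan--H\"older filtration --- is the same as the paper's. But the step you dismiss as ``comparatively routine'' is a genuine gap. You bound the total length of the flow line by a power of $\|\mu_\zeta(b)\|$ via the \Lo\ inequality and conclude the flow stays in $U$ and converges. This works at $\zeta=\zeta_0$, where $0$ is a zero of $\mu_{\zeta_0}$ and the \Lo\ inequality at the origin is taken relative to the critical value $0$, so the length is controlled by $f_{\zeta_0}(b)^{1-\theta}$. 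For $\zeta\neq\zeta_0$ the origin is in general neither a zero nor a critical point of $f_\zeta=\tfrac12\|\mu_\zeta\|^2$, the critical value at the (unknown) limit point is not $0$ a priori, and the \Lo\ constants are attached to individual critical levels --- they are not uniform in $\zeta$ without further work. The paper does not obtain global existence of the $\zeta$-flows this way: it proves it at $\zeta_0$ (Proposition \ref{PRO:Définition et convergence des flux zeta_0}) and then bootstraps to nearby $\zeta$ by a compactness/contradiction argument (Lemma \ref{LEM:Continuité en 0}, Corollary \ref{COR:Flux défini partout}) that crucially uses Duistermaat-type continuity of the limit map and the Ness uniqueness statement at $\zeta_0$ (Corollary \ref{COR:Unicité de Ness}). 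As written, your length estimate is the point where the argument breaks.

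On the ``local moment--weight inequality'' that you correctly flag as the crux: note what it actually requires. The eigenspaces of $\mu_\zeta(b_\infty)$ give a filtration of $\E_{b_\infty}$, not of $\E_b$; to produce an admissible destabilising sub-bundle of $\E_b$ itself one must exhibit a point $b'$ in the orbit $G\cdot b$ (not merely in its closure) such that $\e^{-\i t\mu_\zeta(b_\infty)}\cdot b'$ stays bounded, and then convert boundedness under the $1$-parameter subgroup into holomorphy of the filtration. This is Proposition \ref{PRO:Existence arc équivariant instable} combined with Proposition \ref{PRO:Convergence exp(txi)b}, adapted from \cite[Theorem 10.4]{GRS}, and it is by far the longest proof in the paper; leaving it entirely open leaves the theorem unproven. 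Finally, your uniqueness step differs in mechanism from the paper's: rather than a convexity argument for the Kempf--Ness functional on orbit closures (delicate on a non-complete ball, where geodesics may exit), the paper proves uniqueness of the graded object algebraically, \`a la Huybrechts--Lehn, and combines it with the $K$-orbit uniqueness of zeros within a single $G$-orbit (Lemma \ref{LEM:Unicité K-orbite zéro mu_zeta}); your route would at least need a Lemma \ref{LEM:Convexité}-type statement guaranteeing the relevant paths remain in the ball.
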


In the general case, we can also build a unique Harder--Narasimhan filtration. See Theorem \ref{THE:FHN admissible} for a precise definition.

\begin{theoremIntro}[Theorem \ref{THE:FHN admissible}]\label{THE:7}
    $\E_b$ admits a unique Harder--Narasimhan filtration. Moreover, the limit point $b_\infty$ of the moment map flows represents a bundle $\E_{b_\infty}$ which is the direct sum of the graded objects of the locally $P_\zeta$-semi-stable quotients of the Harder--Narasimhan filtration of $\E_b$.
\end{theoremIntro}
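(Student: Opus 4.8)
The plan is to construct the Harder--Narasimhan filtration by iterating the semistable-reduction mechanism already established, and then to identify its graded pieces with the limit of the moment map flow. Let me sketch the steps.

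\textbf{Step 1: Destabilising subbundle of maximal ``slope''.} First I would set up the right notion of slope in this local setting. Since $P_\zeta(E) = 0$, for an admissible subbundle $F$ (one for which $(F,\dbar_b) \subset \E_b$, $(F,\dbar_0) \subset \E_0$ and $(F^\bot,\dbar_0) \subset \E_0$ are all holomorphic) the relevant quantity is $P_\zeta(F)$ itself, playing the role of a slope (there is no denominator here because $P_\zeta(E)=0$ already normalises things). I expect that among all admissible $F$ there is one, call it $F_1$, maximising $P_\zeta(F)$; the key point is that the set of admissible subbundles near $0$ is controlled by a finite-dimensional picture (the Kuranishi slice), and boundedness plus the GIT machinery borrowed from \cite{GRS} should give existence and, crucially, \emph{uniqueness} of a maximal one — if $F_1, F_1'$ both maximise, then $F_1 + F_1'$ and $F_1 \cap F_1'$ are admissible (this requires checking that the sum and intersection remain holomorphic subbundles with locally free quotients, which is where the ``close to $0$'' hypothesis is used), and a standard submodularity argument $P_\zeta(F_1+F_1') + P_\zeta(F_1 \cap F_1') \geq P_\zeta(F_1) + P_\zeta(F_1')$ forces $F_1 = F_1'$.

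\textbf{Step 2: Iterate on the quotient.} By construction $\E_b / F_1$ (equipped with the induced structures, and $F_1$ with the induced structures) should be locally $P_\zeta$-semistable by maximality of $F_1$. I would then observe that the quotient is again an infinitesimal deformation — represented by some $b'$ close to $0$ in the analogous space for the bundle $E/F_1$ — of a $P_{\zeta_0}$-critical bundle, so the whole apparatus applies recursively. Repeating Step 1 on the quotient produces $F_2 \supset F_1$, and since ranks strictly decrease the process terminates, yielding a filtration $0 = F_0 \subset F_1 \subset \dots \subset F_r = E$ whose successive quotients are locally $P_\zeta$-semistable with strictly decreasing ``slopes'' $P_\zeta(F_i/F_{i-1})$. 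Uniqueness of the filtration follows from the uniqueness in Step 1 together with the standard argument that any admissible subbundle of slope $\geq P_\zeta(F_1/F_0)$ is contained in $F_1$, etc.

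\textbf{Step 3: Identify $b_\infty$ with the HN graded object.} This is the analytic heart and, I expect, the main obstacle. One must show that the limit $b_\infty$ of the negative gradient flow of the moment map (whose existence and the fact that $\E_{b_\infty}$ is a deformation of $\E_0$ come from the earlier theorems) represents exactly $\bigoplus_i \mathrm{Gr}(F_i/F_{i-1})$. The strategy: the moment map flow on $V$ converges, and I would analyse the asymptotics of the flow via a one-parameter subgroup / optimal destabiliser argument — in the spirit of \cite[Chapter~12]{GRS} and the Kempf--Ness picture — to show that the limiting orbit is the associated graded with respect to precisely the filtration isolated in Steps 1--2. Concretely, one compares the flow with the action of a holomorphic one-parameter subgroup of $\Aut(\E_0)$ adapted to the filtration $F_\bullet$: the flow should be asymptotic to such a degeneration, whose limit is the graded object. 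The difficulty is to match the \emph{geometric} HN filtration (defined by slope inequalities) with the \emph{dynamical} limit of the flow; this requires a careful Łojasiewicz-type estimate to control the flow near its limit and to rule out the flow ``sliding'' to a different polystable orbit. Once this identification is made, Theorem~\ref{THE:6} applied to each semistable quotient $F_i/F_{i-1}$ gives that $\E_{b_\infty} = \bigoplus_i \mathrm{Gr}(F_i/F_{i-1})$, which is the claimed description, and uniqueness up to isomorphism follows from uniqueness of the HN filtration together with uniqueness of the graded object in Theorem~\ref{THE:6}.
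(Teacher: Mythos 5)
Your existence strategy (iterated maximal destabiliser) is genuinely different from the paper's, which never constructs $F_1$ algebraically. The paper runs the flow first, takes $\tilde{b}_\infty = \lim_{t \rightarrow +\infty} \tilde{\phi}_\zeta(b,t)$, and reads the filtration off the eigenspaces of $-\i\tilde{\mu}_\zeta(\tilde{b}_\infty)$: Proposition \ref{PRO:Existence arc équivariant instable} supplies points $\tilde{b}_i \in G \cdot b$ for which $t \mapsto \e^{-\i t\tilde{\mu}_\zeta(\tilde{b}_\infty)} \cdot \tilde{b}_i$ converges, Proposition \ref{PRO:Convergence exp(txi)b} then shows the eigenfiltration is by admissible sub-bundles, openness of semi-stability (Proposition \ref{PRO:Topologies lieux stables}) gives semi-stability of the quotients after the shift $\zeta \mapsto \zeta^{G_k}$, and the Huybrechts--Lehn uniqueness argument identifies the result with the HN filtration --- so the ``matching'' you flag as the main obstacle of your Step 3 is obtained for free, in the opposite direction. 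The paper does remark that existence ``could also'' be shown your way, so the route is not unreasonable, but as written it has two concrete problems.

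First, the slope: the HN quotients must satisfy $\frac{P_\zeta(G_m)}{\rk(G_m)} < \cdots < \frac{P_\zeta(G_1)}{\rk(G_1)}$, and local $P_\zeta$-semi-stability of a sub-bundle $F$ is defined through $\frac{P_\zeta(G)}{\rk(G)} \leq \frac{P_\zeta(F)}{\rk(F)}$. The hypothesis $P_\zeta(E) = 0$ does not let you drop the rank denominator for proper sub-objects; maximising $P_\zeta(F)$ instead of $P_\zeta(F)/\rk(F)$ selects the wrong $F_1$ in general (a higher-rank sub-bundle of larger $P_\zeta$ but smaller slope), and the resulting first quotient would fail to be semi-stable. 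Second, and more seriously, the submodularity argument for uniqueness of the maximal destabiliser lives in an abelian category of sheaves, but admissible sub-bundles of $\E_b$ do not form one: admissibility forces $(F,\dbar_b)$, $(F,\dbar_0)$ and $(F^\bot,\dbar_0)$ to be holomorphic, so $F$ is a direct sum of copies of the simple factors $\G_{0,i}$, and the sum and intersection of two such need not be sub-bundles at all (two distinct line sub-bundles of a rank-$2$ factor intersect in rank $0$), let alone admissible; the paper's finiteness of admissible sub-bundles is only up to isomorphism in $G$, which yields existence of a slope-maximiser but not its uniqueness as a sub-object. Finally, even granting Steps 1--2, your Step 3 still requires the full force of Proposition \ref{PRO:Existence arc équivariant instable} (the local analogue of \cite[Theorem 10.4]{GRS}) together with the computation $\lambda_k = -\frac{2\pi}{\Vol(X)}\frac{P_\zeta(G_k)}{\rk(G_k)}$ identifying the eigenvalues of the limit moment map with the HN slopes; nothing in your sketch supplies these.
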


Finally, we show a local version of the Kempf--Ness theorem, which is usually a homeomorphism between a GIT quotient and a symplectic quotient. Let $L_{\mathrm{ps}}^{\mathrm{int}}$ be the set of all $(\zeta,b)$ close to $(0,0)$ such that $\E_b$ is locally $P_\zeta$-polystable and $\dbar_b$ is integrable. Recall that $\Aut(\E_0)$ acts on $b \in V$.

\begin{theoremIntro}[Theorem \ref{THE:Kempf--Ness}]\label{THE:8}
    For each $\zeta$ close to $\zeta_0$, there is a germ of homeomorphism at the origin,
    $$
    \{b|(\zeta,b) \in L_{\mathrm{ps}}^{\mathrm{int}}\}/\Aut(\E_0) \longrightarrow \{\dbar|\dbar \textrm{ is integrable and }(E,h,\dbar) \textrm{ is $P_\zeta$-critical}\}/\G(E,h).
    $$
    Moreover, they vary continuously with $\zeta$.
\end{theoremIntro}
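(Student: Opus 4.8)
The plan is to transport the statement to the finite-dimensional Kuranishi model and there invoke the Kempf--Ness theorem. Write $G = \Aut(\E_0)$ and $K = \Aut(\E_0) \cap \G(E,h)$ for the group of unitary automorphisms. By Proposition \ref{PRO:Polysimplicité}, $G \cong \prod_k \GL(m_k,\C)$ and $K \cong \prod_k \mathrm{U}(m_k)$, so $K$ is a maximal compact subgroup of $G$ and $G = K^\C$. On a fixed $K$-invariant neighbourhood $\mU$ of $0$ in $V$ we have the $K$-invariant Kähler form and the moment map $\mu_\zeta \colon \mU \to \Lie(K)^*$ constructed in the proof of Theorem \ref{THE:Déformation P-critique}, which depends continuously on $\zeta$ and whose zero set consists of those $b$ for which the deformed Kuranishi slice / moment map flow construction attaches to $\dbar_b$ a genuine $P_\zeta$-critical operator $\Phi_\zeta(b)$ lying in its $\G^\C(E)$-orbit. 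The analytic subvariety $\mathcal{K} \subset \mU$ of points $b$ with $\dbar_b$ integrable is $G$-invariant, since the $G$-action is induced by the $\G^\C(E)$-action and the latter preserves integrability; for $b \in \mathcal{K}$ the operator $\Phi_\zeta(b)$ is then integrable as well.

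First I would set up the dictionary between the two sides. By Theorem \ref{THE:Déformation P-critique}, for $b \in \mathcal{K}$ the bundle $\E_b$ carries an integrable $P_\zeta$-critical operator close to $\dbar_0$ if and only if $\E_b$ is locally $P_\zeta$-polystable; and by the GIT description underlying Theorems \ref{THE:Cas zeta = zeta_0} and \ref{THE:Déformation P-critique}, local $P_\zeta$-polystability of $\E_b$ is equivalent to $G\cdot b$ being closed in the locally $P_\zeta$-semistable locus, equivalently to $G\cdot b \cap \mu_\zeta^{-1}(0) \neq \emptyset$. Hence
$$
\{b \mid (\zeta,b) \in L_{\mathrm{ps}}^{\mathrm{int}}\} = \{b \in \mathcal{K} \mid G\cdot b \cap \mu_\zeta^{-1}(0) \neq \emptyset\} = G \cdot \bigl(\mu_\zeta^{-1}(0) \cap \mathcal{K}\bigr),
$$
so the left-hand side of the asserted homeomorphism is identified, via the local finite-dimensional Kempf--Ness theorem (cf. \cite{GRS}), with $\bigl(\mu_\zeta^{-1}(0) \cap \mathcal{K}\bigr)/K$.

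Next I would define and analyse the map. Send a $G$-orbit of a locally $P_\zeta$-polystable $b \in \mathcal{K}$ to $[\Phi_\zeta(b_0)]$, where $b_0 \in G\cdot b \cap \mu_\zeta^{-1}(0) \cap \mathcal{K}$ is a representative, unique modulo $K$ by the Kempf--Ness theorem; this is well defined on $\G(E,h)$-orbits, because if $b_0' = k\cdot b_0$ with $k \in K$ then $\dbar_{b_0'}$ and $\dbar_{b_0}$ differ by a unitary gauge transformation, whence $\Phi_\zeta(b_0')$ and $\Phi_\zeta(b_0)$ lie in one $\G^\C(E)$-orbit and, being $P_\zeta$-critical near $\dbar_0$, in one $\G(E,h)$-orbit by Theorem \ref{THE:Unicité solutions}. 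For injectivity: if $\Phi_\zeta(b_0)$ and $\Phi_\zeta(b_0')$ are $\G(E,h)$-equivalent then $\dbar_{b_0}, \dbar_{b_0'}$ are $\G^\C(E)$-equivalent, so $b_0' \in G\cdot b_0$, and since both lie in $\mu_\zeta^{-1}(0)$ the uniqueness clause of Kempf--Ness gives $b_0' \in K\cdot b_0$. For surjectivity: an integrable $P_\zeta$-critical $\dbar$ near $\dbar_0$ is $\G^\C(E)$-equivalent, by the Kuranishi slice, to some $\dbar_b$ with $b \in \mathcal{K}$; as $\E_b$ then admits a solution it is locally $P_\zeta$-polystable, so by the dictionary $G\cdot b$ meets $\mu_\zeta^{-1}(0)$ at some $b_0 \in \mathcal{K}$, and $\Phi_\zeta(b_0)$ is $\G^\C(E)$-equivalent to $\dbar_b$, hence to $\dbar$, hence $\G(E,h)$-equivalent to $\dbar$ by Theorem \ref{THE:Unicité solutions}. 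Finally, continuity of the map in $(\zeta,b)$ is exactly Theorem \ref{THE:Continuité des opérateurs P-critiques} together with continuity of $\mu_\zeta$ in $\zeta$, while continuity of the inverse follows from the continuous dependence of the Kuranishi slice projection and of the finite-dimensional Kempf--Ness homeomorphism on the data; all constructions living on the fixed neighbourhoods $\mU$ of $0 \in V$ and of $\dbar_0$, this produces the claimed germ of homeomorphism, continuous in $\zeta$.

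The main obstacle is the local, parametrised character of the GIT side: one must ensure that the version of the Kempf--Ness theorem available here holds for the germ of the $K$-action on $\mU$ (rather than on a compact manifold), in the uniform form ``for every locally $P_\zeta$-polystable $b$, $G\cdot b$ meets $\mu_\zeta^{-1}(0)$ in exactly one $K$-orbit, and the induced bijection of quotients is a homeomorphism depending continuously on $\zeta$'', and that $\mU$, the Kähler form and the moment map $\mu_\zeta$ can be chosen simultaneously for all $\zeta$ near $\zeta_0$, with the $G$-invariant subvariety $\mathcal{K}$ compatible with the $K$-orbit projection. Once these local GIT facts---already needed for Theorems \ref{THE:Déformation P-critique}--\ref{THE:Cas zeta = zeta_0}---are in place, the identification with the PDE side is a repackaging of Theorems \ref{THE:Déformation P-critique}, \ref{THE:Unicité solutions} and \ref{THE:Continuité des opérateurs P-critiques}.
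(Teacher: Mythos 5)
Your proposal is correct and follows essentially the same route as the paper: the bijection between polystable $G$-orbits and $K$-orbits of moment-map zeroes (the paper's Lemma \ref{LEM:Unicité K-orbite zéro mu_zeta}), the identification of such zeroes with $P_\zeta$-critical operators in the deformed slice (Proposition \ref{PRO:Zéro dans la tranche déformée} and Lemma \ref{LEM:Lemme Kempf--Ness P}), uniqueness modulo $\G(E,h)$ (Theorem \ref{THE:Unicité solutions}), and continuity via Theorem \ref{THE:Continuité des opérateurs P-critiques}. The only packaging difference is that the paper realises the map concretely as the factorisation of the gradient-flow limit maps $\Gamma$, $\Gamma_\infty$ and gets continuity of the inverse from an explicit continuous left inverse, whereas you define it by choosing a moment-map zero in the orbit; these agree, and the local non-compact GIT inputs you flag as the main obstacle are exactly the ones already established in Section \ref{SEC:Résultats GIT locaux}.
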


A more complete statement can be found later (see Theorem \ref{THE:Kempf--Ness}) but it requires to introduce many definitions and notations to be stated properly. We restrict ourselves to this partial presentation to avoid having a too cumbersome introduction.

\paragraph{Organisation of the paper.}

In Section \ref{SEC:Préliminaires}, we define the notions of sub-solutions and solutions to the $P$-critical equation. We study the properties of a $P$-critical bundle. In particular, we show Proposition \ref{PRO:1}. We also study the properties of the linearisation of the $P$-critical equation. We show that under the sub-solution condition, it is a non-negative real symmetric (for a suitable Hermitian product) elliptic operator of order $2$ and its kernel is exactly the space of global holomorphic sections of $\E$.

In Section \ref{SEC:Exemples}, we give more detailed examples of polynomial equations in the curvature and their appearance in the literature.

In Section \ref{SEC:Kuranishi}, we give the properties of the Kuranishi slice and we construct the deformed Kuranishi slices. We also define the associated moment maps, including the infinite dimensional moment map. We end this section with the precise statements of Theorems \ref{THE:2} and \ref{THE:5}. We also give the topological structure of the locally $P_\zeta$-(semi-)(poly)stable loci with Proposition \ref{PRO:Topologies lieux stables} but we prove it in Section \ref{SEC:Preuve du théorème}.

In Section \ref{SEC:Résultats GIT locaux}, we prove all the intermediate results and technical tools we need for the next section. They are mostly GIT results as the ones we can find in \cite{GRS} but with the constraint that we work on an open ball of $V$, which is non-compact.

In Section \ref{SEC:Preuve du théorème}, we complete the proofs of Theorems \ref{THE:2} and \ref{THE:5}. We also prove Theorems \ref{THE:3}, \ref{THE:4} and \ref{THE:6}.

In Section \ref{SEC:Kempf--Ness}, we prove Theorems \ref{THE:7} and \ref{THE:8}.

Finally, in Section \ref{SEC:Exemple avec la J-équation}, we apply Theorem \ref{THE:2} on a concrete case. We build a bundle $\E_0$ of rank $2$ which solves the $J$-equation, and two infinitesimal simple deformations $\E_1$ and $\E_2$ of $\E_0$, on the blow-up of a point in $\P^2$. We find a necessary and sufficient numerical condition for them to admit solutions to the dHYM equation with respect to a varying Kähler form, in a low volume regime.

\paragraph{Acknowledgements.}

I thank my PhD advisor Carl Tipler for suggesting this interesting problem to me, and for his guidance and support toward its resolution. I thank Carlo Scarpa for useful discussions about the $Z$-critical/$P$-critical equations. In particular, the idea of considering any polynomial equation in the curvature instead of the ones arising from central charges only, comes back to him. I thank Annamaria Ortu for useful discussions about the moment map picture for the cscK equation. I thank Ruadha\'\isanspoint\ Dervan and Lars Martin Sektnan for useful discussions about deformations of geometric PDEs which admit a moment map interpretation and their explanations of Dervan--Hallam theory \cite{Dervan_Hallam}. I thank Ruadha\'\isanspoint\ Dervan, Julien Keller, Annamaria Ortu, Carlo Scarpa, Lars Martin Sektnan and Carl Tipler for their feedbacks on this paper and useful references. I thank Mats Andersson and Richard Lärkäng for interesting discussions in an attempt to generalise the results of this paper to non locally free sheaves.

\section{Preliminaries}\label{SEC:Préliminaires}

\subsection{Framework}

Let $X$ be a connected compact complex manifold of dimension $n$ endowed with a complex Hermitian vector bundle $(E,h) \rightarrow X$.

When $\E = (E,\dbar,h)$ is a Hermitian vector bundle on $X$ endowed with a (non necessarily integrable) Dolbeault operator $\dbar : \Omega^0(X,E) \rightarrow \Omega^{0,1}(X,E)$, we set $\hat{F}(E,\dbar,h) = -\frac{1}{2\i\pi}\nabla \circ \nabla \in \Omega^2(X,\End(E))$ to be the associated reduced curvature form, where $\nabla$ is the Chern connection associated with $\dbar$ and $h$. We set
$$
\mP_\zeta(E,\dbar,h) = \sum_{k = 0}^n \zeta_k \wedge \hat{F}(E,\dbar,h)^k \in \Omega^{n,n}(X,\End(E)),
$$
where each $\zeta_k$ is a closed $(n - k,n - k)$-form with values in $\R$. This operator was first introduced by Scarpa \cite[Sub-section 1.1]{DNST}. When there is no ambiguity on $E$ and $h$, we may drop them from the notation. We then define
$$
P_\zeta(E) = [\tr(\mP_\zeta(E,\dbar,h))] = \sum_{k = 0}^n k![\zeta_k] \cup \ch_k(E) \in \R,
$$
which only depends on the topology of $E$ and is additive on short exact sequences.

We need now to define the formal derivative of $\mP_\zeta(E,\dbar,h)$ with respect to $\hat{F}(E,\dbar,h)$. For this, let us introduce the symmetric product of endomorphism valued forms. Let $d_1,\ldots,d_m$ be integers. When $\sigma \in \frak{S}_m$ is a permutation, we denote by $\mathrm{grsgn}(\sigma) \in \Z/2\Z$ its graded sign \textit{i.e.} the integer modulo $2$ such that for all forms $\alpha_1,\ldots,\alpha_m$ with values in $\C$ and homogenous degree $\deg(\alpha_k) = d_k$,
$$
\alpha_{\sigma(1)} \wedge \cdots \wedge \alpha_{\sigma(m)} = (-1)^{\mathrm{grsgn}(\sigma)}\alpha_1 \wedge \cdots \wedge \alpha_m.
$$
It is uniquely defined by the parities of the $d_k$. Notice that it always equals $1$ if the $d_k$ are all even and it equals the usual sign of a permutation if they are all odd. When $A_1,\ldots,A_m$ are forms with values in $\End(E)$ with homogenous degree $\deg(A_k) = d_k$, let
$$
[A_1,\ldots,A_m]_\sym = \frac{1}{m!}\sum_{\sigma \in \frak{S}_m} (-1)^{\mathrm{grsgn}(\sigma)}A_{\sigma(1)} \wedge \cdots \wedge A_{\sigma(m)}.
$$
Notice that it is the usual product of the $A_k$ if they commute. We can now define the formal derivative of $\mP_\zeta(E,\dbar,h)$ with respect to $\hat{F}(E,\dbar,h)$ as a linear map $\Omega^{1,1}(X,\End(E)) \rightarrow \Omega^{n,n}(X,\End(E))$ by
$$
\mP_\zeta'(E,\dbar,h) : \kappa \mapsto \sum_{k = 1}^n k\zeta_k \wedge [\underbrace{\hat{F}(E,\dbar,h),\ldots,\hat{F}(E,\dbar,h)}_{k - 1 \textrm{ times}},\kappa]_\sym.
$$
We abusively write $[\mP_\zeta'(E,\dbar,h) \wedge \kappa]_\sym$ for $(\mP_\zeta'(E,\dbar,h))(\kappa)$. Similarly, we define a bilinear map\\
$\Omega^{1,0}(X,\End(E)) \times \Omega^{0,1}(X,\End(E)) \rightarrow \Omega^{n,n}(X,\End(E))$ by
$$
\mP_\zeta'(E,\dbar,h) : (\alpha,\beta) \mapsto \sum_{k = 1}^n k\zeta_k \wedge [\underbrace{\hat{F}(E,\dbar,h),\ldots,\hat{F}(E,\dbar,h)}_{k - 1 \textrm{ times}},\alpha,\beta]_\sym.
$$
We abusively write $[\mP_\zeta'(E,\dbar,h) \wedge \beta \wedge \gamma]_\sym$ for $(\mP_\zeta'(E,\dbar,h))(\beta,\gamma)$. We hope no confusion occurs about the fact that in general, $[\mP_\zeta'(E,\dbar,h) \wedge \beta \wedge \gamma]_\sym \neq [\mP_\zeta'(E,\dbar,h) \wedge (\beta \wedge \gamma)]_\sym$. These quite convoluted expressions will appear in the linearisation of the $P$-critical equation \cite[Section 2.3.1]{DMS}.

\begin{definition}[{\cite[Definition 2.35]{DMS}, \cite[Definition 2.6]{Keller_Scarpa}, \cite[Definition 2.1]{DNST}}]\label{DEF:Sous-solution P}
    We say that $\dbar$ is a \textit{sub-solution of the $P$-critical equation} if for all points $x \in X$ and all non-zero tangent vectors $v \in T_x^{1,0}X \otimes \End(E_x)\backslash\{0\}$,
    $$
    \tr([\mP_\zeta'(E,\dbar,h) \wedge \i v \wedge v^\dagger]_\sym) > 0,
    $$
    where $v^\dagger \in T_x^{0,1}X \otimes \End(E_x)$ is the adjoint of the endomorphism $v$ with respect to the Hermitian product $h_x$.
\end{definition}

\begin{definition}[{\cite[Sub-section 1.1]{DNST}}]
    We say that $\dbar$ is a \textit{solution of the $P$-critical equation} if it is a sub-solution and
    $$
    \mP_\zeta(E,\dbar,h) = 0.
    $$
    We also say that $\E$ is $P$-critical in this case, or $P_\zeta$-critical to emphasize the dependence in $\zeta$.
\end{definition}

\begin{remark}
In some papers like \cite{DMS,Keller_Scarpa}, being a sub-solution is not a requirement for being a solution. In this paper, it is a crucial assumption so we add it to the definition of $P$-critical bundles. Soon enough, we will consider a solution $\dbar_0$ to this equation and look for other solutions in a close neighbourhood of $\dbar_0$. Therefore, the sub-solution condition, which is open, will always be satisfied.
\end{remark}

Notice that this equality imposes a condition on the topology of $E$ and on the de Rham classes of the $\zeta_k$.

\begin{lemma}\label{LEM:P(E) = 0}
    If $(E,h)$ admits a solution to the $P$-critical equation, then $P_\zeta(E) = 0$.
\end{lemma}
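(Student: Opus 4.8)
The plan is to extract the topological statement $P_\zeta(E) = 0$ from the pointwise vanishing $\mP_\zeta(E,\dbar,h) = 0$ by taking traces, integrating, and invoking Chern--Weil theory. By definition, a solution $\dbar$ gives a Dolbeault operator on $E$ (not necessarily the initial one, but in the gauge orbit of whatever operator one starts with) together with the Hermitian metric $h$, whose Chern connection $\nabla$ has reduced curvature $\hat F = \hat F(E,\dbar,h)$, and the equation reads $\sum_{k=0}^n \zeta_k \wedge \hat F^k = 0$ as an element of $\Omega^{n,n}(X,\End(E))$. Applying the fibrewise trace $\tr$, which commutes with wedging by the scalar forms $\zeta_k$, yields $\sum_{k=0}^n \zeta_k \wedge \tr(\hat F^k) = 0$ in $\Omega^{n,n}(X,\C)$; integrating over the compact manifold $X$ gives $\sum_{k=0}^n \int_X \zeta_k \wedge \tr(\hat F^k) = 0$.

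Next I would pass to cohomology. Since each $\zeta_k$ is closed and, by Chern--Weil theory, the form $\tr(\hat F^k)$ represents (up to the normalisation already built into the definition $\hat F = -\tfrac{1}{2\i\pi}\nabla\circ\nabla$) the component of the Chern character in degree $2k$ — more precisely $\tr(\hat F^k) = k!\,\ch_k(E)$ at the level of de Rham cohomology — the integral $\int_X \zeta_k \wedge \tr(\hat F^k)$ depends only on the classes $[\zeta_k]$ and $\ch_k(E)$, and equals $\big([\zeta_k] \cup k!\,\ch_k(E)\big)\cdot[X]$ under Poincaré duality (this is also exactly how $P_\zeta(E)$ is defined in the excerpt, via $P_\zeta(E) = [\tr(\mP_\zeta(E,\dbar,h))] = \sum_k k![\zeta_k]\cup\ch_k(E)$). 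Summing over $k$ identifies the integral above with $P_\zeta(E)$, so $P_\zeta(E) = 0$. One small point to check is that $\tr(\mP_\zeta(E,\dbar,h))$ is genuinely the de Rham representative of the class $P_\zeta(E)$ regardless of the choice of Chern connection in the gauge orbit; this is immediate from the gauge-invariance of Chern--Weil classes, and indeed the excerpt already asserts that $P_\zeta(E)$ "only depends on the topology of $E$."

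The argument is essentially a one-line consequence of Chern--Weil theory, so there is no serious obstacle; the only thing requiring a modicum of care is bookkeeping the normalisation constants so that $\tr(\hat F^k)$ matches $k!\,\ch_k(E)$ with the conventions fixed earlier, and noting that the sub-solution clause in the definition of "solution" plays no role here — only the equation $\mP_\zeta(E,\dbar,h)=0$ is used. I would state the proof in two or three sentences: take the trace of the equation, integrate over $X$, and recognise the result as the pairing defining $P_\zeta(E)$.
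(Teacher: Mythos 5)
Your proof is correct and is exactly the paper's argument: take the trace of $\mP_\zeta(E,\dbar,h)=0$, integrate over $X$, and recognise the result as $P_\zeta(E)$ via Chern--Weil theory. The extra bookkeeping you describe is fine but not needed beyond what the definition of $P_\zeta(E)$ already encodes.
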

\begin{proof}
Indeed, when we integrate the trace of the equality $\mP_\zeta(E,\dbar,h) = 0$, we obtain
$$
0 = \int_X \tr(\mP_\zeta(E,\dbar,h)) = P_\zeta(E).
$$
\end{proof}

Notice also that the existence of a sub-solution on $E$ imposes a condition on $X$ and a topological condition on $E$.

\begin{proposition}[{\cite[Lemma 3.3]{Keller_Scarpa}}]\label{PRO:X équilibrée}
    If $(E,h)$ admits a sub-solution $\dbar$ to the $P$-critical equation, then $X$ is balanced \textit{i.e.} it admits a Hermitian metric $g$ whose associated $(1,1)$-form $\omega$ verifies $d\omega^{n - 1} = 0$. Moreover, we may choose it so
    $$
    \omega^{n - 1} \in \sum_{k = 1}^n k![\zeta_k] \cup \ch_{k - 1}(E).
    $$
    In particular, this class is positive.
\end{proposition}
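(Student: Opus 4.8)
The plan is to exhibit the balanced metric directly as an $(n-1)$-st root of a curvature polynomial attached to $\dbar$, following the line of \cite[Lemma 3.3]{Keller_Scarpa}. Let $\nabla$ be the Chern connection of $(E,\dbar,h)$ and $\hat{F} = \hat{F}(E,\dbar,h)$ its reduced curvature, and set
$$
\Phi \;=\; \tr\!\left(\sum_{k=1}^{n} k\,\zeta_k \wedge \hat{F}^{\,k-1}\right)\ \in\ \Omega^{n-1,n-1}(X,\R).
$$
This is a real form, since $\i\,\nabla\circ\nabla$ is $h$-Hermitian, so $\hat{F}$ is a Hermitian $\End(E)$-valued $(1,1)$-form and each $\tr(\hat{F}^{\,k-1})$ is a real $(k-1,k-1)$-form. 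I would first record that $\Phi$ is $d$-closed with $[\Phi] = \sum_{k=1}^n k!\,[\zeta_k]\cup\ch_{k-1}(E)$: the $\zeta_k$ are closed by hypothesis, the Bianchi identity (valid for the curvature of any connection, without integrability of $\dbar$) shows each $\tr(\hat{F}^{\,k-1})$ is closed with de Rham class $(k-1)!\,\ch_{k-1}(E)$, a wedge of closed forms is closed, and $k\cdot(k-1)! = k!$.

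The crucial point is to deduce from Definition \ref{DEF:Sous-solution P} that $\Phi$ is pointwise strictly positive, i.e. that for each $x$ the form $\Phi_x$ corresponds, under the standard isomorphism $\Lambda^{n-1,n-1}T_x^*X \cong \Herm(T^{1,0}_xX)$, to a positive-definite Hermitian form. For this I would restrict the sub-solution inequality to the rank-one vectors $v = \xi \otimes \Id_{E_x}$ with $\xi \in T^{1,0}_xX \setminus\{0\}$: because $\Id_{E_x}$ is central, each symmetric product $[\hat{F}^{\,k-1}, \i\xi\otimes\Id_{E_x}, \bar\xi\otimes\Id_{E_x}]_\sym$ collapses to the ordinary wedge $\hat{F}^{\,k-1}\wedge\i\xi\wedge\bar\xi$ (the graded sign in $[\,\cdot\,]_\sym$ cancels the sign picked up when reordering the two degree-one factors), so applying $\tr$ turns $\tr([\mP_\zeta'(E,\dbar,h)\wedge\i v\wedge v^\dagger]_\sym) > 0$ into the statement that $\Phi_x$ pairs strictly positively with $\i\xi\wedge\bar\xi$ for every $\xi\neq 0$. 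Carrying out this reduction carefully — tracking the combinatorics of $[\,\cdot\,]_\sym$, the graded signs, and the positive constant coming from $\tr(\Id_{E_x})$, and checking that no cancellation occurs — is where I expect essentially all the work to lie; this is the main obstacle.

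Granting that $\Phi$ is a smooth, closed, pointwise strictly positive $(n-1,n-1)$-form, I would invoke the classical Hermitian linear algebra fact (Michelsohn) that such a form has a unique $(n-1)$-st root: there is a smooth positive-definite $(1,1)$-form $\omega$, that is, a Hermitian metric $g$ on $X$, with $\omega^{n-1} = \Phi$. Then $d\omega^{n-1} = d\Phi = 0$, so $g$ is balanced, and $[\omega^{n-1}] = [\Phi] = \sum_{k=1}^n k!\,[\zeta_k]\cup\ch_{k-1}(E)$; since this class is represented by the strictly positive form $\omega^{n-1}$, it is positive. This gives all the assertions of the proposition.
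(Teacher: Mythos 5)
Your proposal is correct and follows essentially the same route as the paper: you form the same closed $(n-1,n-1)$-form $\Theta = \tr\bigl(\sum_k k\zeta_k \wedge \hat F^{k-1}\bigr)$, verify its strict positivity by testing the sub-solution condition on homotheties $v = \xi\otimes\Id_{E_x}$ (which is exactly the paper's one-line reduction, since $\Id_{E_x}$ is central and the symmetric product collapses), and then take the Michelsohn $(n-1)$-st root to produce the balanced metric. The combinatorial step you flag as the main obstacle is genuinely trivial for this choice of $v$, and no extraneous factor of $\tr(\Id_{E_x})$ arises since the trace is taken of the composite endomorphism.
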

\begin{proof}
Set
$$
\Theta = \tr\left(\sum_{k = 1}^n k\zeta_k \wedge \hat{F}(E,\dbar,h)^{k - 1}\right).
$$
$\Theta$ is closed and if $\dbar$ is a sub-solution, for all $x \in X$ and all $v \in T_x^{1,0}X\backslash\{0\}$,
\begin{align*}
    \Theta \wedge \i v \wedge \overline{v} & = \tr\left(\sum_{k = 1}^n k\zeta_k \wedge \hat{F}(E,\dbar,h)^{k - 1} \wedge \i v\Id_E \wedge (v\Id_E)^\dagger\right)\\
    & = \tr([\mP'(E,\dbar,h) \wedge \i v\Id_E \wedge (v\Id_E)^\dagger]_\sym) > 0.
\end{align*}
We used the fact that homotheties commute with any endomorphism. We deduce that $\Theta$ is a positive $(n - 1,n - 1)$-form. By the usual theorems on balanced manifolds \cite{Michelsohn}, there is a unique Hermitian metric $g$ on $X$ whose associated $(1,1)$-form $\omega$ verifies $\omega^{n - 1} = \Theta$, hence $g$ is balanced. Clearly, $[\Theta] = \sum_{k = 1}^n k![\zeta_k] \cup \ch_{k - 1}(E)$.
\end{proof}

From now on, fix a balanced metric $g$ on $X$ and call $\omega$ the associated $(1,1)$-form and $\Vol = \frac{\omega^n}{n!}$ the associated volume form.

In general, when given a Hermitian vector bundle equipped with a (not necessarily integrable) Dolbeault operator $(E,\dbar,h)$, we are interested by Dolbeault operators $\dbar'$ which are solutions to the $P$-critical equation and such that $(E,\dbar')$ is isomorphic to $(E,\dbar)$. This is equivalent to saying that they lie in the same orbit under the action of the complex gauge group of $E$
$$
\G^\C(E) = \{f \in \Omega^0(X,\End(E))|\forall x \in X, f_x \in \GL(E_x)\},
$$
given by,
$$
f \cdot \dbar = f \circ \dbar \circ f^{-1} = \dbar + f\dbar(f^{-1}).
$$
This action preserves integrable connections. We also introduce the unitary gauge group of $(E,h)$,
$$
\G(E,h) = \{u \in \G^\C(E)|uu^\dagger = \Id_E\},
$$
where $u^\dagger$ is the adjoint of $u$ with respect to the metric $h$. $\G^\C(E)$ can be seen as a formal complexification of $\G(E,h)$.

\subsection{$L^2$ Hermitian products and elliptic linearisation}\label{SEC:Produits scalaires}

From now on, we fix a Hermitian complex vector bundle $(E,h)$ on $X$.

When $W$ is a space of smooth sections, we call $L_d^p(W)$ its $L_d^p$ Sobolev completion, $\mC^d(W)$ its $\mC^d$ completion and $\mC^{d,\alpha}(W)$ its Hölder $\mC^{d,\alpha}$ completion. We can use any background metrics to define the associated norms as they are all equivalent. Let $\mX_\zeta$ be the set of all Dolbeault operators $\dbar$ which are sub-solutions to the $P_\zeta$-critical equation. It is a subset of an affine space of direction $\Omega^{0,1}(X,\End(E))$. The map $\dbar \mapsto \hat{F}(\dbar)$ is continuous from $L_d^p$ to $L_{d - 1}^p$ if $d \geq 1$. If moreover, $(d - 1)p > 2n$, $L_{d - 1}^p \subset \mC^0$ continuously so $\hat{F}(\dbar)$ varies continuously in $\mC^0$ thus $\mP_\zeta'(\dbar)$ too. Since the required condition to be a sub-solution is a pointwise positivity condition, it makes being a sub-solution an open condition for this topology. Similarly, for all $d \geq 1$ and all $0 < \alpha \leq 1$, $\mX_\zeta$ is open for the $\mC^d$ and the $\mC^{d,\alpha}$ topologies.

We also define the completions $L_d^p(\mX_\zeta)$, $\mC^d(\mX_\zeta)$ and $\mC^{d + \alpha}(\mX_\zeta)$ as the set of all $\dbar$ with the right regularity such that the operator $\mP_\zeta'(\dbar)$ satisfies the positivity condition pointwise. It makes sense because the induced curvature is continuous in this case. Moreover, for any Dolbeault operator $\dbar_0$ on $E$,
$$
L_d^p(\mX_\zeta) \subset \dbar_0 + L_d^p(\Omega^{0,1}(X,\End(E))) \textrm{ is open if } (d - 1)p > 2n.
$$
And similarly,
$$
\mC^d(\mX_\zeta) \subset \dbar_0 + \mC^d(\Omega^{0,1}(X,\End(E))) \textrm{ is open if } d \geq 1,
$$
$$
\mC^{d,\alpha}(\mX_\zeta) \subset \dbar_0 + \mC^{d,\alpha}(\Omega^{0,1}(X,\End(E))) \textrm{ is open if } d \geq 1.
$$
Finally, we complete the gauge groups. When $(d - 1)p > n$, let
$$
L_{d + 1}^p(\G^\C(E)) = \{g \in L_{d + 1}^p(\Omega^0(X,\End(E)))|\forall x \in X, g_x \in \GL(E_x)\},
$$
$$
L_{d + 1}^p(\G(E,h)) = \{u \in L_{d + 1}^p(\G^\C(E))|uu^\dagger = \Id_E\}.
$$
Similarly, we define their $\mC^{d + 1}$ and their $\mC^{d + 1,\alpha}$ completions when $d \geq 1$ and $0 < \alpha \leq 1$.

In the rest of this section, we endow the spaces of smooth sections/forms with values in $\End(E)$ with $L^2$ Hermitian products. First of all, we endow the space $\Omega^0(X,\End(E))$ of smooth sections of $\End(E)$ with the usual $L^2$ Hermitian product.
$$
\scal{f}{g}_{L^2} = \int_X h_{\End(E)}(f,g) \, \Vol = \int_X \tr(fg^\dagger) \, \Vol.
$$
Recall that $g^\dagger$ is the adjoint of $g$ with respect to $h$. For the space $\Omega^{1,0}(X,\End(E))$ of smooth $(1,0)$-forms of $\End(E)$, sub-solutions provide natural Hermitian products. Indeed, let us define
$$
\scal{\alpha}{\beta}_{L^2(\dbar,\zeta)} = \int_X \tr([\mP_\zeta'(\dbar) \wedge \i\alpha \wedge \beta^\dagger]_\sym).
$$
It is a sesquilinear form and when $\dbar$ is a sub-solution to the $P$-critical equation (with parameter $\zeta$), this product is definite positive. Moreover, it induces a norm which is equivalent to the usual $L^2$ norms because the positivity condition on $\mP_\zeta'$ is pointwise. Similarly, we define the scalar product on $\Omega^{0,1}(X,\End(E))$ as
$$
\scal{\alpha}{\beta}_{L^2(\dbar,\zeta)} = \overline{\scal{\alpha^\dagger}{\beta^\dagger}_{L^2(\dbar,\zeta)}} = \overline{\int_X \tr([\mP_\zeta'(\dbar) \wedge \i\alpha \wedge \beta^\dagger]_\sym)} = \int_X \tr([\mP_\zeta'(\dbar) \wedge \i\beta^\dagger \wedge \alpha]_\sym).
$$
Let $\nabla = \partial + \dbar$ be the Chern connection associated with $(\dbar,h)$. Let us introduce operators,
$$
\dbar_\zeta^* : \fonction{\Omega^{0,1}(X,\End(E))}{\Omega^0(X,\End(E))}{\alpha}{-\frac{1}{\Vol}\i[\mP_\zeta'(\dbar) \wedge \partial\alpha]_\sym},
$$
$$
\partial_\zeta^* : \fonction{\Omega^{1,0}(X,\End(E))}{\Omega^0(X,\End(E))}{\alpha}{\frac{1}{\Vol}\i[\mP_\zeta'(\dbar) \wedge \dbar\alpha]_\sym},
$$
$$
\nabla_\zeta^* = \partial_\zeta^* + \dbar_\zeta^* : \Omega^1(X,\End(E)) \rightarrow \Omega^0(X,\End(E)).
$$
The following lemma justifies the notation.

\begin{lemma}\label{LEM:Adjoints d et dbar}
    For all $\alpha \in \Omega^{1,0}(X,\End(E))$ (resp. $\Omega^{0,1}(X,\End(E))$, $\Omega^1(X,\End(E))$) and all $f \in \Omega^0(X,\End(E))$,
    $$
    \scal{\partial f}{\alpha}_{L^2(\dbar,\zeta)} = \scal{f}{\partial_\zeta^*\alpha}_{L^2} \quad (\textrm{resp. } \scal{\dbar f}{\alpha}_{L^2(\dbar,\zeta)} = \scal{f}{\dbar_\zeta^*\alpha}_{L^2}, \quad \scal{\nabla f}{\alpha}_{L^2(\dbar,\zeta)} = \scal{f}{\nabla_\zeta^*\alpha}_{L^2}).
    $$
\end{lemma}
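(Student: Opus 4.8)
The plan is to verify the adjointness relation by a direct integration-by-parts argument, reducing everything to the defining property of the operators $\partial_\zeta^*$, $\dbar_\zeta^*$ and to the fact that $\mP_\zeta'(\dbar)$ is built from closed forms. I would treat the two cases $\alpha \in \Omega^{1,0}$ and $\alpha \in \Omega^{0,1}$ separately, the statement for $\alpha \in \Omega^1$ following immediately by $\C$-linearity since $\Omega^1 = \Omega^{1,0} \oplus \Omega^{0,1}$ and the corresponding $L^2(\dbar,\zeta)$-decomposition is orthogonal by construction of the scalar products.

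First I would treat $\scal{\partial f}{\alpha}_{L^2(\dbar,\zeta)} = \scal{f}{\partial_\zeta^*\alpha}_{L^2}$ for $\alpha \in \Omega^{1,0}(X,\End(E))$. Unwinding the definition,
$$
\scal{\partial f}{\alpha}_{L^2(\dbar,\zeta)} = \int_X \tr\bigl([\mP_\zeta'(\dbar) \wedge \i\,\partial f \wedge \alpha^\dagger]_\sym\bigr).
$$
The endomorphism-valued form $\mu := [\mP_\zeta'(\dbar) \wedge \i\,\alpha^\dagger]_\sym$ (appropriately interpreted via the symmetric product, keeping track of the last slot) is of type $(n,n-1)$, and the key point is that $\tr(\partial f \wedge \mu)$ differs from $d\tr(f\mu)$ by a term involving the covariant derivative of $\mu$. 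Concretely, I would use that $\partial\,\tr(f\mu) = \tr(\partial f \wedge \mu) \pm \tr(f\,\nabla^{1,0}\mu)$, that $\dbar\,\tr(f\mu)$ has the wrong bidegree to contribute to the top form, and that $\int_X \partial\,\tr(f\mu) = \int_X d\,\tr(f\mu) = 0$ by Stokes. It remains to identify $\nabla^{1,0}\mu$: since each $\zeta_k$ is closed and the reduced curvature $\hat F$ is Bianchi-closed ($\nabla\hat F = 0$ in the appropriate Chern-connection sense), the only surviving term when differentiating $\mu = [\mP_\zeta'(\dbar)\wedge \i\alpha^\dagger]_\sym$ is the one hitting $\alpha^\dagger$, producing $\pm[\mP_\zeta'(\dbar)\wedge\i\,\partial\alpha^\dagger]_\sym$ up to type considerations; taking adjoints and comparing signs yields exactly $\scal{f}{\partial_\zeta^*\alpha}_{L^2}$ from the definition of $\partial_\zeta^*$. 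The case $\scal{\dbar f}{\alpha}_{L^2(\dbar,\zeta)} = \scal{f}{\dbar_\zeta^*\alpha}_{L^2}$ is entirely parallel, exchanging the roles of $\partial$ and $\dbar$, and follows at once from the first case by applying the conjugate-adjoint relation defining $\scal{\cdot}{\cdot}_{L^2(\dbar,\zeta)}$ on $\Omega^{0,1}$ together with $(\partial f)^\dagger = -\dbar(f^\dagger)$ for the Chern connection.

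The main obstacle I anticipate is bookkeeping rather than conceptual: one must be careful that the symmetric product $[\,\cdot\,]_\sym$ is compatible with covariant differentiation in the sense that $\nabla[\,A_1,\dots,A_m\,]_\sym = \sum_i \pm[\,A_1,\dots,\nabla A_i,\dots,A_m\,]_\sym$ with the correct graded signs, and that the terms $\nabla\zeta_k = d\zeta_k = 0$ and $\nabla\hat F = 0$ genuinely kill all contributions except the one on the differential-form argument. A secondary subtlety is verifying that the mixed-bidegree pieces of $d\,\tr(f\mu)$ that one discards really do vanish by type: $\mu$ being of type $(n,n-1)$, only the $(n,n)$-component of $d\tr(f\mu)$ can be nonzero, and that component is exactly $\partial\,\tr(f\mu)$, so Stokes applies cleanly. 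Once these formal identities are in place, matching the signs against the explicit $\frac{1}{\Vol}$ normalisations in the definitions of $\partial_\zeta^*$ and $\dbar_\zeta^*$ completes the proof.
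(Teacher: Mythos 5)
Your proposal is correct and follows essentially the same route as the paper's proof: expand the symmetric product, integrate by parts (Stokes plus the Leibniz rule), use Bianchi's identity and the closedness of the $\zeta_k$ to kill all terms except the one where the derivative lands on $\alpha^\dagger$, then use cyclicity of the trace and the definition of $\partial_\zeta^*$ to conclude. The only slip is the sign in your adjoint relation — for the Chern connection one has $(\dbar f)^\dagger = \partial(f^\dagger)$ on sections, with no minus sign — but this is exactly the bookkeeping you flag as needing care and does not affect the validity of the argument.
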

\begin{proof}
The proof for the second case is the same as the one for the first case, and the third case is simply the sum of the first two so we only give a proof of the first equality. Let $\alpha \in \Omega^{1,0}(X,\End(E))$ and $f \in \Omega^0(X,\End(E))$.

{\allowdisplaybreaks
\begin{align*}
    \scal{\partial f}{\alpha}_{L^2(\dbar,\zeta)} & = \int_X \tr([\mP_\zeta'(\dbar) \wedge \i\partial f \wedge \alpha^\dagger]_\sym)\\
    & = \sum_{k = 1}^n \int_X k\zeta_k \wedge \tr([\hat{F}(\dbar),\ldots,\hat{F}(\dbar),\i\partial f,\alpha^\dagger]_\sym)\\
    & = -\sum_{k = 1}^n \int_X k\zeta_k \wedge \tr([\hat{F}(\dbar),\ldots,\hat{F}(\dbar),\i f,\partial\alpha^\dagger]_\sym)\\
    & \textrm{by integration by parts and Bianchi's identity,}\\
    & = -\sum_{k = 1}^n \int_X k\zeta_k \wedge \tr(f[\hat{F}(\dbar),\ldots,\hat{F}(\dbar),\i\partial\alpha^\dagger]_\sym)\\
    & \textrm{by commutativity inside the trace,}\\
    & = \int_X \tr(f(\i[\mP_\zeta'(\dbar) \wedge \dbar\alpha]_\sym)^\dagger)\\
    & = \int_X \tr(f(\partial_\zeta^*\alpha)^\dagger) \, \Vol\\
    & = \scal{f}{\partial_\zeta^*\alpha}_{L^2}.
\end{align*}}
\end{proof}

Then, we define symmetric order $2$ differential operators on smooth sections of endomorphisms of $E$,
$$
\Delta_{\zeta,\partial} = \partial_\zeta^*\partial, \qquad \Delta_{\zeta,\dbar} = \dbar_\zeta^*\dbar, \qquad \Delta_{\zeta,\nabla} = \nabla_\zeta^*\nabla = \Delta_{\zeta,\partial} + \Delta_{\zeta,\dbar}.
$$
Notice that since $\partial f^\dagger = (\dbar f)^\dagger$ on sections, we have $\partial_\zeta^*\alpha^\dagger = (\dbar_\zeta^*\alpha)^\dagger$ on $(0,1)$-forms so $\Delta_{\zeta,\partial}f^\dagger = (\Delta_{\zeta,\dbar}f)^\dagger$ on sections and similarly, $\Delta_{\zeta,\dbar}f^\dagger = (\Delta_{\zeta,\partial}f)^\dagger$ thus $\Delta_{\zeta,\nabla}f^\dagger = (\Delta_{\zeta,\nabla}f)^\dagger$ meaning this third Laplacian is a real operator.

Moreover, $\Delta_{\zeta,\nabla}$ is the linearisation of the $P$-critical equation at $\dbar$ with respect to Hermitian gauge transformations.

\begin{proposition}[{\cite[Lemma 2.36]{DMS}}]\label{PRO:Linéarisation Z-critique}
    For all Dolbeault operator $\dbar$, if $(d - 1)p > n$, the maps
    $$
    \fonction{L_{d + 1}^p(\G^\C(E))}{L_{d - 1}^p(\Omega^0(X,\End(E)))}{s}{\frac{1}{\Vol}\mP_\zeta(\e^s \cdot \dbar)},
    $$
    are smooth. Similarly with the $\mC^d$ and the $\mC^{d,\alpha}$ topologies. For all $v \in \Lie(\G^\C(E)) = \Omega^0(X,\End(E))$,
    $$
    \frac{d}{ds}_{|s = 0} \frac{1}{\Vol}\mP_\zeta(\e^s \cdot \dbar)v = \frac{1}{2\pi}\Delta_{\zeta,\nabla}\Re(v) + \i\left[\Im(v),\frac{1}{\Vol}\mP_\zeta(\dbar)\right].
    $$
    The real and imaginary part of $v$ has to be understood as the Hermitian and the skew-Hermitian part (divided by $\i$) of $v$. Moreover, if $\dbar$ is a sub-solution, $\Delta_{\zeta,\nabla}$ is elliptic.
\end{proposition}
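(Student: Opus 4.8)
The proposition has three assertions: smoothness of the map $s \mapsto \frac{1}{\Vol}\mP_\zeta(\e^s\cdot\dbar)$ on the relevant completions; the first-variation formula; and ellipticity of $\Delta_{\zeta,\nabla}$ at a sub-solution. I would treat them in that order.

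\medskip
\emph{Smoothness.} The map $s \mapsto \e^s$ is smooth from $L^p_{d+1}(\Omega^0(X,\End E))$ to itself (entire power series, and this Sobolev space is a Banach algebra under the hypothesis $(d-1)p>n$, which is where that numerical condition is used); likewise $g \mapsto g^{-1}$ is smooth on $L^p_{d+1}(\G^\C(E))$. Then $g\cdot\dbar = \dbar + g\dbar(g^{-1})$ is a smooth (in fact polynomial-type) expression in $g$, $g^{-1}$ and $\dbar(g^{-1})$, landing in $L^p_d(\Omega^{0,1}(X,\End E))$. The Chern connection of $(E,h,\dbar')$ depends smoothly (again polynomially, once one writes $\partial' = h^{-1}\partial'_0 h$-type formulas, or simply $\partial' = \partial + (\text{algebraic in the deformation})$) on $\dbar'$, hence $\hat F(\dbar') = -\frac{1}{2\i\pi}(\partial'+\dbar')\circ(\partial'+\dbar')$ is a smooth map $L^p_d \to L^p_{d-1}$, being a sum of a first-order differential operator applied to the connection form plus a quadratic term (here one uses that $L^p_d$ multiplies into $L^p_{d-1}$ continuously, valid since $(d-1)p > n \geq 1$, better $> 2n$ is not even needed here because we only multiply two $L^p_d$ factors). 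Finally $\mP_\zeta(E,\dbar',h) = \sum_k \zeta_k \wedge \hat F(\dbar')^k$ is a polynomial in $\hat F(\dbar') \in L^p_{d-1}$ with fixed smooth coefficient forms $\zeta_k$, and dividing by the fixed smooth volume form $\Vol$ keeps us in $L^p_{d-1}(\Omega^0(X,\End E))$. Composition of smooth maps is smooth, which gives the claim. The $\mC^d$ and $\mC^{d,\alpha}$ cases are identical, using that those are Banach algebras for $d\geq 1$.

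\medskip
\emph{First variation.} Differentiate $s\mapsto \frac{1}{\Vol}\mP_\zeta(\e^s\cdot\dbar)$ at $s=0$ in the direction $v$. Write $v = \Re(v) + \i\,\Im(v)$ with $\Re(v)$ Hermitian and $\Im(v)$ skew-Hermitian-over-$\i$. The infinitesimal action is $\dot{\dbar} = -\dbar v$ (from $\frac{d}{ds}\e^s\cdot\dbar = -\dbar v$, since $f\cdot\dbar = \dbar + f\dbar(f^{-1})$ gives derivative $v\dbar(\cdot) - \dbar(v\cdot) = -\dbar v$ on operators, i.e. $-[\dbar, v]$ abusively), and correspondingly $\dot{\partial} = \partial(v^\dagger)$ because the Chern connection is determined by $h$: the $(1,0)$-part varies by the $h$-adjoint of the $(0,1)$-variation. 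Hence $\dot{\nabla} = \partial(v^\dagger) - \dbar v$. Then $\frac{d}{ds}\hat F = -\frac{1}{2\i\pi}\nabla\circ\dot{\nabla} + (\text{same with order reversed}) = -\frac{1}{2\i\pi}\nabla(\partial v^\dagger - \dbar v)$ interpreted as the curvature variation $d^\nabla(\dot{\nabla})$ up to the $-\frac{1}{2\i\pi}$ normalisation. Splitting into Hermitian/skew parts: for the Hermitian part $\Re(v)$ one gets, after contracting with $\mP_\zeta'(\dbar)$, exactly $\partial_\zeta^*\partial \Re(v) + \dbar_\zeta^*\dbar\Re(v) = \Delta_{\zeta,\nabla}\Re(v)$ up to the constant, using the definitions of $\partial_\zeta^*,\dbar_\zeta^*$ and Bianchi's identity (this is the same computation as in Lemma \ref{LEM:Adjoints d et dbar}, run in reverse); tracking the constants $-\frac{1}{2\i\pi}$ against the $\i$'s in the definitions of $\mP_\zeta'$ and the adjoints produces the stated $\frac{1}{2\pi}$. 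For the skew part $\Im(v)$ one gets the commutator term: here $\dot{\dbar} = -\i\dbar(\Im v)$ acts, but on the closed-form side the $\nabla$-exact contributions contract against $\mP_\zeta'(\dbar)$ to produce, via Bianchi and integration-by-parts-free algebra, precisely $\i[\Im(v),\frac{1}{\Vol}\mP_\zeta(\dbar)]$ — this is the familiar moment-map phenomenon that the imaginary (unitary) direction moves the moment map by the infinitesimal coadjoint action, with no derivative surviving because $\mP_\zeta$ itself is already of top degree and the bracket with $\Im v$ is algebraic. I would verify the constants by specialising to $E$ a line bundle where all brackets are trivial, reducing to the classical linearisation of $\sum_k k\zeta_k\wedge\hat F^{k-1}\wedge(\partial\dbar - \dbar\partial)$.

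\medskip
\emph{Ellipticity.} This is the part I expect to be the real point. We must show the principal symbol of $\Delta_{\zeta,\nabla} = \partial_\zeta^*\partial + \dbar_\zeta^*\dbar$ is an isomorphism at every nonzero cotangent vector. Fix $x\in X$ and $0\neq\xi\in T_x^*X$, decompose $\xi = \xi^{1,0} + \xi^{0,1}$ with $\xi^{0,1} = \overline{\xi^{1,0}}$. The principal symbol of $\dbar$ is (up to constants) $\phi\mapsto \xi^{0,1}\otimes\phi$ on $\End(E_x)$, and of $\dbar_\zeta^*$ it is the $\mP_\zeta'(\dbar)$-adjoint of that. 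So $\sigma(\Delta_{\zeta,\dbar})(\xi)\phi$ equals, up to a positive scalar, the contraction $[\mP_\zeta'(\dbar)\wedge\i\,\xi^{1,0}\phi\wedge(\xi^{1,0}\phi)^\dagger]_\sym$ read as an endomorphism of $E_x$ — wait, more precisely one pairs: $\langle\sigma(\Delta_{\zeta,\dbar})(\xi)\phi,\psi\rangle = [\mP_\zeta'(\dbar)\wedge\i(\xi^{0,1}\phi)^\dagger\wedge(\xi^{0,1}\psi)]_\sym$-type expression, and setting $\psi=\phi$ gives $\langle\sigma(\Delta_{\zeta,\dbar})(\xi)\phi,\phi\rangle = \tr([\mP_\zeta'(\dbar)\wedge\i\,v\wedge v^\dagger]_\sym)$ with $v = \overline{\xi^{1,0}}\cdot\phi$, reversing the two as needed — at any rate, a nonnegative quantity by the sub-solution condition of Definition \ref{DEF:Sous-solution P}, and strictly positive when $v\neq 0$, i.e. when $\phi\neq 0$. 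Adding the $\partial_\zeta^*\partial$ piece only helps. Hence $\langle\sigma(\Delta_{\zeta,\nabla})(\xi)\phi,\phi\rangle_{L^2} > 0$ for $\phi\neq 0$, so the symbol is a positive self-adjoint, hence invertible, endomorphism of $\End(E_x)$ — this is exactly ellipticity. The one thing to be careful about is the bookkeeping of which of $v$, $v^\dagger$ sits where and the sign/factor of $\i$, so that the sub-solution inequality applies with the correct orientation; I would fix this by checking it against $\Delta_{\zeta,\partial}$ and $\Delta_{\zeta,\dbar}$ separately and using the identity $\Delta_{\zeta,\partial}f^\dagger = (\Delta_{\zeta,\dbar}f)^\dagger$ noted just before the proposition to reduce one to the other. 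No cancellation between the $\partial$ and $\dbar$ symbols can occur precisely because each is already positive semidefinite.
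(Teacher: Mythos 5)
Your proposal is correct and follows essentially the same route as the paper, which itself delegates the first-variation computation to \cite[Section 2.3.1]{DMS} and the ellipticity to \cite[Lemma 2.36]{DMS} and \cite[Lemma 3.3]{Takahashi}; your symbol computation and Banach-algebra smoothness argument fill in exactly those citations, and your constants check out ($\dot{\nabla} = \partial(v^\dagger) - \dbar v$ leading to $\frac{1}{2\pi}\Delta_{\zeta,\nabla}\Re(v)$). The only stylistic difference is that for the skew-Hermitian direction the paper uses the equivariance $\hat{F}(\e^w \cdot \dbar) = \e^w\hat{F}(\dbar)\e^{-w}$ to get the commutator term in one line, where you invoke the derivation property of $[\mP_\zeta' \wedge \cdot]_\sym$ on $\dot{\hat{F}} = [w,\hat{F}]$ — both are valid.
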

\begin{proof}
The map
$$
(s,L_{d + 1}^p(\G^\C(E))) \mapsto (\hat{F}(\e^s \cdot \dbar),L_{d - 1}^p(\G^\C(E))),
$$
is smooth. Therefore, $s \mapsto \frac{1}{\Vol}\mP_\zeta(\e^s \cdot \dbar)$ is smooth too with the same topologies because $L_{d - 1}^p$ spaces are stable by products when $(d - 1)p > n$ \cite[Theorem 4.12]{Adams_Fournier}. With the same method as in \cite[Section 2.3.1]{DMS}, the formula for the derivative at $0$ is true when $v$ is Hermitian. When $v$ is skew-Hermitian, $\e^v$ is unitary thus $\hat{F}(\e^v \cdot \dbar) = \e^v\hat{F}(\dbar)\e^{-v}$ hence $\mP_\zeta(\e^v \cdot \dbar) = \e^v\mP_\zeta(\dbar)\e^{-v}$. The formula for the derivative at $v$ easily follows. The proofs for the $\mC^d$ and the $\mC^{d,\alpha}$ topologies are similar.

The proof of the ellipticity of $\Delta_{\zeta,\nabla}$ is the same as in \cite[Lemma 2.36]{DMS}, \cite[Lemma 3.3]{Takahashi}.
\end{proof}

Finally, we want these operators to satisfy the same kind of regularity as Laplacians. Let
$$
\Omega^{*,*}(X,\R) = \bigoplus_{k = 0}^n \Omega^{k,k}(X,\C) \cap \Omega^{2k}(X,\R),
$$
be the space of real smooth forms of degrees $(k,k)$. As before, we may complete it with $L_d^p$, $\mC^d$ or $\mC^{d,\alpha}$ topologies.

\begin{proposition}\label{PRO:Régularité Z laplaciens}
    The operators $\Delta_{\zeta,\partial}$, $\Delta_{\zeta,\dbar}$ and $\Delta_{\zeta,\nabla}$ extend continuously as
    $$
    \Delta_{\zeta,\partial}, \Delta_{\zeta,\dbar}, \Delta_{\zeta,\nabla} : L_{d + 1}^p(\Omega^0(X,\End(E))) \rightarrow L_{d - 1}^p(\Omega^0(X,\End(E))),
    $$
    as long as $(d - 1)p > 2n$, $\zeta$ is $L_{d - 1}^p$ and $\dbar$ is $L_d^p$. Moreover, the maps
    \begin{adjustwidth}{-2cm}{-2cm}
    \begin{equation}\label{EQ:Régularité en dbar du laplacien}
        \fonction{L_{d - 1}^p(\Omega^{*,*}(X,\R)) \times (\dbar_0 + L_d^p(\Omega^{0,1}(X,\End(E))))}{\Hom(L_{d + 1}^p(\Omega^0(X,\End(E))),L_{d - 1}^p(\Omega^0(X,\End(E))))}{(\zeta,\dbar)}{\Delta_{\zeta,\partial}, \Delta_{\zeta,\dbar}, \Delta_{\zeta,\nabla}}
    \end{equation}
    \end{adjustwidth}
    are smooth. Similarly, we can extend them continuously as
    $$
    \Delta_{\zeta,\partial}, \Delta_{\zeta,\dbar}, \Delta_{\zeta,\nabla} : \mC^{d + 1}(\Omega^0(X,\End(E))) \rightarrow \mC^{d - 1}(\Omega^0(X,\End(E))),
    $$
    and
    $$
    \Delta_{\zeta,\partial}, \Delta_{\zeta,\dbar}, \Delta_{\zeta,\nabla} : \mC^{d + 1,\alpha}(\Omega^0(X,\End(E))) \rightarrow \mC^{d - 1,\alpha}(\Omega^0(X,\End(E))).
    $$
    as long as $d \geq 1$, $0 < \alpha \leq 1$, $\zeta$ is $\mC^{d - 1}$ (resp. $\mC^{d - 1,\alpha}$) and $\dbar$ is $\mC^d$ (resp. $\mC^{d,\alpha}$) and we have the same smooth maps as in (\ref{EQ:Régularité en dbar du laplacien}). Moreover, if $\zeta$ is closed and $\dbar$ is a sub-solution, these operators induce isomorphisms from their coimage toward their image with the same topologies as above.
\end{proposition}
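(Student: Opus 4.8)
The plan is to prove the three assertions separately. For the bounded extensions, I would write each operator out, e.g.\ $\Delta_{\zeta,\partial}f = \frac{\i}{\Vol}[\mP_\zeta'(\dbar) \wedge \dbar\partial f]_\sym$ and likewise for $\Delta_{\zeta,\dbar}$, $\Delta_{\zeta,\nabla}$, and observe that each is the composition of a \emph{fixed} second order differential operator $f \mapsto \dbar\partial f$ (resp.\ $\partial\dbar f$, $\nabla\nabla f$), which sends $L_{d+1}^p(\Omega^0(X,\End(E))) \to L_{d-1}^p$, with a fixed bounded multilinear ``wedge, symmetrise, divide by $\Vol$'' operation into which one feeds the $L_{d-1}^p$ form $\zeta$ and powers of $\hat{F}(\dbar) \in L_{d-1}^p$. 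Since $(d-1)p > 2n$ forces $L_{d-1}^p \hookrightarrow \mC^0$ and makes $L_{d-1}^p$ a Banach algebra acting continuously on itself (and $\mC^{d-1}$, $\mC^{d-1,\alpha}$ are Banach algebras for $d \geq 1$), this gives the continuous extensions in all three scales. For the smooth dependence on $(\zeta,\dbar)$, the key point is that $\dbar \mapsto \hat{F}(\dbar)$ is a continuous polynomial of degree two from $\dbar_0 + L_d^p(\Omega^{0,1}(X,\End(E)))$ into $L_{d-1}^p$, its coefficients being built from the fixed data $h$ and $\nabla_0$; since $\zeta \mapsto \mP_\zeta'(\dbar)$ is linear, composing with the fixed bounded maps above makes $(\zeta,\dbar) \mapsto \Delta_{\zeta,\bullet}$ a continuous polynomial, hence $C^\infty$, map into $\Hom(L_{d+1}^p,L_{d-1}^p)$, and the same argument works verbatim in the $\mC^d$ and $\mC^{d,\alpha}$ scales.

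For the isomorphism statement, assume $\zeta$ closed and $\dbar$ a sub-solution. The sub-solution condition is precisely the statement that $\scal{\cdot}{\cdot}_{L^2(\dbar,\zeta)}$ is induced by a fibrewise positive definite Hermitian metric $H$ on $T^{*1,0}X \otimes \End(E)$ (and its conjugate on $T^{*0,1}X \otimes \End(E)$), hence is equivalent to the usual $L^2$ product. By Lemma \ref{LEM:Adjoints d et dbar} (extended to this regularity class by density, which is legitimate because $\zeta \in L^\infty$ is closed in the distributional sense), $\partial_\zeta^*$, $\dbar_\zeta^*$, $\nabla_\zeta^*$ are then the genuine formal adjoints of $\partial$, $\dbar$, $\nabla$, so $\Delta_{\zeta,\partial} = \partial_\zeta^*\partial$, $\Delta_{\zeta,\dbar} = \dbar_\zeta^*\dbar$ and $\Delta_{\zeta,\nabla} = \nabla_\zeta^*\nabla$ are non-negative and formally self-adjoint for the $L^2$ product on $\Omega^0(X,\End(E))$. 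They are elliptic: the principal symbol of $\Delta_{\zeta,\partial}$ at a non-zero real covector $\xi$ is $\sigma(\partial_\zeta^*)(\xi) \circ \sigma(\partial)(\xi)$ with $\sigma(\partial_\zeta^*)(\xi)$ the $H$-adjoint of $\sigma(\partial)(\xi)$, and $\sigma(\partial)(\xi)\colon v \mapsto \i\,\xi^{1,0}\otimes v$ is injective because $\xi^{1,0}\neq 0$; thus the symbol is fibrewise positive definite, and the same holds for $\Delta_{\zeta,\dbar}$ and for their sum $\Delta_{\zeta,\nabla}$ (the latter also by Proposition \ref{PRO:Linéarisation Z-critique}). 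I would then invoke the Fredholm theory of a formally self-adjoint elliptic operator of order two on the compact manifold $X$: its kernel is finite dimensional and as regular as the coefficients allow, its image is closed and equals the $L^2$-annihilator of the kernel, and the elliptic a priori estimate together with Rellich compactness give, on a closed complement of the kernel, $\norme{f}_{L_{d+1}^p} \leq C\norme{\Delta_{\zeta,\bullet}f}_{L_{d-1}^p}$; the induced continuous bijection from the coimage onto the (closed, hence Banach) image is therefore a topological isomorphism by the open mapping theorem, with the claimed topologies, and similarly in the Hölder scales.

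The only delicate point — and where I expect the real work — is that all this elliptic theory must be run with coefficients that are merely $L_{d-1}^p$ (respectively $\mC^{d-1}$, $\mC^{d-1,\alpha}$) rather than smooth. This is exactly what the thresholds $(d-1)p > 2n$ and $d \geq 1$ buy: the coefficients are then continuous and the relevant products land back in the coefficient space, so the standard $L^p$ elliptic estimates (using \cite[Theorem 4.12]{Adams_Fournier} for the multiplicativity) and the Schauder estimates apply without change; the plain $\mC^d$ statement is then recovered from the $\mC^{d-1,\alpha}$ one via the usual embeddings. Apart from fixing this regularity threshold, I expect only routine bookkeeping.
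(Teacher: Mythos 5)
Your proposal follows essentially the same route as the paper: write each Laplacian as a second-order differential operator composed with the multilinear wedge/symmetrisation into which $\zeta$ and powers of $\hat{F}(\dbar)$ are fed, use the multiplicativity of $L_{d-1}^p$ under $(d-1)p>2n$ (resp.\ of $\mC^{d-1}$, $\mC^{d-1,\alpha}$) to get boundedness and polynomial, hence smooth, dependence on $(\zeta,\dbar)$, and deduce the isomorphism statement from symmetry (Lemma \ref{LEM:Adjoints d et dbar}), ellipticity under the sub-solution hypothesis, the Fredholm alternative and the open mapping theorem.

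One imprecision is worth correcting: the second-order operator $f \mapsto \dbar\partial f$ is \emph{not} fixed. Here $\partial$ and $\dbar$ act on $\End(E)$-valued objects, so writing $\dbar = \dbar_0 + \alpha$ they become $\dbar_0 + [\alpha,\cdot]$ and $\partial_0 - [\alpha^\dagger,\cdot]$, and the composition $q(\dbar)f = \dbar_0\partial_0 f + [\alpha,\partial_0 f] - \dbar_0[\alpha^\dagger,f] - [\alpha,[\alpha^\dagger,f]]$ is quadratic in $\alpha$; this is precisely the computation the paper carries out (the map $q$ and the estimate $\norme{q(\dbar_0+\alpha)-q(\dbar_0)} = \mathrm{O}(\norme{\alpha}_{L_d^p} + \norme{\alpha}_{L_d^p}^2)$). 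Your argument survives because this extra dependence is again polynomial in $\dbar$, so the composite $(\zeta,\dbar)\mapsto\Delta_{\zeta,\bullet}$ remains a continuous polynomial; but as written your decomposition only tracks the $\dbar$-dependence through $\hat{F}(\dbar)$ and would miss these bracket terms. The rest — in particular your explicit symbol computation for ellipticity and your observation that the regularity thresholds are exactly what permits the elliptic and Schauder theory with non-smooth coefficients — is consistent with, and somewhat more detailed than, what the paper does.
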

\begin{proof}
We have nine cases to verify but the proofs are exactly the same so we only prove the proposition for
$$
\Delta_{\zeta,\partial} : L_{d + 1}^p(\Omega^0(X,\End(E))) \rightarrow L_{d - 1}^p(\Omega^0(X,\End(E))).
$$
First of all, let $\dbar_0$ be a smooth Dolbeault operator on $E$ and $\mathcal{D} = \dbar_0 + \Omega^{0,1}(X,\End(E))$ the affine space of Dolbeault operators, and let us introduce
$$
q : \fonction{L_d^p(\mathcal{D})}{\Hom(L_{d + 1}^p(\Omega^0(X,\End(E))),L_{d - 1}^p(\Omega^{1,1}(X,\End(E))))}{\dbar = \dbar_0 + \alpha}{\dbar\partial = (\dbar_0 + [\alpha,\cdot])(\partial_0 - [\alpha^\dagger, \cdot])}
$$
We must be careful that the $\dbar$ on the left is a Dolbeault operator on $E$ and the $\partial$ and the $\dbar$ on the right are operators on $\End(E)$ hence the appearance of the Lie bracket with $\alpha$.

Recall that the condition $dp > 2n$ implies that the $L_d^p$ spaces are stable by wedge product \cite[Theorem 4.12]{Adams_Fournier}. For all $f \in L_{d + 1}^p(\Omega^0(X,\End(E)))$ and all $\dbar = \dbar_0 + \alpha \in L_d^p(\mX)$,
$$
q(\dbar)f = \dbar_0\partial_0f + [\alpha,\partial_0f] - \dbar_0[\alpha^\dagger,f] - [\alpha,[\alpha^\dagger,f]].
$$
We easily verify that this $(1,1)$-form is indeed $L_{d - 1}^p$ so $q$ is well defined. Moreover, $q$ is quadratic in $\alpha$ so it is smooth if and only if it is continuous at $\dbar_0$ and
\begin{align*}
    \norme{q(\dbar_0 + \alpha) - q(\dbar_0)}_{\Hom(L_{d + 1}^p,L_{d - 1}^p)} & = \sup_{\norme{f}_{L_{d + 1}^p} = 1}\norme{q(\dbar_0 + \alpha)f - q(\dbar_0)f}_{L_{d - 1}^p}\\
    & = \sup_{\norme{f}_{L_{d + 1}^p} = 1}\norme{[\alpha,\partial_0f] - \dbar_0[\alpha^\dagger,f] - [\alpha,[\alpha^\dagger,f]]}_{L_{d - 1}^p}\\
    & = \mathrm{O}\!\left(\sup_{\norme{f}_{L_{d + 1}^p} = 1} \norme{[\alpha,\partial_0f]}_{L_d^p} + \norme{[\alpha^\dagger,f]}_{L_d^p} + \norme{[\alpha,[\alpha^\dagger,f]]}_{L_d^p}\right)\\
    & = \mathrm{O}\!\left(\sup_{\norme{f}_{L_{d + 1}^p} = 1} \norme{\alpha}_{L_d^p}\norme{f}_{L_{d + 1}^p} + \norme{\alpha}_{L_d^p}\norme{f}_{L_d^p} + \norme{\alpha}_{L_d^p}^2\norme{f}_{L_d^p}\right)\\
    & = \mathrm{O}\!\left(\norme{\alpha}_{L_d^p} + \norme{\alpha}_{L_d^p}^2\right).
\end{align*}

This proves continuity. Notice that we only used $dp > 2n$ for this part. Then,
$$
\dbar \in L_d^p(\mathcal{D}) \mapsto \hat{F}(\dbar) \in L_{d - 1}^p(\Omega^{1,1}(X,\End(E)))
$$
is smooth and $(d - 1)p > 2n$ so $L_{d - 1}^p$ spaces are stable by wedge products. Therefore,
\begin{adjustwidth}{-2cm}{-2cm}
$$
(\zeta,\dbar) \in L_{d - 1}^p(\Omega^{*,*}(X,\R)) \times L_d^p(\mathcal{D}) \mapsto \mP_\zeta'(\dbar) \in \Hom(L_{d - 1}^p(\Omega^{1,1}(X,\End(E))),L_{d - 1}^p(\Omega^{n,n}(X,\End(E))))
$$
\end{adjustwidth}
is smooth too. Up to a constant smooth $(n,n)$-form, the operator $\Delta_{\zeta,\partial}$ is the composition if this map with $q$ thus,
\begin{adjustwidth}{-2cm}{-2cm}
$$
(\zeta,\dbar) \in L_{d - 1}^p(\Omega^{*,*}(X,\R)) \times L_d^p(\mathcal{D}) \mapsto \Delta_{\zeta,\partial} \in \Hom(L_{d + 1}^p(\Omega^0(X,\End(E))),L_{d - 1}^p(\Omega^0(X,\End(E))))
$$
\end{adjustwidth}
is well-defined and smooth.

All we have left to prove is that for all fixed $(\zeta,\dbar)$, if $\zeta$ is closed and $\dbar \in L_d^p(\mX_\zeta)$,
$$
\Delta_{\zeta,\partial} : L_{d + 1}^p(\coim(\Delta_{\zeta,\partial})) \rightarrow L_{d - 1}^p(\im(\Delta_{\zeta,\partial}))
$$
is an isomorphism. By Lemma \ref{LEM:Adjoints d et dbar}, $\Delta_{\zeta,\partial}$ is symmetric and elliptic (in particular, Fredholm of index $0$). By Fredholm alternative, it is bijective from $L_{d + 1}^p(\coim(\Delta_{\zeta,\partial}))$ to $L_{d - 1}^p(\im(\Delta_{\zeta,\partial}))$, and by the open mapping theorem, it is an isomorphism.
\end{proof}

\subsection{Decomposition of $P$-critical bundles}

In this sub-section, we show that $P$-critical bundles are, as HYM bundles, decomposable as a direct sum of simple $P$-critical bundles which share no holomorphic morphisms.

\begin{lemma}\label{LEM:Aut réductif}
    If $\E = (E,\dbar,h)$ is a $P$-critical vector bundle, all its automorphisms are parallel (equivalently, automorphisms are stable by $f \mapsto f^\dagger$).
\end{lemma}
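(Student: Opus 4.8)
The plan is to reduce the statement to the non-negativity of the linearised operator $\Delta_{\zeta,\nabla}$, exactly as one argues that Hermite--Einstein bundles have reductive automorphism groups. Let $f \in \G^\C(E)$ be an automorphism of $\E$; by definition $f$ is a smooth global section of $\End(E)$, pointwise invertible, with $\dbar f = 0$. The key observation is that for every $s \in \R$ the gauge transformation $\e^{sf}$ commutes with $\dbar$ (since $\dbar(f^k) = 0$ for all $k \geq 0$), so $\e^{sf} \cdot \dbar = \dbar$ and the path $s \mapsto \frac{1}{\Vol}\mP_\zeta(\e^{sf} \cdot \dbar)$ is constant, equal to $\frac{1}{\Vol}\mP_\zeta(\dbar)$. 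Differentiating at $s = 0$ and invoking Proposition~\ref{PRO:Linéarisation Z-critique} together with $\mP_\zeta(\dbar) = 0$ (as $\E$ is $P$-critical), I obtain
$$
0 = \frac{d}{ds}\Big|_{s = 0}\frac{1}{\Vol}\mP_\zeta(\e^{sf} \cdot \dbar) = \frac{1}{2\pi}\Delta_{\zeta,\nabla}\Re(f) + \i\Big[\Im(f),\frac{1}{\Vol}\mP_\zeta(\dbar)\Big] = \frac{1}{2\pi}\Delta_{\zeta,\nabla}\Re(f),
$$
where $\Re(f) = \frac12(f + f^\dagger)$ denotes the Hermitian part of $f$.

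The next step turns this into a first-order vanishing. Since $\dbar$ is a sub-solution, the $L^2(\dbar,\zeta)$-products of Section~\ref{SEC:Produits scalaires} are definite positive, so by Lemma~\ref{LEM:Adjoints d et dbar}
$$
0 = \scal{\Delta_{\zeta,\nabla}\Re(f)}{\Re(f)}_{L^2} = \norme{\nabla\Re(f)}_{L^2(\dbar,\zeta)}^2,
$$
whence $\nabla\Re(f) = 0$; in particular $\dbar\Re(f) = 0$. On sections one has $\dbar(f^\dagger) = (\partial f)^\dagger$, so $0 = \dbar\Re(f) = \frac12\big(\dbar f + \dbar(f^\dagger)\big) = \frac12(\partial f)^\dagger$, hence $\partial f = 0$ and therefore $\nabla f = \partial f + \dbar f = 0$: every automorphism is parallel. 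Moreover $\dbar(f^\dagger) = (\partial f)^\dagger = 0$ and $f^\dagger$ is pointwise invertible, so $f^\dagger$ is again an automorphism; conversely, if $\Aut(\E)$ is stable under $f \mapsto f^\dagger$, then $\dbar f = 0 = \dbar(f^\dagger)$ gives $\partial f = (\dbar f^\dagger)^\dagger = 0$, so the two formulations are equivalent.

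I do not expect a serious obstacle: everything rests on Proposition~\ref{PRO:Linéarisation Z-critique} and on the positivity of the $L^2(\dbar,\zeta)$-products provided by the sub-solution hypothesis. The only points requiring care are the bookkeeping of Hermitian and skew-Hermitian parts, the identity $\dbar(f^\dagger) = (\partial f)^\dagger$ on $\Omega^0(X,\End(E))$ used above, and the verification that $\e^{sf}$ genuinely fixes $\dbar$ so that the left-hand side of the linearisation vanishes identically. A slightly more computational alternative would avoid Proposition~\ref{PRO:Linéarisation Z-critique} altogether by establishing directly the Bochner--Kodaira type identity $\Delta_{\zeta,\partial} - \Delta_{\zeta,\dbar} = \frac{2\pi}{\Vol}[\mP_\zeta(\dbar),\,\cdot\,]$ on sections of $\End(E)$ — which vanishes here since $\mP_\zeta(\dbar) = 0$ — so that, for a holomorphic $f$, $\norme{\partial f}_{L^2(\dbar,\zeta)}^2 = \scal{f}{\Delta_{\zeta,\partial}f}_{L^2} = \scal{f}{\Delta_{\zeta,\dbar}f}_{L^2} = \norme{\dbar f}_{L^2(\dbar,\zeta)}^2 = 0$; proving that identity (via the equivariance of $\hat F \mapsto \mP_\zeta$ under conjugation and the Bianchi relation $(\partial\dbar + \dbar\partial)f = -2\i\pi[\hat F, f]$ for the Chern connection) would be the main technical point of that route.
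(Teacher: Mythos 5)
Your proof is correct and follows essentially the same route as the paper: differentiate the constant path $s \mapsto \mP_\zeta(\e^{sf}\cdot\dbar)$, use Proposition~\ref{PRO:Linéarisation Z-critique} and $\mP_\zeta(\dbar)=0$ to get $\Delta_{\zeta,\nabla}\Re(f)=0$, then use Lemma~\ref{LEM:Adjoints d et dbar} and the positivity of $\scal{\cdot}{\cdot}_{L^2(\dbar,\zeta)}$ coming from the sub-solution condition to conclude $\nabla\Re(f)=0$. The only (harmless) divergence is at the very end, where you deduce $\partial f=0$ directly from $\dbar(f^\dagger)=(\partial f)^\dagger$, while the paper phrases the conclusion as the holomorphy of $\Re(f)$, $\Im(f)$ and $f^\dagger$; the two are equivalent.
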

\begin{proof}
Let $f \in \Aut(\E)$ such that $\dbar f = 0$. By Proposition \ref{PRO:Linéarisation Z-critique}, we have
$$
\frac{d}{dt}_{|t = 0} \frac{1}{\Vol}\mP_\zeta(\e^{tf} \cdot \dbar) = \frac{1}{2\pi}\Delta_{\zeta,\nabla}\Re(f) + \i\left[\Im(f),\frac{1}{\Vol}\mP_\zeta(\dbar)\right] = \frac{1}{2\pi}\Delta_{\zeta,\nabla}\Re(f),
$$
where we used here the fact that $\mP_\zeta(\dbar) = 0$ by hypothesis. Moreover, $f$ is holomorphic thus for all $t$, $\e^{tf} \cdot \dbar = \dbar$. We deduce that $\frac{1}{2\pi}\Delta_{\zeta,\nabla}\Re(f) = 0$ as it is the derivative at $0$ of a constant function. Therefore, by Lemma \ref{LEM:Adjoints d et dbar},
$$
\norme{\nabla\Re(f)}_{L^2(\dbar,\zeta)} = \scal{\Delta_{\zeta,\nabla}\Re(f)}{\Re(f)}_{L^2} = 0
$$
and $\scal{\cdot}{\cdot}_{L^2(\dbar,\zeta)}$ is definite positive because $\dbar$ is a sub-solution so $\nabla\Re(f) = 0$. In particular, $\Re(f)$ is holomorphic. So are $\Im(f) = -\i(f - \Re(f))$ and $f^\dagger = \Re(f) - \i\Im(f)$.
\end{proof}

Lemma \ref{LEM:Aut réductif} can be seen as the analogue in GIT of the fact that the stabiliser of a polystable point is reductive. Indeed, it implies that the complex Lie group $\Aut(\E)$ of automorphisms of $\E$ is the complexification of the compact real Lie group $\Aut(\E,h)$ of unitary automorphisms with respect to $h$. In particular, $\Aut(\E)$ is reductive. It also has for consequence a partial result of uniqueness of solutions.

\begin{proposition}\label{PRO:caractérisation unicité}
    If $\dbar_1$ and $\dbar_2$ are solutions to the $P$-critical equation in the same $\G^\C(E)$-orbit, we have equivalence between,
    \begin{enumerate}
        \item $\dbar_1$ and $\dbar_2$ are in the same $\G(E,h)$-orbit.
        \item $\dbar_1 \oplus \dbar_2$ is a sub-solution to the $P$-critical equation on $F = (E,h) \oplus (E,h)$.
    \end{enumerate}
\end{proposition}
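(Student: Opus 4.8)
The plan is to prove the two implications separately, with the content concentrated in $(2)\Rightarrow(1)$. First, for $(1)\Rightarrow(2)$: if $\dbar_2 = u\cdot\dbar_1$ for a unitary $u$, then the isomorphism $(E,h,\dbar_1)\oplus(E,h,\dbar_1)\to(E,h,\dbar_1)\oplus(E,h,\dbar_2)$ given by $\Id\oplus u$ is unitary and holomorphic, so $\dbar_1\oplus\dbar_2$ is gauge-equivalent via a unitary transform to $\dbar_1\oplus\dbar_1$ on $(E,h)\oplus(E,h)$; since $(E,h,\dbar_1)$ is $P$-critical, $\dbar_1\oplus\dbar_1$ is a solution of the $P$-critical equation on the direct sum (one must check that the relevant $\zeta_k$-data behaves diagonally, which is immediate since curvature and $\mathcal{P}'_\zeta$ are block-diagonal on an orthogonal direct sum), hence in particular a sub-solution; the sub-solution condition is preserved under unitary gauge transforms because it is a pointwise positivity statement and $\mathcal{P}'_\zeta$ transforms by conjugation under unitary gauge. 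So $\dbar_1\oplus\dbar_2$ is a sub-solution.

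For $(2)\Rightarrow(1)$: write $\dbar_2 = g\cdot\dbar_1$ for some $g\in\G^\C(E)$, and consider the endomorphism $f = \Id_E\oplus g \in \G^\C(E\oplus E)$ — no, more precisely consider the graph-type map. The clean way is: the map $\Phi = \begin{pmatrix}\Id & 0\\ 0 & g\end{pmatrix}$ conjugates $\dbar_1\oplus\dbar_1$ to $\dbar_1\oplus\dbar_2$, so $\dbar_1\oplus\dbar_2$ lies in the $\G^\C(E\oplus E)$-orbit of the $P$-critical operator $\dbar_1\oplus\dbar_1$; I would then observe that $\dbar_1\oplus\dbar_2$ being a \emph{sub-solution} means it is $P_\zeta$-critical on $(E,h)\oplus(E,h)$ only if also $\mathcal{P}_\zeta(\dbar_1\oplus\dbar_2)=0$, which here it is, being $\mathrm{diag}(\mathcal{P}_\zeta(\dbar_1),\mathcal{P}_\zeta(\dbar_2)) = 0$. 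So $(E,h)\oplus(E,h)$ carries two $P$-critical operators in the same complex gauge orbit, $\dbar_1\oplus\dbar_1$ and $\dbar_1\oplus\dbar_2$. Now apply Lemma \ref{LEM:Aut réductif} to the $P$-critical bundle $(E\oplus E, \dbar_1\oplus\dbar_1, h\oplus h)$: its automorphism group is reductive and equals the complexification of its unitary automorphism group. The off-diagonal block gives a holomorphic morphism $(E,\dbar_1)\to(E,\dbar_1)$, and crucially, since the conjugating element $\Phi$ sends one solution to another, a polar-decomposition / Matsushima-type argument shows $\Phi$ can be replaced by a unitary one: concretely, the map $t\mapsto (\Phi\Phi^\dagger)^{t}$ produces a path of solutions, and differentiating (as in Lemma \ref{LEM:Aut réductif}, using $\mathcal{P}_\zeta = 0$ along the path and the injectivity of $\nabla$ on the kernel of $\Delta_{\zeta,\nabla}$) forces $\Phi\Phi^\dagger$ to be parallel, so its positive square root $p$ is a holomorphic automorphism of $\dbar_1\oplus\dbar_1$; then $u = p^{-1}\Phi$ is unitary and conjugates $\dbar_1\oplus\dbar_1$ to $\dbar_1\oplus\dbar_2$. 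Reading off the second diagonal block of $u$ — which is unitary as a block of a block-unitary that is moreover holomorphic between the two summands — yields the desired $\G(E,h)$-equivalence $\dbar_1\sim\dbar_2$.

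The main obstacle I anticipate is making the polar-decomposition step rigorous in this infinite-dimensional, possibly non-integrable setting: one needs that $p = (\Phi\Phi^\dagger)^{1/2}$ is again a gauge transformation of the right Sobolev/Hölder class and that it is holomorphic for $\dbar_1\oplus\dbar_1$. The holomorphy follows from Lemma \ref{LEM:Aut réductif} once one knows $\Phi\Phi^\dagger$ fixes the $P$-critical equation, and that in turn follows because $\mathcal{P}_\zeta(\Phi\cdot(\dbar_1\oplus\dbar_1)) = \mathcal{P}_\zeta(\dbar_1\oplus\dbar_2) = 0$ together with $\mathcal{P}_\zeta((\Phi^\dagger)^{-1}\cdot(\dbar_1\oplus\dbar_1))$ being the adjoint-conjugate, so that $\Phi\Phi^\dagger$ lies in the stabiliser of the moment map — this is exactly the local Kempf--Ness/uniqueness mechanism, and one should cite or mimic the convexity argument along the geodesic $t\mapsto(\Phi\Phi^\dagger)^t$ rather than redo it. A secondary technical point is verifying that direct sums and adjoints interact correctly with $\mathcal{P}'_\zeta$ and the sub-solution inequality, but this is routine given the block structure.
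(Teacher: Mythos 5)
Your direction $(1)\Rightarrow(2)$ is fine and amounts to the same block computation the paper performs (reducing to the diagonal operator $\dbar_1\oplus\dbar_1$ and using unitary invariance of the pointwise positivity condition is a legitimate reorganisation of it).

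The direction $(2)\Rightarrow(1)$, however, has a genuine gap. Your plan is to run the polar-decomposition/convexity argument along the path $t\mapsto(\Phi\Phi^\dagger)^{t}\cdot(\dbar_1\oplus\dbar_1)$ joining the two solutions $\dbar_1\oplus\dbar_1$ and $\dbar_1\oplus\dbar_2$. That argument needs the second variation inequality $a'(t)=\Omega_{\infty,\zeta}(\i L\xi,L\xi)\leq 0$ at every intermediate time $t\in(0,1)$, and positivity of $\Omega_{\infty,\zeta}$ at a point is exactly the sub-solution condition \emph{at that point}. Hypothesis (2) only gives you the sub-solution condition at the two endpoints; nothing guarantees it along the geodesic, and in this paper the convexity mechanism is only ever available locally (Lemma \ref{LEM:Convexité}, Lemma \ref{LEM:Unicité K-orbite zéro mu_zeta}), precisely because sub-solutions do not form a convex or gauge-invariant set in general. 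Your fallback, ``the holomorphy of $\Phi\Phi^\dagger$ follows from Lemma \ref{LEM:Aut réductif} once one knows $\Phi\Phi^\dagger$ fixes the $P$-critical equation,'' is circular: Lemma \ref{LEM:Aut réductif} takes holomorphy as its \emph{input} and outputs parallelism, so you cannot use it to establish that $\Phi\Phi^\dagger$ is holomorphic. As written, your argument never uses hypothesis (2) in an essential way, and if it worked it would prove unconditional uniqueness of solutions modulo $\G(E,h)$, which is not what the proposition claims.

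The fix is the paper's trick, which uses hypothesis (2) exactly once and at the right place. Write $\dbar_2=f\cdot\dbar_1$ and form $\F=(E\oplus E,\,\dbar_1\oplus\dbar_2,\,h\oplus h)$: by (2) this is a sub-solution, and $\mP_\zeta(\F)=\diag(\mP_\zeta(\dbar_1),\mP_\zeta(\dbar_2))=0$, so $\F$ is $P$-critical. The endomorphism $g=\begin{pmatrix}0&0\\ f&0\end{pmatrix}$ is genuinely holomorphic on $\F$ (because $f\circ\dbar_1=\dbar_2\circ f$), so Lemma \ref{LEM:Aut réductif} applies to it directly and yields that $g^\dagger=\begin{pmatrix}0&f^\dagger\\ 0&0\end{pmatrix}$ is holomorphic, i.e.\ $f^\dagger:\E_2\to\E_1$ is holomorphic. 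Hence $f^\dagger f$ is a holomorphic positive Hermitian endomorphism of $\E_1$, its square root is holomorphic, and $f\sqrt{f^\dagger f}^{-1}$ is a unitary gauge transform carrying $\dbar_1$ to $\dbar_2$. No path, no convexity, no positivity at intermediate points is needed.
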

\begin{proof}\ \\
\noindent\framebox{$1 \Rightarrow 2$} Assume $\dbar_2 = u \cdot \dbar_1$ for some unitary transform $u$. Set $\hat{F}_1 = \hat{F}(E,\dbar_1)$ and $\hat{F}_2 = \hat{F}(E,\dbar_2)$. Then, $\hat{F}_2 = u\hat{F}_1u^\dagger$ so for all $v = \begin{pmatrix} v_1 & v_2 \\ v_3 & v_4 \end{pmatrix}$ in $T_x^{1,0}X \otimes \End(F_x)$ and all $i$, $j$ and $k$,
\begin{align*}
    & \tr(\hat{F}(F,\dbar_1 \oplus \dbar_2)^i \wedge \i v \wedge \hat{F}(F,\dbar_1 \oplus \dbar_2)^j \wedge v^\dagger \wedge \hat{F}(F,\dbar_1 \oplus \dbar_2)^k)\\
    =\ & \tr\left(\begin{pmatrix} \hat{F}_1^i & 0 \\ 0 & \hat{F}_2^i \end{pmatrix} \wedge \begin{pmatrix} \i v_1 & \i v_2 \\ \i v_3 & \i v_4 \end{pmatrix} \wedge \begin{pmatrix} \hat{F}_1^j & 0 \\ 0 & \hat{F}_2^j \end{pmatrix} \wedge \begin{pmatrix} v_1^\dagger & v_3^\dagger \\ v_2^\dagger & v_4^\dagger \end{pmatrix} \wedge \begin{pmatrix} \hat{F}_1^k & 0 \\ 0 & \hat{F}_2^k \end{pmatrix}\right)\\
    =\ & \tr(\hat{F}_1^i \wedge \i v_1 \wedge \hat{F}_1^j \wedge v_1^\dagger \wedge \hat{F}_1^k) + \tr(\hat{F}_1^i \wedge \i v_2 \wedge u\hat{F}_1^ju^\dagger \wedge v_2^\dagger \wedge \hat{F}_1^k)\\
    +\ & \tr(u\hat{F}_1^iu^\dagger \wedge \i v_3 \wedge \hat{F}_1^j \wedge v_3^\dagger \wedge u\hat{F}_1^ku^\dagger) + \tr(u\hat{F}_1^iu^\dagger \wedge \i v_4 \wedge u\hat{F}_1^ju^\dagger \wedge v_4^\dagger \wedge u\hat{F}_1^ku^\dagger)\\
    =\ & \tr(\hat{F}_1^i \wedge \i v_1 \wedge \hat{F}_1^j \wedge v_1^\dagger \wedge \hat{F}_1^k) + \tr(\hat{F}_1^i \wedge (\i v_2u) \wedge \hat{F}_1^j \wedge (v_2u)^\dagger \wedge \hat{F}_1^k)\\
    +\ & \tr(\hat{F}_1^i \wedge (\i u^\dagger v_3) \wedge \hat{F}_1^j \wedge (u^\dagger v_3)^\dagger \wedge \hat{F}_1^k) + \tr(\hat{F}_1^i \wedge (\i u^\dagger v_4u) \wedge \hat{F}_1^j \wedge (u^\dagger v_4u)^\dagger \wedge \hat{F}_1^k).
\end{align*}

We deduce that
\begin{align*}
    & [\mP_\zeta'(F,\dbar_1 \oplus \dbar_2,h \oplus h) \wedge \i v \wedge v^\dagger]_\sym\\
    =\ & [\mP_\zeta'(E,\dbar_1,h) \wedge \i v_1 \wedge v_1^\dagger]_\sym + [\mP_\zeta'(E,\dbar_1,h) \wedge \i (v_2u) \wedge (v_2u)^\dagger]_\sym\\
    +\ & [\mP_\zeta'(E,\dbar_1,h) \wedge \i (u^\dagger v_3) \wedge (u^\dagger v_3)^\dagger]_\sym + [\mP_\zeta'(E,\dbar_1,h) \wedge \i (u^\dagger v_4u) \wedge (u^\dagger v_4u)^\dagger]_\sym.
\end{align*}
The result immediately follows. Notice that we also have $\mP_\zeta(F,\dbar_1 \oplus \dbar_2,h \oplus h) = 0$ so $\dbar_1 \oplus \dbar_2$ is a solution to the $P$-critical equation.\\

\noindent\framebox{$2 \Rightarrow 1$} Let $\dbar_2 = f \cdot \dbar_1$ be both $P$-critical. Let $\E_1 = (E,\dbar_1,h)$, $\E_2 = (E,\dbar_2,h)$ and $\F = \E_1 \oplus \E_2$ endowed with the direct sum metric and complex structure. By assumption, $\dbar_1 \oplus \dbar_2$ is a sub-solutions and the equality $\mP_\zeta(F,\dbar_1 \oplus \dbar_2) = 0$ is trivial. It makes $\F$ a $P$-critical bundle. Let
$$
g =
\begin{pmatrix}
    0 & 0\\
    f & 0
\end{pmatrix}.
$$
Since $f : \E_1 \rightarrow \E_2$ is holomorphic, so is $g : \F \rightarrow \F$. By Lemma \ref{LEM:Aut réductif}, $g^\dagger$ is holomorphic and
$$
g^\dagger =
\begin{pmatrix}
    0 & f^\dagger\\
    0 & 0
\end{pmatrix},
$$
because the decomposition $\F = \E_1 \oplus \E_2$ is orthogonal. It implies that $f^\dagger : \E_2 \rightarrow \E_1$ is holomorphic. We deduce that $\sqrt{f^\dagger f} : \E_1 \rightarrow \E_1$ is holomorphic too thus $\dbar_2 = f \cdot \dbar_1 = f\sqrt{f^\dagger f}^{-1} \cdot \dbar_1$ and $f\sqrt{f^\dagger f}^{-1} : E \rightarrow E$ is unitary.
\end{proof}

The following proposition means that a $P$-critical bundle admits a decomposition as a direct sum of $P$-critical simple components. If two of these components are isomorphic, they are orthogonally isomorphic, else, there are no holomorphic morphisms from one to an other.

\begin{proposition}\label{PRO:Polysimplicité}
    Let $\E$ be $P$-critical. It admits an orthogonal and holomorphic decomposition
    $$
    \E = \bigoplus_{i = 1}^l \G_i^{m_i},
    $$
    where the $m_i \geq 1$ are integers and for all $i \neq j$, $\Hom(\G_i,\G_j) = 0$.
\end{proposition}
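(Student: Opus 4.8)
The plan is to show that the finite-dimensional algebra $A := \Hom(\E,\E)$ of global holomorphic endomorphisms of $\E$ is a $C^*$-algebra, and then to read off the decomposition from its Wedderburn structure.

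First I would check that $A$ is a finite-dimensional $C^*$-algebra. It is a unital $\C$-subalgebra of $\Omega^0(X,\End(E))$, since a product of $\dbar$-holomorphic sections is $\dbar$-holomorphic by the Leibniz rule (integrability of $\dbar$ plays no role). It is finite-dimensional: $A = \ker(\dbar\colon \Omega^0(X,\End(E)) \to \Omega^{0,1}(X,\End(E)))$ coincides with $\ker\Delta_{\zeta,\dbar}$ by Lemma~\ref{LEM:Adjoints d et dbar}, and $\Delta_{\zeta,\dbar} = \dbar_\zeta^*\dbar$ is elliptic on the compact manifold $X$ (its symbol is $\sigma(\dbar)^\dagger\sigma(\dbar) > 0$). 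By Lemma~\ref{LEM:Aut réductif} — whose proof uses only the $\dbar$-holomorphicity of $f$, hence applies to every $f \in A$, not merely to automorphisms — $A$ is stable under $f \mapsto f^\dagger$. Giving $A$ the norm $\|f\| = \sup_{x \in X}\|f_x\|_{\mathrm{op}}$ (operator norm on $\End(E_x)$), the $C^*$-identity $\|f^\dagger f\| = \|f\|^2$ holds pointwise, hence globally; as $A$ is finite-dimensional it is complete, so it is a $C^*$-algebra.

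Then I would invoke the structure theorem for finite-dimensional $C^*$-algebras: $A$ is $*$-isomorphic to $\bigoplus_{i=1}^l M_{m_i}(\C)$ for some integers $m_i \geq 1$. Let $\pi_i \in A$ be the element corresponding to the unit of the $i$-th block; the $\pi_i$ are pairwise orthogonal self-adjoint idempotents with $\sum_i \pi_i = \Id_E$, so $\E = \bigoplus_{i=1}^l \E_i$ with $\E_i := \pi_i\E$ is an orthogonal holomorphic decomposition (even parallel, since each $\pi_i$ is parallel by Lemma~\ref{LEM:Aut réductif}); here self-adjointness of the $\pi_i$ is exactly what makes the summands orthogonal. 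Each $\pi_i$ is central, so $\Hom(\E_i,\E_j) = \pi_j A\pi_i = 0$ for $i \neq j$. Fix $i$ and choose, inside the $i$-th block, a system of matrix units $e^{(i)}_{ab}$ ($1 \le a,b \le m_i$) with $(e^{(i)}_{ab})^\dagger = e^{(i)}_{ba}$. Then $e^{(i)}_{11}, \dots, e^{(i)}_{m_im_i}$ are pairwise orthogonal self-adjoint idempotents summing to $\pi_i$, so $\E_i = \bigoplus_a e^{(i)}_{aa}\E$ orthogonally, the $e^{(i)}_{a1}$ are isomorphisms $e^{(i)}_{11}\E \xrightarrow{\sim} e^{(i)}_{aa}\E$, and hence $\E_i \cong \G_i^{m_i}$ with $\G_i := e^{(i)}_{11}\E$. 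The sub-bundle $\G_i$ being parallel, $\Hom(\G_i,\G_i) = e^{(i)}_{11}\,A\,e^{(i)}_{11} = \C\,e^{(i)}_{11}$, so $\G_i$ is simple; and for $i \neq j$ any $T \in \Hom(\G_i,\G_j) \subset A$ satisfies $T = e^{(j)}_{11}\pi_j\,T\,\pi_i e^{(i)}_{11} \in \pi_j A\pi_i = 0$. Combining the isomorphisms $\E_i \cong \G_i^{m_i}$ yields the asserted decomposition.

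Once Lemma~\ref{LEM:Aut réductif} is in place this is essentially bookkeeping inside a finite-dimensional $C^*$-algebra; the only point demanding attention is orthogonality of all the summands, which forces one to choose \emph{self-adjoint} idempotents at each stage (the central $\pi_i$ and the diagonal matrix units $e^{(i)}_{aa}$), and this is where $\dagger$-stability of $A$ — not just semisimplicity — is used. I would also remark in passing that each $\G_i$ is itself $P_\zeta$-critical: the decomposition being parallel, $\hat F(\E)$ is block-diagonal, so $\mP_\zeta(\G_i) = 0$ follows from $\mP_\zeta(\E) = 0$, and the sub-solution inequality for $\E$ restricts to the one for $\G_i$ by extending test vectors in $T^{1,0}_xX \otimes \End((\G_i)_x)$ by zero — though this refinement is not needed for the proposition as stated.
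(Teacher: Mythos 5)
Your proof is correct, and it reaches the decomposition by a genuinely different route from the paper. The paper argues by induction on the rank: if $\E$ is not simple it picks a single non-central holomorphic section, Hermitises it via Lemma \ref{LEM:Aut réductif}, observes that the coefficients of its characteristic polynomial are constant, and splits off an eigenbundle whose orthogonal projection is holomorphic; Steps 3--4 then compare the resulting simple factors pairwise. You instead package the whole argument into the observation that $A=\Hom(\E,\E)$ is a finite-dimensional $C^*$-algebra and read everything off from its Wedderburn decomposition $\bigoplus_i M_{m_i}(\C)$. The two proofs rest on exactly the same key input — Lemma \ref{LEM:Aut réductif} applied to arbitrary holomorphic endomorphisms, not just automorphisms, which is also how the paper uses it in Steps 1 and 3 — and your reading of that lemma is faithful to its proof, which never uses invertibility of $f$. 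What your approach buys is a one-shot derivation of the isotypic decomposition, the multiplicities, and the unitarity of the identifications $\G_i\xrightarrow{\sim}e^{(i)}_{aa}\E$ (the relation $e^{(i)}_{1a}e^{(i)}_{a1}=e^{(i)}_{11}$ makes the $e^{(i)}_{a1}$ partial isometries, which is the paper's "orthogonally isomorphic" conclusion of Step 3); the cost is invoking the structure theorem for finite-dimensional $C^*$-algebras as a black box, where the paper's induction is self-contained and elementary. Your finite-dimensionality argument via $\ker\dbar=\ker\Delta_{\zeta,\dbar}$ and ellipticity is the right one here, since the paper does not assume $\dbar$ integrable in this proposition, so one cannot simply quote finite-dimensionality of $H^0$ of a coherent sheaf; and your closing remark on $P_\zeta$-criticality of the factors reproduces the paper's Step 2. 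The only points worth making explicit in a final write-up are that $\pi_iE$ has constant rank (immediate, since the rank is the trace of a projection, hence a continuous integer-valued function on connected $X$, or since $\pi_i$ is parallel) and that extension by zero identifies $\Hom(\G_i,\G_j)$ with the corresponding corner $e^{(j)}_{11}Ae^{(i)}_{11}$ of $A$, which uses that both the summands and their orthogonal complements are holomorphic; you use both facts correctly but state them only in passing.
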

\begin{proof}\ \\
\noindent\textbf{Step 1 :} $\E$ decomposes as an orthogonal and holomorphic sum of simple sub-bundles.

Let us prove it by induction on the rank of $E$. It is trivial if $\rk(E) = 1$. If $E$ has rank $\rk(E) \geqslant 2$, it is also trivial if $\E$ is simple, else, $\End(\E)$ admits a holomorphic section $\xi$ which is not a homothety. Using Lemma \ref{LEM:Aut réductif}, up to replacing $\xi$ by its real part or its imaginary part, we may assume that $\xi$ is Hermitian and is not a homothety.

The coefficients of the characteristic polynomial of $\xi$ in function of $x \in X$ are holomorphic thus constant. We deduce that the eigenvalues of $\xi$ and their multiplicities are constant so each eigenspace $\ker(\lambda\Id_E - \xi)$ is a holomorphic sub-bundle of $\E$. Let $\Pi_\lambda \in \Omega^0(X,\End(E))$ be the orthogonal projection onto this space. By the spectral theorem, we have $\mathrm{Sp}(\xi) \subset \R$ and
\begin{equation}\label{EQ:Diaginalisation xi}
    \xi = \sum_{\lambda \in \mathrm{Sp}(\xi)} \lambda\Pi_\lambda.
\end{equation}
As each $\ker(\lambda\Id_E - \xi)$ is a holomorphic sub-bundle of $\E$, $\dbar\Pi_\lambda$ takes values in $\ker(\lambda\Id_E - \xi)$. In other words, we have $\Pi_\lambda\dbar\Pi_\lambda = \dbar\Pi_\lambda$ and for all $\nu \neq \lambda$, $\Pi_\nu\dbar\Pi_\lambda = 0$. When we differentiate (\ref{EQ:Diaginalisation xi}), we obtain
$$
0 = \sum_{\lambda \in \mathrm{Sp}(\xi)} \lambda\dbar\Pi_\lambda,
$$
so for all $\nu \in \mathrm{Sp}(\xi)$,
$$
0 = \sum_{\lambda \in \mathrm{Sp}(\xi)} \lambda\Pi_\nu\dbar\Pi_\lambda = \nu\dbar\Pi_\nu.
$$
Recall that $\xi$ is not a homothety so we can choose $\nu \neq 0$ and $\Pi_\nu \neq 0,\Id_E$. In this case, $\dbar\Pi_\nu = 0$ so $\Pi_\nu$ is holomorphic, meaning that we have an exact sequence which splits both orthogonally and holomorphically,
$$
\begin{tikzcd}
    0 \ar{r} & \ker(\nu\Id_E - \xi) \ar["\subset",shift left]{r} & \E \ar["\Pi_\nu",shift left]{l}\ar["\Id_E - \Pi_\nu",shift left]{r} & \ker(\nu\Id_E - \xi)^\bot \ar["\supset",shift left]{l}\ar{r} & 0
\end{tikzcd}.
$$
Therefore, $\E = \ker(\nu\Id_E - \xi) \oplus \ker(\nu\Id_E - \xi)^\bot$ and we conclude by induction. Let us write
$$
\E = \bigoplus_{i = 1}^l \G_i,
$$
where the $\G_i$ are the simple components of $\E$.\\

\noindent\textbf{Step 2 :} Each $\G_i$ is $P$-critical.

Let for all $i$, $G_i$ be the smooth structure of $\G_i$. In the decomposition $\E = \bigoplus_{i = 1}^l \G_i$, we have
$$
\hat{F}(E,\dbar) = \begin{pmatrix}
    \hat{F}(G_1,\dbar) & 0 & \hdots & 0\\
    0 & \hat{F}(G_2,\dbar) & \hdots & 0\\
    \vdots & \vdots & \ddots & \vdots \\
    0 & 0 & \hdots & \hat{F}(G_l,\dbar)
\end{pmatrix},
$$
because the decomposition is both orthogonal and holomorphic. Therefore if $x \in X$ and $v \in T_x^{1,0}X \otimes \End(G_{i,x})\backslash\{0\}$, let $\tilde{v}$ be its trivial extension to the whole $\End(E)$. Since the decomposition of $\E$ into the sum of the $\G_i$ is orthogonal, $\widetilde{(v^\dagger)} = (\tilde{v})^\dagger$ and
$$
0 < \tr([\mP_\zeta'(E,\dbar,h) \wedge \i \tilde{v} \wedge \tilde{v}^\dagger]_\sym) = \tr([\mP_\zeta'(G_i,\dbar,h) \wedge \i v \wedge v^\dagger]_\sym),
$$
so $\dbar$ (restricted to $G_i$) is a sub-solution of the $P$-critical equation on $(G_i,h)$ and
$$
0 = \mP_\zeta(E,\dbar,h) = \begin{pmatrix}
    \mP_\zeta(G_1,\dbar,h) & 0 & \hdots & 0\\
    0 & \mP_\zeta(G_2,\dbar,h) & \hdots & 0\\
    \vdots & \vdots & \ddots & \vdots \\
    0 & 0 & \hdots & \mP_\zeta(G_l,\dbar,h)
\end{pmatrix}.
$$
We deduce that for all $i$, $\mP_\zeta(G_i,\dbar,h) = 0$ thus $\G_i$ is $P$-critical.\\

\noindent\textbf{Step 3 :} If $\Hom(\G_i,\G_j) \neq 0$, $\G_i$ and $\G_j$ are orthogonally isomorphic.

Let $1 \leq i,j \leq l$. Assume there is a non-zero holomorphic $f : \G_i \rightarrow \G_j$. Using the natural inclusion $\Hom(\G_i,\G_j) \rightarrow \End(\E)$ and Lemma \ref{LEM:Aut réductif}, $f^\dagger : \G_j \rightarrow \G_i$ is holomorphic. We deduce that $f^\dagger f : \G_i \rightarrow \G_i$ is holomorphic. Since $f \neq 0$, $f^\dagger f \neq 0$ and by simplicity of $\G_i$, $f^\dagger f = a\Id_{\G_i}$ for some $a \in \C^*$.

Moreover, $f^\dagger f$ is Hermitian positive so $a \in \R_+^*$. We deduce that $\frac{f}{\sqrt{a}} : \G_i \rightarrow \G_j$ is a unitary isomorphism.\\

\noindent\textbf{Step 4 :} Conclusion.

Let $1 \leq i,j \leq m$. If $\G_i$ is isomorphic to $\G_j$, they are orthogonally isomorphic by Step 3. Therefore, $\G_i \oplus \G_j = \G_i^2$ orthogonally and holomorphically. Else, $\Hom(\G_i,\G_j) = \Hom(\G_j,\G_i) = 0$, still by Step 3. We conclude by regrouping together the pairwise isomorphic simple components of $\E$.
\end{proof}

\begin{remark}
    By Lemma \ref{LEM:P(E) = 0}, it automatically implies that for all $i$, $P_\zeta(G_i) = 0$.
\end{remark}

\section{Examples}\label{SEC:Exemples}

\subsection{The $Z$-critical equation}

Assume $X$ is Kähler and let $\omega$ be a Kähler form on it. Let $U \in H^{*,*}(X,\R)$ with $U_0 = 1$ and $\rho = (\rho_0,\ldots,\rho_n)$ be a vector of complex numbers. Let for all $i$, $u_i \in U_i$ a smooth closed form. Then, we can introduce following operators \cite[Definition 2.22]{DMS},
$$
\mathcal{Z}(E,\dbar,h) = \left(\sum_{d = 0}^n \rho_d\omega^d \wedge u \wedge \e^{\hat{F}(E,\dbar,h)}\right)_{(n,n)},
$$
$$
Z(E) = \int_X \tr(\mathcal{Z}(E,\dbar,h)) = \left(\sum_{d = 0}^n \rho_d[\omega]^d \wedge U \wedge \ch(E)\right)_{(n,n)}.
$$
Here, $\alpha_{(n,n)}$ denotes the component of bidegree $(n,n)$ of a form $\alpha$. We say that $Z$ is the associated \textit{central charge}. Assume that $E$ verifies $Z(E) \neq 0$ and write $Z(E) = \abs{Z(E)}\e^{\i\varphi(E)}$. The associated $P$-critical operator is
$$
\mP_\zeta(E,\dbar,h) = \Im(\e^{-\i\varphi(E)}\mathcal{Z}(E,\dbar,h)).
$$
The equation $\mP_\zeta(E,\dbar,h) = 0$ is the \textit{$Z$-critical equation}. By definition of $\varphi(E)$, we have $P_\zeta(E) = \Im(\e^{-\i\varphi(E)}Z(E)) = 0$ and we can compute that the closed real forms $\zeta_k$ are given by
\begin{equation}\label{EQ:Expression zeta}
    \zeta_k = \frac{1}{\abs{Z(E)}k!}\sum_{i + j \leq n}\sum_{l = 0}^{n - k} \Im(\overline{\rho_i}\rho_l)\left(\int_X [\omega]^i \cup U_j \cup \ch_{n - i - j}(E)\right)\omega^l \wedge u_{n - k - l}.
\end{equation}
The name "$P$-critical equation" is inspired from "$Z$-critical equation". All of the other examples below can be obtained as $Z$-critical equations whose parameters $(\rho,\omega,u)$ don't depend on the particular bundle $E$ we are working on. However, we see in (\ref{EQ:Expression zeta}) that the parameter $\zeta$ of the associated $P$-critical equation depends on the topology of $E$, which comes from the necessary condition $P_\zeta(E) = 0$.

The only known results about $Z$-critical equations on a bundle of rank $2$ or more and outside of asymptotic regimes come from Keller and Scarpa on surfaces \cite[Theorem 1.1]{Keller_Scarpa}. We can notice that any polynomial equation in the curvature can be recovered from a central charge. In particular, Keller and Scarpa's results apply to any $P$-critical equation. They show that the existence of a sub-solution implies a form of stability of the bundle with respect to sub-varieties of dimension $1$ \cite[Definition 1.2]{Keller_Scarpa}. If moreover, $E$ is indecomposable of rank $2$, they show that the existence of a solution implies a form of stability with respect to sub-bundles of rank $1$. They conjecture that the converse holds \cite[Conjecture 1.4]{Keller_Scarpa}.

Notice that under the condition $\Im\left(\frac{\rho_{n - 1}}{\rho_n}\right) > 0$, if we replace $\omega$ by $\epsilon^{-1}\omega$ with $\epsilon > 0$ small, the equation we obtain converges toward the HYM equation. This specific asymptotic regime know as the large volume limit, was studied by Dervan, McCarthy and Sektnan \cite{DMS}.

\subsection{The Hermitian Yang--Mills equation}

Let $g$ be a balanced metric on $X$ \textit{i.e.} the associated $(1,1)$-form $\omega$ verifies $d\omega^{n - 1} = 0$. In this case, we can set
$$
\mP_\zeta(E,\dbar,h) = \omega^{n - 1} \wedge \hat{F}(E,\dbar,h) - \frac{[\omega^{n - 1}] \cup c_1(E)}{\rk(E)[\omega^n]}\omega^n\Id_E.
$$
We immediately see that $P_\zeta(E) = 0$. Moreover, if $x \in X$ and $v \in T_x^{1,0}X \otimes \End(E_x)\backslash\{0\}$,
$$
\tr([\mP_\zeta'(E,\dbar,h) \wedge \i v \wedge v^\dagger]_\sym) = \omega^{n - 1} \wedge \tr(\i v \wedge v^\dagger) > 0,
$$
so the sub-solution condition is always verified. $\dbar$ is a solution to the $P$-critical equation if and only if it is HYM.

In this case, solutions are unique modulo the unitary gauge group and exist if and only if the bundle is slope polystable, as shown by Donaldson, Uhlenbeck and Yau \cite{Donaldson,Uhlenbeck_Yau,Lübke_Teleman}, a famous result known as the Kobayashi--Hitchin correspondence. Theorem \ref{THE:2} can be seen as a local version of this correspondence.

In the HYM case, Theorem \ref{THE:Cas zeta = zeta_0} (\textit{i.e.} when the equation does not vary), was already shown by Buchdahl and Schumacher in \cite{Buchdahl_Schumacher}. Notice however that their results involve a coarser topology than here, because they don't need to worry about working with sub-solutions since all $\dbar$ are sub-solutions to the HYM equation.

If we consider variations of the balanced metric $\omega^{n - 1}$, which can be translated as a variation of the associated form $\zeta$, the continuity result given by Theorem \ref{THE:4} generalises \cite{Delloque_2024}.

\subsection{The complex Monge--Ampère equation}

Let $\eta \in n!\ch_n(E)$ be a $(n,n)$-form on $X$ and
$$
\mP_\zeta(E,\dbar,h) = \hat{F}(E,\dbar,h)^n - \eta.
$$
In this case, $P_\zeta(E) = 0$ because $[\eta] = n!\ch_n(E)$. For all $x \in X$ and $v \in T_x^{1,0}X \otimes \End(E_x)$, if we set $\hat{F} = \hat{F}(E,\dbar,h)$ for some $\dbar$ and $h$ on $E$,
$$
\tr([\mP_\zeta'(E,\dbar,h)) \wedge \i v \wedge v^\dagger]_\sym) = \sum_{k = 1}^n \tr\left(\hat{F}(E,\dbar,h)^{k - 1} \wedge \i v \wedge \hat{F}(E,\dbar,h)^{n - 1 - k} \wedge v^\dagger \right).
$$
This equation is the complex Monge--Ampère equation, originally on line bundles. On a line bundle $L$, the sub-solution condition is equivalent to $\hat{F}(L,h,\dbar)$ being a Kähler form (thus $L$ must be ample). The condition of existence is simply the ampleness of $L$ (or more generally, the ampleness of the $(1,1)$-class where we look for a solution). It is the Calabi conjecture, solved by Yau \cite{Yau}. See \cite{Collins_Shi} for a survey.

It was later generalised by Datar and Pingali in \cite{Datar_Pingali} where the sub-solution condition is called the "cone condition". They show an equivalence between the existence of a solution, the existence of a sub-solution and a numerical condition \cite[Theorem 1.1]{Datar_Pingali}.

In the case of bundles of rank $2$ or more, The Monge--Ampère equation was introduced by Pingali \cite{Pingali}. He calls sub-solutions MA-positively curved \cite[page 2]{Pingali}. The positivity properties of vector bundles which satisfy the Monge--Ampère equation are studied by Ballal and Pingali in \cite{Ballal_Pingali}.

\subsection{The $J$-equation}

Assume $X$ is Kähler and set $\omega$ a Kähler form on $X$. Assume also that $n \geq 2$ and $E$ is a vector bundle such that $\ch_n(E) > 0$ and $[\omega] \cup \ch_{n - 1}(E) > 0$ and let
$$
\mP_\zeta(E,\dbar,h) = \frac{[\omega] \cup \ch_{n - 1}(E)}{n\ch_n(E)}\hat{F}(E,\dbar,h)^n - \omega \wedge \hat{F}(E,\dbar,h)^{n - 1}.
$$
In this case, $P_\zeta(E) = 0$ once again and for all $x \in X$ and $v \in T_x^{1,0}X \otimes \End(E_x)$, if we set $\hat{F} = \hat{F}(E,\dbar,h)$ for some $\dbar$ and $h$ on $E$,
\begin{align*}
    & \tr([\mP_\zeta'(E,\dbar,h)) \wedge \i v \wedge v^\dagger]_\sym)\\
    =\ & \tr\left(\frac{[\omega] \cup \ch_{n - 1}(E)}{n\ch_n(E)}\sum_{k = 0}^{n - 1} \hat{F}^k \wedge \i v \wedge \hat{F}^{n - 1 - k} \wedge v^\dagger - \omega \wedge \sum_{k = 0}^{n - 2} \hat{F}^k \wedge \i v \wedge \hat{F}^{n - 2 - k} \wedge v^\dagger\right).
\end{align*}
This equation is the $J$-equation. It was originally proposed by Donaldson \cite{Donaldson_J_eq} in the case of line bundles. G. Chen proved a Nakai--Moishezon like criterion \cite[Theorem 1.1]{Chen} which was later improved by Song \cite[Corollary 1.2]{Song}.

The $J$-equation was extended in the case of bundles of any rank by Takahashi \cite{Takahashi}. He calls sub-solutions $J$-positive \cite[Definition 3.1]{Takahashi}.

\subsection{The deformed Hermitian Yang--Mills equation}

Assume once again that $X$ is Kähler and set $\omega$ a Kähler form on $X$. Let
$$
\mP_\zeta(E,\dbar,h) = \Im(\e^{-\i\theta}(\omega\Id_E + \i\hat{F}(E,\dbar,h))^n).
$$
It is the dHYM equation. The constant $\theta \in \R$ must be the argument of
\begin{align*}
    Z_{\mathrm{dHYM}}(E) & = \int_X \tr((\omega\Id_E + \i\hat{F}(E,\dbar,h))^n)\\
    & = n!\int_X \i^n\tr(\e^{-\i\omega\Id_E + \hat{F}(E,\dbar,h)})\\
    & = n!\left(\i^n\e^{-\i[\omega]} \cup \ch(E)\right)_{(n,n)},
\end{align*}
at least when this quantity does not vanish, so the equality $P_\zeta(E) = 0$ is verified.

Once more, the usual equation is on line bundles. It comes from SYZ mirror symmetry as the dual of the special Lagrangian equation on a Calabi--Yau manifold \cite{LYZ,MMMS}. It was generalised by Collins and Yau \cite[Paragraph 8.1]{Collins_Yau_2018} where it is expected to be the dual of the special Lagrangian of a multi-section \cite{MMMS,LYZ,Minasian_Tomasiello,Grange_Minasian}. Notice that they also require that
$$
\Re(\tr((\omega\Id_E + \i\hat{F}(E,\dbar,h))^n)) > 0.
$$

Even in the line bundle case, the algebraic condition of existence of a solution is still a conjecture, known as the Collins--Yau--Jacob conjecture \cite[Conjecture 1.4]{Collins_Yau_Jacob}, which was only verified in particular cases. For example, in the super-critical phase, it was solved by G. Chen \cite[Theorem 1.7]{Chen} and later improved by Chu, Lee and Takahashi \cite[Theorem 1.2]{CLT} and by Ballal in the projective case \cite[Corollary 1]{Ballal}. It does not hold outside of the super-critical phase \cite[Remark 1.10]{Chen} where it remains an open problem. See \cite{Collins_Shi} for a survey.

\section{Moment maps and Kuranishi slices}\label{SEC:Kuranishi}

We know from Lemma \ref{LEM:P(E) = 0} that a necessary topological condition for $E$ to admit a solution to the $P$-critical equation is that $P_\zeta(E) = 0$. Let us introduce
$$
\mZ = \{\zeta \in \Omega^{*,*}(X,\R)|d\zeta = 0, P_\zeta(E) = 0\}.
$$
Notice that once we assume $\zeta$ is closed, the equality $P_\zeta(E) = 0$ is linear in $[\zeta] \in H^{*,*}(X,\R)$. In particular, $\mZ$ is a real sub-space of $\Omega^{*,*}(X,\R)$ which contains $\zeta_0$.

Let us fix $\zeta_0 \in \mC^{d - 1}(\mZ)$, with $d$ a positive integer. Let $\E_0 = (E,h,\dbar_0)$ be a $P_{\zeta_0}$-critical bundle. Assume moreover that $\dbar_0^2 = 0$ so $\E_0$ is actually a holomorphic vector bundle. By Proposition \ref{PRO:Polysimplicité}, we can decompose it both orthogonally and holomorphically as
$$
\E_0 = \bigoplus_{i = 1}^l \G_{0,i}^{m_i},
$$
where the $\G_{0,i}$ are simple and pairwise non-isomorphic. Moreover, for all $i \neq j$, $\Hom(\G_{0,i},\G_{0,j}) = 0$. Let $G = \Aut_0(\E_0)$ be the group of automorphisms of $\E_0$ of determinant $1$ and $K = \Aut_0(\E_0,h)$ the compact sub-group of unitary automorphisms of determinant $1$. Let $\frak{g} = \Lie(G)$ and $\frak{k} = \Lie(K)$. $\frak{g}$ is the complex space of trace-free holomorphic endomorphisms of $\E_0$ and $\frak{k} \subset \frak{g}$ is the real space of trace-free skew-Hermitian holomorphic endomorphisms of $\E_0$. By Lemma \ref{LEM:Aut réductif}, $\frak{g}$ is the complexification of $\frak{k}$, thus $G$ is reductive with maximal compact sub-group $K$.

This space inherits from the $L^2$ Hermitian product on $\Omega^0(X,\End(E))$. We simply call $\scal{\cdot}{\cdot}$ the restriction. This Hermitian product is $K$-invariant. In particular, balls in $V$ are preserved by the action of $K$.

Since there is no non-zero morphisms between two distinct $\G_{0,i}$, we have a commutative diagram
$$
\begin{tikzcd}
    K \ar[leftrightarrow,"\rotatebox{90}{$\sim$}"]{d}\ar[hookrightarrow]{r} & G \ar[leftrightarrow,"\rotatebox{90}{$\sim$}"]{d}\\
    \prod_{i = 1}^l U(m_i) \cap \mathrm{SL}_{m_1 + \cdots + m_l}(\R) \ar[hookrightarrow]{r} & \prod_{i = 1}^l \GL_{m_i}(\C) \cap \mathrm{SL}_{m_1 + \cdots + m_l}(\C)
\end{tikzcd}
$$
where the morphisms are the obvious ones. We impose this condition of having a determinant $1$/being trace-free because homotheties act trivially on Dolbeault operators.

\subsection{Infinite dimensional moment map}

The purpose of this article is to give necessary and sufficient conditions for small deformations of $\E_0$ to have solutions to the $P$-critical equations in a neighbourhood of $\dbar_0$ by using the moment map interpretation of this equation. We also make the parameter $\zeta$ of the equation vary near $\zeta_0$. Recall that for all $\zeta$,
$$
\mX_\zeta = \{\dbar : \Omega^0(X,E) \rightarrow \Omega^{0,1}(X,E)|\dbar \textrm{ is a sub-solution to the $P_\zeta$-critical equation}\}.
$$
And recall that $L_d^p(\mX_\zeta)$ is the set of all $\dbar \in \dbar_0 + L_d^p(\Omega^{0,1}(X,\End(E)))$ which are sub-solutions to the $P_\zeta$-critical equation. Similarly with the $\mC^d$ and the $\mC^{d + \alpha}$ topologies. Since $\zeta$ varies near $\zeta_0$, we need first to understand how it affects sub-solutions.

\begin{lemma}\label{LEM:X_zeta ouverts}
    If $(d - 1)p > 2n$, the inclusion
    $$
    \bigsqcup_{\zeta \in L_{d - 1}^p(\mZ)} (\zeta,L_d^p(\mX_\zeta)) \subset L_{d - 1}^p(\mZ) \times (\dbar_0 + L_d^p(\Omega^{0,1}(X,\End(E))))
    $$
    is open. Similarly, if $d \geq 1$, the inclusions
    $$
    \bigsqcup_{\zeta \in \mC^{d - 1}(\mZ)} (\zeta,\mC^d(\mX_\zeta)) \subset \mC^{d - 1}(\mZ) \times (\dbar_0 + \mC^d(\Omega^{0,1}(X,\End(E)))),
    $$
    $$
    \bigsqcup_{\zeta \in \mC^{d - 1,\alpha}(\mZ)} (\zeta,\mC^{d,\alpha}(\mX_\zeta)) \subset \mC^{d - 1,\alpha}(\mZ) \times (\dbar_0 + \mC^{d,\alpha}(\Omega^{0,1}(X,\End(E))))
    $$
    are open.
\end{lemma}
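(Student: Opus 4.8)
The plan is to reduce the statement to a continuity property of the map $(\zeta,\dbar)\mapsto \mathcal{P}'_\zeta(\dbar)$ into an appropriate space of bundle homomorphisms, and then to invoke that the sub-solution condition is a pointwise strict positivity condition on a continuous section, which is therefore stable under small perturbations. Concretely, fix $\zeta \in L^p_{d-1}(\mZ)$ and $\dbar \in L^p_d(\mX_\zeta)$; I want to produce a neighbourhood of $(\zeta,\dbar)$ in the product space contained in the total space $\bigsqcup_\zeta(\zeta,L^p_d(\mX_\zeta))$. First I would recall, as already used in the proof of Proposition \ref{PRO:Régularité Z laplaciens}, that under $(d-1)p>2n$ the Sobolev space $L^p_{d-1}$ is a Banach algebra, that $L^p_{d-1}\hookrightarrow \mC^0$ continuously, and that the curvature map $\dbar'\mapsto \hat F(\dbar')$ is smooth (in particular continuous) from $\dbar_0+L^p_d(\Omega^{0,1}(X,\End E))$ to $L^p_{d-1}(\Omega^{1,1}(X,\End E))$. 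Consequently the map
$$
(\zeta,\dbar')\longmapsto \mathcal{P}'_{\zeta}(\dbar') \in \Hom\!\left(L^p_{d-1}(\Omega^{1,1}(X,\End E)),L^p_{d-1}(\Omega^{n,n}(X,\End E))\right)
$$
is continuous, since it is polynomial in $\hat F(\dbar')$ (of degree $\le n-1$) with coefficients depending linearly and continuously on $\zeta$, and products land continuously in $L^p_{d-1}$.

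Next I would phrase the sub-solution condition as a condition on a single continuous object. Consider the bundle $W\to X$ whose fibre at $x$ is the real vector space of Hermitian forms on $T^{1,0}_xX\otimes\End(E_x)$, and the section
$$
S_{\zeta,\dbar'}\colon x\longmapsto \Big(v\mapsto \tr\big([\mathcal{P}'_\zeta(E,\dbar',h)\wedge \i v\wedge v^\dagger]_\sym\big)/\Vol\Big).
$$
By the previous paragraph and the continuous inclusion $L^p_{d-1}\hookrightarrow\mC^0$, the map $(\zeta,\dbar')\mapsto S_{\zeta,\dbar'}$ is continuous from $L^p_{d-1}(\mZ)\times(\dbar_0+L^p_d(\Omega^{0,1}(X,\End E)))$ into $\mC^0(X,W)$, and $\dbar'\in L^p_d(\mX_\zeta)$ is exactly the statement that $S_{\zeta,\dbar'}$ is a positive-definite Hermitian form at every point of $X$. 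Since $X$ is compact, positive-definiteness at every point of a continuous Hermitian-form-valued section is an \emph{open} condition in the $\mC^0$ topology on sections: if $S_{\zeta,\dbar}$ is positive definite, its smallest eigenvalue (relative to any fixed background metric on $T^{1,0}X\otimes\End E$) attains a positive minimum $\varepsilon_0>0$ on $X$, and any section within $\mC^0$-distance $<\varepsilon_0$ of it is still positive definite. Pulling this open neighbourhood back through the continuous map $(\zeta,\dbar')\mapsto S_{\zeta,\dbar'}$ yields an open neighbourhood of $(\zeta,\dbar)$ in the product contained in the total space, which proves the $L^p_d$ case. The $\mC^d$ and $\mC^{d,\alpha}$ cases are identical once one replaces the Banach-algebra input by the (easier) fact that $\mC^{d-1}$ and $\mC^{d-1,\alpha}$ are stable under products for $d\ge 1$, and uses the continuous inclusions $\mC^{d-1}\hookrightarrow\mC^0$, $\mC^{d-1,\alpha}\hookrightarrow\mC^0$; I would simply remark that the argument is verbatim the same.

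The main obstacle is bookkeeping rather than conceptual: one must check carefully that the curvature map and the formal derivative $\mathcal{P}'_\zeta$ really do land continuously in $L^p_{d-1}$ (not just $L^p_{d-1}$ in each factor separately), which is precisely where the hypothesis $(d-1)p>2n$ — giving the Banach-algebra property of $L^p_{d-1}$ from \cite[Theorem 4.12]{Adams_Fournier} — is used, and where the non-strict inequality $(d-1)p>2n$ versus $>n$ matters. Everything else is the soft observation that strict pointwise positivity of a continuous section over a compact base is $\mC^0$-open, together with continuity of the composite $(\zeta,\dbar')\mapsto S_{\zeta,\dbar'}$; no delicate estimate beyond those already recorded in Section \ref{SEC:Produits scalaires} is needed.
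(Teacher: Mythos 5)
Your proposal is correct and follows essentially the same route as the paper's proof: continuity of $(\zeta,\dbar')\mapsto\mP_\zeta'(\dbar')$ via the Banach-algebra property of $L^p_{d-1}$ under $(d-1)p>2n$ and the embedding into $\mC^0$, combined with the observation that pointwise strict positivity of a continuous object over the compact base $X$ is a $\mC^0$-open condition. Your version merely makes explicit (via the bundle of Hermitian forms and the minimum-eigenvalue argument) what the paper leaves as "being positive is an open condition".
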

\begin{proof}
For the first one, the inequality $(d - 1)p > 2n$ implies that the wedge product
$$
\wedge : \Omega^{p,q}(X,\End(E)) \times \Omega^{p',q'}(X,\End(E)) \rightarrow \Omega^{p + p',q + q'}(X,\End(E))
$$
is continuous when these spaces are all endowed with the $L_{d - 1}^p$ topology. Moreover, $L_{d - 1}^p(\mZ) \subset \mC^0(\mZ)$ is continuous too. We deduce that $\mP_\zeta'(\dbar)$ being positive is an open condition on $\zeta \in L_d^p(\mZ)$ and $\dbar \in \dbar_0 + L_d^p(\Omega^{0,1}(X,\End(E)))$. We deduce the first result.

For the $\mC^d$ and the $\mC^{d,\alpha}$ topologies, the argument is the same.
\end{proof}

From now on, let us fix $d \geq 1$. We shall only work with the $\mC^d$ topologies for simplicity but all our results also hold for $L_d^p$ and $\mC^{d,\alpha}$ topologies as long as $(d - 1)p > 2n$ (resp. $d \geq 1$) in order to satisfy Proposition \ref{PRO:Régularité Z laplaciens} and Lemma \ref{LEM:X_zeta ouverts}. The reason why we choose this topology is that it is the coarsest for $d = 1$. Indeed, by Sobolev embedding theorems, $L_d^p \subset \mC^1$ if $(d - 1)p > 2n$ and clearly $\mC^{d,\alpha} \subset \mC^{1,\alpha} \subset \mC^1$.

By Lemma \ref{LEM:X_zeta ouverts} and by definition of the product topology, there is an open neighbourhood $\mU_d \subset \mC^{d - 1}(\mZ)$ of $\zeta_0$ and an open neighbourhood $\mX_d$ of $\dbar$ in $\dbar_0 + \mC^d(\Omega^{0,1}(X,\End(E)))$ such that
$$
\mU_d \times \mX_d \subset \bigsqcup_{\zeta \in \mC^{d - 1}(\mZ)} (\zeta,\mC^d(\mX_\zeta)).
$$
In other words, for all $(\zeta,\dbar) \in \mU_d \times \mX_d$, $\dbar$ is a sub-solution to the $P$-critical equation with parameter $\zeta$. From now on, we only consider $\zeta \in \mU_d$.

Let for all $\zeta$,
$$
\mu_{\infty,\zeta} : \fonction{\mX_d}{\mC^{d - 1}(\Lie(\G(E,h)))}{\dbar}{-\frac{2\i\pi}{\Vol}\mP_\zeta(\dbar)}.
$$
Notice that $\Lie(\G(E,h)) = \Omega^0(X,\i\End_H(E,h))$ where $\End_H(E,h)$ denotes the real vector bundle of Hermitian endomorphisms of $E$ (for the metric $h$), so $\i\End_H(E,h)$ is the bundle of skew-Hermitian endomorphisms of $E$.

By Propositions \ref{PRO:Linéarisation Z-critique} and \ref{PRO:Régularité Z laplaciens}, $\mu_{\infty,\zeta}$ is smooth on each germ of $\mC^{d + 1}(\G^\C(E))$-orbit and for all $\dbar \in \mX_d$,
$$
\frac{d}{ds}|_{s = 0}\mu_{\infty,\zeta}(\e^s \cdot \dbar) : v \mapsto -\i\Delta_{\zeta,\nabla}\Re(v) + 2\pi\left[\Im(v),\mP_\zeta(\dbar)\right].
$$
The problem of finding $P$-critical connections now reduces to finding zeroes of $\mu_{\infty,\zeta}$.

In Sub-section \ref{SEC:Produits scalaires}, we introduced $L^2$ Hermitian products on $\Omega^{0,1}(X,\End(E))$ depending on parameters $\zeta$ and sub-solutions $\dbar$. Let $\Omega_{\infty,\zeta}$ be the associated Kähler form,
$$
\Omega_{\infty,\zeta,\dbar}(\alpha,\beta) = \Im(\scal{\alpha}{\beta}_{L^2(\dbar,\zeta)}) = \int_X \Re\left(\tr([\mP_\zeta'(\dbar) \wedge \alpha \wedge \beta^\dagger]_\sym))\right).
$$
Using the same arguments as in the proof of Proposition \ref{PRO:Régularité Z laplaciens}, the map
$$
\fonction{\mU_d \times \mX_d}{\Hom(\mC^d(\Omega^{0,1}(X,\End(E))) \otimes \mC^d(\Omega^{0,1}(X,\End(E))),\mC^{d - 1}(\Omega^0(X,\End(E))))}{(\zeta,\dbar)}{\mP_\zeta'(\dbar)}
$$
is well-defined ans smooth. Moreover, it is $\mC^{d + 1}(\G(E,h))$-invariant. We deduce that $\Omega_{\infty,\zeta}$ extends as a Kähler form on $\mX_d$ and it varies smoothly with $\zeta \in \mC^{d - 1}(\mZ)$. However, it does not make it locally complete because the induced norm is an $L^2$ norm. The following Proposition comes from \cite[Theorem 2.45]{DMS}. See also \cite[Theorem 6.7]{Dervan_Hallam} for reference.

\begin{proposition}\label{PRO:Application moment dimension infinie}
    The (germ of) action of $\mC^{d + 1}(\G(E,h))$ on $(\mX_d,\Omega_{\infty,\zeta})$ is Hamiltonian with associated $\mC^{d + 1}(\G(E,h))$-equivariant moment map $\mu_{\infty,\zeta}$.
\end{proposition}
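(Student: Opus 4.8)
The plan is to check directly the two defining properties of a moment map for the germ of $\mC^{d+1}(\G(E,h))$-action on the (formally) Kähler manifold $(\mX_d,\Omega_{\infty,\zeta})$: equivariance of $\mu_{\infty,\zeta}$, and the Hamiltonian identity $d\langle\mu_{\infty,\zeta},\xi\rangle=\iota_{\mathcal{V}_\xi}\Omega_{\infty,\zeta}$ for every $\xi\in\Lie(\G(E,h))=\Omega^0(X,\i\End_H(E,h))$, where $\mathcal{V}_\xi$ is the vector field generated by $\xi$. That $\Omega_{\infty,\zeta}$ is a Kähler form on $\mX_d$ depending smoothly on $\zeta$, and that the action preserves it (from the $\mC^{d+1}(\G(E,h))$-invariance of $(\zeta,\dbar)\mapsto\mP_\zeta'(\dbar)$), has just been recorded; and differentiating $\e^{s\xi}\cdot\dbar=\e^{s\xi}\dbar\e^{-s\xi}$ at $s=0$ identifies $\mathcal{V}_\xi(\dbar)=-\dbar\xi\in\Omega^{0,1}(X,\End(E))$, a tangent vector to $\mX_d$ at $\dbar$. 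At the level of smooth objects the statement is an instance of the moment map constructions of Dervan--Hallam \cite[Theorem 6.7]{Dervan_Hallam} and Dervan--McCarthy--Sektnan \cite[Theorem 2.45]{DMS}, so the genuinely new point is to propagate it through the $\mC^d$-completions using the regularity already available.

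For equivariance, a unitary gauge transformation $u$ conjugates the Chern connection, so $\hat{F}(u\cdot\dbar)=u\hat{F}(\dbar)u^{-1}$ and hence $\mP_\zeta(u\cdot\dbar)=u\mP_\zeta(\dbar)u^{-1}$; this identity already appears in the proof of Proposition \ref{PRO:Linéarisation Z-critique}. It gives $\mu_{\infty,\zeta}(u\cdot\dbar)=u\,\mu_{\infty,\zeta}(\dbar)\,u^{-1}$, which is exactly the coadjoint action of $u$ once $\Lie(\G(E,h))$ is identified with a subspace of its dual via the $K$-invariant $L^2$ product; differentiating at $u=\Id$ yields the infinitesimal equivariance $d\langle\mu_{\infty,\zeta},\xi\rangle(\mathcal{V}_\eta)=\langle\mu_{\infty,\zeta},[\xi,\eta]\rangle$.

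For the Hamiltonian identity I would fix $\dbar\in\mX_d$ and a tangent vector $\beta\in\Omega^{0,1}(X,\End(E))$. Moving $\dbar$ to $\dbar+t\beta$ forces the Chern connection to move by $t(\beta-\beta^\dagger)$ in order to stay $h$-unitary, so the $(1,1)$-part of the reduced curvature moves by $-\frac{1}{2\i\pi}(\partial\beta-\dbar\beta^\dagger)$, whence
$$\frac{d}{dt}\Big|_{t=0}\mP_\zeta(\dbar+t\beta)=-\frac{1}{2\i\pi}\bigl[\mP_\zeta'(\dbar)\wedge(\partial\beta-\dbar\beta^\dagger)\bigr]_\sym.$$
Pairing $\mu_{\infty,\zeta}$ against $\xi$ with the $L^2$ product and differentiating turns $d\langle\mu_{\infty,\zeta},\xi\rangle(\beta)$ into $\int_X\tr\bigl([\mP_\zeta'(\dbar)\wedge(\partial\beta-\dbar\beta^\dagger)]_\sym\,\xi^\dagger\bigr)$; I would then integrate by parts, using the Bianchi identity for $\hat{F}(\dbar)$ and closedness of the $\zeta_k$ exactly as in the proof of Lemma \ref{LEM:Adjoints d et dbar}, to move $\partial$ and $\dbar$ off $\beta$ and $\beta^\dagger$ and onto $\xi^\dagger$. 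Using $\xi^\dagger=-\xi$ together with $(\dbar\xi)^\dagger=\partial(\xi^\dagger)$ and $(\partial\xi)^\dagger=\dbar(\xi^\dagger)$, the outcome collapses to $\int_X\Re\tr\bigl([\mP_\zeta'(\dbar)\wedge(-\dbar\xi)\wedge\beta^\dagger]_\sym\bigr)=\Omega_{\infty,\zeta,\dbar}\bigl(\mathcal{V}_\xi(\dbar),\beta\bigr)$, the asserted identity. Finally, Proposition \ref{PRO:Régularité Z laplaciens} and the smoothness of $(\zeta,\dbar)\mapsto\mP_\zeta'(\dbar)$ make every map above extend continuously, resp.\ smoothly, to the $\mC^d$-completions, with the expected one-derivative loss for $\mu_{\infty,\zeta}$ (values in $\mC^{d-1}$).

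The main obstacle I anticipate is the bookkeeping inside the integration by parts: one must track the normalisations, graded signs and slot positions of the symmetric brackets $[\,\cdot\,]_\sym$ --- in particular the distinction $[\mP_\zeta'(\dbar)\wedge\beta\wedge\gamma]_\sym\neq[\mP_\zeta'(\dbar)\wedge(\beta\wedge\gamma)]_\sym$, and the fact that a degree-zero factor may be cycled freely under the trace and slid in and out of a symmetric bracket --- to be certain the boundary-free rearrangement lands precisely on $-\dbar\xi$ placed in the correct slot. Once the formalism of Section \ref{SEC:Préliminaires} is used consistently this is the same manipulation already carried out in Lemma \ref{LEM:Adjoints d et dbar}, but it is the step where sign and placement errors are easiest to make; the $\mC^d$ upgrade, by contrast, is routine given the smoothness statements already established.
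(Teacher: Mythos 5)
Your proposal is correct in substance: the paper itself gives no proof of this proposition, deferring entirely to \cite[Theorem 2.45]{DMS} (with \cite[Theorem 6.7]{Dervan_Hallam} as a cross-reference), and your direct verification --- conjugation-equivariance of $\hat{F}$ for the equivariance, the first variation $\delta\hat{F}=-\frac{1}{2\i\pi}(\partial\beta-\dbar\beta^\dagger)$ followed by integration by parts via Bianchi, closedness of the $\zeta_k$ and the adjoint identities of Lemma \ref{LEM:Adjoints d et dbar} for the Hamiltonian condition --- is exactly the computation that citation encapsulates, with the regularity upgrade to $\mC^d$-completions handled by Proposition \ref{PRO:Régularité Z laplaciens} as you say. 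The only step you leave genuinely unexecuted is the final sign-and-slot collapse onto $\Omega_{\infty,\zeta,\dbar}(-\dbar\xi,\beta)$, which you correctly flag as the delicate point; it is the same manipulation as in Lemma \ref{LEM:Adjoints d et dbar} and does not hide any new idea, so I consider the argument complete at the paper's own level of detail.
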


\begin{remark}
    We could have used Dervan--Hallam theory \cite[Theorem 6.7]{Dervan_Hallam} directly instead of constructing $\Omega_{\infty,\zeta}$ but it wouldn't have given us to continuity with respect to $\zeta$ with the involved topologies. We use here the method of \cite{DMS} to define $\Omega_{\infty,\zeta}$ thanks to the formal derivative of $\mP_\zeta$. It has the advantage of giving a more explicit definition of this Kähler form and to grant us the wanted continuity and smoothness results.
\end{remark}

\subsection{Kuranishi slices and finite dimensional moment maps}

An important tool in deformation theory is the Kuranishi slice, which is basically a way to parametrise all complex deformations of a complex object (here, a holomorphic vector bundle) by a finite dimensional analytic space. It can be found in various forms in literature \cite{Kuranishi}, \cite[Section 7.3]{Kobayashi}, \cite[Theorem 1]{Buchdahl_Schumacher}. We give here a small informal exposition.

The gauge group $\G^\C(E)$ acts on the space of $(0,1)$-forms with values in $\End(E)$ by conjugation,
$$
f \cdot \alpha = f\alpha f^{-1}.
$$
The closed positive $(n - 1,n - 1)$-form $\frac{\omega^{n - 1}}{(n - 1)!}$ on $X$ defines an $L^2$ norm on the space of $(0,1)$-forms with values in the endomorphisms of $E$, on which $\G(E,h)$ acts by unitary actions,
$$
\norme{\alpha}_{L^2} = \int_X \tr(\i\alpha^\dagger \wedge \alpha) \wedge \frac{\omega^{n - 1}}{(n - 1)!}.
$$
Let $\Lambda$ be the contraction with respect to $\omega$ on $(1,1)$-forms,
$$
\Lambda\alpha = \frac{\alpha \wedge \frac{\omega^{n - 1}}{(n - 1)!}}{\frac{\omega^n}{n!}}.
$$
For this norm, the adjoint of the $\dbar_0$ (still on $\End(E)$) operator is
$$
\dbar_0^\bullet = -\star\partial_0\star,
$$
where $\star$ is the Hodge star. We use the notation $\bullet$ instead of $*$ to avoid confusion with the adjoints of $\dbar_0$ with respect to the $\scal{\cdot}{\cdot}_{L^2(\dbar_0,\zeta)}$ introduced in Sub-section \ref{SEC:Produits scalaires}. Moreover, on the space of sections,
$$
\dbar_0^\bullet = -\i\Lambda\partial_0 \textrm{ on } \Omega^0(X,\End(E)).
$$
It is a well-known fact on Kähler manifolds and it remains true for balanced manifolds at degree $0$. See \cite[Lemma 2.1]{Delloque_2024} or Lemma \ref{LEM:Adjoints d et dbar} in the balanced HYM case.

Let $V = H^{0,1}(X,\End(\E))$ be the space of harmonic $(0,1)$-forms \textit{i.e.} the space of\\
$b \in \Omega^{0,1}(X,\End(E))$ such that $\dbar_0b = \dbar_0^\bullet b = 0$. It is preserved by the action of $G \subset \G^\C(E)$. We endow it with the induced $L^2$ norm so $G$ acts on $V$ and $K$ acts on $V$ by unitary actions.

Hodge theory tells us that $V$ is finite dimensional and we have an $L^2$ orthogonal decomposition \cite[Theorem 7.1]{Demailly},
\begin{equation}\label{EQ:Hodge}
    \Omega^{0,1}(X,\End(E)) = \lefteqn{\overbrace{\phantom{\im(\dbar_0) \oplus V}}^{= \ker(\dbar_0)}} \im(\dbar_0) \oplus \underbrace{V \oplus \im(\dbar_0^\bullet)}_{= \ker(\dbar_0^\bullet)}.
\end{equation}
When we apply a gauge transform of the form $\e^s$ to $\dbar_0$, we obtain $\e^s \cdot \dbar_0 = \dbar_0 + \e^s\dbar_0(\e^{-s})$, which is close to $\dbar_0 - \dbar_0s$ when $s$ is close to $0$. Therefore, the direction tangent to the gauge orbit of $\dbar_0$ at $\dbar_0$ is $\im(\dbar_0)$.

Consequently, the small deformations of $\dbar_0$ can be chosen in the orthogonal direction \textit{i.e.} $\ker(\dbar_0^\bullet)$. Moreover, $\dbar = \dbar_0 + \gamma$ is integrable if and only if $\dbar_0\gamma + \gamma \wedge \gamma = 0$. We see that in this case, if $\gamma$ is small, $\dbar_0\gamma = -\gamma \wedge \gamma$ is small with quadratic order in $\gamma$. We deduce that small integrable deformations of $\dbar_0$ can be chosen in the direction of $\ker(\dbar_0) \cap \ker(\dbar_0^\bullet) = V$, which is finite dimensional.

The \textit{Kuranishi slice} is the rigorous statement of this observation.

\begin{proposition}\label{PRO:Propriétés Kuranishi}
    There is and open ball $B \subset V$ centred at $0$ and a holomorphic embedding $\Phi : (B,0) \rightarrow (\mC^\infty(\Omega^{0,1}(X,\End(E))),0)$ such that,
    \begin{enumerate}
        \item $\Phi$ takes values in $\ker(\dbar_0^\bullet)$.
        \item $\Phi$ is $G$ invariant \textit{i.e.} for all $b \in B$ and all $g \in G$ such that $g \cdot b \in B$, $\Phi(g \cdot b) = g \cdot \Phi(b)$.
        \item For all $b$, $b = \Phi(b) + \dbar_0^\bullet\mathrm{Green}(\Phi(b) \wedge \Phi(b))$ where $\mathrm{Green}$ is the Green operator of the Laplace--Beltrami operator $\dbar_0\dbar_0^\bullet + \dbar_0^\bullet\dbar_0$. In particular, $d\Phi(0) : v \mapsto v$.
        \item For all $\dbar$ which is $\mC^d$ close enough to $\dbar_0$ and integrable, there is a unique $s \in \ker(\dbar_0)^\bot$ whose $\mC^{d + 1}$ norm is bounded by a multiple of the $\mC^d$ norm of $\dbar - \dbar_0$ such that $\e^s \cdot \dbar \in \Phi(B)$.
        \item The set of all $b \in B$ such that $\dbar_0 + \Phi(b)$ is integrable is an analytic subset of $B$ whose equation is given by $\Pi(\Phi(b) \wedge \Phi(b)) = 0$, where $\Pi : \Omega^{0,2}(X,\End(E)) \rightarrow H^{0,2}(X,\End(\E_0))$ is the $L^2$ orthogonal projection onto the space of harmonic $(0,2)$-forms.
        \item The map
        $$
        G \cdot b \mapsto \mC^{d + 1}(\G^\C(E)) \cdot \Phi(b)
        $$
        is a well-defined and continuous. Moreover, it induces a local homeomorphism from the space of $b$ modulo $G$ such that $\dbar_0 + \Phi(b)$ is integrable to the space of integrable $\mC^d$ Dolbeault operators modulo $\mC^{d + 1}(\G^\C(E))$.
    \end{enumerate}
\end{proposition}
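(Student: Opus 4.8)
The construction is the classical Kuranishi one (see \cite{Kuranishi}, \cite[Section 7.3]{Kobayashi}, \cite[Theorem 1]{Buchdahl_Schumacher}); the plan is to carry it out while keeping track of the two features specific to this situation, namely the presence of a fixed balanced metric on $X$ and the fact that automorphisms of $\E_0$ are parallel. Write $\mathrm{Green}$ for the Green operator of the Laplace--Beltrami operator $\Delta''_0 = \dbar_0\dbar_0^\bullet + \dbar_0^\bullet\dbar_0$ on $\End(E)$-valued forms and $H$ for the $L^2$-orthogonal projection onto harmonic forms. First I would solve, for $b$ in a small ball $B \subset V$, the fixed-point equation
\[
\gamma = b - \dbar_0^\bullet\mathrm{Green}(\gamma \wedge \gamma)
\]
in, say, $\mC^{d+1}(\Omega^{0,1}(X,\End(E)))$: the right-hand side is a smooth map, holomorphic in $b$ and $\gamma$, whose $\gamma$-differential at $\gamma = 0$ is the identity, so the holomorphic implicit function theorem (or a contraction-mapping argument) produces a unique small solution $\Phi(b)$, holomorphic in $b$. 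A bootstrap on this equation, the quadratic term $\mathrm{Green}(\gamma\wedge\gamma)$ gaining two derivatives, shows $\Phi(b) \in \mC^\infty$; reading off the equation gives $\Phi(0) = 0$, $d\Phi(0) = \id$, item 3, and, since $b \in \ker\dbar_0^\bullet$ and $(\dbar_0^\bullet)^2 = 0$, item 1. For item 2, Lemma \ref{LEM:Aut réductif} (which uses that $\E_0$ is $P$-critical) says every $g \in G = \Aut_0(\E_0)$ is parallel, so conjugation $\alpha \mapsto g\alpha g^{-1}$ on $\End(E)$-valued forms commutes with $\dbar_0$, with $\dbar_0^\bullet = -\star\partial_0\star$, with $\mathrm{Green}$ and with the wedge product, and agrees with the action $g\cdot b$ on $V$; hence the fixed-point equation is $G$-equivariant, and uniqueness of its solution forces $\Phi(g\cdot b) = g\cdot\Phi(b)$ whenever $b, g\cdot b \in B$.

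For item 5, set $\gamma = \Phi(b)$ and $\eta = \dbar_0\gamma + \gamma\wedge\gamma$, so integrability of $\dbar_0 + \gamma$ means $\eta = 0$. Using $\Delta''_0\mathrm{Green} = \id - H$ together with the fixed-point equation one gets $\eta = H(\gamma\wedge\gamma) + \dbar_0^\bullet\dbar_0\mathrm{Green}(\gamma\wedge\gamma)$, so $\dbar_0^\bullet\eta = 0$ and $H\eta = \Pi(\gamma\wedge\gamma)$; a Bianchi-type computation (Leibniz plus associativity, the cubic terms cancelling) gives $\dbar_0\eta = \eta\wedge\gamma - \gamma\wedge\eta$. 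Feeding this back through the Hodge decomposition yields $\eta - H\eta = \dbar_0^\bullet\mathrm{Green}(\eta\wedge\gamma - \gamma\wedge\eta)$, whose norm is at most $C\norme{\gamma}\norme{\eta} \le \tfrac12\norme{\eta}$ once $B$ is small, so $\eta = 0$ if and only if $H\eta = \Pi(\Phi(b)\wedge\Phi(b)) = 0$; since $b \mapsto \Pi(\Phi(b)\wedge\Phi(b))$ is holomorphic with values in the finite-dimensional space $H^{0,2}(X,\End(\E_0))$, its zero locus is an analytic subset of $B$. For item 4, given an integrable $\dbar$ that is $\mC^d$-close to $\dbar_0$, I would solve $\dbar_0^\bullet(\e^s\cdot\dbar - \dbar_0) = 0$ for $s \in \ker(\dbar_0)^\bot$ by the implicit function theorem: the $s$-differential at $(0,\dbar_0)$ is $-\Delta''_0$ on sections, which restricts to an isomorphism from $\ker(\dbar_0)^\bot$ onto $\im\Delta''_0 = (\ker\dbar_0)^\bot$, and elliptic estimates give $\norme{s}_{\mC^{d+1}} \le C\norme{\dbar - \dbar_0}_{\mC^d}$. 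Then $\gamma := \e^s\cdot\dbar - \dbar_0$ lies in $\ker\dbar_0^\bullet$, is integrable and small; the Hodge decomposition of $\gamma$ combined with $\dbar_0\gamma = -\gamma\wedge\gamma$ gives $\gamma = H\gamma - \dbar_0^\bullet\mathrm{Green}(\gamma\wedge\gamma)$, so by uniqueness of the fixed point $\gamma = \Phi(H\gamma) \in \Phi(B)$, i.e. $\e^s\cdot\dbar \in \Phi(B)$, and $s$ is unique.

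Finally, for item 6, well-definedness and continuity of $G\cdot b \mapsto \mC^{d+1}(\G^\C(E))\cdot\Phi(b)$ on the integrable locus follow from item 2 and continuity of $\Phi$; the candidate inverse sends an integrable $\dbar$ near $\dbar_0$ to $G\cdot b$, where $\dbar_0 + \Phi(b) = \e^{s(\dbar)}\cdot\dbar$ as in item 4. Checking that this descends to $\mC^{d+1}(\G^\C(E))$-orbits and is a two-sided inverse reduces to the assertion that two slice points $\dbar_0 + \Phi(b_1)$ and $\dbar_0 + \Phi(b_2)$ lying in a common $\mC^{d+1}(\G^\C(E))$-orbit must already lie in a common $G$-orbit; this is the transversality of the slice $\dbar_0 + \ker\dbar_0^\bullet$ to the gauge orbits, which I would extract from the Hodge decomposition (\ref{EQ:Hodge}) together with the uniqueness in item 4. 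I expect the main obstacle to be exactly this last reduction, i.e. establishing that the Kuranishi slice is a genuine local model for deformations modulo gauge, together with the careful bookkeeping of Sobolev/Hölder indices needed to represent a $\mC^d$-small gauge equivalence between slice points by an element of $\mC^{d+1}(\G^\C(E))$; the remaining steps are routine elliptic theory and contraction estimates.
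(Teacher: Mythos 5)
The paper itself does not prove this proposition: it is stated as a known result, with references to \cite{Kuranishi}, \cite[Section 7.3]{Kobayashi} and \cite[Theorem 1]{Buchdahl_Schumacher}, preceded only by an informal discussion. Measured against the standard construction, your treatment of items 1--5 is correct: the fixed-point equation $\gamma = b - \dbar_0^\bullet\mathrm{Green}(\gamma\wedge\gamma)$ with bootstrap, the equivariance via Lemma \ref{LEM:Aut réductif} (parallelism of the automorphisms is exactly what makes conjugation commute with $\dbar_0^\bullet = -\star\partial_0\star$ and with the Green operator even for non-unitary $g$), the obstruction computation for item 5, and the gauge-fixing argument for item 4 are all sound, and they go through on a balanced manifold since only the $\dbar$-Laplacian is used.

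The gap is in item 6, and it sits exactly where you suspect, but the fix you propose does not close it. Injectivity of the induced map on quotients requires: if $f\cdot\dbar_{b_1} = \dbar_{b_2}$ for \emph{some} $f \in \mC^{d+1}(\G^\C(E))$ --- with no smallness assumption on $f$ --- and $b_1,b_2$ are small, then $b_1 \in G\cdot b_2$. Transversality of the slice to the gauge orbits, i.e.\ the Hodge decomposition (\ref{EQ:Hodge}) combined with the uniqueness in item 4, only controls gauge transformations in a neighbourhood of the identity and says nothing about a large $f$. The missing ingredient is the rigidity statement the paper isolates as Lemma \ref{LEM:Sections dbar + alpha holomorphes} (essentially \cite[Proposition 4.5]{Buchdahl_Schumacher}): for a holomorphic bundle $\F$ all of whose global holomorphic sections are parallel, every $(\dbar_F+\gamma)$-holomorphic section with $\gamma \in \ker\dbar_F^\bullet$ small in $\mC^0$ is already $\dbar_F$-holomorphic; its proof uses the degree-zero identity $\dbar_F^\bullet = -\i\Lambda\partial_F$ on balanced manifolds and is not a formal consequence of the slice construction. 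Applied to $\F = \End(\E_0)$ --- noting that the perturbation $\Phi(b_2)(\cdot) - (\cdot)\Phi(b_1)$ of $\dbar_0$ on $\End(E)$ lies in $\ker\dbar_0^\bullet$ because each $\Phi(b_i)$ does --- it shows that the intertwiner $f$ is $\dbar_0$-holomorphic and hence lies in $G$ up to a scalar. This is precisely how the paper later establishes Corollary \ref{COR:Sections dbar_b holomorphes} and Lemma \ref{LEM:Constuction B' subset B''}, which is the form of item 6 it actually uses. Until you supply this argument (or an equivalent one), item 6 is not proved.
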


Moreover, by openness of $\mX_d$, we may assume, up to shrinking $B$, that $\Phi(B) \subset \mX_d$ so the closed $(1,1)$-forms
$$
\Omega_\zeta = \Phi^*\Omega_{\infty,\zeta}
$$
are well-defined and positive on the whole $B$. They define Kähler metrics which vary smoothly with $\zeta$. From now on, when $b \in B$, we set
$$
\dbar_b = \dbar_0 + \Phi(b) \in \mX_d.
$$
Let $\Pi_{\frak{k}} : \Omega^0(X,\End(E)) \rightarrow \frak{k}$ be the $L^2$ orthogonal projection on $\frak{k}$ and
$$
\mu_\zeta : \fonction{B}{\frak{k}}{b}{\Pi_{\frak{k}}\mu_{\infty,\zeta}(\dbar_b)}.
$$
Then, $\mu$ is a smooth map between finite dimensional spaces by composition and it can also be interpreted as a moment map. It varies smoothly with $\zeta$ too.

\begin{proposition}
    The action of $K$ on $(B,\Omega_\zeta)$ is Hamiltonian with associated $K$-equivariant moment map $\mu_\zeta$.
\end{proposition}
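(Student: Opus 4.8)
The plan is to obtain this statement by restricting the infinite-dimensional moment map picture of Proposition~\ref{PRO:Application moment dimension infinie} along the Kuranishi parametrisation $b \mapsto \dbar_b = \dbar_0 + \Phi(b)$. First I would check that this parametrisation is $K$-equivariant: since $K \subset G$ and the ball $B$ is preserved by the unitary $K$-action, property~(2) of Proposition~\ref{PRO:Propriétés Kuranishi} gives $\Phi(u \cdot b) = u \cdot \Phi(b)$ for all $u \in K$ and $b \in B$; moreover every $u \in K$ is a holomorphic automorphism of $\E_0$, so $u \cdot \dbar_0 = \dbar_0$ and hence $u \cdot \dbar_b = u(\dbar_0 + \Phi(b))u^{-1} = \dbar_{u \cdot b}$. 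In particular $K$ preserves $\Phi(B) \subset \mX_d$, so the restricted action is well defined. Since $\Omega_{\infty,\zeta}$ is $\mC^{d + 1}(\G(E,h))$-invariant, its pullback $\Omega_\zeta = \Phi^*\Omega_{\infty,\zeta}$ is $K$-invariant, and it is closed and non-degenerate because it is the Kähler form already built on $B$; so $K$ acts on $(B,\Omega_\zeta)$ by symplectomorphisms.

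Next I would verify the moment map identity. Fix $\xi \in \frak{k}$, regarded as an element of $\Lie(\G(E,h)) = \Omega^0(X,\i\End_H(E,h))$, and let $X_\xi$ denote its fundamental vector field on $B$ and on $\mX_d$. Differentiating $\dbar_{\e^{t\xi} \cdot b} = \e^{t\xi} \cdot \dbar_b$ at $t = 0$ shows that the two vector fields are $\Phi$-related, so that, using Proposition~\ref{PRO:Application moment dimension infinie},
$$
\iota_{X_\xi}\Omega_\zeta = \Phi^*\bigl(\iota_{X_\xi}\Omega_{\infty,\zeta}\bigr) = \Phi^*\,d\scal{\mu_{\infty,\zeta}}{\xi}_{L^2} = d\bigl(b \mapsto \scal{\mu_{\infty,\zeta}(\dbar_b)}{\xi}_{L^2}\bigr).
$$
Then I would use that $\Pi_{\frak{k}}$ is the $L^2$-orthogonal projection onto $\frak{k}$: for $\xi \in \frak{k}$ one has $\scal{\mu_{\infty,\zeta}(\dbar_b)}{\xi}_{L^2} = \scal{\Pi_{\frak{k}}\mu_{\infty,\zeta}(\dbar_b)}{\xi} = \scal{\mu_\zeta(b)}{\xi}$, whence $\iota_{X_\xi}\Omega_\zeta = d\scal{\mu_\zeta}{\xi}$, which is the moment map identity.

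Finally, for the $K$-equivariance of $\mu_\zeta$ I would start from the $\mC^{d + 1}(\G(E,h))$-equivariance of $\mu_{\infty,\zeta}$, which gives $\mu_{\infty,\zeta}(u \cdot \dbar_b) = \mathrm{Ad}_u\,\mu_{\infty,\zeta}(\dbar_b)$; combined with the $K$-equivariance of $b \mapsto \dbar_b$ this yields $\mu_{\infty,\zeta}(\dbar_{u \cdot b}) = \mathrm{Ad}_u\,\mu_{\infty,\zeta}(\dbar_b)$, and since the $L^2$ Hermitian product on $\Omega^0(X,\End(E))$ is $K$-invariant and $\frak{k}$ is stable under $\mathrm{Ad}_K$, the projection $\Pi_{\frak{k}}$ commutes with $\mathrm{Ad}_u$ for $u \in K$, giving $\mu_\zeta(u \cdot b) = \mathrm{Ad}_u\,\mu_\zeta(b)$. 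I do not expect a genuine analytic difficulty here, as the hard work — smoothness of $\mu_\zeta$, the Kähler property of $\Omega_\zeta$, and the infinite-dimensional moment map identity — is already available; the only points requiring care are the bookkeeping ones, namely that the fundamental vector fields really are $\Phi$-related in this Banach-manifold setting (immediate from the equivariance of $\Phi$ and the smoothness already established) and the legitimacy of replacing the pairing with $\mu_{\infty,\zeta}$ by the pairing with its projection $\mu_\zeta$ for $\xi \in \frak{k}$, which is just the orthogonality of $\Pi_{\frak{k}}$.
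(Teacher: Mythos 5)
Your argument is correct and is exactly the route the paper takes: its proof consists of the single sentence that the statement is a direct consequence of Proposition \ref{PRO:Application moment mension infinie} (together with the reference to Dervan--Hallam), and what you have written is precisely the verification that pulling back along the $K$-equivariant embedding $\Phi$ transports the Hamiltonian structure, with the pairing against $\xi \in \frak{k}$ unaffected by replacing $\mu_{\infty,\zeta}$ by its orthogonal projection $\mu_\zeta$. No further comment is needed.
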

\begin{proof}
It is a direct consequence of Proposition \ref{PRO:Application moment dimension infinie}. See also \cite[Theorem 6.8]{Dervan_Hallam}.
\end{proof}

This moment map is defined between finite dimensional vector spaces so we will be able to use classical GIT results \cite{DMS}. The fact that $B$ is non compact will be compensated by the fact that we are only looking for local results around $0$. Therefore, finding which $G$-orbits contain zeroes of $\mu_\zeta$ will be easier than finding which $\G^\C(E)$-orbits contain zeroes of $\mu_{\infty,\zeta}$. However, the zeroes $b$ of $\mu_\zeta$ only give operators $\dbar_b$ on $E$ such that $\mP_\zeta(\dbar_b) \in \i\frak{k}^\bot\Vol$, which is weaker than being $0$.

Nevertheless, our main theorem (Theorem \ref{THE:Déformation P-critique}) will ensure that finding zeroes of $\mu_\zeta$ is more or less equivalent to finding zeroes of $\mu_{\infty,\zeta}$ so the complicated infinite dimensional problem can be reduced to a more simple finite dimensional one.

In order to prove Theorem \ref{THE:Déformation P-critique}, we still need of course to be able to find zeroes of $\mu_{\infty,\zeta}$. For this, we introduce an other finite dimensional moment map whose zeroes will correspond to actual zeroes of $\mu_{\infty,\zeta}$. The following Proposition is analogous to \cite[Proposition 3.2]{Clarke_Tipler}, \cite[Proposition 4.1]{Delloque_2024}.

\begin{proposition}\label{PRO:Tranche déformée P}
    Up to shrinking $B$ and $\mU_d$, there exists a smooth map
    $$
    \sigma : \mU_d \times B \rightarrow \mC^{d + 1}(\Omega^0_0(X,\End(E))),
    $$
    that verifies $\sigma(\zeta_0,0) = 0$, $\frac{\partial}{\partial b}_{|b = 0}\sigma(\zeta_0,b) = 0$, all $\sigma(\zeta,b)$ are Hermitian and orthogonal to $\i\frak{k}$ and the following maps are well-defined and smooth.
    $$
    \tilde{\Phi} : \fonction{\mU_d \times B}{\mX_d - \dbar_0}{(\zeta,b)}{\e^{\sigma(\zeta,b)} \cdot \dbar_b - \dbar_0}, \qquad \tilde{\mu} : \fonction{\mU_d \times B}{\mC^{d - 1}(\Omega^0(X,\End(E)))}{(\zeta,b)}{\mu_{\infty,\zeta}(\dbar_0 + \tilde{\Phi}(b))}.
    $$
    Moreover, they verify,
    \begin{enumerate}
        \item For all $\zeta$ and $b$, $\dbar_{\zeta,b} = \dbar_0 + \tilde{\Phi}(\zeta,b)$ is gauge equivalent to $\dbar_b$.
        \item For all $\zeta$, $b \mapsto \dbar_{\zeta,b}$ is $K$-equivariant.
        \item For all $\zeta$ and $b$, $\tilde{\mu}_\zeta(b) \in \frak{k}$. Reciprocally, if $\dbar = \e^s \cdot \dbar_b$ verifies $\mu_{\infty,\zeta}(\dbar) \in \frak{k}$, $b \in B$ and $s \in \mC^{d + 1}(\i\frak{k}^\bot)$ is small enough, then $s = \sigma(\zeta,b)$, so $\dbar = \dbar_{\zeta,b}$.
        \item $\frac{\partial}{\partial b}_{|b = 0}\tilde{\Phi}(\zeta_0,b) : v \mapsto v$. In particular, up to shrinking $B$ and $\mU_d$, each $\tilde{\Phi}(\zeta,\cdot)$ is a smooth embedding.
    \end{enumerate}
\end{proposition}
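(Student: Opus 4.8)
The plan is to produce $\sigma$ by the implicit function theorem in Banach spaces, following \cite[Proposition 3.2]{Clarke_Tipler} and \cite[Proposition 4.1]{Delloque_2024}. Let $\mathcal{W}^k$ be the closed subspace of $\mC^k(\Omega^0_0(X,\End(E)))$ of Hermitian sections that are $L^2$-orthogonal to $\i\frak{k}$; being trace-free, they are then also orthogonal to $\Id_E$, hence to the whole of $\ker\Delta_{\zeta_0,\nabla}\cap\{\textrm{Hermitian}\}$, which equals $\i\frak{k}\oplus\R\,\Id_E$ since, $\dbar_0$ being a sub-solution, $\ker\Delta_{\zeta_0,\nabla}$ is the space of holomorphic endomorphisms of $\E_0$ (see the end of Section \ref{SEC:Produits scalaires} and Lemma \ref{LEM:Aut réductif}). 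Let also $\frak{n}^k\subset\mC^k(\Lie(\G(E,h)))$ be the hyperplane $\{A:\int_X\tr A\,\Vol=0\}$, which contains $\frak{k}$. I would consider, on a neighbourhood of $(\zeta_0,0,0)$,
$$
\mathcal{N} : \mU_d\times B\times\mathcal{W}^{d+1} \longrightarrow \frak{n}^{d-1}/\frak{k}, \qquad (\zeta,b,s)\longmapsto\bigl[\mu_{\infty,\zeta}(\e^s\cdot\dbar_b)\bigr].
$$
It is well defined because $\int_X\tr\mu_{\infty,\zeta}(\dbar)=-2\pi P_\zeta(E)=0$ for $\zeta\in\mZ$, and smooth by Propositions \ref{PRO:Linéarisation Z-critique} and \ref{PRO:Propriétés Kuranishi} (the latter, combined with openness of $\mX_d$, also lets us assume after shrinking $B$ that $\dbar_b$, and $\e^s\cdot\dbar_b$ for $s$ small, are sub-solutions). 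We have $\mathcal{N}(\zeta_0,0,0)=0$ because $\mP_{\zeta_0}(\dbar_0)=0$. The first key point is that $\partial_s\mathcal{N}_{|(\zeta_0,0,0)}$ is a Banach isomorphism: by Proposition \ref{PRO:Linéarisation Z-critique} and $\mP_{\zeta_0}(\dbar_0)=0$ it is $\dot s\mapsto[-\i\,\Delta_{\zeta_0,\nabla}\dot s]$; its kernel is $\mathcal{W}^{d+1}\cap\ker\Delta_{\zeta_0,\nabla}=\mathcal{W}^{d+1}\cap(\i\frak{k}\oplus\R\,\Id_E)=0$, and surjectivity onto $\frak{n}^{d-1}/\frak{k}$ follows from the isomorphism statement of Proposition \ref{PRO:Régularité Z laplaciens} together with self-adjointness of $\Delta_{\zeta_0,\nabla}$. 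The implicit function theorem then gives, after shrinking $B$ and $\mU_d$, a unique smooth $\sigma$ with $\sigma(\zeta_0,0)=0$ and $\mathcal{N}(\zeta,b,\sigma(\zeta,b))=0$; by construction each $\sigma(\zeta,b)$ is Hermitian, trace-free and orthogonal to $\i\frak{k}$. Setting $\dbar_{\zeta,b}=\e^{\sigma(\zeta,b)}\cdot\dbar_b$, $\tilde{\Phi}(\zeta,b)=\dbar_{\zeta,b}-\dbar_0$ and $\tilde{\mu}_\zeta(b)=\mu_{\infty,\zeta}(\dbar_{\zeta,b})$, smoothness of $\tilde{\Phi}$ and $\tilde{\mu}$ follows by composition (and Proposition \ref{PRO:Régularité Z laplaciens} for the $\zeta$-dependence), while $\mathcal{N}(\zeta,b,\sigma(\zeta,b))=0$ reads exactly $\tilde{\mu}_\zeta(b)\in\frak{k}$, the first half of property 3.

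Next I would differentiate $\mathcal{N}(\zeta_0,b,\sigma(\zeta_0,b))=0$ at $b=0$ to get $\frac{\partial}{\partial b}_{|b=0}\sigma(\zeta_0,b)=0$: since $d\Phi(0)=\id$ and $\partial_s\mathcal{N}_{|(\zeta_0,0,0)}$ is invertible, it suffices to show that the differential of $\mu_{\infty,\zeta_0}$ at $\dbar_0$ annihilates $V$ in $\frak{n}^{d-1}/\frak{k}$. A computation with Lemma \ref{LEM:Adjoints d et dbar} identifies $D\mu_{\infty,\zeta_0}{}_{|\dbar_0}(v)$, for $v\in V$, with a constant multiple of the Hermitian part of $\dbar^{*}_{\zeta_0}v$; this is orthogonal to $\frak{k}$ and to $\Id_E$ because $\langle\dbar^{*}_{\zeta_0}v,\xi\rangle_{L^2}=\langle v,\dbar_0\xi\rangle_{L^2(\dbar_0,\zeta_0)}=0$ for holomorphic $\xi$, and it vanishes by harmonicity of $v$ relative to the chosen balanced metric — recall $\omega$ was selected so that $\omega^{n-1}=\tr\bigl(\sum_k k\zeta_{0,k}\wedge\hat{F}(\dbar_0)^{k-1}\bigr)$, which is the identity making $\dbar_0^\bullet$ and $\dbar^{*}_{\zeta_0}$ have the same kernel on $\dbar_0$-closed $(0,1)$-forms, arguing as in \cite[Lemma 2.1]{Delloque_2024} via Lemma \ref{LEM:Adjoints d et dbar}. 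Hence $\partial_b\mathcal{N}_{|(\zeta_0,0,0)}=0$, so $\frac{\partial}{\partial b}_{|b=0}\sigma(\zeta_0,b)=0$ and therefore $\frac{\partial}{\partial b}_{|b=0}\tilde{\Phi}(\zeta_0,b)=d\Phi(0)=\id$; after further shrinking $B$ and $\mU_d$, each $\tilde{\Phi}(\zeta,\cdot)$ is then a smooth embedding by the inverse function theorem and continuity in $\zeta$, which is property 4.

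The remaining assertions are formal. Property 1 is the definition of $\dbar_{\zeta,b}$. For property 2, I would fix $u\in K$ and use the $G$-invariance of $\Phi$ (Proposition \ref{PRO:Propriétés Kuranishi}), the $K$-invariance of $\mu_{\infty,\zeta}$ and of the $L^2$ products, and the equivariance $\Pi_{\frak{k}}(u(\cdot)u^{-1})=u\Pi_{\frak{k}}(\cdot)u^{-1}$, to see that $u\,\sigma(\zeta,b)\,u^{-1}\in\mathcal{W}^{d+1}$ satisfies the defining equation of $\sigma(\zeta,u\cdot b)$; uniqueness in the implicit function theorem gives $\sigma(\zeta,u\cdot b)=u\,\sigma(\zeta,b)\,u^{-1}$, whence $\dbar_{\zeta,u\cdot b}=u\cdot\dbar_{\zeta,b}$. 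For the converse in property 3, if $\dbar=\e^s\cdot\dbar_b$ with $b\in B$ and $s\in\mC^{d+1}(\i\frak{k}^\bot)$ small and Hermitian (hence trace-free, $\e^s$ having determinant $1$, so $s\in\mathcal{W}^{d+1}$) satisfies $\mu_{\infty,\zeta}(\dbar)\in\frak{k}$, then $\mathcal{N}(\zeta,b,s)=0$, so local uniqueness forces $s=\sigma(\zeta,b)$ and $\dbar=\dbar_{\zeta,b}$.

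I expect the main obstacle to be the derivative computation behind $\frac{\partial}{\partial b}_{|b=0}\sigma(\zeta_0,b)=0$: it requires carefully matching the family of weighted products $\scal{\cdot}{\cdot}_{L^2(\dbar_0,\zeta_0)}$ and the adjoints $\dbar^{*}_{\zeta_0},\partial^{*}_{\zeta_0}$ of Section \ref{SEC:Produits scalaires} against the fixed $\omega^{n-1}$-geometry defining the Kuranishi slice and the harmonic space $V$, and it relies on the particular choice of $\omega$. A secondary technicality is the Banach-space bookkeeping: the two-derivative loss in $\mu_{\infty,\zeta}$, and checking that the splittings $\Lie(\G(E,h))=\frak{k}\oplus\frak{k}^\bot$ and $\mC^{d-1}(\Lie(\G(E,h)))=\frak{n}^{d-1}\oplus\i\R\,\Id_E$, together with the trace-free constraint on $\sigma$, are compatible with the mapping properties of $\Delta_{\zeta_0,\nabla}$ from Proposition \ref{PRO:Régularité Z laplaciens}.
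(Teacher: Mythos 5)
Your proposal is correct and follows essentially the same route as the paper: an implicit function theorem for the moment map projected away from $\frak{k}$ (you quotient by $\frak{k}$ where the paper orthogonally projects onto $\i\frak{k}^\bot$, a cosmetic difference, and you are if anything more careful about the trace/$\Id_E$ component), with invertibility of $\Delta_{\zeta_0,\nabla_0}$ on the complement of its kernel supplying the isomorphism and uniqueness in the implicit function theorem giving $K$-equivariance and the converse in point 3. The one delicate step you flag — that $V$ lies in the kernel of $\nabla_{0,\zeta_0}^*$, so that $\frac{\partial}{\partial b}_{|b = 0}\sigma(\zeta_0,b) = 0$ — is exactly the step the paper's own proof rests on, justified there in the same way.
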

\begin{proof}
Let $\Pi_{\frak{k}^\bot} : \Omega^0(X,\End_H(E,h)) \rightarrow \i\frak{k}^\bot$ be the orthogonal projection onto $\i\frak{k}^\bot$ with respect $\scal{\cdot}{\cdot}_{L^2}$. Notice that we write abusively $\i\frak{k}^\bot$ to denote Hermitian global sections of $E$ which are orthogonal to $\i\frak{k}$. Let
$$
\Psi : \fonction{\mU_d \times B \times \mC^{d + 1}(\i\frak{k}^\bot)}{\mC^{d - 1}(\i\frak{k}^\bot)}{(\zeta,b,s)}{\Pi_{\frak{k}^\bot}\i\mu_{\infty,\zeta}(\e^s \cdot \dbar_b)}.
$$
With the same arguments as in the proof of Proposition \ref{PRO:Régularité Z laplaciens}, we see that $\Psi$ is smooth and by Proposition \ref{PRO:Linéarisation Z-critique}, $\frac{\partial}{\partial s}_{|s = 0}\Psi(\zeta_0,0,s) = \Pi_{\frak{k}^\bot}\Delta_{\zeta_0,\nabla_0}$. $\Delta_{\zeta_0,\nabla_0} = \nabla_{0,\zeta_0}^*\nabla_0$ is symmetric positive so $\coker(\Delta_{\zeta_0,\nabla_0}) = \ker(\Delta_{\zeta_0,\nabla_0}) = \ker(\nabla_0) = \i\frak{k}$ by Lemma \ref{LEM:Aut réductif}. We deduce that $\frac{\partial}{\partial s}_{|s = 0}\Psi(\zeta_0,0,s) = \Delta_{\zeta_0,\nabla_0} : \mC^{d + 1}(\i\frak{k}^\bot) \rightarrow \mC^{d - 1}(\i\frak{k}^\bot)$ is an isomorphism by Proposition \ref{PRO:Régularité Z laplaciens}.

By the implicit functions theorem, up to shrinking $B$ and $\mU_d$, there is a unique smooth $\sigma : \mU_d \times B \rightarrow \mC^{d + 1}(\i\frak{k}^\bot)$ such that $\sigma(\zeta_0,0) = 0$ and for all $\zeta$ and $b$, $\Psi(\zeta,b,\sigma(\zeta,b)) = 0$. When we set $\tilde{\Phi}(\zeta,b) = \e^{\sigma(\zeta,b)} \cdot \dbar_b - \dbar_0$, the first point is immediate and the third point is verified. For the second one, notice that for all $u \in K$ and all $\zeta$ and $b$,
\begin{align*}
    \Psi(\zeta,b,u^\dagger\sigma(\zeta,u \cdot b)u) & = \Pi_{\frak{k}^\bot}\i\mu_{\infty,\zeta}(\e^{u^\dagger\sigma(\zeta,u \cdot b)u} \cdot \dbar_b)\\
    & = \Pi_{\frak{k}^\bot}\i\mu_{\infty,\zeta}(u^\dagger \cdot (\e^{\sigma(\zeta,u \cdot b)} \cdot \dbar_{u \cdot b}))\\
    & = u^\dagger\Pi_{\frak{k}^\bot}\i\mu_{\infty,\zeta}(\e^{\sigma(\zeta,u \cdot b)} \cdot \dbar_{u \cdot b})u\\
    & = u^\dagger\Psi(\zeta,u \cdot b,\sigma(\zeta,u \cdot b))u\\
    & = 0.
\end{align*}
By uniqueness of $\sigma$, it implies that $u^\dagger\sigma(\zeta,u \cdot b)u = \sigma(\zeta,b)$. Therefore,
$$
\dbar_{\zeta,u \cdot b} = \e^{\sigma(\zeta,u \cdot b)} \cdot \dbar_{u \cdot b} = u\e^{\sigma(\zeta,b)}u^\dagger u\dbar_bu^\dagger = u \cdot \dbar_{\zeta,b}.
$$
It proves the second point. For the fourth point, we have for all $b$ close to $0$,
\begin{align*}
    \Psi(\zeta_0,b,0) & = \Pi_{\frak{k}^\bot}\i\mu_{\infty,\zeta_0}(\dbar_0 + \Phi(b))\\
    & = \Pi_{\frak{k}^\bot}\i\nabla_{0,\zeta_0}^*(\Phi(b) - \Phi(b)^\dagger) + \mathrm{o}(\Phi(b))\\
    & = \Pi_{\frak{k}^\bot}\i\nabla_{0,\zeta_0}^*(d\Phi(0)b - (d\Phi(0)b)^\dagger) + \mathrm{o}(b).
\end{align*}
Therefore, $\frac{\partial}{\partial b}_{|b = 0}\Psi(\zeta_0,b,0)v = \Pi_{\frak{k}^\bot}\i\nabla_{0,\zeta_0}^*(d\Phi(0)v - (d\Phi(0)v)^\dagger)$. Now, recall that the image of $d\Phi(0) : v \mapsto v$ is the space $V$ of $(0,1)$-harmonic forms. In particular, it is included in the kernel of $\nabla_{0,\zeta_0}^*$. Similarly for $(d\Phi(0)\cdot)^\dagger$. Thus $\frac{\partial}{\partial b}_{|b = 0}\Psi(\zeta_0,b,0) = 0$. Therefore, when we differentiate the equality $\Psi(\zeta_0,b,\sigma(\zeta_0,b))$ at $b = 0$,
$$
0 = \frac{\partial}{\partial b}|_{b = 0}\Psi(\zeta_0,b,0) + \frac{\partial}{\partial s}|_{s = 0}\Psi(\zeta_0,0,s)\frac{\partial}{\partial b}_{|b = 0}\sigma(\zeta_0,b) = \frac{\partial}{\partial s}|_{s = 0}\Psi(\zeta_0,0,s)\frac{\partial}{\partial b}_{|b = 0}\sigma(\zeta_0,b).
$$
Since $\frac{\partial}{\partial s}|_{s = 0}\Psi(\zeta_0,0,s)$ is an isomorphism, we have $\frac{\partial}{\partial b}_{|b = 0}\sigma(\zeta_0,b) = 0$ hence
$$
\tilde{\Phi}(\zeta_0,b) = \e^{\sigma(\zeta_0,b)} \cdot \dbar_b - \dbar_0 = \e^{\sigma(\zeta_0,b)}\dbar_0(\e^{-\sigma(\zeta_0,b)}) + \e^{\sigma(\zeta_0,b)}\Phi(b)\e^{-\sigma(\zeta_0,b)} = \Phi(b) + \mathrm{o}(b) = b + \mathrm{o}(b).
$$
We deduce the fourth point and we showed along the way that $\frac{\partial}{\partial b}_{|b = 0}\sigma(\zeta_0,b) = 0$. Finally, all the $\sigma(\zeta_0,b)$ take values in $\i\frak{k}^\bot$ by definition of $\sigma$.
\end{proof}

Similarly as the $\mu_\zeta$, the $\tilde{\mu}_\zeta$ are moment maps on $B$. Let $\tilde{\Omega}_\zeta = \tilde{\Phi}(\zeta,\cdot)^*\Omega_{\infty,\zeta}$ which is a close symplectic form on $B$.

\begin{proposition}
    The action of $K$ on $(B,\tilde{\Omega}_\zeta)$ is Hamiltonian with associated $K$-equivariant moment map $\tilde{\mu}_\zeta$.
\end{proposition}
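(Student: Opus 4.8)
The plan is to mimic, \textit{verbatim}, the argument behind the preceding proposition about $\mu_\zeta$ on $(B,\Omega_\zeta)$, but with the deformed embedding $\tilde\Phi(\zeta,\cdot)$ in place of the Kuranishi map $\Phi$. In other words, I realise $(B,\tilde\Omega_\zeta,\tilde\mu_\zeta)$ as the pullback along a $K$-equivariant symplectic embedding of the infinite dimensional Hamiltonian space $(\mX_d,\Omega_{\infty,\zeta},\mu_{\infty,\zeta})$ furnished by Proposition \ref{PRO:Application moment dimension infinie}, and then invoke the general fact that the restriction of a moment map to a subgroup, pulled back along an equivariant map, is again an equivariant moment map.

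Concretely, first I would record that $K=\Aut_0(\E_0,h)$ is a finite dimensional subgroup of $\mC^{d+1}(\G(E,h))$ acting on $\mX_d$ by restriction of the gauge action, and that $\psi_\zeta:b\mapsto\dbar_0+\tilde\Phi(\zeta,b)$ is a $K$-equivariant smooth embedding $B\hookrightarrow\mX_d$ by points $2$ and $4$ of Proposition \ref{PRO:Tranche déformée P} (note that, unlike $\Phi$, the map $\tilde\Phi(\zeta,\cdot)$ is only $K$-equivariant, not $G$-equivariant, which is all that is needed here). Consequently $\tilde\Omega_\zeta=\psi_\zeta^*\Omega_{\infty,\zeta}$ is automatically closed and $K$-invariant, and it is non-degenerate near $0$ because, for $(\zeta,b)$ close to $(\zeta_0,0)$, the differential $d\psi_\zeta(b)$ is close to the inclusion of the complex subspace $V=H^{0,1}(X,\End(\E_0))$, on which $\Omega_{\infty,\zeta}$ restricts to the imaginary part of the positive definite Hermitian form $\scal{\cdot}{\cdot}_{L^2(\dbar_0,\zeta)}$; after shrinking $B$ and $\mU_d$ it is therefore symplectic. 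Next, for $\xi\in\frak{k}$, equivariance of $\psi_\zeta$ makes the fundamental vector field $X_\xi^B$ on $B$ and the fundamental vector field $X_\xi$ on $\mX_d$ $\psi_\zeta$-related, so that
$\iota_{X_\xi^B}\tilde\Omega_\zeta=\psi_\zeta^*\big(\iota_{X_\xi}\Omega_{\infty,\zeta}\big)=\psi_\zeta^*\,d\scal{\mu_{\infty,\zeta}}{\xi}=d\scal{\mu_{\infty,\zeta}\circ\psi_\zeta}{\xi}=d\scal{\tilde\mu_\zeta}{\xi}$,
using Proposition \ref{PRO:Application moment dimension infinie}; here point $3$ of Proposition \ref{PRO:Tranche déformée P} guarantees that $\tilde\mu_\zeta(b)$ already lies in $\frak{k}$, so the $L^2$ pairing appearing above agrees with the chosen $K$-invariant inner product on $\frak{k}$ and no further projection onto $\frak{k}$ is required. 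Finally, $\tilde\mu_\zeta=\mu_{\infty,\zeta}\circ\psi_\zeta$ is $K$-equivariant, being the composition of the $K$-equivariant $\psi_\zeta$ with the $\mC^{d+1}(\G(E,h))$-equivariant (hence $K$-equivariant) moment map $\mu_{\infty,\zeta}$.

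I do not expect any genuine obstacle: as with the analogous statement for $\mu_\zeta$, this is essentially a formal consequence of Propositions \ref{PRO:Application moment dimension infinie} and \ref{PRO:Tranche déformée P}. The only points demanding a little care are (a) the non-degeneracy of $\tilde\Omega_\zeta$ after shrinking the neighbourhoods, where one uses that $V$ is a \emph{complex} subspace so that the imaginary part of a positive definite Hermitian form stays non-degenerate there, together with openness of non-degeneracy and the smooth dependence of $\tilde\Phi$ on $(\zeta,b)$; and (b) the bookkeeping that identifies $\frak{k}\cong\frak{k}^*$ through the restriction of the $L^2$ product, so that $\tilde\mu_\zeta$, which \textit{a priori} is valued in $\mC^{d-1}(\Lie(\G(E,h)))$, is read off as the honest $\frak{k}$-valued moment map by point $3$ of Proposition \ref{PRO:Tranche déformée P}.
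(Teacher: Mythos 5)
Your proposal is correct and takes essentially the same route as the paper, which simply declares the proposition ``a direct consequence of Propositions \ref{PRO:Application moment dimension infinie} and \ref{PRO:Tranche déformée P}''; you have merely spelled out the pullback-along-a-$K$-equivariant-embedding argument that those references encode, including the correct observations that only $K$-equivariance (point 2) is available for $\tilde\Phi(\zeta,\cdot)$, that point 3 removes the need for a projection onto $\frak{k}$, and that non-degeneracy of $\tilde\Omega_\zeta$ is the content the paper defers to Lemma \ref{LEM:Omega tilde positive}.
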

\begin{proof}
It is a direct consequence of Propositions \ref{PRO:Application moment dimension infinie} and \ref{PRO:Tranche déformée P}.
\end{proof}

\begin{remark}
    The fact that $\tilde{\Phi}(\zeta,\cdot)$ is only smooth (and not holomorphic) means that $\tilde{\Omega}_\zeta$ is not compatible with the complex structure of $B$. However, as a small deformation of $\Omega_{\zeta_0}$, the form
    $$
    (v,w) \mapsto -\tilde{\Omega}_\zeta(\i v,w)
    $$
    will be positive (see Lemma \ref{LEM:Omega tilde positive}). However, the incompatibility of $\tilde{\Phi}(\zeta,\cdot)$ with the complex structure of $B$ implies that the above bilinear form need not be symmetric.
\end{remark}

\begin{lemma}\label{LEM:Omega tilde positive}
    Up to shrinking $B$ and $\mU_d$, there is a positive constant $c$ such that for all $b \in B$ and $v \in T_bB = V$,
    $$
    -\tilde{\Omega}_{\zeta,b}(\i v,v) \geq c\norme{v}_{L^2}^2.
    $$
\end{lemma}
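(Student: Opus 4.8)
The plan is to reduce the lemma to the elementary fact that positive definiteness of a quadratic form on a fixed finite-dimensional Euclidean space is stable, with a locally uniform coercivity constant, under small perturbations of the form; the only non-formal input will be the continuous dependence of $\tilde{\Omega}_{\zeta,b}$ on $(\zeta,b)$, which is already available from the smoothness statements of this section.

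First I would fix the reference $L^2$ inner product $\norme{\cdot}_{L^2}$ on $V$ and introduce, for $(\zeta,b)\in\mU_d\times B$, the real quadratic form on $T_bB=V$ given by $Q_{\zeta,b}(v)=-\tilde{\Omega}_{\zeta,b}(\i v,v)$; the lemma is exactly the existence, after shrinking $B$ and $\mU_d$, of a $c>0$ with $Q_{\zeta,b}\geq c\,\norme{\cdot}_{L^2}^2$. The key observation is that $(\zeta,b)\mapsto Q_{\zeta,b}$ is continuous (indeed smooth) from $\mU_d\times B$ into the finite-dimensional space of quadratic forms on $V$. This I would get by composition: $\tilde{\Phi}\colon\mU_d\times B\to\mX_d-\dbar_0$ is smooth by Proposition \ref{PRO:Tranche déformée P}, so $(\zeta,b)\mapsto\dbar_{\zeta,b}=\dbar_0+\tilde{\Phi}(\zeta,b)$ and its differential in $b$, $\partial_b\tilde{\Phi}(\zeta,b)\in\Hom(V,\mC^d(\Omega^{0,1}(X,\End(E))))$, depend smoothly on $(\zeta,b)$; the map $(\zeta,\dbar)\mapsto\mP_\zeta'(\dbar)$, hence $(\zeta,\dbar)\mapsto\Omega_{\infty,\zeta,\dbar}$ viewed as a continuous bilinear form on $\mC^d(\Omega^{0,1}(X,\End(E)))$, is smooth (as recorded after Proposition \ref{PRO:Application moment dimension infinie}); and $\tilde{\Omega}_{\zeta,b}(v,w)=\Omega_{\infty,\zeta,\dbar_{\zeta,b}}(\partial_b\tilde{\Phi}(\zeta,b)v,\partial_b\tilde{\Phi}(\zeta,b)w)$ is the composite of these.

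Next I would evaluate at the base point. Since $\sigma(\zeta_0,0)=0$ and $\Phi(0)=0$, we have $\dbar_{\zeta_0,0}=\e^{\sigma(\zeta_0,0)}\cdot\dbar_0=\dbar_0$, and by the fourth point of Proposition \ref{PRO:Tranche déformée P}, $\partial_b\tilde{\Phi}(\zeta_0,0)=\id_V$ (the inclusion $V\hookrightarrow\Omega^{0,1}(X,\End(E))$). Hence $\tilde{\Omega}_{\zeta_0,0}$ is the restriction to $V$ of $\Omega_{\infty,\zeta_0}$ at $\dbar_0$, so that $Q_{\zeta_0,0}(v)=-\Omega_{\infty,\zeta_0,\dbar_0}(\i v,v)$ is, up to the sign conventions of Sub-section \ref{SEC:Produits scalaires}, the square of the Hermitian norm $\norme{v}_{L^2(\dbar_0,\zeta_0)}$ — this is the case $(\zeta,b)=(\zeta_0,0)$ of the remark preceding the statement. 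It is a positive definite quadratic form on $V$ precisely because $\dbar_0$ is a sub-solution to the $P_{\zeta_0}$-critical equation, i.e. because $\scal{\cdot}{\cdot}_{L^2(\dbar_0,\zeta_0)}$ is positive definite (Sub-section \ref{SEC:Produits scalaires}).

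Finally I would conclude by openness. Set $\lambda(\zeta,b)=\min_{\norme{v}_{L^2}=1}Q_{\zeta,b}(v)$; since the unit sphere of $(V,\norme{\cdot}_{L^2})$ is compact and $(\zeta,b)\mapsto Q_{\zeta,b}$ is continuous, $\lambda$ is continuous on $\mU_d\times B$, and $\lambda(\zeta_0,0)=\min_{\norme{v}_{L^2}=1}\norme{v}_{L^2(\dbar_0,\zeta_0)}^2>0$ by the previous paragraph. Hence there are a neighbourhood $\mU_d'\ni\zeta_0$ in $\mC^{d-1}(\mZ)$, an open ball $B'\ni0$ in $V$, and a constant $c>0$ with $\lambda(\zeta,b)\geq c$ on $\mU_d'\times B'$; by homogeneity, $Q_{\zeta,b}(v)=\norme{v}_{L^2}^2\,Q_{\zeta,b}(v/\norme{v}_{L^2})\geq c\,\norme{v}_{L^2}^2$ for every $v\neq0$ there. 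Replacing $\mU_d$ by $\mU_d'$ and $B$ by $B'$ yields the statement. The only place where any work is involved is the continuity of the family $(\zeta,b)\mapsto\tilde{\Omega}_{\zeta,b}$, which is a routine consequence of Proposition \ref{PRO:Tranche déformée P} together with the smoothness of $(\zeta,\dbar)\mapsto\mP_\zeta'(\dbar)$; once that is granted, the conclusion is the standard stability of positive definiteness under small perturbations on a finite-dimensional space.
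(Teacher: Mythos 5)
Your proof is correct and follows exactly the paper's argument: evaluate at $(\zeta_0,0)$ using $\partial_b\tilde{\Phi}(\zeta_0,\cdot)|_{b=0}=\id_V$ to identify $-\tilde{\Omega}_{\zeta_0,0}(\i v,v)$ with $\norme{v}_{L^2(\dbar_0,\zeta_0)}^2$, which dominates $\norme{v}_{L^2}^2$ by the sub-solution condition, and then propagate the coercivity constant by continuity of $(\zeta,b)\mapsto\tilde{\Omega}_{\zeta,b}$. You merely make explicit the compactness-of-the-unit-sphere step that the paper leaves implicit.
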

\begin{proof}
By Proposition \ref{PRO:Tranche déformée P}, we have $\frac{\partial}{\partial b}_{|b = 0}\tilde{\Phi}(\zeta_0,b) : v \mapsto v$. Thus, at $\zeta = \zeta_0$ and $b = 0$,
$$
-\tilde{\Omega}_{\zeta_0,0}(\i v,v) = -\tilde{\Omega}_{\infty,\zeta_0,0}(\i v,v) = \norme{v}_{L^2(\dbar_0,\zeta_0)}^2 \geq 2c\norme{v}_{L^2}^2
$$
for some $c > 0$. We deduce the wanted equality by continuity of $\tilde{\Omega}_{\zeta,b}$ in $\zeta$ and $b$.
\end{proof}

\subsection{Statement of the main theorem}

Let us define some different notions of stability, all related to stability in GIT, that we will show to be equivalent.
\begin{definition}[{\cite[Definition 7.1]{GRS}}]
    A point $b \in B$ is said to be \textit{$\mu_\zeta$-semi-stable} in $B$ if $\mu_\zeta$ vanishes on the closure $\overline{G \cdot b} \cap B$ of the orbit of $b$.
    
    It is said to be \textit{$\mu_\zeta$-polystable} in $B$ if $\mu_\zeta$ vanishes in the orbit $G \cdot b \cap B$ of $b$.
    
    It is said to be \textit{$\mu_\zeta$-stable} in $B$ if it is $\mu_\zeta$-polystable and the stabiliser $\Stab(b) \subset G$ of $b$ is trivial.
\end{definition}

\begin{remark}
    Usually, in GIT, a point is stable when it is polystable and has a \textit{discrete} stabiliser. Here, we will see that the stabiliser of a point corresponds to the group of holomorphic automorphisms of a vector bundle. These are always connected so in this context, discrete is equivalent to trivial.
\end{remark}

Similarly, we introduce $\mu_{\infty,\zeta}$-(semi-)(poly)stability of a vector bundle $\E_b = (E,\dbar_b)$ whose Dolbeault operator is in the Kuranishi slice.
\begin{definition}
    Let $\mB_d$ be an open neighbourhood of $\dbar_0$ in $\mX_d$ and $b \in B$.
    
    $\E_b$ is said to be \textit{$\mu_{\infty,\zeta}$-semi-stable} in $\mB_d$ if $\mu_{\infty,\zeta}$ vanishes on $\overline{\mC^{d + 1}(\G^\C(E)) \cdot \dbar_b} \cap \mB_d$.
    
    It is said to be \textit{$\mu_{\infty,\zeta}$-polystable} in $\mB_d$ if $\mu_{\infty,\zeta}$ vanishes on $\mC^{d + 1}(\G^\C(E)) \cdot \dbar_b \cap \mB_d$.
    
    It is said to be \textit{$\mu_{\infty,\zeta}$-stable} if it is $\mu_{\infty,\zeta}$-polystable and if $\E_b$ is simple.
\end{definition}

Finally, we introduce local $P$-stability of a bundle $\E_b = (E,\dbar_b)$ which basically says that $\E$ does not have sub-bundles which are deformations of direct sums of the simple components of $\E_0$.
\begin{definition}
    We say that a smooth sub-bundle $F$ of $E$ is an \textit{admissible} sub-bundle of $\E_b$ if $(F,\dbar_b) \subset \E_b$, $(F,\dbar_0) \subset \E_0$ and $(F^\bot,\dbar_0) \subset \E_0$ are holomorphic sub-bundles.
\end{definition}
In particular, if $F$ is admissible, $(F,\dbar_0)$ is isomorphic to a direct sum of $\G_{0,i}$s (the simple components of $\E_0$). It implies that there is a finite number of them up to isomorphism (with respect to $\dbar_0$).
\begin{definition}\label{DEF:Stabilité P}
    Let $0 \subsetneq F \subset E$ an admissible sub-bundle of $\E_b$.
    
    $(F,\dbar_b)$ is said to be \textit{locally $P_\zeta$-semi-stable} if for all $0 \subsetneq G \subsetneq F$ which are admissible in $\E_b$, $\frac{P_\zeta(G)}{\rk(G)} \leq \frac{P_\zeta(F)}{\rk(F)}$.
    
    It is said to be \textit{locally $P_\zeta$-polystable} if for all $0 \subsetneq G \subsetneq F$ which are admissible in $\E_b$, $\frac{P_\zeta(G)}{\rk(G)} \leq \frac{P_\zeta(F)}{\rk(F)}$ and in case of equality, the inclusion $(G^\bot,\dbar_b) \subset (F,\dbar_b)$ is holomorphic.
    
    It is said to be \textit{locally $P_\zeta$-stable} if for all $0 \subsetneq G \subsetneq F$ which are admissible in $\E_b$, $\frac{P_\zeta(G)}{\rk(G)} < \frac{P_\zeta(F)}{\rk(F)}$.
\end{definition}
By an immediate induction, we see that any locally $P_\zeta$-polystable $(F,\dbar_b) \subset \E_b$ can be decomposed as a direct sum of locally $P_\zeta$-stable simple admissible components $(G_i,\dbar_b)$ verifying $\frac{P_\zeta(G_i)}{\rk(G_i)} = \frac{P_\zeta(F)}{\rk(F)}$. Notice also that $\E_0$ is locally $P_{\zeta_0}$-polystable.

We can now state our main theorems.

\begin{theorem}[Local Kobayashi--Hitchin correspondence]\label{THE:Déformation P-critique}
    There is an open neighbourhood $\mB_d$ of $\dbar_0$ in $\mX_d$ and an open neighbourhood $\mU_d$ of $\zeta_0$ in $\mC^{d - 1}(\mZ)$ such that for all small enough open ball $B_2 \subset V$ of centre $0$, there is an open ball $B_1 \subset B_2$ of centre $0$ such that for all $\zeta \in \mU_d$ and $b \in B_1$, we have equivalence between,
    \begin{enumerate}
        \item $b$ is $\mu_\zeta$-semi-stable (resp. polystable, resp. stable) in $B_2$,
        \item $\E_b$ is $\mu_{\infty,\zeta}$-semi-stable (resp. polystable, resp. stable) in $\mB_d$,
        \item $\E_b$ is locally $P_\zeta$-semi-stable (resp. polystable, resp. stable).
    \end{enumerate}
\end{theorem}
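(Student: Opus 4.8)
\emph{Proof plan.} The plan is to push the whole problem down to the finite-dimensional ball $B\subset V$ by means of the two Kuranishi slices, and then to recognise the resulting finite-dimensional GIT stability as a Hilbert--Mumford condition whose numerical translation is exactly local $P_\zeta$-stability; this follows the pattern of the HYM case of Buchdahl--Schumacher \cite{Buchdahl_Schumacher} and the cscK case of \Sz\ \cite{Szkelyhidi}. All the radii and neighbourhoods are to be fixed at the end so that the quantitative estimates of Section~\ref{SEC:Résultats GIT locaux} hold uniformly for $\zeta\in\mU_d$; obtaining such uniform choices is one of the two genuine difficulties.

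First I would establish $(1)\Leftrightarrow(2)$. By Proposition~\ref{PRO:Tranche déformée P}, a point $b\in B$ is a zero of $\tilde\mu_\zeta$ exactly when $\dbar_{\zeta,b}$ — which is $\mC^{d+1}(\G^\C(E))$-equivalent to $\dbar_b$ — is $P_\zeta$-critical, and conversely every $P_\zeta$-critical operator that is $\mC^d$-close to $\dbar_0$ and lies in the $\mC^{d+1}(\G^\C(E))$-orbit of some $\dbar_b$, $b\in B$, is of the form $\dbar_{\zeta,b}$. Combined with the slice property of Proposition~\ref{PRO:Propriétés Kuranishi}, this makes $b\mapsto\dbar_b$ a local homeomorphism carrying a neighbourhood of $G\cdot b$ onto a neighbourhood of $\mC^{d+1}(\G^\C(E))\cdot\dbar_b$, orbit closures to orbit closures, and zeros of $\tilde\mu_\zeta$ to zeros of $\mu_{\infty,\zeta}$; hence $\E_b$ is $\mu_{\infty,\zeta}$-(semi-/poly)stable in $\mB_d$ iff $b$ is $\tilde\mu_\zeta$-(semi-/poly)stable in $B$. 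Finally $\mu_\zeta$ and $\tilde\mu_\zeta$ are moment maps for the $K$-action on $B$ associated respectively to the Kähler form $\Omega_\zeta$ and to the close form $\tilde\Omega_\zeta$, both agreeing at $(\zeta_0,0)$; by Lemma~\ref{LEM:Omega tilde positive} and the perturbation results of Section~\ref{SEC:Résultats GIT locaux} they have the same (semi-/poly)stable loci on a small enough ball, which finishes $(1)\Leftrightarrow(2)$. For the stable variant, triviality of $\Stab(b)$ corresponds to simplicity of $\E_b$ via Lemma~\ref{LEM:Aut réductif} and Proposition~\ref{PRO:Polysimplicité}.

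Next I would prove $(1)\Leftrightarrow(3)$. Here $G=\Aut_0(\E_0)\cong(\prod_{i=1}^l\GL_{m_i}(\C))\cap\mathrm{SL}$ acts linearly on $V=H^{0,1}(X,\End(\E_0))$ fixing $0$, so by the local Kempf--Ness/Hilbert--Mumford results of Section~\ref{SEC:Résultats GIT locaux} (the ball-version of \cite[Theorem 12.5]{GRS}) $b$ is $\mu_\zeta$-semi-stable (resp. stable) in $B_2$ iff no one-parameter subgroup $\lambda$ of $G$ destabilises it, i.e. the Mumford weight $w(\lambda,b)$ is $\le 0$ (resp. $<0$) for every $\lambda$, and $\mu_\zeta$-polystable iff in addition $w(\lambda,b)=0$ forces $\lim_{t\to 0}\lambda(t)\cdot b\in G\cdot b$. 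Writing $\lambda(t)=\e^{t\xi}$ with $\xi\in\frak{g}$, a one-parameter subgroup corresponds to a filtration of $\bigoplus_i\C^{m_i}$, hence — since the $\G_{0,i}$ are simple and mutually $\Hom$-free — to an admissible sub-bundle $F$ of $\E_0$, with $\lim_{t\to 0}\lambda(t)\cdot b$ the harmonic representative of the associated graded; because $b\in\ker(\dbar_0^\bullet)$, the flag part of $b$ makes $(F,\dbar_b)$ an admissible sub-bundle of $\E_b$, and every admissible $0\subsetneq F\subsetneq E$ of $\E_b$ arises this way (up to finitely many $\dbar_0$-isomorphism classes), so it suffices to test two-step filtrations. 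The crucial computation is that $w(\lambda,b)$ equals a fixed nonzero multiple of $P_\zeta(F)$, of sign such that destabilising sub-bundles are exactly those with $P_\zeta(F)>0$ (consistent with the HYM special case): one obtains this by differentiating $\scal{\mu_{\infty,\zeta}(\e^{t\Re\xi}\cdot\dbar_b)}{\Re\xi}$ along the destabilising direction and integrating via Bianchi's identity, using $P_\zeta(E)=0$ (Lemma~\ref{LEM:P(E) = 0}) to absorb the trace term and recognising the limit as an integral with de Rham class $P_\zeta(F)$. Then $w(\lambda,b)\le 0$ for all $\lambda$ $\Leftrightarrow$ $P_\zeta(F)\le 0$ for all admissible $F\subsetneq E$ $\Leftrightarrow$ $\E_b$ locally $P_\zeta$-semi-stable; in the borderline case $w(\lambda,b)=0$, the condition $\lim_{t\to 0}\lambda(t)\cdot b\in G\cdot b$ translates, via Step~1 and Proposition~\ref{PRO:caractérisation unicité}, into $(F^\bot,\dbar_b)\subset\E_b$ being a holomorphic sub-bundle, giving the polystable equivalence; the stable case is again immediate from Proposition~\ref{PRO:Polysimplicité}.

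The main obstacle will be the weight identification $w(\lambda,b)=c\,P_\zeta(F)$ with $c\neq 0$ of the correct sign: the non-commutative symmetric products $[\,\cdot\,]_\sym$ entering $\mP'_\zeta$ make the integration-by-parts bookkeeping substantially heavier than in the HYM case, and one must check that the off-diagonal (second fundamental form) terms recombine into the topological quantity $P_\zeta(F)$. The second difficulty is the uniformity in $\zeta$: all the radii $B_1\subset B_2\subset\mB_d$ and the neighbourhood $\mU_d$ must be chosen so that the convexity/properness estimates for the Kempf--Ness functional and the comparisons between $\mu_\zeta$, $\tilde\mu_\zeta$ and $\mu_{\infty,\zeta}$ hold uniformly over $\mU_d$ — this is exactly what forces the two-scale formulation of the theorem and is supplied by the uniform GIT estimates of Section~\ref{SEC:Résultats GIT locaux}, with the topology of the stability loci recorded in Proposition~\ref{PRO:Topologies lieux stables}.
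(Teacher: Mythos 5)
Your high-level reduction is the right one, and you correctly identify the key numerical identity: for the one-parameter subgroup $\xi=-\rk(F^\bot)\Pi_F+\rk(F)\Pi_{F^\bot}$ attached to an admissible sub-bundle $F$, the asymptotic weight is a positive multiple of $P_\zeta(F)$ (in the paper, $\lim_{t\to+\infty}\scal{\mu_\zeta(\e^{t\xi}\cdot b')}{\i\xi}=2\pi\rk(E)P_\zeta(F)$). One reassurance: this computation is easier than you fear. No integration by parts through the symmetric products is needed, because the weight is evaluated at the limit point $b_\infty$ of $\e^{t\xi}\cdot b'$, where the Dolbeault operator is block-diagonal, so $\mP_\zeta$ splits and the pairing with $\i\xi$ is directly the topological quantity $\rk(F^\bot)P_\zeta(F)-\rk(F)P_\zeta(F^\bot)$, simplified by $P_\zeta(E)=0$; the monotonicity $w'(t)=\Omega_{\zeta}(\i L\xi,L\xi)\le 0$ supplies the inequality. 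The correspondence between one-parameter subgroups and admissible filtrations that you describe is exactly Proposition~\ref{PRO:Convergence exp(txi)b}.

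The genuine gap is in the direction $(3)\Rightarrow(1),(2)$: you invoke a ``ball-version of \cite[Theorem 12.5]{GRS}'' as a black box, but no such result is available off the shelf on a non-compact ball, and producing it is the core analytic content of the paper. To go from local $P_\zeta$-(semi/poly)stability to the existence of a zero of $\mu_\zeta$ (resp.\ $\mu_{\infty,\zeta}$) in the orbit closure (resp.\ orbit), the paper runs the negative gradient flow of $\tfrac12\norme{\mu_\zeta}^2$; its global existence and convergence inside $B_2$ is established only at $\zeta=\zeta_0$ via the \Lo\ inequality (Proposition~\ref{PRO:Définition et convergence des flux zeta_0}), then extended to all nearby $\zeta$ by a compactness/continuity bootstrap (Lemma~\ref{LEM:Continuité en 0}, Corollary~\ref{COR:Flux défini partout}) which itself uses the already-proved equivalences in the flow-convergent case. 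The limit of the flow is then shown to be a zero of the moment map by pairing $\tilde\mu_\zeta(b_\infty)$ against its own eigenprojections and telescoping $\sum_k(\lambda_k-\lambda_{k+1})P_\zeta(F_k)\ge 0$; and in the polystable case one still needs Proposition~\ref{PRO:Existence arc équivariant semi-stable} to replace the flow line by a genuine one-parameter degeneration from a point of the orbit. None of this machinery appears in your plan, so the implication from numerical stability to existence of solutions is unproved. A second, smaller gap: you assert that $\mu_\zeta$ and $\tilde\mu_\zeta$ have the same stability loci ``by perturbation''; two moment maps for different (non-compatible) symplectic forms need not have the same semistable loci a priori, and in the paper this coincidence is a \emph{consequence} of both being equivalent to condition (3), not an input to the proof of $(1)\Leftrightarrow(2)$.
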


Moreover, at $\zeta = \zeta_0$ (when we don't change the parameter of the equation), we have,

\begin{theorem}\label{THE:Cas zeta = zeta_0}
    With the same $B_2$ and $B_1$ as in Theorem \ref{THE:Déformation P-critique}, we have, for all $b \in B_1$,
    \begin{enumerate}
        \item $b$ is $\mu_{\zeta_0}$-semi-stable in $B_2$.
        \item $b$ is $\mu_{\zeta_0}$-polystable in $B_2$ if and only if $G \cdot b$ is closed in $V$.
    \end{enumerate}
\end{theorem}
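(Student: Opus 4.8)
The whole argument turns on one remark: since $\E_0$ is $P_{\zeta_0}$-critical we have $\mP_{\zeta_0}(\dbar_0)=0$, hence $\mu_{\zeta_0}(0)=0$, so the origin $0\in B$ is a $G$-fixed point lying in $\mu_{\zeta_0}^{-1}(0)$. Everything else is a specialisation at $\zeta=\zeta_0$ of the finite-dimensional GIT already set up for Theorem \ref{THE:Déformation P-critique} and proved in Section \ref{SEC:Résultats GIT locaux}.

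For the first assertion, the plan is: by Theorem \ref{THE:Déformation P-critique} (which applies since $\zeta_0\in\mU_d$ and $b\in B_1$) it suffices to check that $\E_b$ is locally $P_{\zeta_0}$-semi-stable, and this is purely topological. If $0\subsetneq F\subsetneq E$ is admissible in $\E_b$, then $(F,\dbar_0)$ is a holomorphic direct summand of $\E_0=\bigoplus_i\G_{0,i}^{m_i}$; since the $\G_{0,i}$ are simple and pairwise $\Hom$-orthogonal, an idempotent argument in $\End(\E_0)\cong\bigoplus_iM_{m_i}(\C)$ gives $(F,\dbar_0)\cong\bigoplus_i\G_{0,i}^{n_i}$ for some $0\le n_i\le m_i$. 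As $P_{\zeta_0}$ depends only on the underlying topological bundle and is additive, and $P_{\zeta_0}(\G_{0,i})=0$ (Remark after Proposition \ref{PRO:Polysimplicité}) while $P_{\zeta_0}(E)=0$ (Lemma \ref{LEM:P(E) = 0}), we get $P_{\zeta_0}(F)/\rk(F)=0=P_{\zeta_0}(E)/\rk(E)$. So the semi-stability inequalities of Definition \ref{DEF:Stabilité P} hold trivially, and Theorem \ref{THE:Déformation P-critique} yields $\mu_{\zeta_0}$-semi-stability of $b$ in $B_2$. (Alternatively one could run the negative gradient flow of $\frac12\norme{\mu_{\zeta_0}}^2$ from $b$ and use the convergence estimates of Section \ref{SEC:Résultats GIT locaux} directly, the point being that the flow cannot escape $B_2$ because $0$ is a fixed zero.)

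For the second assertion, one implication is formal: if $G\cdot b$ is closed in $V$ then $\overline{G\cdot b}\cap B_2=G\cdot b\cap B_2$, so the zero of $\mu_{\zeta_0}$ on $\overline{G\cdot b}\cap B_2$ produced by the first assertion already lies in the orbit $G\cdot b$, i.e.\ $b$ is $\mu_{\zeta_0}$-polystable in $B_2$. For the converse, suppose $b'\in G\cdot b\cap B_2$ satisfies $\mu_{\zeta_0}(b')=0$; I would then invoke the local Kempf--Ness theory of Section \ref{SEC:Résultats GIT locaux} together with the fact that near the fixed point $0$ the map $\mu_{\zeta_0}$ is a controlled higher-order perturbation of the quadratic moment map of the \emph{linear} $G$-action on $V$. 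Indeed $\mu_{\zeta_0}$ vanishes to second order at $0$ (the fundamental vector fields vanish there), and since $d\Phi(0)=\id$ its quadratic part is $\xi\mapsto\frac12\Im\scal{\xi\cdot v}{v}_{L^2(\dbar_0,\zeta_0)}$, where $\scal{\cdot}{\cdot}_{L^2(\dbar_0,\zeta_0)}$ is a Hermitian inner product on $V$ for which $K$ acts unitarily (elements of $K$ are $\dbar_0$-parallel by Lemma \ref{LEM:Aut réductif}, hence commute with $\hat{F}(\dbar_0)$) and $G=K^\C$. Convexity of the Kempf--Ness functional along geodesics of $G/K$ then propagates the equality $\mu_{\zeta_0}(b')=0$ into the sign condition of a polystable point for every Hilbert--Mumford weight of $b'\in V$, so that by the classical Kempf--Ness/Hilbert--Mumford theorem on the vector space $V$ the orbit $G\cdot b'=G\cdot b$ is closed in $V$.

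The main obstacle, as usual in this circle of ideas, is that the orbit $G\cdot b$ and the geodesic rays of $G/K$ used in the Kempf--Ness argument may leave the ball $B_2$ on which $\mu_{\zeta_0}$ and $\Omega_{\zeta_0}$ are defined; this is precisely what forces the nested choice $B_1\subset B_2$, and its control is exactly the content of the local GIT estimates of Section \ref{SEC:Résultats GIT locaux} on which Theorem \ref{THE:Déformation P-critique} already rests. Once those are in hand, the proof above is essentially bookkeeping.
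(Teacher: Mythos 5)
Your first assertion and the easy implication of the second are fine. For semi-stability you reduce to checking local $P_{\zeta_0}$-semi-stability of $\E_b$ (every admissible $F$ has $(F,\dbar_0)$ a direct summand of $\E_0$, hence $P_{\zeta_0}(F)=0$) and then quote Theorem \ref{THE:Déformation P-critique}; the paper instead runs the flow $\phi_{\zeta_0}(b,\cdot)$ directly and uses Propositions \ref{PRO:Définition et convergence des flux zeta_0}, \ref{PRO:Convergence flux} and the \Lo\ inequality (\ref{EQ:Lojasiewicz}) to produce a zero of $\mu_{\zeta_0}$ in $\overline{G\cdot b}\cap B_2$. Both routes are valid, and "closed orbit $\Rightarrow$ polystable" is the same formal observation in both.

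The converse of the second assertion, however, has a genuine gap. You want to conclude that $G\cdot b$ is closed \emph{in all of $V$} from the existence of a zero of $\mu_{\zeta_0}$ in $G\cdot b\cap B_2$, by appealing to "the classical Kempf--Ness/Hilbert--Mumford theorem on the vector space $V$". That theorem characterises closed orbits via zeroes of the \emph{flat} moment map of the linear $K$-action (or via the signs of its Hilbert--Mumford weights), whereas what you have is a zero of the nonlinear, locally defined $\mu_{\zeta_0}$. The sentence "convexity of the Kempf--Ness functional along geodesics then propagates the equality $\mu_{\zeta_0}(b')=0$ into the sign condition of a polystable point for every Hilbert--Mumford weight" is precisely the step that needs a proof: a zero of one moment map does not transfer to a zero (or to the correct weight inequalities) of a different moment map without an argument, and the relevant weights here live at limit points $\lim_{t\to+\infty}\e^{t\xi}\cdot b'$ which, together with arbitrary points of $\overline{G\cdot b}\setminus G\cdot b$, need not lie in $B_2$ where $\mu_{\zeta_0}$ is even defined. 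Your closing remark that this escape problem "is exactly the content of the local GIT estimates of Section \ref{SEC:Résultats GIT locaux}" is not accurate: those estimates control flows started in $B_1$, not arbitrary points of the orbit closure in $V$.

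What actually closes this gap in the paper is a rescaling trick exploiting the linearity of the $G$-action, which is absent from your proposal. One first observes (via Proposition \ref{PRO:Convergence exp(txi)b}, whose boundedness criterion for $\norme{\e^{t\xi}\cdot b}$ is manifestly invariant under $b\mapsto\lambda b$) that local $P_{\zeta_0}$-polystability of $\E_b$ passes to $\E_{\varepsilon b}$ for all $\varepsilon\in\R^*$ with $\varepsilon b\in B_1$. Then, given $b_0\in\overline{G\cdot b}$ possibly far from $0$, linearity gives $\varepsilon b_0\in\overline{G\cdot(\varepsilon b)}$ with both points in $B_1$ for $\varepsilon$ small; applying the flow, Corollary \ref{COR:Unicité de Ness} and Proposition \ref{PRO:Duistermaat} to $\varepsilon b$ and $\varepsilon b_0$ yields $\varepsilon b\in\overline{G\cdot(\varepsilon b_0)}$, and the standard fact about reductive group orbits (two orbits each in the closure of the other coincide) gives $\varepsilon b_0\in G\cdot(\varepsilon b)$, hence $b_0\in G\cdot b$. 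Without some device of this kind to bring the whole orbit closure into the range of the local theory, your argument does not establish closedness in $V$.
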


Then, we give a topological description of sets of couples $(\zeta,b)$ making $\E_b$ $\mu_{\infty,\zeta}$-semi-stable, polystable and stable. Let
$$
L_{\mathrm{ss}} = \{(\zeta,b) \in \mU_d \times B_1|b \textrm{ is $\mu_\zeta$-semi-stable in } B_2\},
$$
$$
L_{\mathrm{ps}} = \{(\zeta,b) \in \mU_d \times B_1|\E_b \textrm{ is $\mu_\zeta$-polystable in } B_2\},
$$
$$
L_{\mathrm{s}} = \{(\zeta,b) \in \mU_d \times B_1|\E_b \textrm{ is $\mu_\zeta$-stable in } B_2\}.
$$
Clearly, $L_{\mathrm{s}} \subset L_{\mathrm{ps}} \subset L_{\mathrm{ss}} \subset \mU_d \times B_1$.

\begin{proposition}\label{PRO:Topologies lieux stables}
    For all $(\zeta,b) \in \mU_d \times B_1$, $(\zeta,b)$ belonging to $L_{\mathrm{s}}$ (resp. $L_{\mathrm{ps}}$, $L_{\mathrm{ss}}$) only depends on $[\zeta]$ and $G \cdot b \cap B_1$. Moreover,
    \begin{enumerate}
        \item $L_{\mathrm{s}} \subset \mU_d \times B_1$ is open.
        \item For all fixed $\zeta \in \mU_d$, $\{b \in B_1|(\zeta,b) \in L_{\mathrm{ss}}\} \subset B_1$ is open.
        \item For all fixed $b \in B_1$, $\{\zeta \in \mU_d|(\zeta,b) \in L_{\mathrm{ss}}\} \subset \mU_d$ is a closed polyhedral cone centred at $0$ defined by a finite number of inequalities on $[\zeta - \zeta_0]$ and $\{\zeta \in \mU_d|(\zeta,b) \in L_{\mathrm{s}}\} \subset \mU_d$ is its interior.
        \item For all fixed $b \in B_1$, if $(\zeta_0,b) \in L_{\mathrm{ps}}$, $\{\zeta \in \mU_d|(\zeta,b) \in L_{\mathrm{ps}}\} = \{\zeta \in \mU_d|(\zeta,b) \in L_{\mathrm{ss}}\}$.
        \item For all fixed $b \in B_1$, if $\E_b$ is simple, $\{\zeta \in \mU_d|(\zeta,b) \in L_{\mathrm{ps}}\} = \{\zeta \in \mU_d|(\zeta,b) \in L_{\mathrm{s}}\}$.
    \end{enumerate}
\end{proposition}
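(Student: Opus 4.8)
The plan is to transfer the whole statement to the numerical side using Theorems~\ref{THE:Déformation P-critique} and~\ref{THE:Cas zeta = zeta_0}. By Theorem~\ref{THE:Déformation P-critique}, for $(\zeta,b)\in\mU_d\times B_1$ belonging to $L_{\mathrm{ss}}$ (resp. $L_{\mathrm{ps}}$, resp. $L_{\mathrm{s}}$) is equivalent to $\E_b$ being locally $P_\zeta$-semi-stable (resp. polystable, resp. stable), and by Theorem~\ref{THE:Cas zeta = zeta_0} every $b\in B_1$ satisfies $(\zeta_0,b)\in L_{\mathrm{ss}}$. The key reduction is that an admissible sub-bundle $G$ of $\E_b$ has $(G,\dbar_0)\cong\bigoplus_i\G_{0,i}^{r_i}$ for some integers $0\le r_i\le m_i$ by Proposition~\ref{PRO:Polysimplicité}, so $P_{\zeta_0}(G)=\sum_i r_iP_{\zeta_0}(\G_{0,i})=0$ by the Remark following that proposition. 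Since $\zeta\mapsto P_\zeta(G)$ is linear and factors through $[\zeta]$, this gives $P_\zeta(G)=P_{\zeta-\zeta_0}(G)=\ell_\tau([\zeta-\zeta_0])$, where the type $\tau=(r_i)$ ranges over a finite set and $\ell_\tau$ is a linear form depending only on $\tau$.

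A second ingredient is a compactness lemma. A sub-bundle $G\subset E$ with $(G,\dbar_0)$ and $(G^\bot,\dbar_0)$ holomorphic in $\E_0$ corresponds to a $\dbar_0$-parallel orthogonal idempotent $\pi$ of $\E_0$; using $\End(\E_0)\cong\bigoplus_i\mathrm{Mat}_{m_i}(\C)$ and $\Hom(\G_{0,i},\G_{0,j})=0$ for $i\ne j$, the idempotents of a fixed type $\tau$ form the compact manifold $\prod_i\mathrm{Gr}(r_i,\C^{m_i})$, and the condition that $(G,\dbar_b)$ be holomorphic in $\E_b$, namely $(\Id-\pi)\Phi(b)\pi=0$, is closed in $(b,\pi)$. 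Projecting along the compact factor, the set $B_\tau\subset B_1$ of those $b$ for which $\E_b$ carries an admissible sub-bundle of type $\tau$ is closed in $B_1$, and the locus of $(\zeta,b)\in\mU_d\times B_1$ for which $\E_b$ has such a sub-bundle is $\mU_d\times B_\tau$.

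Granting these two facts: the dependence statement is immediate, since $\overline{G\cdot b}$ and $\Stab(b)$ up to conjugacy depend only on $G\cdot b$, which is recovered from $G\cdot b\cap B_1$ as $b\in B_1$, while by the reduction the three conditions depend on $\zeta$ only through the numbers $\ell_\tau([\zeta-\zeta_0])$, hence only through $[\zeta]$. For item~1, the complement of $L_{\mathrm{s}}$ in $\mU_d\times B_1$ is $\bigcup_{\tau\ \mathrm{proper}}\{\zeta\in\mU_d:\ell_\tau([\zeta-\zeta_0])\ge0\}\times B_\tau$, a finite union of closed sets, so $L_{\mathrm{s}}$ is open; item~2 is the same computation with $\zeta$ fixed and the inequality strict. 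For item~4, $(\zeta_0,b)\in L_{\mathrm{ps}}$ forces $G\cdot b$ to be closed in $V$ by Theorem~\ref{THE:Cas zeta = zeta_0}, so $\overline{G\cdot b}\cap B_2=G\cdot b\cap B_2$ for every $\zeta$ and $\mu_\zeta$-semi-stability of $b$ coincides with $\mu_\zeta$-polystability. For item~5, if $\E_b$ is simple its stabiliser in $G$ is a finite group of homotheties, acting trivially, so $b$ is $\mu_\zeta$-stable as soon as it is $\mu_\zeta$-polystable.

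The hardest point is item~3. Fixing $b$, let $\mathcal{T}(b)$ be the finite set of types of proper admissible sub-bundles of $\E_b$; then $\{\zeta\in\mU_d:(\zeta,b)\in L_{\mathrm{ss}}\}=\{\zeta\in\mU_d:\ell_\tau([\zeta-\zeta_0])\le0\ \text{for every }\tau\in\mathcal{T}(b)\}$ is the trace on $\mU_d$ of a closed polyhedral cone with apex at $[\zeta_0]$, cut out by finitely many linear inequalities on $[\zeta-\zeta_0]$. By the observation following Definition~\ref{DEF:Stabilité P}, $\E_b$ is locally $P_\zeta$-stable iff all these inequalities are strict and $\E_b$ is simple: if equality held for some admissible $G$ then local $P_\zeta$-polystability would split off $(G^\bot,\dbar_b)\subset\E_b$, contradicting simplicity. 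The remaining and most delicate task is to identify this set with the topological interior of the cone; the subtlety is twofold --- an admissible sub-bundle and its orthogonal complement may both be admissible in $\E_b$, in which case $\ell_G\le0$ and $\ell_{G^\bot}=-\ell_G\le0$ collapse the cone onto a face where $\E_b$ splits and fails to be simple, and $\E_b$ may carry a proper admissible sub-bundle with Chern character proportional to that of $E$, whose $\ell_\tau$ vanishes identically. Controlling these degeneracies, using the Kuranishi relation $(\Id-\pi)\Phi(b)\pi=0$ to decide which admissible sub-bundles have admissible complements, is where the main work of item~3 lies.
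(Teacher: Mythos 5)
Your reduction to the numerical side via Theorem~\ref{THE:Déformation P-critique}, the observation that every admissible sub-bundle $G$ satisfies $P_{\zeta_0}(G)=0$ so that $\zeta\mapsto P_\zeta(G)$ is a linear form $\ell_\tau$ in $[\zeta-\zeta_0]$ depending only on the finite datum $\tau$, and the treatment of the dependence statement and of items 4 and 5 are all correct. For items 1 and 2 you take a genuinely different route from the paper: the paper proves openness of $L_{\mathrm{s}}$ by an implicit function theorem argument on the moment map (using triviality of the stabiliser to invert the linearisation) and openness of the semi-stable locus in $b$ via the \Lo\ inequality and Proposition~\ref{PRO:Duistermaat}, whereas you observe that the locus $B_\tau$ of $b$ carrying an admissible sub-bundle of type $\tau$ is closed, being the image of the closed set $\{(b,\pi):(\Id-\pi)\Phi(b)\pi=0\}$ under projection along the compact space of $\dbar_0$-parallel Hermitian idempotents of type $\tau$. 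Once Theorem~\ref{THE:Déformation P-critique} is granted, your argument is correct and arguably more elementary and more uniform (it gives both items from one lemma); the paper's analytic arguments have the merit of not passing through the numerical characterisation.

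There is, however, a genuine gap at item 3: you describe the semi-stable slice as the polyhedral cone $\{\ell_\tau([\zeta-\zeta_0])\le0,\ \tau\in\mathcal{T}(b)\}$ (which is the first half of the claim), but you explicitly defer "the main work" of identifying the stable slice with its interior, so this part is not proved. What the paper uses here is the elementary convex-geometry fact that for finitely many \emph{non-zero} linear forms $l_1,\ldots,l_m$, one has $\mathrm{int}\bigl(\bigcap_i\{l_i\le0\}\bigr)=\bigcap_i\{l_i<0\}$: if the open cone is non-empty this is standard, and if it is empty then by Gordan's theorem some non-trivial non-negative combination $\sum\lambda_il_i$ vanishes, which forces the closed cone into a hyperplane $\{l_i=0\}$ and hence to have empty interior. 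Your first "degeneracy" (both $G$ and $G^\bot$ admissible) is exactly this second case and is therefore not an obstruction. Your second degeneracy ($\ell_\tau\equiv0$ for a proper admissible sub-bundle, e.g. one with $\ch(G)$ proportional to $\ch(E)$) is the only genuine one, and the paper's proof is silent about it too; to complete the argument you must either show such sub-bundles do not occur under the standing hypotheses or restrict to types with $\ell_\tau\not\equiv0$ and justify why the remaining ones do not affect the interior. Separately, note that Definition~\ref{DEF:Stabilité P} of local $P_\zeta$-stability asks only for the strict inequalities and does not include simplicity of $\E_b$; importing simplicity into that definition, as you do, is not needed for item 3 and slightly misstates what has to be identified with the interior.
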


In Section \ref{SEC:Résultats GIT locaux}, we show some local GIT results in a local non-compact symplectic setting and we fix $\mB_d$. We also study some properties of the moment map flows including their global existence and convergence at $\zeta = \zeta_0$. This section gives all the tools to complete the proofs of the main theorems of the paper.

In the first half of Section \ref{SEC:Preuve du théorème}, we prove Theorem \ref{THE:Déformation P-critique} under an hypothesis of global existence of the flows. In the second half, we use the existence of the moment map flows at all time when $\zeta = \zeta_0$ and a version of the second Ness uniqueness theorem in the semi-stable case to deduce that the flows exist and converge at all time and for all $\zeta$. It will conclude the proof of Theorem \ref{THE:Déformation P-critique}. Proofs of Theorem \ref{THE:Cas zeta = zeta_0} and Proposition \ref{PRO:Topologies lieux stables} will easily follow. We also construct along the way Jordan--Hölder filtrations of locally $P_\zeta$-semi-stable bundles by admissible sub-bundles (Theorem \ref{THE:FJH admissible}) and we show the uniqueness of solutions of the $P_\zeta$-critical equation in $\mB_d$ modulo the unitary gauge group, under existence assumption (Theorem \ref{THE:Unicité solutions}).

In Section \ref{SEC:Kempf--Ness}, we construct Harder--Narasimhan filtrations of any small deformation of $\E_0$ by admissible sub-bundles (Theorem \ref{THE:FHN admissible}), we show a continuity result about the solutions of the $P_\zeta$-critical equations (Theorem \ref{THE:Continuité des opérateurs P-critiques}) and we deduce a local version of the Kempf--Ness theorem (Theorem \ref{THE:Kempf--Ness}).

\section{Local GIT results}\label{SEC:Résultats GIT locaux}

\subsection{Degeneration by $1$-parameter sub-groups}

\begin{lemma}\label{LEM:Sections dbar + alpha holomorphes}
    Let $\F = (F,h,\dbar_F)$ be a holomorphic Hermitian vector bundle (so $\dbar_F$ is integrable) such that all global holomorphic sections of $\F$ are parallel. For all $\gamma \in \Omega^{0,1}(X,\End(F))$ whose $\mC^0$ norm is small enough (depending on $\F$) such that $\dbar_F^\bullet\gamma = 0$, all $(\dbar_F + \gamma)$-holomorphic global sections of $F$ are $\dbar_F$-holomorphic.
\end{lemma}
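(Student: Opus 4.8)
The plan is a spectral-gap argument for the $\dbar_F$-Laplacian on sections of $F$, using crucially \emph{both} the hypothesis $\dbar_F^\bullet\gamma=0$ and the fact that holomorphic sections of $\F$ are parallel. First I would record the relevant Hodge theory: on the compact manifold $X$ the operator $\Delta_F:=\dbar_F^\bullet\dbar_F$ on $\Omega^0(X,F)$ is non-negative, self-adjoint and elliptic (its symbol is $|\xi^{0,1}|^2\id$), hence has discrete spectrum; its kernel is the space of $\dbar_F$-holomorphic sections (since $\scal{\Delta_F s}{s}_{L^2}=\norme{\dbar_F s}_{L^2}^2$) and it admits a smallest positive eigenvalue $\lambda_1=\lambda_1(\F)>0$. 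Consequently $\norme{u}_{L^2}^2\leq\lambda_1^{-1}\norme{\dbar_F u}_{L^2}^2$ for every $u\in\Omega^0(X,F)$ orthogonal to $\ker\dbar_F$.

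Now let $s$ be a $(\dbar_F+\gamma)$-holomorphic section (smooth, by elliptic regularity of $\dbar_F+\gamma$), and write $s=s_0+s^\perp$ for its $L^2$-orthogonal decomposition with $s_0\in\ker\dbar_F$ and $s^\perp\perp\ker\dbar_F$. Since $\dbar_Fs_0=0$ we have $\dbar_Fs^\perp=\dbar_Fs=-\gamma(s)=-\gamma(s_0)-\gamma(s^\perp)$, so the goal is to show $s^\perp=0$.

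The heart of the argument is the identity $\dbar_F^\bullet(\gamma(s_0))=0$. Since $\F$ has the property that its holomorphic sections are parallel, $s_0$ satisfies $\nabla s_0=0$, hence in particular $\partial_Fs_0=0$; the Leibniz rule for the Chern connection then gives $\partial_F(\gamma(s_0))=(\partial_{\End(F)}\gamma)(s_0)$ (the term involving $\partial_Fs_0$ drops). Applying $-\i\Lambda$ and invoking the identity $\dbar^\bullet=-\i\Lambda\partial$ on $(0,1)$-forms over a balanced manifold (recalled in Section~\ref{SEC:Kuranishi}), we obtain $\dbar_F^\bullet(\gamma(s_0))=(\dbar_{\End(F)}^\bullet\gamma)(s_0)$, which vanishes because $\dbar_F^\bullet\gamma=0$ by assumption. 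Pairing $\dbar_Fs^\perp=-\gamma(s_0)-\gamma(s^\perp)$ with $\dbar_Fs^\perp$ therefore kills the $\gamma(s_0)$-term, $\scal{\dbar_Fs^\perp}{\gamma(s_0)}_{L^2}=\scal{s^\perp}{\dbar_F^\bullet(\gamma(s_0))}_{L^2}=0$, leaving
$$
\norme{\dbar_Fs^\perp}_{L^2}^2=-\scal{\dbar_Fs^\perp}{\gamma(s^\perp)}_{L^2}\leq\norme{\dbar_Fs^\perp}_{L^2}\norme{\gamma(s^\perp)}_{L^2}\leq c_1\norme{\gamma}_{\mC^0}\lambda_1^{-1/2}\norme{\dbar_Fs^\perp}_{L^2}^2,
$$
where $c_1$ depends only on $(X,g)$ and $(F,h)$, i.e. only on $\F$. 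Hence, if $\norme{\gamma}_{\mC^0}<\lambda_1^{1/2}/c_1$ — a threshold depending only on $\F$ — then $\dbar_Fs^\perp=0$, so $s^\perp=0$, and $s=s_0$ is $\dbar_F$-holomorphic.

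I expect the identity $\dbar_F^\bullet(\gamma(s_0))=0$ to be the only real content: both of its ingredients are essential, namely that \emph{every} holomorphic section of $\F$ is parallel (so that $\partial_Fs_0=0$, not merely $\dbar_Fs_0=0$) and that the balanced condition suppresses the torsion terms in $\dbar^\bullet=-\i\Lambda\partial$ at this degree. The remaining steps — the Hodge decomposition, the spectral gap, and the Cauchy–Schwarz absorption — are routine.
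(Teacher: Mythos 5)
Your proof is correct, and it is essentially the argument of \cite[Proposition 4.5]{Buchdahl_Schumacher} that the paper simply cites: the two ingredients the paper flags as essential — the parallel-sections hypothesis (giving $\partial_F s_0 = 0$ and hence $\dbar_F^\bullet(\gamma(s_0)) = (\dbar_F^\bullet\gamma)(s_0) = 0$ via the balanced identity $\dbar_F^\bullet = -\i\Lambda\partial_F$ at degree $0$) — are exactly the ones you isolate, and the spectral-gap absorption correctly yields a threshold on $\norme{\gamma}_{\mC^0}$ depending only on $\F$.
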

\begin{proof}
The proof is the same as the proof of \cite[Proposition 4.5]{Buchdahl_Schumacher}. It uses the fact that the Kähler identity $\dbar_F^\bullet = -\i\Lambda\partial_F$ on smooth sections remains true on balanced manifolds.
\end{proof}

\begin{corollary}\label{COR:Sections dbar_b holomorphes}
    Up to shrinking $B$, for all $(b,b') \in B^2$, any holomorphic morphism $\xi : \E_b \rightarrow \E_{b'}$ lies in $\frak{g}$. In particular, $\E_b \cong \E_{b'}$ is and only if $b \in G \cdot b'$ and any holomorphic endomorphism of $\E_b$ is a holomorphic endomorphism of $\E_0$.
\end{corollary}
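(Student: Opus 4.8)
The plan is to deduce this from Lemma~\ref{LEM:Sections dbar + alpha holomorphes} applied to the endomorphism bundle $\End(\E_0)$. First I would record that every global $\dbar_0$-holomorphic section of $\End(\E_0)$ is parallel for the Chern connection $\nabla_0$: such a section is precisely a holomorphic endomorphism of $\E_0$, and the argument of Lemma~\ref{LEM:Aut réductif} uses only $\dbar_0 f=0$ and $\mP_{\zeta_0}(\dbar_0)=0$, hence applies verbatim to non-invertible $f$; alternatively this is immediate from the orthogonal holomorphic decomposition $\E_0=\bigoplus_i\G_{0,i}^{m_i}$ of Proposition~\ref{PRO:Polysimplicité}, since then every holomorphic endomorphism is block-scalar with constant coefficients. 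Thus $\End(\E_0)$ is a holomorphic Hermitian bundle all of whose global holomorphic sections are parallel, so Lemma~\ref{LEM:Sections dbar + alpha holomorphes} is available for it.

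Next I would reformulate the hypothesis. For $b,b'\in B$, write $\dbar_b=\dbar_0+\Phi(b)$ and $\dbar_{b'}=\dbar_0+\Phi(b')$. Unwinding the definition, a section $\xi\in\Omega^0(X,\End(E))$ is a holomorphic morphism $\E_b\to\E_{b'}$ exactly when
$$
\dbar_0^{\End}\xi+\Phi(b')\,\xi-\xi\,\Phi(b)=0\quad\text{in }\Omega^{0,1}(X,\End(E)),
$$
i.e.\ when $\bigl(\dbar_0^{\End}+\gamma\bigr)\xi=0$, where $\gamma\in\Omega^{0,1}\bigl(X,\End(\End(E))\bigr)$ is left-multiplication by $\Phi(b')$ minus right-multiplication by $\Phi(b)$. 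I would then check the two hypotheses of Lemma~\ref{LEM:Sections dbar + alpha holomorphes} for the pair $\bigl(\End(\E_0),\gamma\bigr)$. Smallness of $\|\gamma\|_{\mC^0}$ holds after shrinking $B$, because $\Phi$ is continuous with $\Phi(b)=b+\mathrm{o}(b)$ by Proposition~\ref{PRO:Propriétés Kuranishi}(3). For the gauge condition, left- and right-multiplication are parallel bundle maps, hence commute with $\partial_0^{\End}$ and with the Hodge star, so $(\dbar_0^{\End})^\bullet\gamma$ equals left-multiplication by $(\dbar_0^{\End})^\bullet\Phi(b')$ minus right-multiplication by $(\dbar_0^{\End})^\bullet\Phi(b)$, which vanishes by Proposition~\ref{PRO:Propriétés Kuranishi}(1). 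Lemma~\ref{LEM:Sections dbar + alpha holomorphes} then gives $\dbar_0^{\End}\xi=0$, which is the main assertion. (Equivalently one could run the argument on $\E_b\oplus\E_{b'}$, viewing $\xi$ as the holomorphic endomorphism $(s,s')\mapsto(0,\xi s)$ and applying the lemma to $\End(\E_0\oplus\E_0)$, which is $P_{\zeta_0}$-critical by Proposition~\ref{PRO:caractérisation unicité}.)

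Finally I would extract the two consequences. Taking $b'=b$ shows every holomorphic endomorphism of $\E_b$ is $\dbar_0$-holomorphic, i.e.\ a holomorphic endomorphism of $\E_0$. For the isomorphism criterion: if $b\in G\cdot b'$ then $\dbar_b$ and $\dbar_{b'}$ are gauge-equivalent by the $G$-equivariance of $\Phi$ (Proposition~\ref{PRO:Propriétés Kuranishi}(2)), so $\E_b\cong\E_{b'}$; conversely, an isomorphism $\xi\colon\E_b\to\E_{b'}$ is $\dbar_0$-holomorphic by the above, hence lies in $\Aut(\E_0)$, and rescaling it by the constant scalar $(\det\xi)^{-1/\rk(E)}$ (homotheties act trivially on Dolbeault operators) produces $g\in G$ with $g\cdot\dbar_b=\dbar_{b'}$, whence $g\cdot\Phi(b)=\Phi(b')$ and $G$-equivariance together with injectivity of $\Phi$ give $g\cdot b=b'$.

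The only genuine work is the two verifications above — that $\gamma$ is $(\dbar_0^{\End})^\bullet$-closed and $\mC^0$-small — together with the correct unwinding of ``holomorphic morphism between deformations'' into the displayed first-order equation; both are routine. All the substance is carried by Lemma~\ref{LEM:Sections dbar + alpha holomorphes} itself (the balanced-manifold analogue of Buchdahl--Schumacher's Proposition~4.5, resting on the Kähler identity $\dbar_0^\bullet=-\i\Lambda\partial_0$ in degree $0$), which is already established, so I do not expect any real obstacle.
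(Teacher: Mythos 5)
Your proof is correct and follows exactly the same route as the paper's (one-line) proof: applying Lemma \ref{LEM:Sections dbar + alpha holomorphes} to $\F = \End(\E_0)$, with Lemma \ref{LEM:Aut réductif} supplying parallelism of holomorphic sections of $\End(\E_0)$, continuity of $\Phi$ supplying $\mC^0$-smallness, and $\Phi(B)\subset\ker(\dbar_0^\bullet)$ supplying the gauge condition on $\gamma$. The only loose point is the final step, where $G$-equivariance of $\Phi$ requires $g\cdot b\in B$; it is cleaner to project $g\Phi(b)g^{-1}=\Phi(b')$ onto $V$ (conjugation by the parallel $g$ preserves the Hodge decomposition), which gives $g\cdot b=b'$ directly.
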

\begin{proof}
This is a consequence of the continuity of $\Phi$, the fact that it takes values in $\ker(\dbar_0^\bullet)$ (see Proposition \ref{PRO:Propriétés Kuranishi}), Lemma \ref{LEM:Aut réductif} and Lemma \ref{LEM:Sections dbar + alpha holomorphes} applied to $\F = \End(\E_0)$.
\end{proof}

To complete the proof of Theorem \ref{THE:Déformation P-critique}, we need to construct four open balls centred at $0$ with certain properties. We set $B_{4,0} \subset V$ such a ball and we choose it small enough to satisfy all the previous results. We still call $B$ a ball smaller than $B_{4,0}$ and we allow ourselves to shrink it later. However, $B_{4,0}$ is fixed until the end of this paper.

We now give a link between the existence of the limit $\lim_{t \rightarrow +\infty} \e^{t\xi} \cdot b$ for $b \in B$ close enough to $0$ and $\xi \in \i\frak{k}$ in terms of holomorphic sub-bundles of $\E_b$. For all $\xi \in \i\frak{k}$, $\xi$ is Hermitian and the coefficients of its characteristic polynomial are holomorphic thus constant. We deduce that the eigenvalues of $\xi$ are constant so we can write, by the spectral theorem,
$$
\xi = \sum_{\lambda \in \mathrm{Sp}(\xi)} \lambda\Pi_\lambda,
$$
where the $\Pi_\lambda \in \i\frak{k}$ are the orthogonal projection onto the eigenspaces of $\xi$. These eigenspaces define pairwise orthogonal smooth sub-bundles $G_{\xi,\lambda}$ of $E$ such that $(G_{\xi,\lambda},\dbar_0) \subset \E_0$ and $(G_{\xi,\lambda}^\bot,\dbar_0) \subset \E_0$ are holomorphic sub-bundles.

We also define
$$
F_{\xi,\lambda} = \bigoplus_{\nu \leq \lambda} G_{\xi,\nu}.
$$
These bundles also verify $(F_{\xi,\lambda},\dbar_0) \subset \E_0$ and $(F_{\xi,\lambda}^\bot,\dbar_0) \subset \E_0$ being holomorphic, and they define a filtration of $\E_0$ by holomorphic sub-bundles.

\begin{proposition}\label{PRO:Convergence exp(txi)b}
    Up to shrinking $B$ and $\mU_d$, for all $\zeta \in \mU_d$, all $b \in B$ and all $\xi = \sum_{\lambda \in \mathrm{Sp}(\xi)} \lambda\Pi_\lambda$ in $\i\frak{k}$, we have equivalence between,
    \begin{enumerate}
        \item For all $\lambda$, $(F_{\xi,\lambda},\dbar_b) \subset \E_b$ is holomorphic (in other words, they define a filtration of $\E_b$ by admissible sub-bundles),
        \item For all $t \geq 0$, $\e^{t\xi} \cdot b \in B_{4,0}$ and it converges in $B_{4,0}$ when $t \rightarrow +\infty$,
        \item There is are sequences $t_p \rightarrow +\infty$, $b_p \rightarrow b$ and $\xi_p \rightarrow \xi$ of elements of $\i\frak{k}$ such that $(\e^{t_p\xi_p} \cdot b_p)$ is bounded in $V$.
        \item $\norme{\e^{t\xi} \cdot \dbar_b}_{L^2}$ is bounded when $t \rightarrow +\infty$.
    \end{enumerate}
    Moreover, in this case, if we set $b_\infty = \lim_{t \rightarrow +\infty} \e^{t\xi} \cdot b \in B_{4,0}$, all the embeddings $(G_{\xi,\lambda},\dbar_{b_\infty}) \subset \E_{b_\infty}$ are holomorphic and for all $t$, $\norme{\e^{t\xi} \cdot b} \leq \norme{b}$. In particular, $b_\infty \in B$.
\end{proposition}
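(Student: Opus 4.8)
The plan is to diagonalise $\xi$ and follow the conjugation action $\e^{t\xi}\cdot(\cdot)=\e^{t\xi}(\cdot)\e^{-t\xi}$ block by block; granting the properties of the Kuranishi slice (Proposition \ref{PRO:Propriétés Kuranishi}) and Lemma \ref{LEM:Aut réductif}, the whole statement then reduces to linear algebra on $V$ together with one elliptic estimate. Write $E=\bigoplus_{\lambda\in\mathrm{Sp}(\xi)}G_{\xi,\lambda}$ for the orthogonal eigenspace decomposition and, for $\alpha\in\Omega^{0,1}(X,\End(E))$, set $\alpha_{\lambda\mu}=\Pi_\lambda\alpha\Pi_\mu$, so that $\e^{t\xi}\cdot\alpha=\sum_{\lambda,\mu}\e^{t(\lambda-\mu)}\alpha_{\lambda\mu}$ and the components $\alpha_{\lambda\mu}$ are mutually $L^2$-orthogonal. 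Since every $\Pi_\lambda\in\i\frak{k}$ is parallel and Hermitian, the operator $\alpha\mapsto\Pi_\lambda\alpha\Pi_\mu$ commutes with $\dbar_0$, with $\dbar_0^\bullet$, and hence with the Green operator $\mathrm{Green}$; in particular it preserves $V$, and the Kuranishi identity $b=\Phi(b)+\dbar_0^\bullet\mathrm{Green}(\Phi(b)\wedge\Phi(b))$ decomposes into components. Finally, because $\e^{t\xi}\in G$ is $\dbar_0$-holomorphic we have $\e^{t\xi}\cdot\dbar_0=\dbar_0$, so $\e^{t\xi}\cdot\dbar_b=\dbar_0+\e^{t\xi}\cdot\Phi(b)$, while the $G$-equivariance of $\Phi$ gives $\Phi(\e^{t\xi}\cdot b)=\e^{t\xi}\cdot\Phi(b)$ whenever $\e^{t\xi}\cdot b\in B$.

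I would then record three reductions. (i) \emph{Block lemma}: for fixed $\alpha$, the sum $\sum_{\lambda,\mu}\e^{t(\lambda-\mu)}\alpha_{\lambda\mu}$ stays bounded as $t\to+\infty$ if and only if $\alpha_{\lambda\mu}=0$ for all $\lambda>\mu$, and in that case it converges to $\sum_\lambda\alpha_{\lambda\lambda}$, with $\norme{\e^{t\xi}\cdot\alpha}_{L^2}$ non-increasing on $[0,+\infty)$ by orthogonality. (ii) Since $\dbar_0$ already preserves each $\Omega^0(X,F_{\xi,\lambda})$, condition (1) is equivalent to $\Phi(b)(F_{\xi,\lambda})\subseteq F_{\xi,\lambda}$ for all $\lambda$, i.e.\ to $\Phi(b)_{\lambda\mu}=0$ for all $\lambda>\mu$. (iii) The latter is equivalent to $b_{\lambda\mu}=0$ for all $\lambda>\mu$, i.e.\ to $b$ lying in the linear subspace $V^{+}\subset V$ of "filtered" classes: one implication is immediate from the component form of the Kuranishi identity together with the fact that a wedge product of two forms with vanishing strictly lower components again has vanishing strictly lower components; the converse follows, after shrinking $B$, by observing that every term of the strictly lower part of $\Phi(b)\wedge\Phi(b)$ carries a strictly lower factor of $\Phi(b)$, so that the strictly lower part $\Phi(b)^{<}$ satisfies $\norme{\Phi(b)^{<}}\lesssim\norme{\Phi(b)}\,\norme{\Phi(b)^{<}}$ in a fixed Hölder norm (using boundedness of $\dbar_0^\bullet\mathrm{Green}$ and smallness of $\norme{\Phi(b)}$), whence $\Phi(b)^{<}=0$.

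The equivalences then assemble: $(1)\Rightarrow(2)$ because $b\in V^{+}$ by (ii)--(iii), so $\norme{\e^{t\xi}\cdot b}\le\norme{b}$ for all $t\ge0$, hence $\e^{t\xi}\cdot b$ remains in $B\subset B_{4,0}$ and converges to $b_\infty=\sum_\lambda b_{\lambda\lambda}\in B$; $(2)\Rightarrow(3)$ trivially (take $b_p=b$, $\xi_p=\xi$); and $(1)\Leftrightarrow(4)$ is the block lemma applied to $\alpha=\Phi(b)$ together with $\e^{t\xi}\cdot\dbar_b=\dbar_0+\e^{t\xi}\cdot\Phi(b)$. For the "moreover" part, continuity and $G$-equivariance of $\Phi$ give $\Phi(b_\infty)=\lim_{t\to+\infty}\e^{t\xi}\cdot\Phi(b)=\sum_\lambda\Phi(b)_{\lambda\lambda}$, which preserves each $G_{\xi,\lambda}$, so $(G_{\xi,\lambda},\dbar_{b_\infty})\subset\E_{b_\infty}$ is holomorphic, and $\norme{\e^{t\xi}\cdot b}\le\norme{b}$ has already been shown. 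The step that is not pure bookkeeping is $(3)\Rightarrow(1)$: here $\xi_p\to\xi$ may carry a coarser eigenvalue structure, so I would fix $\delta>0$ smaller than a third of the minimal gap between the (constant, since holomorphic) eigenvalues of $\xi$, group the eigenvalues of $\xi_p$ into the clusters within distance $\delta$ of the $\lambda$'s, and let $\Pi^{p}_\lambda$ be the corresponding parallel Hermitian spectral projections, which converge to $\Pi_\lambda$; then for $\lambda>\mu$ one has $\norme{\Pi^{p}_\lambda(\e^{t_p\xi_p}\cdot b_p)\Pi^{p}_\mu}_{L^2}\ge\e^{t_p\delta}\norme{\Pi^{p}_\lambda b_p\Pi^{p}_\mu}_{L^2}$, so the assumed boundedness and $t_p\to+\infty$ force $\Pi^{p}_\lambda b_p\Pi^{p}_\mu\to0$, hence $\Pi_\lambda b\Pi_\mu=0$, i.e.\ $b\in V^{+}$, and (1) follows from (iii) and (ii). I expect this spectral-perturbation estimate to be the main obstacle; everything else is linear algebra plus the already-established properties of the deformation theory.
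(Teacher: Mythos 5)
Your proof is correct, and the core mechanism is the same as the paper's: decompose into the eigenblocks of $\xi$, track the weights $\e^{t(\lambda-\mu)}$, and use $L^2$-orthogonality of the blocks. The plumbing, however, differs in two genuine ways. First, you route $(1)\Rightarrow(2)$ and $(3)\Rightarrow(1)$ through the linear subspace $V^{+}$ of block-upper-triangular harmonic classes, which forces you to prove the equivalence ``$b^{<}=0\Leftrightarrow\Phi(b)^{<}=0$''; the nontrivial direction is your absorption estimate $\norme{\Phi(b)^{<}}\leq C\norme{\Phi(b)}\,\norme{\Phi(b)^{<}}$ from the Kuranishi identity, the parallelism of the $\Pi_\lambda$ (so that they commute with $\dbar_0^\bullet$ and $\mathrm{Green}$), and smallness of $\Phi(b)$. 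This lemma is correct but is nowhere needed in the paper: for $(1)\Rightarrow(2)$ the paper only uses that $\norme{\e^{t\xi}\cdot\Phi(b)}_{L^2}$ is non-increasing and then recovers convergence of $\e^{t\xi}\cdot b$ itself from injectivity and properness of $\Phi$ on a slightly larger ball $B'$ (shrinking $B$ so that $\norme{\Phi(b)}_{L^2}<\inf_{\partial B'}\norme{\Phi}_{L^2}$), while for condition $(3)$ the paper proves $(3)\Rightarrow(2)$ directly by showing $b^{<}=0$, never passing back through $(1)$. Your route buys a cleaner monotonicity statement $\norme{\e^{t\xi}\cdot b}\leq\norme{b}$ from the start (which the paper only extracts inside its $(3)\Rightarrow(2)$ argument), at the price of one extra elliptic estimate. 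Second, for $\xi_p\to\xi$ you use clustered Riesz/spectral projections $\Pi^p_\lambda\to\Pi_\lambda$ with a gap parameter $\delta$, whereas the paper writes $\xi_p=u_p\xi_p'u_p^\dagger$ with $\xi_p'$ in a fixed maximal torus and $u_p\to\Id_E$ and pushes the errors $\varepsilon_{p,i}=\lambda_{p,i}-\lambda_i\Id_i$ into the exponential; your version avoids the (true but slightly delicate) claim that the diagonalising unitaries can be chosen converging to the identity. Both estimates yield the same conclusion $\Pi_\lambda b\Pi_\mu=0$ for $\lambda>\mu$. The only point worth making explicit is that the constant in your absorption estimate, hence the required shrinking of $B$, is uniform in $\xi$ (the projections have pointwise norm $1$ and the number of blocks is bounded by $\rk(E)$), since the proposition is quantified over all $\xi\in\i\frak{k}$.
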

\begin{proof}\ \\
\noindent\framebox{$1 \Rightarrow 2$} Assume the first point. Let $\mathrm{Sp}(\xi) = \{\lambda_1,\ldots,\lambda_m\}$ with $\lambda_1 < \lambda_2 < \ldots < \lambda_m$. It means that in the orthogonal decomposition
$$
E = \bigoplus_{k = 1}^m G_{\xi,\lambda_k},
$$
we have
$$
\Phi(b) = \dbar_b - \dbar_0 =
\begin{pmatrix}
    \gamma_{11} & \gamma_{12} & \hdots & \gamma_{1m}\\
    0 & \gamma_{22} & \hdots & \gamma_{2m}\\
    \vdots & \vdots & \ddots & \vdots\\
    0 & 0 & \hdots & \gamma_{mm}
\end{pmatrix}
$$
because each $F_{\xi,\lambda_k} = \bigoplus_{i = 1}^k G_{\xi,\lambda_i} \subset E$ is preserved by both $\dbar_0$ and $\dbar_b$. Let $\Id_k$ be the identity of $G_{\xi,\lambda_k}$. We have, for all $t$,
\begin{align*}
    &\e^{t\xi} \cdot \Phi(b)\\
    =\ & \e^{t\xi}\Phi(b)\e^{-t\xi}\\
    =\ &
    \begin{pmatrix}
        \e^{\lambda_1t}\Id_1 & 0 & \hdots & 0\\
        0 & \e^{\lambda_2t}\Id_2 & \hdots & 0\\
        \vdots & \vdots & \ddots & \vdots\\
        0 & 0 & \hdots & \e^{\lambda_mt}\Id_m
    \end{pmatrix}
    \begin{pmatrix}
        \gamma_{11} & \gamma_{12} & \hdots & \gamma_{1m}\\
        0 & \gamma_{22} & \hdots & \gamma_{2m}\\
        \vdots & \vdots & \ddots & \vdots\\
        0 & 0 & \hdots & \gamma_{mm}
    \end{pmatrix}
    \begin{pmatrix}
        \e^{-\lambda_1t}\Id_1 & 0 & \hdots & 0\\
        0 & \e^{-\lambda_2t}\Id_2 & \hdots & 0\\
        \vdots & \vdots & \ddots & \vdots\\
        0 & 0 & \hdots & \e^{-\lambda_mt}\Id_m
    \end{pmatrix}\\
    =\ &
    \begin{pmatrix}
        \gamma_{11} & \e^{-(\lambda_2 - \lambda_1)t}\gamma_{12} & \hdots & \e^{-(\lambda_m - \lambda_1)t}\gamma_{1m}\\
        0 & \gamma_{22} & \hdots & \e^{-(\lambda_m - \lambda_2)t}\gamma_{2m}\\
        \vdots & \vdots & \ddots & \vdots\\
        0 & 0 & \hdots & \gamma_{mm}
    \end{pmatrix}\\
    \tend{t}{+\infty}\ &
    \begin{pmatrix}
        \gamma_{11} & 0 & \hdots & 0\\
        0 & \gamma_{22} & \hdots & 0\\
        \vdots & \vdots & \ddots & \vdots\\
        0 & 0 & \hdots & \gamma_{mm}
    \end{pmatrix}.
\end{align*}
The decomposition $E = \bigoplus_{k = 1}^m G_{\xi,\lambda_k}$ is orthogonal and $\dbar_0$-holomorphic thus the associated decomposition
$$
\Omega^{0,1}(X,\End(E)) = \bigoplus_{i,j = 1}^m \Omega^{0,1}(X,\Hom(G_{\xi,\lambda_j},G_{\xi,\lambda_i}))
$$
is orthogonal for the Hermitian product $\scal{\cdot}{\cdot}_{L^2}$. We deduce that $t \mapsto \norme{\e^{t\xi} \cdot \Phi(b)}_{L^2}$ is non-increasing. Let $B'$ be an open ball in $V$ such that $B \subsetneq B' \subsetneq B_{4,0}$. As $\Phi$ is an embedding and $\partial B'$ is compact and does not contain $0$,
$$
a = \inf_{\partial B'} \norme{\Phi(b)}_{L^2}
$$
is positive. Up to shrinking $B$, we may assume that for all $b \in B$, $\norme{\Phi(b)}_{L^2} < a$. In particular, if $b \in B$, for all $t \geq 0$, $\norme{\e^{t\xi} \cdot \Phi(b)}_{L^2} < a$. It implies that whenever $\e^{t\xi} \cdot b \in B_{4,0}$, it is not in $\partial B'$ (recall that $\Phi$ is $G$-invariant). Thus, it stays in the compact subset $\overline{B'}$ of $B_{4,0}$. In particular, it implies that for all non-negative time, $\e^{t\xi} \cdot b \in \overline{B'} \subset B_{4,0}$ and it converges because $\Phi(\e^{t\xi} \cdot b)$ converges and $\Phi_{|\overline{B'}}$ is injective and proper.

Moreover, the limit structure $\dbar_{b_\infty}$ is diagonal in the decomposition $E = \bigoplus_{k = 1}^m G_{\xi,\lambda_k}$, each $G_{\xi,\lambda_k}$ is preserved hence the holomorphic decomposition
$$
\E_{b_\infty} = \bigoplus_{k = 1}^m (G_{\xi,\lambda_k},\dbar_{b_\infty}).
$$\\

\noindent\framebox{$2 \Rightarrow 3$ and $4$} Trivial.\\

\noindent\framebox{$3 \Rightarrow 2$} First of all, the set of all $\eta$ which are diagonal in the decomposition
$$
\E_0 = \bigoplus_{k = 1}^m (G_{\xi,\lambda_k},\dbar_0)
$$
is the Lie algebra $\frak{t}$ of a complex torus $T$ in $G$. Therefore, it is contained in the Lie algebra $\frak{t}_{\max}$ of a maximal torus $T_{\max}$ of $G$. It can be obtained thanks to an orthogonal and $\dbar_0$-holomorphic decomposition of each $(G_{\xi,\lambda_k},\dbar_0)$ for example. By assumption, $\xi \in \frak{t} \subset \frak{t}_{\max}$. Since $\xi_p \rightarrow \xi$, we can diagonalise it as
$$
\xi_p = u_p\xi_p'u_p^\dagger,
$$
where $u_p \in K$ and $\xi_p' \in \frak{t}_{\max}$. Moreover, as $\xi_p \rightarrow \xi \in \frak{t}_{\max}$, $\xi_p' \rightarrow \xi$ and we may assume that $u_p \rightarrow \Id_E$. Let us write, in the orthogonal decomposition $E = \bigoplus_{k = 1}^m G_{\xi,\lambda_k}$,
$$
\xi_p' =
\begin{pmatrix}
    \lambda_{p,1} & 0 & \hdots & 0\\
    0 & \lambda_{p,2} & \hdots & 0\\
    \vdots & \vdots & \ddots & \vdots\\
    0 & 0 & \hdots & \lambda_{p,m}
\end{pmatrix}.
$$
There is no reason to believe that $\xi_p'$ is in $\frak{t}$ so each $\lambda_{p,k}$ is a holomorphic Hermitian endomorphism of $(G_{\xi,\lambda_k},\dbar_0)$ but it needs not be a homothety. In other words, it is block diagonal but not diagonal. As $\xi_p' \rightarrow \xi$, we have $\lambda_{p,k} \rightarrow \lambda_k\Id_k$ for all $k$. Let for all $p$, $b_p' = u_p^\dagger \cdot b_p$ and let us set
$$
b =
\begin{pmatrix}
    b_{11} & b_{12} & \hdots & b_{1m}\\
    b_{21} & b_{22} & \hdots & b_{2m}\\
    \vdots & \vdots & \ddots & \vdots\\
    b_{m1} & b_{m2} & \hdots & b_{mm}
\end{pmatrix}, \qquad
b_p' = 
\begin{pmatrix}
    b_{p,11} & b_{p,12} & \hdots & b_{p,1m}\\
    b_{p,21} & b_{p,22} & \hdots & b_{p,2m}\\
    \vdots & \vdots & \ddots & \vdots\\
    b_{p,m1} & b_{p,m2} & \hdots & b_{p,mm}
\end{pmatrix}.
$$
As $u_p \rightarrow \Id_E$, for all $i,j$, $b_{p,ij} \rightarrow b_{ij}$. We have, for all $p$,
$$
\e^{t_p\xi_p} \cdot b_p = u_p\e^{t_p\xi_p'} \cdot b_p' = u_p \cdot
\begin{pmatrix}
    \e^{t_p\lambda_{p,1}}b_{p,11}\e^{-t_p\lambda_{p,1}} & \e^{t_p\lambda_{p,1}}b_{p,12}\e^{-t_p\lambda_{p,2}} & \hdots & \e^{t_p\lambda_{p,1}}b_{p,1m}\e^{-t_p\lambda_{p,m}}\\
    \e^{t_p\lambda_{p,2}}b_{p,21}\e^{-t_p\lambda_{p,1}} & \e^{t_p\lambda_{p,2}}b_{p,22}\e^{-t_p\lambda_{p,2}} & \hdots & \e^{t_p\lambda_{p,2}}b_{p,2m}\e^{-t_p\lambda_{p,m}}\\
    \vdots & \vdots & \ddots & \vdots\\
    \e^{t_p\lambda_{p,m}}b_{p,m1}\e^{-t_p\lambda_{p,1}} & \e^{t_p\lambda_{p,m}}b_{p,m2}\e^{-t_p\lambda_{p,2}} & \hdots & \e^{t_p\lambda_{p,m}}b_{p,mm}\e^{-t_p\lambda_{p,m}}
\end{pmatrix}.
$$
By assumption, this sequence is bounded so for all $i,j$, $(\e^{t_p\lambda_{p,i}}b_{p,ij}\e^{-t_p\lambda_{p,j}})$ is bounded. Let $i > j$, so $\lambda_i > \lambda_j$. Set $\varepsilon_{p,i} = \lambda_{p,i} - \lambda_i\Id_i \rightarrow 0$ and $\varepsilon_{p,j} = \lambda_{p,j} - \lambda_j\Id_j \rightarrow 0$. We have
\begin{align*}
    \norme{\e^{t_p\lambda_{p,i}}b_{p,ij}\e^{-t_p\lambda_{p,j}}}_{L^2} & = \norme{\e^{t_p\varepsilon_{p,i}}\e^{(\lambda_i - \lambda_j)t_p}b_{p,ij}\e^{-t_p\varepsilon_{p,j}}}_{L^2}\\
    & \geq \norme{\e^{-t_p\varepsilon_{p,i}}}^{-1}\norme{\e^{t_p\varepsilon_{p,j}}}^{-1}\norme{\e^{(\lambda_i - \lambda_j)t_p}b_{p,ij}}_{L^2}\\
    & \geq \e^{-t_p\norme{\varepsilon_{p,i}}}\e^{-t_p\norme{\varepsilon_{p,j}}}\e^{(\lambda_i - \lambda_j)t_p}\norme{b_{p,ij}}_{L^2}\\
    & \geq \e^{\frac{1}{2}(\lambda_i - \lambda_j)t_p}\norme{b_{p,ij}}_{L^2} \textrm{ for $p$ large enough.}
\end{align*}
Since this sequence is bounded, we must have $\norme{b_{p,ij}}_{L^2} \rightarrow 0$ so $b_{ij} = 0$. It holds for all $i > j$ so $b$ is upper diagonal. Moreover, the decomposition
$$
\Omega^{0,1}(X,\End(E)) = \bigoplus_{i,j = 1}^m \Omega^{0,1}(X,\Hom(G_j,G_i))
$$
is orthogonal. It implies that
$$
t \mapsto \norme{\e^{t\xi} \cdot b}_{L^2} = \sum_{i \leq j} \e^{t(\lambda_i - \lambda_j)}\norme{b_{ij}}_{L^2}
$$
decreases and $b_\infty = \lim_{t \rightarrow +\infty} \e^{t\xi} \cdot b$ exists and is given by the diagonal part of $b$. In particular, for all $t$, $\norme{\e^{t\xi} \cdot b}_{L^2} \leq \norme{b}_{L^2}$ and $\norme{b_\infty}_{L^2} \leq \norme{b}_{L^2}$ so $\e^{t\xi} \cdot b$ and $b_\infty$ lie in $B \subset B_{4,0}$.\\

\noindent\framebox{$4 \Rightarrow 1$} Write, in the orthogonal decomposition $E = \bigoplus_{k = 1}^m G_{\xi,\lambda_k}$,
$$
\Phi(b) =
\begin{pmatrix}
\gamma_{11} & \gamma_{12} & \hdots & \gamma_{1m}\\
\gamma_{21} & \gamma_{22} & \hdots & \gamma_{2m}\\
\vdots & \vdots & \ddots & \vdots\\
\gamma_{m1} & \gamma_{m2} & \hdots & \gamma_{mm}
\end{pmatrix}.
$$
As before, we have, for all $t$,
$$
\e^{t\xi} \cdot \Phi(b) =
\begin{pmatrix}
\gamma_{11} & \e^{-(\lambda_2 - \lambda_1)t}\gamma_{12} & \hdots & \e^{-(\lambda_m - \lambda_1)t}\gamma_{1m}\\
\e^{(\lambda_2 - \lambda_1)t}\gamma_{21} & \gamma_{22} & \hdots & \e^{-(\lambda_m - \lambda_2)t}\gamma_{2m}\\
\vdots & \vdots & \ddots & \vdots\\
\e^{(\lambda_m - \lambda_1)t}\gamma_{m1} & \e^{(\lambda_m - \lambda_2)t}\gamma_{m2} & \hdots & \gamma_{mm}
\end{pmatrix}.
$$
We easily see that if $\norme{\e^{t\xi} \cdot \dbar_b}_{L^2}$ is bounded for large $t$, then $\gamma_{ij} = 0$ when $i > j$. We deduce that each $F_{\xi,\lambda_k}$ is preserved by $\dbar_b = \dbar_0 + \Phi(b)$ thus $(F_{\xi,\lambda_k},\dbar_b) \subset \E_b$ is holomorphic.
\end{proof}

The following lemma will be useful to show uniqueness results.

\begin{lemma}\label{LEM:Convexité}
    Up to shrinking $B$, if $b \in B$ and $\xi \in \i\frak{k}$ are such that $\e^\xi \cdot b \in B$, then, all $\e^{t\xi} \cdot b$ for $0 \leq t \leq 1$ lie in $B_{4,0}$.
\end{lemma}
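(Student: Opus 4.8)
The plan is to exploit convexity of $t \mapsto \norme{\e^{t\xi}\cdot b}_{L^2}^2$, reusing the block computation from the proof of Proposition~\ref{PRO:Convergence exp(txi)b}. First I would use that $\xi \in \i\frak{k}$ is Hermitian and holomorphic, so its eigenvalues $\lambda_1 < \dots < \lambda_m$ are real and constant and $E$ splits orthogonally and $\dbar_0$-holomorphically as $E = \bigoplus_{k=1}^m G_{\xi,\lambda_k}$ into the eigenbundles of $\xi$. Writing $b \in V$ in block form $b = (b_{ij})$ with $b_{ij} \in \Omega^{0,1}(X,\Hom(G_{\xi,\lambda_j}, G_{\xi,\lambda_i}))$ and using that $\e^{t\xi}$ acts as the scalar $\e^{t\lambda_k}$ on $G_{\xi,\lambda_k}$, one has $\e^{t\xi}\cdot b = (\e^{(\lambda_i - \lambda_j)t} b_{ij})$. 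Since that block decomposition of $\Omega^{0,1}(X,\End(E))$ is $L^2$-orthogonal (already noted in the proof of Proposition~\ref{PRO:Convergence exp(txi)b}),
$$
\norme{\e^{t\xi}\cdot b}_{L^2}^2 = \sum_{i,j=1}^m \e^{2(\lambda_i - \lambda_j)t}\norme{b_{ij}}_{L^2}^2 ,
$$
a finite sum of real exponentials with non-negative coefficients, hence a convex function of $t \in \R$.

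Next I would conclude from convexity: for $t \in [0,1]$,
$$
\norme{\e^{t\xi}\cdot b}_{L^2}^2 \leq (1-t)\norme{b}_{L^2}^2 + t\,\norme{\e^\xi\cdot b}_{L^2}^2 \leq \max\!\left(\norme{b}_{L^2}^2,\ \norme{\e^\xi\cdot b}_{L^2}^2\right).
$$
By hypothesis $b$ and $\e^\xi\cdot b$ both lie in $B$, so both norms are strictly below the radius of $B$; hence $\e^{t\xi}\cdot b \in B \subset B_{4,0}$ for all $t \in [0,1]$. In fact this already shows the stronger statement that the whole segment stays in $B$, so no shrinking of $B$ beyond what is already in force is actually needed for this lemma.

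I do not expect a genuine obstacle here. The only mild points to handle carefully are: the $L^2$-orthogonality of the block decomposition for $\scal{\cdot}{\cdot}_{L^2}$ (one checks $\tr(b_{kl}^\dagger \wedge b_{ij})$ vanishes pointwise unless $(i,j)=(k,l)$, as already used in the proof of Proposition~\ref{PRO:Convergence exp(txi)b}); the fact that $\e^{t\xi} \in G$ and preserves $V$, so the action is well defined along the whole segment; and the identity $\e^{t\xi}\cdot\dbar_b = \dbar_{\e^{t\xi}\cdot b}$, which holds because $\xi$ is $\dbar_0$-holomorphic so that $\e^{t\xi}\dbar_0(\e^{-t\xi}) = 0$. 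All of these are immediate from the material already established.
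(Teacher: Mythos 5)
Your proof is correct and follows the same route as the paper: the same orthogonal decomposition $E = \bigoplus_k G_{\xi,\lambda_k}$ into eigenbundles of $\xi$, the same block formula $\e^{t\xi}\cdot b = (\e^{(\lambda_i-\lambda_j)t}b_{ij})$, and the same $L^2$-orthogonality of the block decomposition. The only difference is the final estimate: the paper bounds each $\e^{2(\lambda_i-\lambda_j)t}$ for $t\in[0,1]$ by $\norme{b_{ij}}_{L^2}^2+\norme{\e^{\lambda_i-\lambda_j}b_{ij}}_{L^2}^2$ and therefore must take the radius of $B$ at most that of $B_{4,0}$ divided by $\sqrt{2}$, whereas your convexity observation yields the sharper conclusion that the whole segment stays in $B$ itself with no shrinking needed.
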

\begin{proof}
Let $\mathrm{Sp}(\xi) = \{\lambda_1,\ldots,\lambda_m\}$ with $\lambda_1 < \lambda_2 < \ldots < \lambda_m$ so
$$
\xi = \sum_{k = 1}^m \lambda_k\Pi_{\lambda_k}.
$$
Similarly as in the proof of Proposition \ref{PRO:Convergence exp(txi)b}, we have an orthogonal decomposition
$$
E = \bigoplus_{k = 1}^m G_{\xi,\lambda_k}
$$
in which we can decompose $b$ as
$$
b =
\begin{pmatrix}
    b_{11} & b_{12} & \hdots & b_{1m}\\
    b_{21} & b_{22} & \hdots & b_{2m}\\
    \vdots & \vdots & \ddots & \vdots\\
    b_{m1} & b_{m2} & \hdots & b_{mm}
\end{pmatrix}.
$$
So for all $t$, we have,
$$
\e^{t\xi} \cdot b = \e^{t\xi}b\e^{-t\xi} =
\begin{pmatrix}
    b_{11} & \e^{(\lambda_1 - \lambda_2)t}b_{12} & \hdots & \e^{(\lambda_1 - \lambda_m)t}b_{1m}\\
    \e^{(\lambda_2 - \lambda_1)t}b_{21} & b_{22} & \hdots & \e^{(\lambda_2 - \lambda_m)t}b_{2m}\\
    \vdots & \vdots & \ddots & \vdots\\
    \e^{(\lambda_m - \lambda_1)t}b_{m1} & \e^{(\lambda_m - \lambda_2)t}b_{m2} & \hdots & b_{mm}
\end{pmatrix}.
$$
In particular, for $t = 1$,
$$
\e^\xi \cdot b =
\begin{pmatrix}
    b_{11} & \e^{\lambda_1 - \lambda_2}b_{12} & \hdots & \e^{\lambda_1 - \lambda_m}b_{1m}\\
    \e^{\lambda_2 - \lambda_1}b_{21} & b_{22} & \hdots & \e^{\lambda_2 - \lambda_m}b_{2m}\\
    \vdots & \vdots & \ddots & \vdots\\
    \e^{\lambda_m - \lambda_1}b_{m1} & \e^{\lambda_m - \lambda_2}b_{m2} & \hdots & b_{mm}
\end{pmatrix}.
$$
And for all $i,j$ and $0 \leq t \leq 1$,
$$
\norme{\e^{(\lambda_i - \lambda_j)t}b_{ij}}_{L^2}^2 = \e^{2(\lambda_i - \lambda_j)t}\norme{b_{ij}}_{L^2}^2 \leq \max\{\norme{b_{ij}}_{L^2}^2,\norme{\e^{\lambda_i - \lambda_j}b_{ij}}_{L^2}^2\} \leq \norme{b_{ij}}_{L^2}^2 + \norme{\e^{\lambda_i - \lambda_j}b_{ij}}_{L^2}^2.
$$
When we sum the last inequality for all $1 \leq i,j \leq m$, we obtain that
$$
\norme{\e^{t\xi} \cdot b}_{L^2}^2 \leq \norme{b}_{L^2}^2 + \norme{\e^\xi \cdot b}_{L^2}^2.
$$
Therefore, if the radius of $B$ is less than or equal to the radius of $B_{4,0}$ divided by $\sqrt{2}$, the result of the lemma holds, which we may assume up to shrinking $B$.
\end{proof}

From now on, we set $B_{3,0} \subset B_{4,0}$ which satisfies Proposition \ref{PRO:Convergence exp(txi)b} and Lemma \ref{LEM:Convexité}. Once again, it is fixed and we still call $B$ a smaller ball that we allow ourselves to shrink later.

\subsection{Construction of $\mB_d$}

In this sub-section, we show that that all zeroes of $\mu_{\infty,\zeta}$ which are $\mC^d$ close to $\dbar_0$ and in the gauge orbit of $\dbar_b$ for some $b \in B$ actually belong to the deformed slice $\tilde{\Phi}(\zeta,B_{3,0})$ up to a unitary gauge transform. We define here the open neighbourhood $\mB_d$ of $\dbar_0$ mentioned in Theorem \ref{THE:Déformation P-critique}.

\begin{lemma}\label{LEM:Constuction B' subset B''}
    Let $B'' \subset B_{3,0}$ be an open ball centred at $0$. There is a ball $B' \subset B''$ such that for all $\alpha \in \im(\dbar_0^\bullet)$ whose $\mC^0$ norm is small enough, for all $f \in \mC^2(\G^\C(E))$ and all $(b,b') \in {B'}^2$, if
    $$
    f \cdot \dbar_b = \dbar_{b'} + \alpha,
    $$
    then $f \in G$ and $f \cdot b \in B''$.
\end{lemma}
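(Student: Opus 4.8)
Here is the approach I would take. The plan is to reduce the statement to the uniqueness part of a slice theorem for the $\G^\C(E)$-action near $\dbar_0$, and to prove that by an inverse function theorem together with an a priori estimate handling large gauge transformations. First I would rewrite the hypothesis: expanding $f\cdot\dbar_b=\dbar_0-(\dbar_0 f)f^{-1}+f\Phi(b)f^{-1}$ and using $\dbar_b=\dbar_0+\Phi(b)$, $\dbar_{b'}=\dbar_0+\Phi(b')$, the relation $f\cdot\dbar_b=\dbar_{b'}+\alpha$ becomes, after multiplying on the right by $f$,
\[
\dbar_0 f\;=\;f\,\Phi(b)\;-\;\big(\Phi(b')+\alpha\big)\,f .
\]
Since $\Phi$ is valued in $\ker(\dbar_0^\bullet)$ (Proposition~\ref{PRO:Propriétés Kuranishi}) and $\im(\dbar_0^\bullet)\subseteq\ker(\dbar_0^\bullet)$, both $\dbar_b-\dbar_0$ and $(\dbar_{b'}+\alpha)-\dbar_0$ lie in the slice $\mathcal S=\dbar_0+\{\text{small ball of }\ker(\dbar_0^\bullet)\}$, so the content of the lemma is that, near $\dbar_0$, a $\G^\C(E)$-orbit meets $\mathcal S$ in at most one $\Aut(\E_0)$-orbit (with an error term $\alpha$ built in).

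The key structural point is that $\mathcal S$ is $\Aut(\E_0)$-invariant: using Proposition~\ref{PRO:Polysimplicité} one has $\E_0=\bigoplus_i\G_{0,i}^{m_i}$ orthogonally, so $\Aut(\E_0)\cong\prod_i\GL_{m_i}(\C)$ acts on $\Omega^{0,1}(X,\End(\E_0))$ only through the trivial multiplicity directions; hence this action commutes with $\dbar_0$, $\partial_0$ and $\dbar_0^\bullet$, preserves the Hodge decomposition $\Omega^{0,1}(X,\End(\E_0))=\im(\dbar_0)\oplus V\oplus\im(\dbar_0^\bullet)$, and fixes $\dbar_0$. Then, exactly as in the proof of Proposition~\ref{PRO:Propriétés Kuranishi}, the map $(\e^s,\gamma)\mapsto\e^s\cdot(\dbar_0+\gamma)$ is a local diffeomorphism at $(\Id,0)$ from $\{\e^s\mid s\in\frak{g}^\bot\}\times\mathcal S$ onto a neighbourhood of $\dbar_0$, where $\frak{g}=\ker\big(\dbar_0\colon\Omega^0(X,\End(E))\to\Omega^{0,1}(X,\End(E))\big)$ and $\frak{g}^\bot$ is an $L^2$-complement, its differential $(\dot s,\dot\gamma)\mapsto-\dbar_0\dot s+\dot\gamma$ being an isomorphism by Hodge theory. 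From this I would extract a neighbourhood $\mathcal N$ of $\Aut(\E_0)$ in $\mC^2(\G^\C(E))$ and a ball $B'_0\subseteq B''$ such that, for $f\in\mathcal N$ and $b\in B'_0$, the condition $f\cdot\dbar_b\in\mathcal S$ forces $f\in\Aut(\E_0)$: writing $f=g\e^s$ with $g\in\Aut(\E_0)$ and $s\in\frak{g}^\bot$ small, the pair $(g,b)$ already satisfies $g\cdot\dbar_b\in\mathcal S$ (as $g$ preserves $\mathcal S$ and $\dbar_b\in\mathcal S$), so uniqueness in the inverse function theorem gives $s=0$.

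The hard part will be to remove the hypothesis $f\in\mathcal N$ — the usual difficulty caused by $\Aut(\E_0)$ being reductive but not compact; I expect this to be the main obstacle, and it should be handled as in the proof of the Kuranishi slice theorem (cf.\ \cite{Buchdahl_Schumacher}, \cite[Section 7.3]{Kobayashi}). From the displayed equation one reads off $\norme{\dbar_0 f}_{\mC^1}\le\delta\,\norme{f}_{\mC^1}$ with $\delta$ bounded by a fixed multiple of $\norme{\Phi(b)}_{\mC^1}+\norme{\Phi(b')}_{\mC^1}+\norme{\alpha}_{\mC^0}$; feeding this back through $\dbar_0^\bullet$ (using $\dbar_0^\bullet\Phi(b)=\dbar_0^\bullet\alpha=0$) and invoking elliptic estimates for the $\dbar_0$-Laplacian of $\End(\E_0)$ yields $\norme{f}_{\mC^2}\le C\,\norme{f}_{\mC^0}$ once $\delta$ is small. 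A normalisation-and-compactness argument then produces from $f$ a nonzero element of $\frak{g}$; combining this with $\Aut(\E_b)\subseteq\Aut(\E_0)$ from Corollary~\ref{COR:Sections dbar_b holomorphes} to absorb the stabiliser of $\dbar_b$, one concludes that, after left multiplication by a suitable element of $\Aut(\E_0)$, the transformation $f$ becomes $\mC^2$-close to $\Id$, hence lies in $\mathcal N$ — provided $B'_0$ and the threshold on $\norme{\alpha}_{\mC^0}$ are shrunk. The resulting ball is the desired $B'$.

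It then remains to read off the conclusion. By the previous steps, $f$ is a holomorphic automorphism of $\E_0$; rescaling it to have determinant one (which changes neither its action on Dolbeault operators nor on $V$) we may take $f\in G$. By Lemma~\ref{LEM:Aut réductif}, $f$ is parallel, so conjugation by $f$ preserves the Hodge pieces of $\Omega^{0,1}(X,\End(\E_0))$; hence the $V$-component of $f\cdot\dbar_b-\dbar_0=f\Phi(b)f^{-1}$ equals $f\cdot b$ (the image of $b$ under the $G$-action on $V$), using that the $\im(\dbar_0^\bullet)$-part of $\Phi(b)$ provided by Proposition~\ref{PRO:Propriétés Kuranishi} is sent into $\im(\dbar_0^\bullet)$, whereas the $V$-component of $(\dbar_{b'}+\alpha)-\dbar_0=\Phi(b')+\alpha$ equals $b'$, since $\alpha\in\im(\dbar_0^\bullet)$ and the $V$-component of $\Phi(b')$ is $b'$. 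Therefore $f\cdot b=b'$; as $b'\in B'\subseteq B''$ this gives $f\cdot b\in B''$, and one also sees in passing that $\alpha=0$.
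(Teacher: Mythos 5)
Your opening reduction is exactly the paper's: the hypothesis $f\cdot\dbar_b=\dbar_{b'}+\alpha$ says that $f$ is a holomorphic section of $\End(E)$ for the perturbed operator $\dbar_{b,b',\alpha}=\dbar_0+\big((\Phi(b')+\alpha)\cdot{}-{}\cdot\Phi(b)\big)$, whose perturbation lies in $\ker(\dbar_0^\bullet)$ and is small in $\mC^0$. Your endgame is also fine and in fact slightly sharper than the paper's: once $f$ is known to be a parallel automorphism of $\E_0$, comparing $V$-components of $f\Phi(b)f^{-1}$ and $\Phi(b')+\alpha$ gives $f\cdot b=b'\in B'\subset B''$ (the paper instead settles for the norm estimate $\norme{f\cdot b}_{L^2}\le\norme{\Phi(b')+\alpha}_{L^2}<r''$, which is all the lemma needs).

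The gap is in the middle, precisely at the point you yourself flag as "the hard part". The paper disposes of it in one line by applying Lemma~\ref{LEM:Sections dbar + alpha holomorphes} to $\F=\End(\E_0)$ (whose global holomorphic sections are parallel by Lemma~\ref{LEM:Aut réductif}): that lemma asserts that \emph{every} $(\dbar_0+\gamma)$-holomorphic section is $\dbar_0$-holomorphic once $\norme{\gamma}_{\mC^0}$ is small, with no restriction whatsoever on the size of the section, so no case distinction between "$f$ near $\Aut(\E_0)$" and "general $f$" is needed. Your replacement for this — normalise $f$, extract a limit via elliptic estimates, obtain a nonzero element of $\frak{g}$, then "after left multiplication by a suitable element of $\Aut(\E_0)$ the transformation $f$ becomes $\mC^2$-close to $\Id$" — does not close as stated: the normalised limit $\hat f_\infty$ of a degenerating sequence is a nonzero holomorphic endomorphism but need not be \emph{invertible} (already for $\E_0=\mL_1\oplus\mL_2$ one can have $\hat f_n\to\mathrm{diag}(1,0)$), so you cannot left-multiply by its inverse, and producing one element of $\frak{g}$ does not by itself place $f$ in $\Aut(\E_0)\cdot\exp(\text{small }\frak{g}^\bot)$, which is what your slice/IFT step requires. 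There is also a mismatch of norms: the lemma only assumes $\alpha$ small in $\mC^0$, whereas your bootstrap $\norme{\dbar_0 f}_{\mC^1}\le\delta\norme{f}_{\mC^1}$ needs $\mC^1$ control of $\alpha$. The argument that actually works (and is the content of Lemma~\ref{LEM:Sections dbar + alpha holomorphes}, i.e.\ \cite[Proposition 4.5]{Buchdahl_Schumacher}) avoids compactness entirely: split $f=H(f)+f^\perp$ into harmonic (hence parallel) and orthogonal parts, use $\dbar_0^\bullet\gamma=0$ and parallelism of $H(f)$ to get $\norme{\dbar_0 f^\perp}_{L^2}\le\norme{\gamma}_{\mC^0}\norme{f^\perp}_{L^2}\le C\norme{\gamma}_{\mC^0}\norme{\dbar_0 f^\perp}_{L^2}$ from the spectral gap, and conclude $f^\perp=0$ for $\norme{\gamma}_{\mC^0}$ small. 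I would recommend replacing your compactness sketch by a direct appeal to that lemma, after which your near-identity slice analysis becomes unnecessary.
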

\begin{proof}
By Proposition \ref{PRO:Propriétés Kuranishi}, $\Phi$ takes values in $\ker(\dbar_0^\bullet)$ and since $\dbar_0$ is integrable, $\im(\dbar_0^\bullet) \subset \ker(\dbar_0^\bullet)$. When $b \in B_{3,0}$ and $\alpha \in \im(\dbar_0^\bullet)$, the complex structure of $\Hom((E,\dbar_b),(E,\dbar_{b'} + \alpha))$ is given by
$$
\dbar_{b,b',\alpha}f = (\dbar_{b'} + \alpha) \circ f - f \circ \dbar_b = \dbar_0f + (\Phi(b') + \alpha)f - f\Phi(b).
$$
Since $\Phi(b)$, $\Phi(b')$ and $\alpha$ are in $\ker(\dbar_0^\bullet)$, then $\dbar_{b,b',\alpha} - \dbar_0 \in \ker(\dbar_0^\bullet)$ (be careful, this time, we are in $\End(E)$, not in $E$). In particular, we may apply Lemma \ref{LEM:Sections dbar + alpha holomorphes} to $\F = \End(\E_0)$ so, when $b$, $b'$ and the $\mC^0$ norm of $\alpha$ are small enough, any $f \in \ker(\dbar_{b,b',\alpha})$ is in $\ker(\dbar_0)$.

Let $B' \subset B_{3,0}$ and $\varepsilon > 0$ such that this property is true when $(b,b') \in {B'}^2$ and $\norme{\alpha}_{\mC^0} \leq \varepsilon$. Assume without loss of generality that $B' \subset B''$.

By Proposition \ref{PRO:Propriétés Kuranishi}, $\Phi(b) = b + \mathrm{O}(\norme{b}_{L^2}^2)$ thus there is a constant $C > 0$ such that, up to shrinking $B'$, for all $b \in B'$,
\begin{equation}\label{EQ:Inégalité norme Phi(b) - b}
    \norme{\Phi(b) - b}_{L^2} \leq C\norme{b}_{L^2}.
\end{equation}
Moreover, $\Phi(b) - b \in \im(\dbar_0^\bullet) \subset V^\bot$ and the action of $G$ by conjugation preserves both $V$ and $V^\bot$ thus, for all $b \in B'$ and all $f \in G$,
\begin{equation}\label{EQ:Inégalité norme fPhi(b)f-1}
    \norme{fbf^{-1}}_{L^2} \leq \norme{f\Phi(b)f^{-1}}_{L^2}.
\end{equation}
Let $C' > 0$ be a constant such that, on $(0,1)$-forms,
\begin{equation}\label{EQ:Norme L2 dominée}
    \norme{\cdot}_{L^2} \leq C'\norme{\cdot}_{\mC^0}.
\end{equation}
Let $r' \leq r''$ the respective radii of $B'$ and $B''$. Assume up to reducing $\varepsilon$ and shrinking $B'$ that
\begin{equation}\label{EQ:Inégalité rayons}
    r' + C{r'}^2 + C'\varepsilon \leq r''.
\end{equation}

We can now prove the lemma. Let $(b,b') \in {B'}^2$, $\alpha \in \im(\dbar_0^\bullet)$ with $\norme{\alpha}_{\mC^0} \leq \varepsilon$ and $f \in \G^\C(E)$ such that $f \cdot \dbar_b = \dbar_{b'} + \alpha$. We have $f \in \ker(\dbar_{b,b',\alpha})$ so by construction of $B'$ and $\varepsilon$, $f \in G$ and we have
$$
f\Phi(b)f^{-1} = \Phi(b') + \alpha.
$$
Therefore,
\begin{align*}
    \norme{fbf^{-1}}_{L^2} & \leq \norme{f\Phi(b)f^{-1}}_{L^2} \textrm{ by (\ref{EQ:Inégalité norme fPhi(b)f-1}),}\\
    & = \norme{\Phi(b') + \alpha}_{L^2}\\
    & \leq \norme{b'}_{L^2} + \norme{\Phi(b') - b'}_{L^2} + \norme{\alpha}_{L^2}\\
    & \leq \norme{b'}_{L^2} + C\norme{b'}_{L^2}^2 + C'\norme{\alpha}_{\mC^0} \textrm{ by (\ref{EQ:Inégalité norme Phi(b) - b}) and (\ref{EQ:Norme L2 dominée}),}\\
    & < r'' \textrm{ by (\ref{EQ:Inégalité rayons}).}
\end{align*}
It proves that $f \cdot b = fbf^{-1} \in B''$.
\end{proof}

\begin{proposition}\label{PRO:Zéro dans la tranche déformée}
    Up to shrinking $B$ and $\mU_d$, there is an open neighbourhood $\mB_d$ of $\dbar_0$ in $\mX_d$ such that $\dbar_0 + \tilde{\Phi}(\mU_d,B) \subset \mB_d$. Reciprocally, there is a smooth map
    $$
    \beta : \mB_d \rightarrow B_{3,0},
    $$
    such that for all $b \in B$ and all $\dbar \in \G^\C(E) \cdot \dbar_b \cap \mB_d$, $\beta(\dbar) \in G \cdot b$. If moreover, $\mu_{\infty,\zeta}(\dbar) = 0$, there is a unitary gauge transform $u \in \G(E,h)$ such that $\dbar = u \cdot (\dbar_0 + \tilde{\Phi}(\zeta,\beta(\dbar)))$ and $\tilde{\mu}_\zeta(\beta(\dbar)) = 0$.
\end{proposition}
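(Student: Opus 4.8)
The plan is to take $\mB_d$ to be a sufficiently small $\mC^d$--ball around $\dbar_0$ in $\mX_d$, to build $\beta$ by a chain of implicit function theorems (a gauge fixing onto the Kuranishi slice, followed by a normalisation of the connecting gauge transformation), and, when $\mu_{\infty,\zeta}(\dbar)=0$, to invoke the uniqueness part of Proposition \ref{PRO:Tranche déformée P} to land inside the deformed slice. Once $\mB_d$ is fixed, the inclusion $\dbar_0+\tilde\Phi(\mU_d,B)\subset\mB_d$ follows after shrinking $B$ and $\mU_d$, using $\tilde\Phi(\zeta_0,0)=0$ and the continuity of $\tilde\Phi$. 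For $\dbar\in\mB_d$ I would first solve the Coulomb gauge-fixing equation for the $\G^\C(E)$--action: the map $(\dbar,s)\mapsto\dbar_0^\bullet(\e^s\cdot\dbar-\dbar_0)$ has $s$--derivative $\dbar_0^\bullet\dbar_0$ at $(\dbar_0,0)$, an isomorphism $\mC^{d+1}(\ker(\dbar_0)^\bot)\to\mC^{d-1}(\im(\dbar_0^\bullet))$ by ellipticity (as in Proposition \ref{PRO:Régularité Z laplaciens}); this produces a unique small $s(\dbar)\in\mC^{d+1}(\ker(\dbar_0)^\bot)$, smooth in $\dbar$, with $\e^{s(\dbar)}\cdot\dbar=\dbar_0+\gamma(\dbar)$ and $\gamma(\dbar)\in\ker(\dbar_0^\bullet)$ small (note $s(\dbar)\bot\i\frak k$ since $\i\frak k\subset\ker(\dbar_0)$). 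Writing $\gamma(\dbar)=\Phi(b_0)+\alpha_0$ with $b_0=\Pi_V\gamma(\dbar)$ and $\alpha_0=\gamma(\dbar)-\Phi(b_0)\in\im(\dbar_0^\bullet)$ small (using Proposition \ref{PRO:Propriétés Kuranishi}(3)), one gets $\e^{s(\dbar)}\cdot\dbar=\dbar_{b_0}+\alpha_0$.

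\textbf{The $G$--orbit property.} For $\dbar=f\cdot\dbar_b\in\mB_d$ with $b\in B$, elliptic regularity gives $f\in\mC^{d+1}(\G^\C(E))$, so $\e^{s(\dbar)}f\in\mC^2(\G^\C(E))$ satisfies $(\e^{s(\dbar)}f)\cdot\dbar_b=\dbar_{b_0}+\alpha_0$. Then Lemma \ref{LEM:Constuction B' subset B''} (with $b,b_0$ in a suitable ball and $\alpha_0$ of small $\mC^0$--norm, which holds after shrinking $\mB_d$) yields $\e^{s(\dbar)}f\in G$ and $(\e^{s(\dbar)}f)\cdot b\in B''$; by the $G$--invariance of $\Phi$ (Proposition \ref{PRO:Propriétés Kuranishi}(2)) this reads $\dbar_{b_0}+\alpha_0=\dbar_0+\Phi((\e^{s(\dbar)}f)\cdot b)$, and projecting onto $V$ while using that $\Phi$ is an embedding forces $b_0=(\e^{s(\dbar)}f)\cdot b\in G\cdot b$ and $\alpha_0=0$. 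In particular $\dbar=\e^{-s(\dbar)}\cdot\dbar_{b_0}$.

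\textbf{Normalisation of $\beta$ and the case $\mu_{\infty,\zeta}(\dbar)=0$.} Using the real-space splitting $\Omega^0(X,\End(E))=(\text{skew-Hermitian})\oplus(\i\frak k^\bot\text{-Hermitian})\oplus\i\frak k$, a further implicit function theorem factors every element of $\mC^{d+1}(\G^\C(E))$ close to $\Id$ uniquely as $u\,\e^v\e^q$ with $u\in\mC^{d+1}(\G(E,h))$, $v\in\mC^{d+1}(\i\frak k^\bot)$ Hermitian and $q\in\i\frak k$, all small. Apply this to $\e^{-s(\dbar)}=u(\dbar)\e^{v(\dbar)}\e^{q(\dbar)}$; since $q(\dbar)\in\i\frak k$ is parallel (Lemma \ref{LEM:Aut réductif}) and $G$ preserves $V$ and $V^\bot$, I would set $\beta(\dbar):=\e^{q(\dbar)}\cdot b_0\in V$ (the same recipe defining $\beta$ on all of $\mB_d$). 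This is smooth, independent of $\zeta$, lands in $B_{3,0}$ after shrinking, still satisfies $\beta(\dbar)\in G\cdot b$, and since $\e^{q(\dbar)}\in G$ one has $\dbar=u(\dbar)\e^{v(\dbar)}\cdot\dbar_{\beta(\dbar)}$. If moreover $\mu_{\infty,\zeta}(\dbar)=0$, put $\dbar'=u(\dbar)^{-1}\cdot\dbar=\e^{v(\dbar)}\cdot\dbar_{\beta(\dbar)}$; a unitary gauge transformation conjugates the curvature, so $\mP_\zeta(\dbar')=u(\dbar)^{-1}\mP_\zeta(\dbar)u(\dbar)=0$ and $\mu_{\infty,\zeta}(\dbar')=0\in\frak k$. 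The reciprocal part of Proposition \ref{PRO:Tranche déformée P}(3), applied with $b=\beta(\dbar)$ and $s=v(\dbar)$, then forces $v(\dbar)=\sigma(\zeta,\beta(\dbar))$, i.e. $\dbar'=\dbar_0+\tilde\Phi(\zeta,\beta(\dbar))$; hence $\dbar=u(\dbar)\cdot(\dbar_0+\tilde\Phi(\zeta,\beta(\dbar)))$ with $u:=u(\dbar)\in\G(E,h)$, and $\tilde\mu_\zeta(\beta(\dbar))=\mu_{\infty,\zeta}(\dbar_0+\tilde\Phi(\zeta,\beta(\dbar)))=\mu_{\infty,\zeta}(\dbar')=0$.

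\textbf{Main obstacle.} The delicate part is not any single estimate but the consistent assembly of these gauge-fixing factorisations at the right Sobolev/Hölder levels, and in particular arranging that the normalising factor $\e^{q(\dbar)}$ yields a base point $\beta(\dbar)$ that is at once independent of $\zeta$, compatible with Lemma \ref{LEM:Constuction B' subset B''} (so $\beta(\dbar)\in G\cdot b$), and exactly the point against which Proposition \ref{PRO:Tranche déformée P}(3) can be applied. This follows the pattern of \cite[Proposition 3.2]{Clarke_Tipler} and \cite[Proposition 4.1]{Delloque_2024}; the analytic inputs — ellipticity of the relevant Laplacians, stability of the $\mC^d$--type spaces under wedge products and under the gauge action, and the properties of $\Phi$ — are all already available from Section \ref{SEC:Préliminaires} and Proposition \ref{PRO:Propriétés Kuranishi}.
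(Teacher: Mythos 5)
Your proposal is correct and follows essentially the same route as the paper: the paper performs your three gauge-fixing steps in one stroke, applying the inverse function theorem to the single map $\Psi(b,s,\alpha)=\e^{\i\Im(s)}\e^{\Re(s)}\cdot(\dbar_b+\alpha)$ with $s\in\mC^{d+1}(\frak{g}^\bot)$ and $\alpha\in\mC^d(\im(\dbar_0^\bullet))$, defines $\beta=(\Psi^{-1})_1$, and then invokes Lemma \ref{LEM:Constuction B' subset B''} and Proposition \ref{PRO:Tranche déformée P}(3) exactly as you do. Your extra $\e^{q}$ factor and the twist $\beta(\dbar)=\e^{q(\dbar)}\cdot b_0$ are an artefact of doing the Coulomb gauge fixing first (the paper avoids them because $s\in\frak{g}^\bot$ already forces $\Re(s)\perp\i\frak{k}$), but they are handled correctly and do not change the substance of the argument.
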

\begin{proof}
Consider the germ of smooth map
$$
\Psi : \fonction{(B_{3,0} \times \mC^{d + 1}(\frak{g}^\bot) \times \mC^d(\im(\dbar_0^\bullet)),(0,0,0))}{(\mX_d,\dbar_0)}{(b,s,\alpha)}{\e^{\i\Im(s)}\e^{\Re(s)} \cdot (\dbar_b + \alpha)}.
$$
We have
$$
d\Psi(0,0,0)(v,s,\alpha) = v - \dbar_0s + \alpha,
$$
which is an isomorphism by Hodge decomposition (\ref{EQ:Hodge}). Therefore, by the inverse function theorem, there are a neighbourhood $B''$ of $0$ in $B_{3,0}$, a neighbourhood $\mathcal{M}$ of $0$ in $\mC^{d + 1}(\frak{g}^\bot)$, a neighbourhood $\mathcal{N}$ of $0$ in $\mC^d(\im(\dbar_0^\bullet))$ and a neighbourhood $\mB_d'$ of $\dbar_0$ in $\mX_d$ such that
$$
\Psi : B'' \times \mathcal{M} \times \mathcal{N} \rightarrow \mB_d'
$$
is a smooth diffeomorphism. Let $B' \subset B''$ given by Lemma \ref{LEM:Constuction B' subset B''}. Since the $\mC^0$ norm dominates the $\mC^d$ norm, we may assume up to shrinking $\mathcal{N}$ (thus $\mB_d'$) that all $\alpha \in \mathcal{N}$ satisfy Lemma \ref{LEM:Constuction B' subset B''} with $B'$ and $B''$. We may also assume up to shrining $\mathcal{M}$ (thus $\mB_d'$) that the Hermitian part of all $s \in \mathcal{M}$ verify the third point of Proposition \ref{PRO:Tranche déformée P}. Let
$$
\mB_d = \Psi(B' \times \mathcal{M} \times \mathcal{N}).
$$
Up to shrinking $B$ and $\mU_d$, we may assume that $B \subset B'$ and $\dbar_0 + \tilde{\Phi}(\mU_d,B) \subset \mB_d$ so the first condition is verified. We then define $\beta$ as the smooth map given by the first component of the local inverse of $\Psi$.
$$
\beta = (\Psi^{-1})_1 : \mB_d \rightarrow B' \subset B_{3,0}.
$$
Let $\zeta \in \mU_d$, $b \in B \subset B'$ and $g \in \G^\C(E)$ such that $\dbar = g \cdot \dbar_b \in \mB_d$. Let $(b',s,\alpha) = \Psi^{-1}(\dbar) \in B' \times \mathcal{M} \times \mathcal{N}$ so $b' = \beta(\dbar)$ and $g \cdot \dbar_b = \e^{\i\Im(s)}\e^{\Re(s)} \cdot (\dbar_{b'} + \alpha)$. Let $f = \e^{-\Re(s)}\e^{-\i\Im(s)}g \in \G^\C(E)$ so
$$
f \cdot \dbar_b = \dbar_{b'} + \alpha.
$$
By Lemma \ref{LEM:Constuction B' subset B''}, $f \in G$ and $f \cdot b \in B''$ hence $f \cdot \dbar_b = \dbar_{f \cdot b}$ and since $\Psi$ is injective on $B'' \times \mathcal{M} \times \mathcal{N}$, the equality $\dbar_{f \cdot b} = \dbar_{b'} + \alpha$ implies $f \cdot b = b'$ and $\alpha = 0$. In particular, $b' \in G \cdot b$ as wanted.

Assume now that $\mu_{\infty,\zeta}(\dbar) = 0$. Recall that $\dbar = \e^{\i\Im(s)}\e^{\Re(s)} \cdot (\dbar_{b'} + \alpha) = u\e^{\Re(s)} \cdot \dbar_{b'}$ with $u = \e^{\i\Im(s)} \in \mC^{d + 1}(\G(E,h))$. As $\dbar$ is $P_\zeta$-critical, so is $\e^{\Re(s)} \cdot \dbar_{b'}$. We have $s \in \mathcal{M}$ and $\mu_{\infty,\zeta}(\e^{\Re(s)} \cdot \dbar_b) = 0$ so by Proposition \ref{PRO:Tranche déformée P}, $\Re(s) = \sigma(\zeta,b')$ so $\e^{\Re(s)} \cdot \dbar_{b'} = \dbar_0 + \tilde{\Phi}(\zeta,b')$. Finally, we have
$$
\dbar = u \cdot (\dbar_0 + \tilde{\Phi}(\zeta,b')), \qquad b' = f \cdot b \in G \cdot b \cap B_{3,0}.
$$
Moreover, $\tilde{\mu}_\zeta(b') = \mu_{\infty,\zeta}(\dbar_0 + \tilde{\Phi}(\zeta,b')) = 0$.
\end{proof}

\subsection{Gradient flows of the moment map squared, existence and convergence}

In this sub-section, we define and study the negative gradient flows of the moment maps. In particular, we show that the flows associated with $\mu_{\zeta_0}$ and $\tilde{\mu}_{\zeta_0}$ are well-defined at all time if the starting point is close enough to $0$. It relies on \Lo\ gradient inequality and Duistermaat's theorem. In the Kähler case (\textit{i.e.} for $\mu_{\zeta_0}$), this result is due to Ortu \cite[Proposition 2.4]{Ortu}. In \cite{Ortu}, Ortu studies cscK like equations but her results \cite[Sub-section 2.1]{Ortu} apply for general moment map frameworks. In the non-Kähler case (\textit{i.e.} for $\tilde{\mu}_{\zeta_0}$), the proof is similar but we include it for completeness.

Let $\zeta \in \mU_d$ and $\phi_\zeta$ (resp. $\tilde{\phi}_\zeta$) be the flows defined by
$$
\left\{
\begin{array}{rcl}
    \phi_\zeta(b,0) & = & b,\\
    \frac{\partial\phi_\zeta}{\partial t}(b,t) & = & -\i L_{\phi_\zeta(b,t)}\mu_\zeta(\phi_\zeta(b,t)).
\end{array}
\right.
\left\{
\begin{array}{rcl}
    \tilde{\phi}_\zeta(b,0) & = & b,\\
    \frac{\partial\tilde{\phi}_\zeta}{\partial t}(b,t) & = & -\i L_{\tilde{\phi}_\zeta(b,t)}\tilde{\mu}_\zeta(\tilde{\phi}_\zeta(b,t)).
\end{array}
\right..
$$
Recall that when $b \in B$ and $\xi \in \frak{g}$, $L_b\xi$ is the infinitesimal action of $\xi$ at $b$ given by
$$
L_b\xi = \frac{\partial}{\partial t}|_{t = 0}(\e^{t\xi} \cdot b) = [\xi,b] \in T_bB.
$$
Let for all $\zeta$,
$$
f_\zeta : b \mapsto \frac{1}{2}\norme{\mu_\zeta(b)}^2, \qquad \tilde{f}_\zeta : b \mapsto \frac{1}{2}\norme{\tilde{\mu}_\zeta(b)}^2,
$$
be (half) the squares of the norms of the moment maps. The following lemmas are \cite[Lemma 3.1, Lemma 3.2]{GRS}, the proofs are similar.

\begin{lemma}\label{LEM:Flux de gradient}
    Let $b \in B$, $v \in T_bB$ and $\zeta \in \mU_d$. The flows $\phi_\zeta$ and $\tilde{\phi}_\zeta$ verify, where they are defined,
    $$
    \Omega_{\zeta,\phi_\zeta(b,t)}\left(\i\frac{\partial\phi_\zeta}{\partial t}(b,t),v\right) = df_\zeta(\phi_\zeta(b,t))v, \qquad \tilde{\Omega}_{\zeta,\tilde{\phi}_\zeta(b,t)}\left(\i\frac{\partial\tilde{\phi}_\zeta}{\partial t}(b,t),v\right) = d\tilde{f}_\zeta(\tilde{\phi}_\zeta(b,t))v.
    $$
\end{lemma}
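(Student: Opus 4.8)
The plan is to deduce both identities from the sole defining property of a moment map; the computation is purely formal and, crucially, never uses compatibility of the symplectic form with the complex structure, so a single argument handles both $\phi_\zeta$ and $\tilde{\phi}_\zeta$. First I would differentiate $f_\zeta = \tfrac{1}{2}\norme{\mu_\zeta}^2$: since the restriction of $\scal{\cdot}{\cdot}$ to $\frak{k}$ is a genuine real inner product, the chain rule gives $df_\zeta(b)v = \scal{d\mu_\zeta(b)v}{\mu_\zeta(b)}$ for $b\in B$, $v\in T_bB$, where $\mu_\zeta(b)\in\frak{k}$ is treated as a fixed vector — that is, $df_\zeta(b)v = d\langle\mu_\zeta,\xi\rangle(b)\,v$ with $\xi=\mu_\zeta(b)$. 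Then, because $\mu_\zeta$ is a moment map for the Hamiltonian $K$-action on $(B,\Omega_\zeta)$ (the Proposition just above), for every $\xi\in\frak{k}$ one has $d\langle\mu_\zeta,\xi\rangle(b)v = \Omega_{\zeta,b}(L_b\xi,v)$ with $L_b\xi=[\xi,b]$; specialising to $\xi=\mu_\zeta(b)$ yields $df_\zeta(b)v = \Omega_{\zeta,b}(L_b\mu_\zeta(b),v)$.

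Next I would insert the flow equation. The complex structure on $T_bB=V$ is multiplication by $\i$, so the defining relation $\frac{\partial\phi_\zeta}{\partial t}(b,t) = -\i L_{\phi_\zeta(b,t)}\mu_\zeta(\phi_\zeta(b,t))$ rearranges to $L_{\phi_\zeta(b,t)}\mu_\zeta(\phi_\zeta(b,t)) = \i\frac{\partial\phi_\zeta}{\partial t}(b,t)$. Evaluating the identity of the previous paragraph at $b=\phi_\zeta(b,t)$ and substituting gives exactly $\Omega_{\zeta,\phi_\zeta(b,t)}\big(\i\frac{\partial\phi_\zeta}{\partial t}(b,t),v\big) = df_\zeta(\phi_\zeta(b,t))v$. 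For $\tilde{\phi}_\zeta$ the same three lines apply verbatim with $\tilde{\mu}_\zeta$, $\tilde{\Omega}_\zeta$, $\tilde{f}_\zeta$ in place of $\mu_\zeta$, $\Omega_\zeta$, $f_\zeta$, using that $\tilde{\mu}_\zeta$ is a moment map for the $K$-action on $(B,\tilde{\Omega}_\zeta)$; since nothing above uses $J$-invariance of $\Omega_\zeta$ or $\tilde{\Omega}_\zeta$, the incompatibility of $\tilde{\Phi}(\zeta,\cdot)$ with the complex structure of $B$ is irrelevant.

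There is no substantial obstacle; the argument is the standard one (cf. \cite[Lemma 3.1, Lemma 3.2]{GRS}), and the only points requiring attention are bookkeeping ones: identifying $\frak{k}^*$ with $\frak{k}$ via the restricted $L^2$ product so that $\norme{\mu_\zeta}^2$ and the pairing of $d\mu_\zeta$ against $\mu_\zeta$ are meaningful and coincide, checking that this restricted product is indeed real-valued on the skew-Hermitian endomorphisms forming $\frak{k}$, and fixing the sign and slot conventions in the definitions of the moment map and of the gradient flow so that they match the form of the asserted identities.
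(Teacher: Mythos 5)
Your argument is correct and is exactly the standard computation the paper defers to (\cite[Lemma 3.1]{GRS}): differentiate $f_\zeta=\tfrac12\norme{\mu_\zeta}^2$ by the chain rule, apply the defining identity $d\scal{\mu_\zeta}{\xi}(b)v=\Omega_{\zeta,b}(L_b\xi,v)$ of the moment map with $\xi=\mu_\zeta(b)$, and substitute $L_{\phi_\zeta(b,t)}\mu_\zeta(\phi_\zeta(b,t))=\i\frac{\partial\phi_\zeta}{\partial t}(b,t)$ from the flow equation. Your observation that the argument uses only the Hamiltonian property and never the compatibility of the form with the complex structure is precisely why the same proof covers $\tilde{\phi}_\zeta$, and your bookkeeping caveats (realness of the restricted $L^2$ product on $\frak{k}$, sign conventions) are the right ones.
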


\begin{lemma}\label{LEM:Orbite préservée}
    Let $b \in B$ and $\zeta \in \mU_d$. Let $g$ (resp. $\tilde{g}$) in $G$ be defined by
    $$
    \left\{
    \begin{array}{rcl}
        g(0) & = & \Id_E,\\
        g(t)^{-1}g'(t) & = & \i\mu_\zeta(\phi_\zeta(b,t)).
    \end{array}
    \right.
    \qquad
    \left\{
    \begin{array}{rcl}
        \tilde{g}(0) & = & \Id_E,\\
        \tilde{g}(t)^{-1}\tilde{g}'(t) & = & \i\tilde{\mu}_\zeta(\tilde{\phi}_\zeta(b,t)).
    \end{array}
    \right..
    $$
    They verify, where they are defined,
    $$
    \phi_\zeta(b,t) = g(t)^{-1} \cdot b, \qquad \tilde{\phi}_\zeta(b,t) = \tilde{g}(t)^{-1} \cdot b.
    $$
    In particular, $G$-orbits are preserved by $\phi_\zeta$ and $\tilde{\phi}_\zeta$.
\end{lemma}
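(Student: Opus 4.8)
The plan is to decouple the two equations that appear to be defined in terms of one another. First I would invoke the standard local existence and uniqueness theorem for ordinary differential equations: since $\mu_\zeta$ (resp. $\tilde{\mu}_\zeta$) is smooth on $B$ with values in $\frak{k}$, the vector field $b \mapsto -\i L_b\mu_\zeta(b)$ (resp. $b \mapsto -\i L_b\tilde{\mu}_\zeta(b)$) is smooth, so $\phi_\zeta(b,\cdot)$ (resp. $\tilde{\phi}_\zeta(b,\cdot)$) is defined and unique on a maximal open interval $I \ni 0$. With $\phi_\zeta(b,\cdot)$ now \emph{fixed} on $I$, the relation $g(0) = \Id_E$, $g(t)^{-1}g'(t) = \i\mu_\zeta(\phi_\zeta(b,t))$ is a time-dependent \emph{linear} ODE on $G$ whose coefficient $t \mapsto \i\mu_\zeta(\phi_\zeta(b,t))$ takes values in $\i\frak{k} \subset \frak{g} = \Lie(G)$ and is smooth in $t$; hence it has a unique solution $g \colon I \to G$ (it stays in $G$ because $\frak{g} = \Lie(G)$), and the same argument produces $\tilde{g}$ on the interval of definition of $\tilde{\phi}_\zeta(b,\cdot)$. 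This is where the apparent circularity dissolves.

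Next I would simply differentiate $t \mapsto g(t)^{-1} \cdot b$. Writing the $G$-action as conjugation $g \cdot b = gbg^{-1}$ as fixed above and abbreviating $\mu(t) = \mu_\zeta(\phi_\zeta(b,t))$, a one-line computation using $\frac{d}{dt}(g^{-1}) = -g^{-1}g'g^{-1}$, hence $\frac{d}{dt}(g^{-1}) \cdot g = -g^{-1}g' = -\i\mu(t)$, gives
\[
\frac{d}{dt}\bigl(g(t)^{-1} \cdot b\bigr) = \bigl[-\i\mu(t),\, g(t)^{-1} \cdot b\bigr] = -\i\, L_{g(t)^{-1} \cdot b}\,\mu(t),
\]
where I use that $\frak{g}$ is a complex Lie algebra, so $-\i L_x\xi = L_x(-\i\xi) = [-\i\xi,x]$. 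On the other hand, by definition of the flow, $\partial_t\phi_\zeta(b,t) = -\i L_{\phi_\zeta(b,t)}\mu(t) = [-\i\mu(t),\phi_\zeta(b,t)]$. Thus both $t \mapsto g(t)^{-1} \cdot b$ and $t \mapsto \phi_\zeta(b,t)$ solve the \emph{same} linear ODE $y'(t) = [-\i\mu(t), y(t)]$ on $I$ with the same initial value $b$ at $t = 0$; uniqueness for linear ODEs forces $\phi_\zeta(b,t) = g(t)^{-1} \cdot b$ on $I$. The argument for $\tilde{\phi}_\zeta$ and $\tilde{g}$ is word for word the same — it uses no compatibility of $\tilde{\Omega}_\zeta$ with the complex structure, only the defining equations. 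Finally, since $g(t)^{-1} \in G$ and $\tilde{g}(t)^{-1} \in G$, the identities $\phi_\zeta(b,t) = g(t)^{-1} \cdot b$ and $\tilde{\phi}_\zeta(b,t) = \tilde{g}(t)^{-1} \cdot b$ show both flows stay in $G \cdot b$, i.e. $G$-orbits are preserved.

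There is no serious obstacle here; the only points requiring care are bookkeeping ones. One must keep track of the complex structure $\i$ when moving between the infinitesimal action $L_b$ and multiplication by $\i$ — harmless because $\i$ acts $\C$-linearly on $\frak{g}$ and commutes with the bracket — and one must ensure the auxiliary curve $g$ (resp. $\tilde{g}$) is defined on the whole interval on which $\phi_\zeta$ (resp. $\tilde{\phi}_\zeta$) is defined, which is precisely why it matters to solve the $\phi_\zeta$-equation first and only afterwards the $g$-equation, the latter being linear once $\phi_\zeta$ is known.
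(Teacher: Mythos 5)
Your proof is correct and is essentially the standard argument of \cite[Lemma 3.2]{GRS}, which the paper cites in place of giving a proof: decouple the system by first solving the flow equation, then solving the (now linear, time-dependent) ODE for $g$, and finally observing that $t \mapsto g(t)^{-1}\cdot b$ and $t \mapsto \phi_\zeta(b,t)$ satisfy the same linear ODE $y' = [-\i\mu(t),y]$ with the same initial condition. No gaps; the care you take about solving for $\phi_\zeta$ first and about the $\C$-linearity of $\xi \mapsto L_b\xi = [\xi,b]$ is exactly what is needed.
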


The first equality in Lemma \ref{LEM:Flux de gradient} can be rewritten as
$$
\frac{\partial\phi_\zeta}{\partial t}(b,t) = -\nabla_{\Omega_\zeta} f_\zeta(\phi_\zeta(b,t)),
$$
where $\nabla_\Omega$ is the gradient with respect to the scalar product induced by $\Omega_\zeta$. Similarly, a consequence of Lemma \ref{LEM:Omega tilde positive} is that $\tilde{\Omega}_\zeta$ is a symplectic form. In particular, we can define
$$
-\tilde{\Omega}_{\zeta,b}\left(\i\nabla_{\tilde{\Omega}_\zeta}\tilde{f}_\zeta(b),v\right) = d\tilde{f}_\zeta(b)v.
$$
In this case, the second equality in Lemma \ref{LEM:Flux de gradient} can be rewritten as
\begin{equation}\label{EQ:Flux de gradient}
    \frac{\partial\tilde{\phi}_\zeta}{\partial t}(b,t) = -\nabla_{\tilde{\Omega}_\zeta}\tilde{f}_\zeta(\tilde{\phi}_\zeta(b,t)).
\end{equation}
Notice that $\nabla_{\tilde{\Omega}_\zeta}\tilde{f}_\zeta$ shares properties with the usual gradients of $\tilde{f}$. Clearly, it vanishes at some point $b$ if and only if $b$ is a critical point of $\tilde{f}_\zeta$. Moreover, by Lemma \ref{LEM:Omega tilde positive}, there is a positive constant $c$ such that
\begin{equation}\label{EQ:Norme gradient tilde}
    d\tilde{f}_\zeta(b)\nabla_{\tilde{\Omega}_\zeta}\tilde{f}_\zeta(b) = -\tilde{\Omega}_{\zeta,b}(\i\nabla_{\tilde{\Omega}_\zeta}\tilde{f}_\zeta(b),\nabla_{\tilde{\Omega}_\zeta}\tilde{f}_\zeta(b)) \geq c\norme{\nabla_{\tilde{\Omega}_\zeta}\tilde{f}_\zeta(b)}_{L^2}^2 \geq 0
\end{equation}
with equality if and only if $\nabla_{\tilde{\Omega}_\zeta}\tilde{f}_\zeta(b) = 0$. We easily see that it implies that $t \mapsto \tilde{f}_\zeta(\tilde{\phi}_\zeta(b,t))$ is non-increasing (similarly with $t \mapsto f_\zeta(\phi_\zeta(b,t))$). Using this, if $\norme{\cdot}_{L^2(V^\lor)}$ is the operator norm on the dual space $V^\lor$ of $V$ induced by $\norme{\cdot}_{L^2}$, we have
\begin{equation}\label{EQ:Equivalence normes gradients}
    \norme{d\tilde{f}_\zeta(b)}_{L^2(V^\lor)} \geq \frac{\abs{d\tilde{f}_\zeta(b)\nabla_{\tilde{\Omega}_\zeta}\tilde{f}_\zeta(b)}}{\norme{\nabla_{\tilde{\Omega}_\zeta}\tilde{f}_\zeta(b)}_{L^2}} \geq c\norme{\nabla_{\tilde{\Omega}_\zeta}\tilde{f}_\zeta(b)}_{L^2}.
\end{equation}
However, $\nabla_{\tilde{\Omega}_\zeta}\tilde{f}_\zeta(b)$ is \textit{a priori} not a gradient of $\tilde{f}_\zeta$ because the non-degenerate positive forms $(v,w) \mapsto \tilde{\Omega}_{\zeta,b}(\i v,w)$ need not be symmetric as $\tilde{\Omega}_\zeta$ is not compatible with the complex structure on $B$.

We now study the existence of these flows at $\zeta = \zeta_0$. A key tool is the \Lo\ gradient inequality.

\begin{proposition}[\Lo\ gradient inequality, {\cite[Chapter 18, Proposition 1]{Lojasiewicz}}]\label{PRO:Lojasiewicz}
    If $U \subset \R^d$ is open and $g : U \rightarrow \R$ is real analytic, then for all $x \in U$ critical point of $g$, there is a neighbourhood $V$ of $x$ in $U$ and constants $a > 0$ and $\frac{1}{2} \leq \theta < 1$ such that, for all $y \in V$,
    $$
    a\abs{g(x) - g(y)}^\theta \leq \norme{\nabla g(y)}.
    $$
\end{proposition}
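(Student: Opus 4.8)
This is the classical \Lo\ gradient inequality, so the plan is to reduce it to the standard (two--function) \Lo\ inequality and then to confront the one genuinely delicate point, namely the bound $\theta<1$ on the exponent. First I would normalise: after a translation take $x=0$, and replacing $g$ by $g-g(0)$ take $g(0)=0$; the goal becomes the existence of a ball $V$ around $0$, an $a>0$ and a $\theta\in[\tfrac12,1)$ with $a|g(y)|^\theta\le\norme{\nabla g(y)}$ on $V$. I would take as the fundamental input the two--function \Lo\ inequality: if $f_1,f_2$ are real analytic near $0$ and $f_1^{-1}(0)\subseteq f_2^{-1}(0)$ in a neighbourhood of $0$, then $|f_2|^{N}\le C|f_1|$ near $0$ for some $C>0$ and integer $N\ge 1$. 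This is itself proved by Weierstrass preparation and induction on the dimension, or by Hironaka's resolution of singularities, and I would simply cite it.

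\textbf{Getting some exponent.} The first step is to observe that on a small enough ball $W$ one has $\{\nabla g=0\}\cap W\subseteq\{g=0\}$. Indeed, the germ at $0$ of the analytic set $\{\nabla g=0\}$ decomposes into finitely many irreducible germs, each passing through $0$; along the (connected, dense) regular locus of such a branch $\nabla g$ annihilates every tangent direction, so $g$ is locally constant there, hence constant $=g(0)=0$ on the whole branch by continuity. Taking the union over branches gives the inclusion on $W$. Applying the two--function inequality with $f_1=\norme{\nabla g}^2$ and $f_2=g$ then yields $|g(y)|^{N}\le C\norme{\nabla g(y)}^2$ on $W$, that is $a|g(y)|^{N/2}\le\norme{\nabla g(y)}$.

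\textbf{The main obstacle: $\theta<1$.} The hard part is that the above only produces the exponent $\theta=N/2$, which may be $\ge 1$, whereas the applications to the moment map flows in Section~\ref{SEC:Résultats GIT locaux} (\Lo--Simon type convergence) require $\theta\in[\tfrac12,1)$; and this improvement genuinely uses real-analyticity, not merely the vanishing structure just exploited (for a merely smooth $g$, for instance $g(t)=\e^{-1/t^2}$, no inequality with $\theta<1$ holds). To obtain $\theta<1$ I would invoke resolution of singularities: pull $g$ back along a proper analytic modification $\pi$ so that $g\circ\pi$ becomes, in suitable local coordinates, a normal-crossings monomial $x^\alpha$ times a unit; for such a monomial the gradient dominates $|g\circ\pi|/|x_j|$ for a suitable coordinate $x_j$, and $|x_j|\le|g\circ\pi|^{1-\theta}$ once $\theta$ is chosen close enough to $1$ (depending on $\alpha$), giving the inequality upstairs with an explicit $\theta<1$; then push it down using that $\pi$ is proper and that $\norme{D\pi}$ and $\norme{(D\pi)^{-1}}$ off the exceptional locus are controlled by powers of the distance to that locus, themselves controlled by powers of $|g|$. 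An alternative I would keep in reserve is a contradiction argument via the curve selection lemma for semianalytic sets: a failure of the inequality for every $\theta<1$ would produce a real-analytic arc $\gamma(s)\to 0$ along which $\norme{\nabla g(\gamma(s))}=\mathrm{o}(|g(\gamma(s))|)$, and comparing the Puiseux expansions of $s\mapsto g(\gamma(s))$ and of $s\mapsto\tfrac{d}{ds}g(\gamma(s))=\nabla g(\gamma(s))\cdot\gamma'(s)$ would yield a contradiction. For the purposes of this paper it suffices to invoke \cite{Lojasiewicz} for the full statement.
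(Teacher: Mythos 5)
The paper gives no proof of this proposition at all: it is imported verbatim as a classical theorem, with the reference to \cite{Lojasiewicz} standing in for the argument, and your closing sentence falls back on exactly that — so as far as the paper is concerned you are doing what it does, and there is nothing to contradict. Your sketch of an actual proof is moreover essentially correct and, importantly, identifies the genuine difficulty: the two-function \Lo\ inequality applied to $f_1=\norme{\nabla g}^2$, $f_2=g$ only produces some exponent $N/2$ which may exceed $1$, whereas the flow arguments of Section \ref{SEC:Résultats GIT locaux} consume precisely the fact that $\theta$ can be taken in $[\tfrac12,1)$ (otherwise the integral estimates behind Propositions \ref{PRO:Définition et convergence des flux zeta_0} and \ref{PRO:Convergence flux} collapse), and both of your routes to $\theta<1$ — normal-crossings resolution with the monomial computation, or curve selection plus comparison of Puiseux exponents — are the standard ones and do work; your counterexample $\e^{-1/t^2}$ correctly shows analyticity is indispensable. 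One soft spot worth flagging: your argument that $\{\nabla g=0\}\subseteq\{g=g(x)\}$ near $x$ leans on the regular locus of an irreducible \emph{real}-analytic germ being connected and dense, which can fail in the real category (unlike the complex one). The standard repair is to use instead that the critical set is semianalytic, hence has locally finitely many connected components, each arcwise connected by $\mC^1$ semianalytic arcs along which $g$ is constant since $(g\circ\gamma)'=\nabla g(\gamma)\cdot\gamma'=0$; after shrinking the neighbourhood only the components adherent to $x$ survive and $g$ equals $g(x)$ on them by continuity. With that substitution the outline is sound, though for the purposes of this paper the citation alone is what is actually used.
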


Using the Marle--Guillemin--Sternberg form \cite[Theorem 2.1]{Lerman}, we can show that the norms squared of the moment maps are locally real analytic up to a composition on the right with a smooth local diffeomorphism. It means that we can apply Proposition \ref{PRO:Lojasiewicz} to the functions $f_{\zeta_0}$ and $\tilde{f}_{\zeta_0}$ at $b = 0$. Indeed, $\mu_{\zeta_0}(0) = \tilde{\mu}_{\zeta_0}(0) = 0$ thus $f_{\zeta_0}$ and $\tilde{f}_{\zeta_0}$ have a critical point at $0$. Moreover, by (\ref{EQ:Equivalence normes gradients}), we can replace the gradient of $\tilde{f}_{\zeta_0}$ by $\nabla_{\tilde{\Omega}_{\zeta_0}}\tilde{f}_{\zeta_0}$. Thus, up to shrinking $B$, there are constants $a > 0$ and $\frac{1}{2} \leq \theta < 1$ such that for all $b \in B$,
\begin{equation}\label{EQ:Lojasiewicz}
    af_{\zeta_0}(b)^\theta \leq \norme{\nabla_{\Omega_{\zeta_0}}f_{\zeta_0}(b)}_{L^2}, \qquad a\tilde{f}_{\zeta_0}(b)^\theta \leq \norme{\nabla_{\tilde{\Omega}_{\zeta_0}}\tilde{f}_{\zeta_0}(b)}_{L^2}.
\end{equation}
In particular, all critical points of $f_{\zeta_0}$ or $\tilde{f}_{\zeta_0}$ in $B$ are zeroes of the associated moment maps.

\Lo\ gradient inequality is the last assumption we needed on the elements of $b$. Let $B_{2,0} = B$ which satisfies Propositions \ref{PRO:Zéro dans la tranche déformée} and \ref{PRO:Lojasiewicz}. Let $B_2 \subset B_{2,0}$. We will now construct $B_1 \subset B_2$ in function of $B_2$ which satisfies the assumptions of Theorem \ref{THE:Déformation P-critique}. In particular, we want the flows $\phi_{\zeta_0}$ and $\tilde{\phi}_{\zeta_0}$ starting at points of $B_1$ to stay in $B_2$. For now, set $B_1 = B_2$, but we allow ourselves to shrink it.

\begin{proposition}[{\cite[Proposition 2.4]{Ortu}}]\label{PRO:Définition et convergence des flux zeta_0}
    Up to shrinking $B_1$, $\phi_{\zeta_0}(b,t)$ (resp. $\tilde{\phi}_{\zeta_0}(b,t)$) are well-defined in $B_2$ for all non-negative time as long as $b \in B_1$. Moreover, they stay in some compact subset of $B_2$ (independent from $b \in B_1$).
\end{proposition}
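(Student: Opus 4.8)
The plan is to run the standard argument by which the \Lo\ gradient inequality forces a negative gradient flow started near a minimum of the moment map squared to have uniformly bounded trajectory length, hence to exist for all time and to stay in a fixed compact set. I treat $\phi_{\zeta_0}$ in detail; the flow $\tilde\phi_{\zeta_0}$ is handled identically (and in fact slightly more directly). Recall that $-\nabla_{\Omega_{\zeta_0}}f_{\zeta_0}$ is a smooth vector field on the open ball $B = B_{2,0}$, since $\mu_{\zeta_0}$ is smooth and $\Omega_{\zeta_0}$ is a smooth Kähler metric there; so for each $b \in B$ the flow $\phi_{\zeta_0}(b,\cdot)$ is defined on a maximal interval $[0,T_{\max}(b))$, and by the standard fact that a maximal integral curve of a smooth vector field on an open set which remains in a compact subset of that set is defined for all time, it suffices to exhibit a ball $B_1 \subset B_2$ and a compact set $Q \subset B_2$, independent of $b \in B_1$, with $\phi_{\zeta_0}(b,t) \in Q$ for all $t \in [0,T_{\max}(b))$ and all $b \in B_1$.

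Fix an auxiliary open ball $B'$ of radius $r'$, with $0 < r' < r_2$ the radius of $B_2$, so that $\overline{B'}$ is a compact subset of $B_2 \subset B$; on $\overline{B'}$ the norm $\norme{\cdot}_{L^2}$ and the norm of the Kähler metric $\Omega_{\zeta_0}$ are uniformly equivalent. Along the flow $t \mapsto f_{\zeta_0}(\phi_{\zeta_0}(b,t))$ is non-increasing, and since $\frac{\partial\phi_{\zeta_0}}{\partial t} = -\nabla_{\Omega_{\zeta_0}}f_{\zeta_0}(\phi_{\zeta_0})$ (Lemma~\ref{LEM:Flux de gradient} rewritten) we have $\frac{d}{dt}f_{\zeta_0}(\phi_{\zeta_0}(b,t)) = -\norme{\frac{\partial\phi_{\zeta_0}}{\partial t}(b,t)}_{\Omega_{\zeta_0}}^2$. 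Combining this with the \Lo\ inequality (\ref{EQ:Lojasiewicz}), which holds on all of $B \supset B'$, and with the equivalence of norms on $\overline{B'}$, the usual computation with $N(t) := f_{\zeta_0}(\phi_{\zeta_0}(b,t))$ --- from $-N' = \norme{\frac{\partial\phi_{\zeta_0}}{\partial t}}_{\Omega_{\zeta_0}}^2 \geq \mathrm{const}\cdot \norme{\frac{\partial\phi_{\zeta_0}}{\partial t}}_{\Omega_{\zeta_0}} N^\theta$ one deduces $\frac{d}{dt}N^{1-\theta} \leq -\mathrm{const}\cdot\norme{\frac{\partial\phi_{\zeta_0}}{\partial t}}_{\Omega_{\zeta_0}}$, then integrates --- gives a constant $K > 0$ depending only on $a$, $\theta$ and $B'$ such that
\[
\int_0^T \norme{\frac{\partial\phi_{\zeta_0}}{\partial t}(b,t)}_{L^2}\,dt \;\leq\; K\, f_{\zeta_0}(b)^{1-\theta}
\]
for every $T$ with $\phi_{\zeta_0}(b,\cdot)$ remaining in $B'$ on $[0,T]$; in particular $\norme{\phi_{\zeta_0}(b,T) - b}_{L^2} \leq K f_{\zeta_0}(b)^{1-\theta}$ for all such $T$.

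Now I fix $B_1$. Since $f_{\zeta_0}(0) = \tfrac{1}{2}\norme{\mu_{\zeta_0}(0)}^2 = 0$ and $f_{\zeta_0}$ is continuous, shrink $B_1 \subset B'$ so that $\norme{b}_{L^2} + K f_{\zeta_0}(b)^{1-\theta} < r'$ for all $b \in B_1$, and set $Q := \{x \in V \mid \norme{x}_{L^2} \leq r''\}$ with $r'' := \sup_{b \in B_1}(\norme{b}_{L^2} + K f_{\zeta_0}(b)^{1-\theta}) < r'$, a compact subset of $B_2$ independent of $b$. Fix $b \in B_1$ and suppose, for contradiction, that the trajectory leaves $B'$: then there is a first time $t^* \in (0, T_{\max}(b))$ with $\norme{\phi_{\zeta_0}(b,t^*)}_{L^2} = r'$ and $\phi_{\zeta_0}(b,s) \in B'$ for $s < t^*$. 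The length estimate of the previous paragraph applies on $[0,T]$ for every $T < t^*$, so $\norme{\phi_{\zeta_0}(b,T) - b}_{L^2} \leq K f_{\zeta_0}(b)^{1-\theta}$; letting $T \uparrow t^*$ and using continuity, $\norme{\phi_{\zeta_0}(b,t^*)}_{L^2} \leq \norme{b}_{L^2} + K f_{\zeta_0}(b)^{1-\theta} \leq r'' < r'$, a contradiction. Hence $\phi_{\zeta_0}(b,\cdot)$ never leaves $B'$, and by the same estimate $\norme{\phi_{\zeta_0}(b,t)}_{L^2} \leq r''$ for all $t \in [0,T_{\max}(b))$, so the trajectory stays in $Q$. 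As $Q$ is a compact subset of $B$, this forces $T_{\max}(b) = +\infty$, and $\phi_{\zeta_0}(b,\cdot)$ remains in $Q \subset B_2$ for all $t \geq 0$, as claimed.

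For $\tilde\phi_{\zeta_0}$ the argument is word for word the same, with $(f_{\zeta_0},\Omega_{\zeta_0},\nabla_{\Omega_{\zeta_0}})$ replaced by $(\tilde f_{\zeta_0},\tilde\Omega_{\zeta_0},\nabla_{\tilde\Omega_{\zeta_0}})$: one uses $\frac{\partial\tilde\phi_{\zeta_0}}{\partial t} = -\nabla_{\tilde\Omega_{\zeta_0}}\tilde f_{\zeta_0}(\tilde\phi_{\zeta_0})$ from (\ref{EQ:Flux de gradient}), the monotonicity bound $\frac{d}{dt}\tilde f_{\zeta_0}(\tilde\phi_{\zeta_0}) = -d\tilde f_{\zeta_0}(\tilde\phi_{\zeta_0})\nabla_{\tilde\Omega_{\zeta_0}}\tilde f_{\zeta_0}(\tilde\phi_{\zeta_0}) \leq -c\,\norme{\nabla_{\tilde\Omega_{\zeta_0}}\tilde f_{\zeta_0}(\tilde\phi_{\zeta_0})}_{L^2}^2$ from (\ref{EQ:Norme gradient tilde}), and the \Lo\ inequality (\ref{EQ:Lojasiewicz}) for $\tilde f_{\zeta_0}$, to obtain $\int_0^T \norme{\frac{\partial\tilde\phi_{\zeta_0}}{\partial t}(b,t)}_{L^2}\,dt \leq \frac{1}{(1-\theta)ca}\tilde f_{\zeta_0}(b)^{1-\theta}$ as long as the trajectory stays in $B'$, after which the bootstrap is identical. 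The one place that needs a word of care --- and precisely the reason (\ref{EQ:Norme gradient tilde}) and (\ref{EQ:Equivalence normes gradients}) were proved beforehand --- is that $\nabla_{\tilde\Omega_{\zeta_0}}\tilde f_{\zeta_0}$ is not a genuine gradient, since $(v,w) \mapsto -\tilde\Omega_{\zeta_0,b}(\i v,w)$ need not be symmetric; those two inequalities play exactly the role that $df \cdot \nabla f = \norme{\nabla f}^2$ plays in the \Lo\ computation. Otherwise the statement is the classical principle ``\Lo\ $\Rightarrow$ global existence of the gradient flow of the moment map squared near one of its minima'', as in Ortu \cite[Proposition~2.4]{Ortu}, and the only remaining work is the bookkeeping of the successive shrinkings of $B_1$ (and of the constants $K$, $c$, $a$, $\theta$), which presents no real obstacle.
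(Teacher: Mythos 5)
Your proof is correct and follows essentially the same route as the paper's: the \Lo\ inequality combined with the (generalised) gradient structure of the flow yields a uniform bound on the trajectory length in terms of $f_{\zeta_0}(b)^{1-\theta}$, one then shrinks $B_1$ so that $\norme{b}_{L^2}$ plus this length stays below a radius strictly smaller than that of $B_2$, and the finite-time-explosion criterion gives global existence in a fixed compact ball. The only cosmetic differences are your explicit first-exit-time contradiction and the auxiliary ball $B'$ used to make the equivalence between $\norme{\cdot}_{L^2}$ and the $\Omega_{\zeta_0}$-norm uniform, neither of which changes the substance.
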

\begin{proof}
The proof is the same for $\phi_{\zeta_0}$ and $\tilde{\phi}_{\zeta_0}$ so we prove it for $\tilde{\phi}_{\zeta_0}$ to emphasize the fact that the non-holomorphy of $\tilde{\Phi}(\zeta_0,\cdot)$ is not a problem here. The proof is similar to the proof of \cite[Theorem 3.3]{GRS}. Let $b \in B_1$. If $\tilde{f}_{\zeta_0}(b) = 0$, then $\tilde{\phi}_{\zeta_0}(b,\cdot) = b$ is constant and defined for all $t \geq 0$. Else, for all $t$ where $\tilde{\phi}_{\zeta_0}(b,t)$ is defined, $\tilde{f}_{\zeta_0}(\tilde{\phi}_{\zeta_0}(b,t)) \neq 0$ because zeroes of $\tilde{f}_{\zeta_0}$ are critical points and
\begin{align*}
    -\frac{\partial}{\partial t}(\tilde{f}_{\zeta_0}(\tilde{\phi}_{\zeta_0}(b,t))^{1 - \theta}) & = -(1 - \theta)\tilde{f}_{\zeta_0}(\tilde{\phi}_{\zeta_0}(b,t))^{-\theta}d\tilde{f}_{\zeta_0}(\tilde{\phi}_{\zeta_0}(b,t))\frac{\partial \tilde{\phi}_{\zeta_0}}{\partial t}(b,t)\\
    & = (1 - \theta)\tilde{f}_{\zeta_0}(\tilde{\phi}_{\zeta_0}(b,t))^{-\theta}d\tilde{f}_{\zeta_0}(\tilde{\phi}_{\zeta_0}(b,t))\nabla_{\tilde{\Omega}_{\zeta_0}}\tilde{f}_{\zeta_0}(\tilde{\phi}_{\zeta_0}(b,t)) \textrm{ by (\ref{EQ:Flux de gradient}),}\\
    & \geq c(1 - \theta)\tilde{f}_{\zeta_0}(\tilde{\phi}_{\zeta_0}(b,t))^{-\theta}\norme{\nabla_{\tilde{\Omega}_{\zeta_0}}\tilde{f}_{\zeta_0}(\tilde{\phi}_{\zeta_0}(b,t))}_{L^2}^2 \textrm{ by (\ref{EQ:Norme gradient tilde}),}\\
    & \geq ac(1 - \theta)\norme{\nabla_{\tilde{\Omega}_{\zeta_0}}\tilde{f}_{\zeta_0}(\tilde{\phi}_{\zeta_0}(b,t))}_{L^2} \textrm{ by (\ref{EQ:Lojasiewicz}),}\\
    & = ac(1 - \theta)\norme{\frac{\partial\tilde{\phi}_{\zeta_0}}{\partial t}(b,t)}_{L^2}.
\end{align*}
When we integrate this on an interval $[t_1,t_2]$, we obtain
\begin{align}\label{EQ:Lojasiewicz 2}
    \norme{\tilde{\phi}_{\zeta_0}(b,t_1) - \tilde{\phi}_{\zeta_0}(b,t_2)}_{L^2} & \leq \int_{t_1}^{t_2} \norme{\frac{\partial\tilde{\phi}_{\zeta_0}}{\partial s}(b,s)}_{L^2} \, ds \nonumber\\
    & \leq -\frac{1}{ac(1 - \theta)}\int_{t_1}^{t_2} \frac{\partial}{\partial s}(\tilde{f}_{\zeta_0}(\tilde{\phi}_{\zeta_0}(b,s))^{1 - \theta}) \, ds \nonumber\\
    & = \frac{-\tilde{f}_{\zeta_0}(\tilde{\phi}_{\zeta_0}(b,t_2))^{1 - \theta} + \tilde{f}_{\zeta_0}(\tilde{\phi}_{\zeta_0}(b,t_1))^{1 - \theta}}{ac(1 - \theta)}.
\end{align}
In particular,
\begin{align}
    \norme{\tilde{\phi}_{\zeta_0}(b,t)}_{L^2} & \leq \norme{\tilde{\phi}_{\zeta_0}(b,t) - \tilde{\phi}_{\zeta_0}(b,0)}_{L^2} + \norme{b}_{L^2} \nonumber\\
    & \leq \frac{\tilde{f}_{\zeta_0}(\tilde{\phi}_{\zeta_0}(b,0))^{1 - \theta} - \tilde{f}_{\zeta_0}(\tilde{\phi}_{\zeta_0}(b,t))^{1 - \theta}}{ac(1 - \theta)} + \norme{b}_{L^2} \nonumber\\
    & \leq \frac{\tilde{f}_{\zeta_0}(b)^{1 - \theta}}{ac(1 - \theta)} + \norme{b}_{L^2}. \label{EQ:Borne distance}
\end{align}
Let $r > 0$ be the radius of $B_2$ and chose $B_1$ small enough so
$$
\sup_{b \in B_1} \frac{\tilde{f}_{\zeta_0}(b)^{1 - \theta}}{ac(1 - \theta)} + \norme{b}_{L^2} \leq \frac{r}{2}.
$$
In this case, if $b \in B_1$, by (\ref{EQ:Borne distance}), $\tilde{\phi}_{\zeta_0}(b,t)$ stays in the closed ball of centre $0$ and radius $\frac{r}{2}$, which is compact. By the finite time explosion theorem, this flow is defined for all $t \geq 0$ in $B_2$.
\end{proof}

We end this sub-section with a classical convergence result (see \cite[Theorem 3.3]{GRS}) and a continuity result which is a weak version of the Duistermaat's theorem (see \cite[Theorem 1.1]{Lerman}). They both rely on the \Lo\ gradient inequality.

\begin{proposition}\label{PRO:Convergence flux}
    Let $b \in B_2$ and $\zeta \in \mU_d$. Assume that the flow $\phi_\zeta(b,\cdot)$ (resp. $\tilde{\phi}_\zeta(b,\cdot)$) exists for all $t$ and stays in a compact subset of $B_2$. Then, it converges toward a critical point of $f_\zeta$ (resp. $\tilde{f}_\zeta$). Moreover, there are constants $C > 0$ and $\epsilon > 0$ (depending on $b$ and $\zeta$) such that for all $t > 0$,
    $$
    \int_t^{+\infty} \norme{\frac{\partial\phi_\zeta(b,s)}{\partial s}}_{L^2} \, ds \leq \frac{C}{t^\epsilon} \qquad \left(\textrm{resp. } \int_t^{+\infty} \norme{\frac{\partial\tilde{\phi}_\zeta(b,s)}{\partial s}}_{L^2} \, ds \leq \frac{C}{t^\epsilon}\right).
    $$
\end{proposition}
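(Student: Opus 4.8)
The plan is to run the standard \Lo\ argument for the negative gradient flow of a locally real-analytic function, as in \cite[Theorem 3.3]{GRS}, with the adjustments already made in Proposition \ref{PRO:Définition et convergence des flux zeta_0} to handle the non-compactness of $B$ and, for $\tilde\phi_\zeta$, the fact that $\tilde\Omega_\zeta$ is symplectic but not compatible with the complex structure of $B$. I carry out the argument for $(\phi_\zeta,f_\zeta,\Omega_\zeta)$; the case of $(\tilde\phi_\zeta,\tilde f_\zeta,\tilde\Omega_\zeta)$ is word for word the same once one uses Lemma \ref{LEM:Omega tilde positive} in place of positivity of $\Omega_\zeta$, and (\ref{EQ:Equivalence normes gradients}) together with boundedness of $\tilde\Omega_\zeta$ as a bilinear form on a compact subset of $B$ to compare $\|\nabla_{\tilde\Omega_\zeta}\tilde f_\zeta\|_{L^2}$ with the genuine gradient. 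First, by Lemma \ref{LEM:Flux de gradient}, $\frac{d}{dt}f_\zeta(\phi_\zeta(b,t)) = -df_\zeta\big(\nabla_{\Omega_\zeta}f_\zeta(\phi_\zeta(b,t))\big)\le -c\|\partial_t\phi_\zeta(b,t)\|_{L^2}^2$ for some fixed $c>0$, exactly as in (\ref{EQ:Norme gradient tilde}), since $\partial_t\phi_\zeta=-\nabla_{\Omega_\zeta}f_\zeta(\phi_\zeta)$; as $f_\zeta\ge 0$, $t\mapsto f_\zeta(\phi_\zeta(b,t))$ decreases to a limit $f_\infty\ge 0$ and $\int_0^{+\infty}\|\partial_s\phi_\zeta(b,s)\|_{L^2}^2\,ds<+\infty$. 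As the flow stays in a compact $\mathcal K\subset B_2$ and $f_\zeta$ is smooth, $\partial_t\phi_\zeta=-\nabla_{\Omega_\zeta}f_\zeta(\phi_\zeta)$ is bounded and Lipschitz in $t$, which together with the finiteness of that integral forces $\|\partial_t\phi_\zeta(b,t)\|_{L^2}\to 0$; hence any limit point $b_\infty$ of $\phi_\zeta(b,\cdot)$, which exists by compactness, is a critical point of $f_\zeta$, with $f_\zeta(b_\infty)=f_\infty$ by continuity.

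Next, as in the paragraph preceding (\ref{EQ:Lojasiewicz}), the Marle--Guillemin--Sternberg normal form \cite[Theorem 2.1]{Lerman} for the Hamiltonian $K$-action on $(B,\Omega_\zeta)$ near $b_\infty$ makes $f_\zeta$ real analytic up to composition with a local diffeomorphism, so Proposition \ref{PRO:Lojasiewicz} yields a ball $W=B(b_\infty,\rho)\subset B_2$ and constants $a>0$, $\tfrac12\le\theta<1$ with $a(f_\zeta(q)-f_\infty)^\theta\le\|\nabla_{\Omega_\zeta}f_\zeta(q)\|_{L^2}$ for $q\in W$. Exactly as in the proof of Proposition \ref{PRO:Définition et convergence des flux zeta_0}, for $\phi_\zeta(b,s)\in W$ this gives $-\frac{d}{ds}\big((f_\zeta(\phi_\zeta(b,s))-f_\infty)^{1-\theta}\big)\ge ac(1-\theta)\|\partial_s\phi_\zeta(b,s)\|_{L^2}$, so the flow has length at most $\frac{1}{ac(1-\theta)}(f_\zeta(\phi_\zeta(b,t_1))-f_\infty)^{1-\theta}$ over any subinterval of $[t_1,+\infty)$ contained in $W$. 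Choosing $t_1$ so large that $\|\phi_\zeta(b,t_1)-b_\infty\|_{L^2}<\rho/2$ and $\frac{1}{ac(1-\theta)}(f_\zeta(\phi_\zeta(b,t_1))-f_\infty)^{1-\theta}<\rho/2$ — possible since $b_\infty$ is a limit point and $f_\zeta(\phi_\zeta(b,t))\downarrow f_\infty$ — a bootstrap on continuity of the flow forces $\phi_\zeta(b,t)\in W$ for all $t\ge t_1$; the flow then has finite total length, hence is Cauchy and converges, necessarily to a limit point, that is, to the critical point $b_\infty$.

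For the rate, if $f_\zeta(\phi_\zeta(b,t_0))=f_\infty$ for some finite $t_0$ the flow is stationary afterwards and the bound is trivial. Otherwise put $F(t)=f_\zeta(\phi_\zeta(b,t))-f_\infty>0$; for $t$ large enough that $\phi_\zeta(b,t)\in W$, the \Lo\ inequality gives $F'(t)=-df_\zeta(\nabla_{\Omega_\zeta}f_\zeta)\le -c\|\nabla_{\Omega_\zeta}f_\zeta\|_{L^2}^2\le -ca^2 F(t)^{2\theta}$, so an elementary comparison yields $F(t)\le C_1 t^{-1/(2\theta-1)}$ when $\theta>\tfrac12$ and $F(t)\le C_1 e^{-ca^2 t}$ when $\theta=\tfrac12$; in all cases $F(t)^{1-\theta}\le C_2 t^{-\epsilon}$ with $\epsilon=\tfrac{1-\theta}{2\theta-1}>0$. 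Letting $t_2\to+\infty$ in the length bound of the previous paragraph then gives $\int_t^{+\infty}\|\partial_s\phi_\zeta(b,s)\|_{L^2}\,ds\le\frac{1}{ac(1-\theta)}F(t)^{1-\theta}\le Ct^{-\epsilon}$ for $t$ large, and enlarging $C$ absorbs small $t$.

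The step requiring the most care is the second one: one must confirm that the real-analyticity and \Lo\ machinery really is available in a neighbourhood of an \emph{arbitrary} critical point of $f_\zeta$ (resp. $\tilde f_\zeta$) for \emph{arbitrary} $\zeta\in\mU_d$, and in the $\tilde\Omega_\zeta$ case one must keep careful track of the two inequivalent notions of gradient through (\ref{EQ:Equivalence normes gradients}) and the boundedness of $\tilde\Omega_\zeta$. Everything else is a routine transcription of \cite[Theorem 3.3]{GRS} and of the computation in Proposition \ref{PRO:Définition et convergence des flux zeta_0}.
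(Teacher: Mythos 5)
Your proof is correct and follows essentially the same route as the paper's: the identical \Lo-type length estimate $\int_t^{+\infty}\norme{\partial_s\phi_\zeta(b,s)}_{L^2}\,ds \leq \frac{1}{ac(1-\theta)}(f_\zeta(\phi_\zeta(b,t))-f_\infty)^{1-\theta}$ followed by the same ODE comparison yielding $\epsilon=\frac{1-\theta}{2\theta-1}$, with the same use of the boundedness/positivity of $\tilde\Omega_\zeta$ to transfer the gradient inequality to $\nabla_{\tilde\Omega_\zeta}\tilde f_\zeta$. The only (immaterial) difference is the localisation of the \Lo\ inequality: you fix a single limit point, show it is critical by a Barbalat-type argument and trap the flow in a small ball around it, whereas the paper establishes a uniform inequality on a neighbourhood $U_\delta$ of the entire limit level set inside the compact set containing the flow and uses monotonicity of $f_\zeta$ along the flow lines to see that the flow eventually enters and stays in $U_\delta$.
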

\begin{proof}
Once again, we only prove it for $\tilde{\phi}_\zeta$ because the proof is the same for $\phi_\zeta$. Let $K \subset B_2$ a compact set where the flow stays at all time. Set $l = \lim_{t \rightarrow +\infty} \tilde{f}_\zeta(\tilde{\phi}_\zeta(b,t))$ which exists because this function is non-negative and non-increasing. Let $b_0$ be a critical point of $\tilde{f}_\zeta$ such that $\tilde{f}_\zeta(b_0) = l$. We use Proposition \ref{PRO:Lojasiewicz} near $b_0$ so there is an open neighbourhood $U_0$ of $b_0$ in $B_2$ such that, for all $b_1 \in U_0$,
$$
a(b_0)\abs{\tilde{f}_\zeta(b_1) - l}^{\theta(b_0)} \leq \norme{\nabla_{\tilde{\Omega}_\zeta}\tilde{f}_\zeta(b_1)}_{L^2},
$$
where $\frac{1}{2} \leq \theta(b_0) < 1$ and $a(b_0) > 0$. Of course, such an inequality is also true near any point $b_0$ where $\tilde{f}_\zeta(b_0) = l$. The level set $K_l = \{b_0 \in K|\tilde{f}_\zeta(b_0) = l\}$ is closed in $K$ thus compact. By compactness, we can find a neighbourhood $U$ of $K_l$ in $K$ and uniform constants $\frac{1}{2} \leq \theta' < 1$ and $a' > 0$ such that, for all $b_1 \in U$,
\begin{equation}\label{EQ:Lojasiewicz 3}
    a'\abs{\tilde{f}_\zeta(b_1) - l}^{\theta'} \leq \norme{\nabla_{\tilde{\Omega}_\zeta}\tilde{f}_\zeta(b_1)}_{L^2}.
\end{equation}
By properness of $\tilde{f}_\zeta$ on $K$, we may choose $U$ of the form
$$
U_\delta = \left\{b \in K|\abs{\tilde{f}_\zeta(b) - l} < \delta\right\},
$$
for some $\delta > 0$. In particular, $\tilde{f}_\zeta(\tilde{\phi}_\zeta(b,t)) \tend{t}{+\infty} l$ implies that there is a $T \geq 0$ such that for all $t \geq T$, $\tilde{\phi}_\zeta(b,t) \in U_\delta$. Similarly to the proof of (\ref{EQ:Lojasiewicz 2}), we show that, for all $t_2 \geq t_1 \geq T$,
\begin{align}
    \norme{\tilde{\phi}_\zeta(b,t_1) - \tilde{\phi}_\zeta(b,t_2)}_{L^2} & \leq \int_{t_1}^{t_2} \norme{\frac{\partial\tilde{\phi}_\zeta(b,s)}{\partial s}}_{L^2} \, ds \nonumber\\
    & \leq \frac{-(\tilde{f}_\zeta(\tilde{\phi}_\zeta(b,t_2)) - l)^{1 - \theta'} + (\tilde{f}_\zeta(\tilde{\phi}_\zeta(b,t_1)) - l)^{1 - \theta'}}{a'c(1 - \theta')} \nonumber\\
    & \leq \frac{(\tilde{f}_\zeta(\tilde{\phi}_\zeta(b,t_1)) - l)^{1 - \theta'}}{a'c(1 - \theta')}. \label{EQ:Lojasiewicz 4}
\end{align}
As $\tilde{\phi}_\zeta(b,t)$ stays in $K$ which is complete, we can use Cauchy sequences like arguments to say that $\lim_{t \rightarrow +\infty} \tilde{\phi}_\zeta(b,t)$ exists. This point must be a critical point of $\tilde{f}_\zeta$.

For the last inequality, assume without loss of generality that $\theta' > \frac{1}{2}$. We have, for all $t \geq T$,
\begin{align*}
    \frac{\partial}{\partial t}((\tilde{f}_\zeta(\tilde{\phi}_\zeta(b,t)) - l)^{1 - 2\theta'}) & = (1 - 2\theta')(\tilde{f}_\zeta(\tilde{\phi}_\zeta(b,t)) - l)^{-2\theta'}d\tilde{f}_\zeta(\tilde{\phi}_\zeta(b,t))\frac{\partial \tilde{\phi}_\zeta}{\partial t}(b,t)\\
    & = (2\theta' - 1)(\tilde{f}_\zeta(\tilde{\phi}_\zeta(b,t)) - l)^{-2\theta'}d\tilde{f}_\zeta(\tilde{\phi}_\zeta(b,t))\nabla_{\tilde{\Omega}_\zeta}\tilde{f}_\zeta(\tilde{\phi}_\zeta(b,t)) \textrm{ by (\ref{EQ:Flux de gradient}),}\\
    & \geq c(2\theta' - 1)(\tilde{f}_\zeta(\tilde{\phi}_\zeta(b,t)) - l)^{-2\theta'}\norme{\nabla_{\tilde{\Omega}_\zeta}\tilde{f}_\zeta(\tilde{\phi}_\zeta(b,t))}_{L^2}^2 \textrm{ by (\ref{EQ:Norme gradient tilde}),}\\
    & \geq {a'}^2c(2\theta' - 1) \textrm{ by (\ref{EQ:Lojasiewicz 3}).}
\end{align*}
Therefore, $\tilde{f}_\zeta(\tilde{\phi}_\zeta(b,t) - l)^{1 - 2\theta'} \geq {a'}^2c(2\theta' - 1)(t - T)$. Finally, for all $t \geq T$, by (\ref{EQ:Lojasiewicz 4}),
$$
\int_t^{+\infty} \norme{\frac{\partial\tilde{\phi}_\zeta(b,s)}{\partial s}}_{L^2} \, ds \leq \frac{(\tilde{f}_\zeta(\tilde{\phi}_\zeta(b,t)) - l)^{1 - \theta'}}{a'c(1 - \theta')} \leq \frac{({a'}^2c(2\theta' - 1)(t - T))^{\frac{1 - \theta'}{1 - 2\theta'}}}{a'c(1 - \theta')}.
$$
We deduce the wanted inequality with $\epsilon = \frac{1 - \theta'}{2\theta' - 1} > 0$ when $t \geq 2T$ for some $C > 0$. Up to adjusting the constant $C$, we may assume it is also true on the compact interval $[0,2T]$.
\end{proof}

\begin{proposition}\label{PRO:Duistermaat}
    Let $(\zeta,b_0)$ such that for all $b$ close enough to $b_0$ the flow $\phi_\zeta(b,t)$ (resp. $\tilde{\phi}_\zeta(b,t)$) exists at all time and stays in a compact subset of $B_2$ independent of $b$. Assume moreover that $\lim_{t \mapsto +\infty} \phi_\zeta(b_0,t)$ is a zero of $\mu_\zeta$ (resp. $\lim_{t \mapsto +\infty} \tilde{\phi}_\zeta(b_0,t)$ is a zero of $\tilde{\mu}_\zeta$). Then,
    $$
    b \mapsto \lim_{t \rightarrow +\infty} \phi_\zeta(b,t) \qquad \left(\textrm{resp. } b \mapsto \lim_{t \rightarrow +\infty} \tilde{\phi}_\zeta(b,t)\right)
    $$
    is continuous at $b_0$.
\end{proposition}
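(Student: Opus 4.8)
The plan is to adapt the proof of Duistermaat's theorem \cite[Theorem 1.1]{Lerman} (see also \cite[Theorem 3.3]{GRS}) to our local non-compact setting, feeding in the \Lo\ estimates already at our disposal. Since the arguments for $\phi_\zeta$ and $\tilde{\phi}_\zeta$ are verbatim the same, I would only write out the case of $\tilde{\phi}_\zeta$, which is the slightly more delicate one because $\nabla_{\tilde{\Omega}_\zeta}\tilde{f}_\zeta$ is not a genuine gradient; the inequalities (\ref{EQ:Norme gradient tilde}) and (\ref{EQ:Equivalence normes gradients}) are exactly what is needed to handle this, so nothing essentially new occurs.

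First I would set $b_\infty = \lim_{t \rightarrow +\infty}\tilde{\phi}_\zeta(b_0,t)$, which by hypothesis is a zero of $\tilde{\mu}_\zeta$, hence a global minimum of $\tilde{f}_\zeta \geq 0$ with $\tilde{f}_\zeta(b_\infty) = 0$. Since the flows $\tilde{\phi}_\zeta(b,\cdot)$ for $b$ near $b_0$ stay in a fixed compact subset of $B_2$, the point $b_\infty$ lies in $B_2$, and the Marle--Guillemin--Sternberg normal form holds near any zero of the moment map, so (by the very argument that produced (\ref{EQ:Lojasiewicz}), but applied at $b_\infty$ instead of $0$) there are a radius $\rho > 0$ with $\overline{B(b_\infty,\rho)} \subset B_2$ and constants $a' > 0$, $\frac{1}{2} \leq \theta' < 1$ such that $a'\tilde{f}_\zeta(b)^{\theta'} \leq \norme{\nabla_{\tilde{\Omega}_\zeta}\tilde{f}_\zeta(b)}_{L^2}$ for all $b \in B(b_\infty,\rho)$. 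Exactly as in the derivation of (\ref{EQ:Lojasiewicz 4}) with $l = 0$, this gives: whenever $\tilde{\phi}_\zeta(b,s) \in B(b_\infty,\rho)$ for all $s \in [t_1,t_2]$, one has
$$
\norme{\tilde{\phi}_\zeta(b,t_1) - \tilde{\phi}_\zeta(b,t_2)}_{L^2} \leq \frac{\tilde{f}_\zeta(\tilde{\phi}_\zeta(b,t_1))^{1 - \theta'}}{a'c(1 - \theta')}.
$$

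Next, fix $\varepsilon$ with $0 < \varepsilon < \rho$. Using $\tilde{\phi}_\zeta(b_0,t) \rightarrow b_\infty$ together with the monotonicity of $t \mapsto \tilde{f}_\zeta(\tilde{\phi}_\zeta(b_0,t))$, choose $T$ so large that $\tilde{\phi}_\zeta(b_0,T) \in B(b_\infty,\varepsilon/4)$ and $\tilde{f}_\zeta(\tilde{\phi}_\zeta(b_0,T))^{1 - \theta'} < \frac{1}{8}a'c(1 - \theta')\varepsilon$. Since $\nabla_{\tilde{\Omega}_\zeta}\tilde{f}_\zeta$ is a smooth vector field, continuous dependence of the flow on the initial condition on the compact interval $[0,T]$ (the flow being defined at all times and staying in the fixed compact set for all $b$ near $b_0$) shows that, for $b$ close enough to $b_0$, we still have $\tilde{\phi}_\zeta(b,T) \in B(b_\infty,\varepsilon/4)$ and $\tilde{f}_\zeta(\tilde{\phi}_\zeta(b,T))^{1 - \theta'} < \frac{1}{8}a'c(1 - \theta')\varepsilon$. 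For such a $b$, run the bootstrap argument: let $T' = \sup\{t \geq T \mid \tilde{\phi}_\zeta(b,s) \in B(b_\infty,\varepsilon) \textrm{ for all } s \in [T,t]\}$; on $[T,T']$ the displayed inequality applies and gives $\norme{\tilde{\phi}_\zeta(b,t) - \tilde{\phi}_\zeta(b,T)}_{L^2} < \varepsilon/8$, whence $\tilde{\phi}_\zeta(b,t) \in B(b_\infty,3\varepsilon/8) \subsetneq B(b_\infty,\varepsilon)$, so by continuity $T' = +\infty$. Thus the flow stays in $B(b_\infty,\varepsilon)$ for $t \geq T$, converges (Proposition \ref{PRO:Convergence flux}) to some $\tilde{\phi}_{\zeta,\infty}(b)$, and
$$
\norme{\tilde{\phi}_{\zeta,\infty}(b) - b_\infty}_{L^2} \leq \norme{\tilde{\phi}_{\zeta,\infty}(b) - \tilde{\phi}_\zeta(b,T)}_{L^2} + \norme{\tilde{\phi}_\zeta(b,T) - b_\infty}_{L^2} < \frac{\varepsilon}{8} + \frac{\varepsilon}{4} < \varepsilon.
$$
As $\tilde{\phi}_{\zeta,\infty}(b_0) = b_\infty$ and $\varepsilon > 0$ is arbitrary, this is continuity at $b_0$.

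I expect the main obstacle to be precisely the bootstrap step: one has to glue the \emph{finite-time} continuous dependence on the initial data to the \emph{infinite-time} \Lo\ control of the tail of the flow, uniformly in $b$, so that once a nearby flow has entered the \Lo\ neighbourhood of $b_\infty$ it can never leave it. A secondary point to be careful about is that the \Lo\ inequality must be legitimately invoked at $b_\infty$ rather than at $0$ — this is fine since $b_\infty \in B_2$ is a zero of the moment map and the Marle--Guillemin--Sternberg argument giving real analyticity of $\tilde{f}_\zeta$ up to a smooth change of coordinates is local around any such point.
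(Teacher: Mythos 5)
Your proof is correct and follows essentially the same strategy as the paper's: the \Lo\ inequality at $b_\infty$, finite-time continuous dependence to bring $\tilde{\phi}_\zeta(b,T)$ close to $b_\infty$ with small $\tilde{f}_\zeta$-value, then the \Lo\ tail estimate and a triangle inequality. The only structural difference is that the paper takes the \Lo\ neighbourhood to be a sub-level set $U_\delta = \{b \in K \mid \tilde{f}_\zeta(b) < \delta\}$ rather than a metric ball; since $\tilde{f}_\zeta$ is non-increasing along the flow, such a set is automatically forward-invariant, so the bootstrap step you single out as the main obstacle disappears entirely.
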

\begin{proof}
As for Proposition \ref{PRO:Convergence flux}, we only prove it for $\tilde{\phi}_\zeta$. Let $K \subset B_2$ be a compact set in which the flows $\tilde{\phi}_\zeta(b,t)$ stays at all time (when $b$ is close enough to $b_0$). As in the proof of Proposition \ref{PRO:Convergence flux}, there is a $\delta > 0$ such that for all $b$ in
$$
U_\delta = \left\{b \in K|\tilde{f}_\zeta(b) < \delta\right\},
$$
we have the \Lo\ gradient inequality,
$$
a'f_\zeta(b)^{1 - \theta'} \leq \norme{\nabla_{\tilde{\Omega}_\zeta}\tilde{f}_\zeta(b)}_{L^2}
$$
for some $a' > 0$ and $\frac{1}{2} \leq \theta' < 1$. Let $\varepsilon > 0$ that we assume to be smaller than $\delta$. Let $T > 0$ such that $\tilde{f}_\zeta(\tilde{\phi}_\zeta(b_0,T)) \leq \frac{\varepsilon}{2}$. By continuity of $b \mapsto \tilde{\phi}_\zeta(b,T)$, for all $b$ close enough to $b_0$ (in function of $\varepsilon$), $\tilde{f}_\zeta(\tilde{\phi}_\zeta(b,T)) \leq \varepsilon$. Then, since $\tilde{f}_\zeta$ non-increases with the flow, for all $b$ close enough to $b_0$ and for all $t \geq T$,
$$
\tilde{f}_\zeta(\tilde{\phi}_\zeta(b,t)) < \varepsilon.
$$
In particular, $\tilde{\phi}_\zeta(b,t) \in U_\delta$. As in the proof of Proposition \ref{PRO:Convergence flux}, we have, for all $b$ close enough to $b_0$ and for all $t \geq T$,
$$
\norme{\tilde{\phi}_\zeta(b,T) - \tilde{\phi}_\zeta(b,t)}_{L^2} \leq \frac{\tilde{f}_\zeta(\tilde{\phi}_\zeta(b,T))^{1 - \theta'}}{a'c(1 - \theta')} \leq \frac{\varepsilon^{1 - \theta'}}{a'c(1 - \theta')}.
$$
In particular,
\begin{align*}
    & \norme{\lim_{t \rightarrow +\infty} \tilde{\phi}_\zeta(b,t) - \lim_{t \rightarrow +\infty} \tilde{\phi}_\zeta(b_0,t)}_{L^2}\\
    \leq\ & \norme{\lim_{t \rightarrow +\infty} \tilde{\phi}_\zeta(b,t) - \tilde{\phi}_\zeta(b,T)}_{L^2} + \norme{\tilde{\phi}_\zeta(b,T) - \tilde{\phi}_\zeta(b_0,T)}_{L^2} + \norme{\tilde{\phi}_\zeta(b_0,T) - \lim_{t \rightarrow +\infty} \tilde{\phi}_\zeta(b_0,t)}_{L^2}\\
    \leq\ & \frac{\varepsilon^{1 - \theta'}}{a'c(1 - \theta')} + \varepsilon + \frac{\varepsilon^{1 - \theta'}}{a'c(1 - \theta')}\\
    \tend{\varepsilon}{0}\ & 0.
\end{align*}
We deduce the wanted continuity result.
\end{proof}

\subsection{Closure of an orbit and equivariant arc}\label{SEC:Arc équivariant semi-stable}

The following proposition gives sufficient conditions for a point in the closure of some $G$-orbit in $B$ to be reachable by a $1$-parameter sub-group and will be useful to construct locally $P$-destabilising sub-bundles thanks to Proposition \ref{PRO:Convergence exp(txi)b}. The proof shares similarities with the one of \cite[Theorem 12.5]{GRS}.

\begin{proposition}\label{PRO:Existence arc équivariant semi-stable}
    Let $(b,b_\infty) \in B_2^2$ and $g : \R_+ \rightarrow G$ a $\mC^1$ path in $G$ such that $g(t) \cdot b \tend{t}{+\infty} b_\infty$. Assume that $\Stab(b_\infty) = (\Stab(b_\infty) \cap K)^\C$. Then, there is a $\xi \in \i\frak{k}$ and a point $b' \in G \cdot b \cap B_2$ such that
    $$
    \e^{t\xi} \cdot b' \tend{t}{+\infty} b_\infty.
    $$
\end{proposition}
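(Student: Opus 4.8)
The plan is to adapt the proof of the analytic Hilbert--Mumford criterion of \cite[Theorem 12.5]{GRS}. Choosing $t_n \to +\infty$ and setting $g_n = g(t_n)$, so that $g_n \cdot b \to b_\infty$, I would first fix a maximal abelian subalgebra $\frak{a} \subset \i\frak{k}$ with a closed positive Weyl chamber $\frak{a}_+$ and write the Cartan ($KAK$) decomposition $g_n = k_n \e^{\xi_n} l_n$, with $k_n, l_n \in K$ and $\xi_n \in \frak{a}_+$. By compactness of $K$, after passing to a subsequence we may assume $k_n \to k$ and $l_n \to l$ in $K$. If $(\xi_n)$ is bounded we may further assume $\xi_n \to \xi_0$, so $g_n \to k\e^{\xi_0}l =: g_0$ and $b_\infty = g_0 \cdot b \in G\cdot b \cap B_2$, and the pair $\xi = 0$, $b' = b_\infty$ answers the question. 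Otherwise $\|\xi_n\| \to +\infty$ and, after a subsequence, $\eta_n := \xi_n/\|\xi_n\| \to \eta \in \frak{a}_+$ with $\|\eta\|=1$; I would then set $\xi = k\eta k^{-1} \in \i\frak{k}$ and $b' = kl\cdot b$. Since $k,l \in K$ preserve the $L^2$-norm on $V$, $b' \in G\cdot b \cap B_2$, and a direct computation gives $\e^{t\xi}\cdot b' = k\cdot(\e^{t\eta}\cdot c)$ where $c := l\cdot b$.

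The core computation is a weight-space analysis. As the $G$-action on the finite-dimensional space $V$ is linear and algebraic, $V$ splits into $\frak{a}$-weight spaces $V = \bigoplus_\alpha V_\alpha$, pairwise orthogonal for the $K$-invariant product, on which $\e^\zeta$ ($\zeta\in\frak{a}$) acts by the scalar $\e^{\alpha(\zeta)}$. Writing $c = \sum_\alpha c_\alpha$ and $c_n := l_n\cdot b = \sum_\alpha(c_n)_\alpha$ with $(c_n)_\alpha\to c_\alpha$, the convergence $g_n\cdot b = k_n\cdot(\e^{\xi_n}\cdot c_n)\to b_\infty$ together with $k_n\to k$ yields $\e^{\xi_n}\cdot c_n\to k^{-1}b_\infty$, hence $\e^{\alpha(\xi_n)}(c_n)_\alpha\to(k^{-1}b_\infty)_\alpha$ for each $\alpha$; since $\alpha(\xi_n)=\|\xi_n\|\,\alpha(\eta_n)$ and $\|\xi_n\|\to+\infty$, this forces $\alpha(\eta)\le 0$ whenever $c_\alpha\ne 0$. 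Consequently $\e^{t\eta}\cdot c = \sum_\alpha\e^{t\alpha(\eta)}c_\alpha$ has non-increasing $L^2$-norm and converges, as $t\to+\infty$, to $c_0 := \sum_{\alpha(\eta)=0}c_\alpha$; therefore $\e^{t\xi}\cdot b'$ stays in $B_2$ and tends to $k\cdot c_0$.

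The remaining, and genuinely hard, point is to show that this limit is $b_\infty$ itself, i.e. $k\cdot c_0 = b_\infty$. A priori $k\cdot c_0$ is only a point of $\overline{G\cdot b}$ fixed by $\e^{t\xi}$, and it could differ from $b_\infty$ in the weight directions where $\alpha(\xi_n)=\|\xi_n\|\,\alpha(\eta_n)$ is indeterminate in the limit. This is exactly where the hypothesis $\Stab(b_\infty)=(\Stab(b_\infty)\cap K)^\C$ is used, following \cite[Theorem 12.5]{GRS}: reductivity of the isotropy group of $b_\infty$ forces $k\cdot c_0 = b_\infty$, so that $\e^{t\xi}\cdot b'\to b_\infty$. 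An alternative route to this step is Luna's étale slice theorem at $b_\infty$ — available precisely because $\Stab(b_\infty)$ is reductive — which, after translating a point $g(T)\cdot b$ (with $T$ large, hence close to $b_\infty$) into the slice, reduces the statement to the Hilbert--Mumford criterion for the reductive group $\Stab(b_\infty)$ acting on an affine slice through $b_\infty$, and then one conjugates the resulting one-parameter subgroup by an element of $\Stab(b_\infty)\cap K$ to obtain $\xi\in\i(\frak{k}\cap\Lie(\Stab(b_\infty)))$ and $b'\in G\cdot b\cap B_2$ with $\e^{t\xi}\cdot b'\to b_\infty$ directly. Everything before this identification is routine compactness together with the weight decomposition; the delicate part is entirely this last step.
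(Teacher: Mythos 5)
Your reduction to the two cases (bounded versus unbounded $\xi_n$ in the $KAK$ decomposition), and the weight-space computation showing that $\e^{t\xi}\cdot b'$ has non-increasing norm and converges to $k\cdot c_0$ with $c_0=\sum_{\alpha(\eta)=0}c_\alpha$, are fine as far as they go. But the proof has a genuine gap exactly at the point you flag as ``the genuinely hard part'': you assert, without argument, that reductivity of $\Stab(b_\infty)$ forces $k\cdot c_0=b_\infty$. This does not follow from what you have established. Your own analysis only shows that $(k^{-1}b_\infty)_\alpha=0$ for $\alpha(\eta)<0$ and $c_\alpha=0$ for $\alpha(\eta)>0$; on the zero-weight spaces the factor $\e^{\alpha(\xi_n)}=\e^{\|\xi_n\|\alpha(\eta_n)}$ is an indeterminate form, and on the positive-weight spaces the product $\e^{\alpha(\xi_n)}(c_n)_\alpha$ can converge to a nonzero limit even though $c_\alpha=0$. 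So $k\cdot c_0$ and $b_\infty$ may a priori differ in both the positive and the zero weight directions, and $k\cdot c_0$ is merely \emph{some} $\xi$-fixed point of $\overline{G\cdot b}$. The citation of \cite[Theorem 12.5]{GRS} does not close this: there the target of the degeneration is a zero of the moment map and the identification of the limit uses the moment-map machinery (moment-weight inequality, polystability), none of which is available for an arbitrary $b_\infty$ with reductive stabiliser. Your alternative route via Luna's slice theorem has the same problem: the étale slice theorem in its usual form requires the orbit $G\cdot b_\infty$ to be closed, which is not assumed here, and even granting a slice you would still need to know that the projected path lands in a \emph{single} $\Stab(b_\infty)$-orbit of the slice converging to the origin before Hilbert--Mumford for the stabiliser can be invoked.

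That last point is precisely the technical content of the paper's proof, and it is what your proposal is missing. The paper builds the slice by hand: the inverse function theorem applied to $(\xi,c)\mapsto\e^{\xi}\cdot(b_\infty+c)$, with $\xi\in\frak{g}\cap\frak{g}_{b_\infty}^\bot$ and $c\in W=V\cap\im(L_{b_\infty})^\bot$, lets one write $g(t)\cdot b=\e^{\xi(t)}\cdot(b_\infty+c(t))$ with $(\xi(t),c(t))\to(0,0)$. The crucial step is then a linearisation argument (the maps $\psi_{c_0}$ and the identity $c'(t)=L_{b_\infty+c(t)}(f_s'(t)f_s(t)^{-1})$) showing that $f_s'(t)f_s(t)^{-1}\in\frak{g}_{b_\infty}$, hence that all the $c(t)$ lie in one $\Stab(b_\infty)$-orbit with $0$ in its closure. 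Only then does the closed-orbit Hilbert--Mumford criterion \cite[Theorem 12.4]{GRS}, applied to the reductive group $\Stab(b_\infty)$ degenerating $c(s)$ to the fixed point $0$, produce $\xi\in\i\frak{k}_{b_\infty}$ and $c\in\Stab(b_\infty)\cdot c(s)$ with $\e^{t\xi}\cdot(b_\infty+c)=b_\infty+\e^{t\xi}\cdot c\to b_\infty$; the fact that $\xi$ fixes $b_\infty$ is what guarantees the limit is $b_\infty$ itself. You would need to supply an argument of this kind (or a genuinely different one) to make your identification $k\cdot c_0=b_\infty$ legitimate.
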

\begin{proof}
Let us define
$$
G_{b_\infty} = \Stab(b_\infty), \qquad K_{b_\infty} = G_{b_\infty} \cap K.
$$
Their Lie algebras are
$$
\frak{g}_{b_\infty} = \Lie(G_{b_\infty}) = \ker(L_{b_\infty}), \qquad \frak{k}_{b_\infty} = \Lie(K_{b_\infty}) = \frak{g}_{b_\infty} \cap \frak{k}.
$$
By assumption, $G_{b_\infty}$ is a reductive group with maximal compact sub-group $K_{b_\infty}$ so $\frak{g}_{b_\infty}$ is the complexification of $\frak{k}_{b_\infty}$. Let
$$
W = V \cap \im(L_{b_\infty})^\bot \subset V,
$$
and the germ of smooth map
$$
\Psi : \fonction{(\frak{g} \cap \frak{g}_{b_\infty}^\bot \times W,(0,0))}{(V,b_\infty)}{(\xi,c)}{\e^{\xi} \cdot (b_\infty + c)}.
$$
It verifies $d\Psi(0,0)(\xi,c) = L_{b_\infty}\xi + c$ which is an isomorphism. Therefore, we may apply the inverse function theorem. There is a $t_0 \in \R_+$ such that for all $t \geq t_0$,
$$
g(t) \cdot b = \Psi(\xi(t),c(t)) = \e^{\xi(t)} \cdot (b_\infty + c(t)),
$$
where $t \mapsto (\xi(t),c(t))$ is $\mC^1$ and converges toward $(0,0)$. For all $s,t \geq t_0$, we have
\begin{equation}\label{EQ:b_infty + c meme orbite}
    b_\infty + c(t) = f_s(t) \cdot (b_\infty + c(s)),
\end{equation}
where $f_s(t) = \e^{-\xi(t)}g(t)g(s)^{-1}\e^{\xi(s)} \in G$ is $\mC^1$.

Let us introduce, for all $c_0 \in W$,
$$
\psi_{c_0} : \fonction{\frak{g} \times W}{V}{(\eta,c)}{L_{b_\infty + c_0}\eta - c}, \qquad K_{c_0} = \{(\eta,L_{b_\infty + c_0}\eta)|\eta \in \frak{g}_{b_\infty}\}.
$$
The $\psi_{c_0}$ are a complex linear maps which vary continuously with $c_0$. $\psi_0$ is surjective and its kernel is $\ker(\psi_0) = \frak{g}_{b_\infty} \times \{0\} = K_0$, which has dimension $m = \dim(\frak{g}_{b_\infty})$. By openness of surjectivity, if $c_0$ is close enough to $0$, $\psi_{c_0}$ is surjective so $\dim(\ker(\psi_{c_0})) = \dim(\ker(\psi_0)) = m$.

Moreover, for all $c_0 \in W$ and all $\eta \in \frak{g}_{b_\infty}$, $L_{b_\infty + c_0}\eta = L_{c_0}\eta \in W$ because $W$ is preserved by the action of $G_{b_\infty}$. We deduce that $K_{c_0} \subset \ker(\psi_{c_0})$. By equality of dimensions (if $c_0$ is close enough to $0$), $\ker(\psi_{c_0}) = K_{c_0}$.

Choose $s \geq t_0$ large enough so for all $t \geq s$, $\ker(\psi_{c(t)}) = K_{c(t)}$. When we differentiate (\ref{EQ:b_infty + c meme orbite}), we obtain
$$
c'(t) = L_{f_s(t) \cdot (b_\infty + c(s))}(f_s'(t)f_s(t)^{-1}) = L_{b_\infty + c(t)}(f_s'(t)f_s(t)^{-1}).
$$
In other words, $(f_s'(t)f_s(t)^{-1},c'(t)) \in \ker(\psi_{c(t)}) = K_{c(t)}$. It implies that $f_s'(t)f_s(t)^{-1} \in \frak{g}_{b_\infty}$. Since $f_s(s) = \Id_E \in G_{b_\infty}$, we deduce that for all $t \geq s$, $f_s(t) \in G_{b_\infty}$. Therefore, (\ref{EQ:b_infty + c meme orbite}) becomes
$$
c(t) = f_s(t) \cdot c(s).
$$
It means that all the $c(t)$ lie in the same $G_{b_\infty}$-orbit and $c(t) \tend{t}{+\infty} 0$. By \cite[Theorem 12.4]{GRS}, there is a $\xi$ in $\i\frak{k}_{b_\infty} \subset \i\frak{k}$ and a $c = g \cdot c(s) \in G_{b_\infty} \cdot c(s)$ such that
$$
\e^{t\xi} \cdot c \tend{t}{+\infty} 0.
$$
Finally,
$$
b_\infty + c = g\e^{-\xi(s)}g(s) \cdot b \in G \cdot b, \qquad \e^{t\xi} \cdot (b_\infty + c) = b_\infty + \e^{t\xi} \cdot c \tend{t}{+\infty} b_\infty.
$$
\end{proof}

Proposition \ref{PRO:Existence arc équivariant semi-stable} will be useful in the semi-stable case \textit{i.e.} when $\lim_{t \rightarrow +\infty} \mu_\zeta(\phi_\zeta(b,t)) = 0$ (resp. $\lim_{t \rightarrow +\infty} \tilde{\mu}_\zeta(\tilde{\phi}_\zeta(b,t)) = 0$). However, in the unstable case, $\Stab(b_\infty)$ may not be the complexification of $\Stab(b_\infty) \cap K$ so we can't use the same argument. It may not even by reductive. We can still show an analogous result that will help us to construct destabilising sub-bundles, whose proof is widely inspired form \cite[Theorem 10.4]{GRS}.

\begin{figure}[H]
\begin{center}
\definecolor{ffqqqq}{rgb}{1.,0.,0.}
\definecolor{qqffqq}{rgb}{0.,1.,0.}
\definecolor{qqqqff}{rgb}{0.,0.,1.}
\begin{tikzpicture}[line cap=round,line join=round,>=triangle 45,x=2.0cm,y=2.0cm]
    \clip(3.4,2.708974100859924) rectangle (8.650268580288428,6.856040060418605);
    \draw [shift={(9.239457752051297,1.3529883221142884)},line width=1.2pt]  plot[domain=1.7767876634707285:2.7842273963268935,variable=\t]({1.*4.394950727636741*cos(\t r)+0.*4.394950727636741*sin(\t r)},{0.*4.394950727636741*cos(\t r)+1.*4.394950727636741*sin(\t r)});
    \draw [rotate around={-24.28582515565124:(4.669965877256986,3.590621540539471)},dash pattern=on 4pt off 4pt] (4.669965877256986,3.590621540539471) ellipse (1.394050025168909cm and 0.6784537665439491cm);
    \draw [->] (5.812788335551193,4.104992793109486) -- (6.0489002789461095,4.375559232641841);
    \draw [->] (6.6935863459367795,4.93546381845522) -- (6.991416839906913,5.129481928050854);
    \draw [->] (5.235243831249737,3.1645795494097055) -- (5.451406628850812,3.58149916713485);
    \draw [rotate around={-30.434235753443367:(5.6311028326411146,4.9172612817738965)},dash pattern=on 4pt off 4pt] (5.6311028326411146,4.9172612817738965) ellipse (1.4224871627243896cm and 0.6677084839304603cm);
    \draw [rotate around={-59.39359296849049:(7.079576835825635,5.891935377374708)},dash pattern=on 4pt off 4pt] (7.079576835825635,5.891935377374708) ellipse (1.277545266072021cm and 0.6205459792507906cm);
    \draw [->] (4.6882777246219804,3.9479617946362167) -- (4.68913886231099,5.3189808973181085);
    \draw [line width=1.2pt] (4.6882777246219804,3.9479617946362167)-- (4.69,6.69);
    \draw [->] (6.088540860088356,4.981931444916878) -- (6.089270430044178,5.71096572245844);
    \draw [->] (7.500383892824351,5.569997470433965) -- (7.500191946412175,5.784998735216982);
    \draw [line width=1.2pt] (6.088540860088356,4.981931444916878)-- (6.09,6.44);
    \draw [line width=1.2pt] (7.500383892824351,5.569997470433965)-- (7.5,6.);
    \begin{scriptsize}
    \draw [fill=black] (4.69,6.69) circle (1.5pt);
    \draw [fill=ffqqqq] (5.1216783528808785,2.8905580665344637) circle (1.5pt);
    \draw[color=ffqqqq] (5.2,2.85) node {$b$};
    \draw [fill=qqffqq] (8.340524907666067,5.655024077386459) circle (1.5pt);
    \draw[color=qqffqq] (8.5,5.6) node {$b_\infty$};
    \draw[color=black] (7.1,4.843136699931058) node {$t \mapsto \phi_\zeta(b,t)$};
    \draw [fill=ffqqqq] (5.314816586654761,3.3310640590944542) circle (1.5pt);
    \draw[color=ffqqqq] (5.6,3.3) node {$\phi_\zeta(b,s_0)$};
    \draw [fill=qqqqff] (4.6882777246219804,3.9479617946362167) circle (1.5pt);
    \draw[color=qqqqff] (4.810392691808178,4.01048992832376) node {$b_0$};
    \draw [fill=ffqqqq] (6.255251471898585,4.5794577200556095) circle (1.5pt);
    \draw[color=ffqqqq] (6.55,4.5) node {$\phi_\zeta(b,s_1)$};
    \draw [fill=ffqqqq] (7.423533365134435,5.355239003837678) circle (1.5pt);
    \draw[color=ffqqqq] (7.7,5.28) node {$\phi_\zeta(b,s_2)$};
    \draw [fill=qqqqff] (6.088540860088356,4.981931444916878) circle (1.5pt);
    \draw[color=qqqqff] (6.216999859377785,5.029067532425892) node {$b_1$};
    \draw[color=black] (4.05,5.5) node {$t \mapsto \mathbf{\mathrm{e}}^{- \mathbf{\mathrm{i}}t\mu_\zeta(b_\infty)} \cdot b_0$};
    \draw [fill=qqqqff] (7.500383892824351,5.569997470433965) circle (1.5pt);
    \draw[color=qqqqff] (7.623607026947393,5.635363725343828) node {$b_2$};
    \draw [fill=black] (6.09,6.44) circle (1.5pt);
    \draw [fill=black] (7.5,6.) circle (1.5pt);
    \draw[color=black] (5.45,5.9) node {$t \mapsto \mathbf{\mathrm{e}}^{- \mathbf{\mathrm{i}}t\mu_\zeta(b_\infty)} \cdot b_1$};
    \draw[color=black] (6.85,5.95) node {$t \mapsto \mathbf{\mathrm{e}}^{- \mathbf{\mathrm{i}}t\mu_\zeta(b_\infty)} \cdot b_2$};
    \draw [fill=qqqqff] (7.803196053448802,5.613993803138661) circle (1.5pt);
    \draw [fill=qqqqff] (8.026323516353221,5.649694197203368) circle (1.5pt);
    \draw [fill=qqqqff] (8.196005128979262,5.658619295719555) circle (1.5pt);
    \draw [fill=black] (7.8,5.92) circle (1.5pt);
    \draw [fill=black] (8.028708012946604,5.855011562366587) circle (1.5pt);
    \draw [fill=black] (8.200854320748324,5.765301514638931) circle (1.5pt);
    \draw [fill=ffqqqq] (7.827958759240762,5.515110676352515) circle (1.5pt);
    \draw [fill=ffqqqq] (8.043244684375145,5.582014947128078) circle (1.5pt);
    \draw [fill=ffqqqq] (8.194630749681167,5.6219376581304665) circle (1.5pt);
    \end{scriptsize}
\end{tikzpicture}
\end{center}
\caption{Illustration of Proposition \ref{PRO:Existence arc équivariant instable}. The dashed ellipses represent $K$-orbits.}
\end{figure}
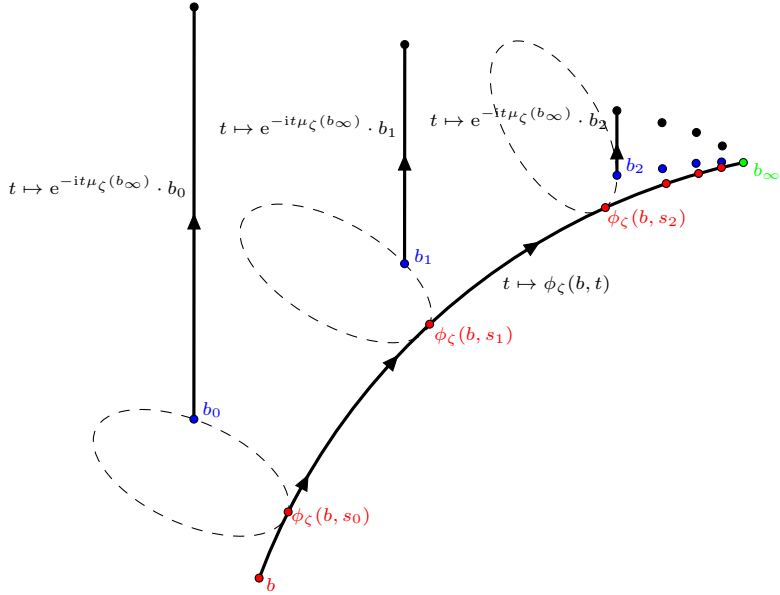

\begin{proposition}\label{PRO:Existence arc équivariant instable}
    Let $b \in B_2$ and $\zeta \in \mU_d$ such that $\phi_\zeta(b,t)$ (resp. $\tilde{\phi}_\zeta(b,t)$) exists in $B_2$ for all $t$ and converges toward some point $b_\infty \in B_2$ (resp. $\tilde{b}_\infty \in B_2$). Then there are sequences $s_k \rightarrow +\infty$ (resp. $\tilde{s}_k \rightarrow +\infty$) and $b_k \in K \cdot \phi_\zeta(b,s_k)$ (resp. $\tilde{b}_k \in K \cdot \tilde{\phi}_\zeta(b,\tilde{s}_k)$) such that
    $$
    \forall t \geq 0, \e^{-\i t\mu_\zeta(b_\infty)} \cdot b_k \in B_2 \qquad \left(\textrm{resp. } \e^{-\i t\tilde{\mu}_\zeta(b_\infty)} \cdot \tilde{b}_k \in B_2\right).
    $$
    Moreover, it converges in $B_2$ when $t \rightarrow +\infty$ and $b_k \rightarrow b_\infty$ (resp. $\tilde{b}_k \rightarrow \tilde{b}_\infty$).
\end{proposition}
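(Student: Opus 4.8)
The plan is to adapt the argument of \cite[Theorem 10.4]{GRS} to the present non-compact local setting, using the flow-convergence and \Lo-inequality machinery already established. We treat $\tilde{\phi}_\zeta$; the case of $\phi_\zeta$ is identical (and slightly easier, being the Kähler case). Write $\tilde{b}_\infty = \lim_{t \to +\infty} \tilde{\phi}_\zeta(b,t)$, which exists in $B_2$ by hypothesis, and set $\xi_\infty = \mu_\zeta(\tilde{b}_\infty)$... more precisely $\xi_\infty = \tilde{\mu}_\zeta(\tilde{b}_\infty) \in \frak{k}$. First I would record the key asymptotic fact coming from Proposition \ref{PRO:Convergence flux}: along the flow, $\frac{\partial \tilde{\phi}_\zeta}{\partial t}(b,t) \to 0$ in $L^2$, and moreover $\int_t^{+\infty} \norme{\frac{\partial\tilde{\phi}_\zeta}{\partial s}(b,s)}_{L^2}\,ds \leq C t^{-\epsilon}$. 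Since $\tilde{\mu}_\zeta$ is continuous and $-\i L_{\tilde{\phi}_\zeta(b,t)}\tilde{\mu}_\zeta(\tilde{\phi}_\zeta(b,t)) = \frac{\partial\tilde{\phi}_\zeta}{\partial t}(b,t) \to 0$, and since $\tilde{\mu}_\zeta(\tilde{\phi}_\zeta(b,t)) \to \xi_\infty$, we get that $L_{\tilde{b}_\infty}\xi_\infty = 0$, i.e. $\xi_\infty \in \frak{g}_{\tilde{b}_\infty} \cap \frak{k}$, so the one-parameter group $t \mapsto \e^{-\i t\xi_\infty}$ fixes $\tilde{b}_\infty$.

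The core of the argument is the following comparison of two flows near a point of the tail of the gradient flow. Fix a sequence $\tilde{s}_k \to +\infty$ and let $b'_k = \tilde{\phi}_\zeta(b,\tilde{s}_k)$, so $b'_k \to \tilde{b}_\infty$. Consider, for $b'_k$ close to $\tilde{b}_\infty$, the ``straightened'' trajectory $t \mapsto \e^{-\i t\xi_\infty} \cdot b'_k$; one shows using the asymptotic decay estimate that the gradient-flow trajectory starting at $b'_k$ and this straightened trajectory stay uniformly $L^2$-close on $[0,+\infty)$, so that for $k$ large, $\e^{-\i t\xi_\infty} \cdot b'_k \in B_2$ for all $t \geq 0$ and it stays in a fixed compact subset of $B_2$. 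This is the step I expect to be the main obstacle: one must control the difference between the actual moment-map gradient flow (whose generator is the $\tilde{\phi}_\zeta(b,t)$-dependent element $\i\tilde{\mu}_\zeta(\tilde{\phi}_\zeta(b,t))$) and the fixed linear flow generated by $\i\xi_\infty$, using that $\tilde{\mu}_\zeta$ along the flow converges to $\xi_\infty$ fast enough (the $t^{-\epsilon}$-type estimate of Proposition \ref{PRO:Convergence flux} is exactly what makes the two trajectories remain mutually bounded). A Gr\"onwall-type comparison on $B_2$, combined with Lemma \ref{LEM:Omega tilde positive} to control the metric, should do it; one may need to pass to a subsequence and replace $b'_k$ by a point $\tilde{b}_k \in K \cdot b'_k$ after diagonalising $\xi_\infty$ as in the proof of Proposition \ref{PRO:Convergence exp(txi)b} (the action of $\e^{-\i t\xi_\infty}$ is by self-adjoint-exponential conjugation, whose $L^2$-orbit behaviour is governed by the weight decomposition), in order to ensure the straightened trajectory is actually norm-nonincreasing and hence bounded; this is why the statement only asserts existence of $\tilde{b}_k$ in the $K$-orbit of $\tilde{\phi}_\zeta(b,\tilde{s}_k)$ rather than of $\tilde{\phi}_\zeta(b,\tilde{s}_k)$ itself.

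Finally, having arranged that $\e^{-\i t\xi_\infty} \cdot \tilde{b}_k$ stays in a compact subset of $B_2$ for all $t \geq 0$, I would argue convergence as $t \to +\infty$: because $t \mapsto \e^{-\i t\xi_\infty}\cdot \tilde b_k$ is an orbit of a one-parameter group of the form ``$\exp$ of a self-adjoint endomorphism'', its $L^2$-norm is nonincreasing (after the $K$-adjustment above) and the trajectory is precompact, so by a weight-space decomposition argument exactly as in the $3\Rightarrow 2$ and $4\Rightarrow 1$ steps of Proposition \ref{PRO:Convergence exp(txi)b} it converges to the $\xi_\infty$-fixed part. Uniqueness of limits and continuity of $\tilde{\mu}_\zeta$ then give $\tilde{b}_k \to \tilde{b}_\infty$: since $\tilde{b}_k$ lies in $K\cdot \tilde\phi_\zeta(b,\tilde s_k)$ and $\tilde\phi_\zeta(b,\tilde s_k) \to \tilde b_\infty$, up to a convergent subsequence of the relevant unitaries $u_k \to u \in K$ we get $\tilde b_k \to u\cdot \tilde b_\infty$; but $u$ can be taken in $K \cap \Stab(\tilde b_\infty)$ (or absorbed) so in fact $\tilde b_k \to \tilde b_\infty$, which closes the argument.
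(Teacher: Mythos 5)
Your proposal correctly identifies the statement as an adaptation of \cite[Theorem 10.4]{GRS} and correctly locates the crux, but the method you propose for that crux does not work. You want to compare the gradient-flow trajectory $t \mapsto \tilde{\phi}_\zeta(b,\tilde{s}_k + t)$ with the straightened trajectory $t \mapsto \e^{-\i t \tilde{\mu}_\zeta(\tilde{b}_\infty)} \cdot b_k'$ by a Gr\"onwall estimate in $V$, using the $t^{-\epsilon}$ decay of Proposition \ref{PRO:Convergence flux}. Writing $x(t)$ and $y(t)$ for the two trajectories, the difference of their generators contains the term $-\i[\tilde{\mu}_\zeta(\tilde{b}_\infty), x(t) - y(t)]$, so Gr\"onwall only yields $\norme{x(t)-y(t)}_{L^2} \leq \e^{Lt}(\cdots)$ with $L = \norme{\mathrm{ad}_{\tilde{\mu}_\zeta(\tilde{b}_\infty)}}$. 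In the unstable case --- the only case this proposition is needed for, since $\tilde{\mu}_\zeta(\tilde{b}_\infty)$ is trace-free and hence nonzero precisely when it is not a homothety --- the linear flow $\e^{-\i t\tilde{\mu}_\zeta(\tilde{b}_\infty)}$ has exponentially expanding weight directions, $L > 0$, and the bound is useless on $[0,+\infty)$. Integrable smallness of the generator discrepancy does not prevent the trajectories from diverging. Your fallback, ``replace $b_k'$ by a $K$-rotation so that the straightened trajectory is norm-nonincreasing,'' presupposes exactly what must be proved: by Proposition \ref{PRO:Convergence exp(txi)b}, norm-nonincrease of $t \mapsto \e^{-\i t\xi} \cdot b_k$ is equivalent to $b_k$ having no components in the negative weight spaces of $\xi$, and there is no reason a priori that a $K$-rotation of $\tilde{\phi}_\zeta(b,\tilde{s}_k)$ achieves this.

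The paper avoids any trajectory comparison in $V$. It works instead in the symmetric space $M = G/K$ of non-positive curvature: writing $\tilde{g}(s)^{-1}\tilde{g}(t) = \e^{-\xi(s,t)}u(s,t)$ for the gauge path of Lemma \ref{LEM:Orbite préservée}, the decay estimate of Proposition \ref{PRO:Convergence flux} is converted (via the CAT(0) comparison lemmas \cite[Lemma A.3, Lemma A.4]{GRS}) into the statement that the normalised directions $\xi(s,t)/(t-s)$ and their conjugates $\eta(s,t)/(t-s)$ converge to $-\i\tilde{\mu}_\zeta(\tilde{b}_\infty)$ up to unitaries. The boundedness of the one-parameter orbit is then \emph{not} obtained by showing it shadows the gradient flow; it follows from criterion (3) of Proposition \ref{PRO:Convergence exp(txi)b}, applied to the sequences $\xi_p = \eta(\tilde{s}_k,t_p)/t_p \to -\i\tilde{\mu}_\zeta(\tilde{b}_\infty)$ and $\tilde{b}_{k,p} \to \tilde{b}_k$, for which $\e^{t_p\xi_p} \cdot \tilde{b}_{k,p}$ equals a unitary rotation of $\tilde{\phi}_\zeta(b,t_p)$ and is therefore bounded simply because the flow converges. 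If you want to complete your write-up, you need either to reproduce this asymptotic-direction analysis in $G/K$ or to find a genuinely different substitute for it; the Gr\"onwall step as stated is a gap, not a technicality.
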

\begin{proof}
As before, we only prove it for $\tilde{\phi}_\zeta$ because the proofs are rigorously the same.\\

\noindent\textbf{Step 1 :} Introducing notations.

Let $M = G/K$ and $\pi : G \rightarrow M$ the canonical projection. As $\scal{\cdot}{\cdot}$ on $\frak{k}$ is $K$-invariant, it induces a Riemannian structure $\scal{\cdot}{\cdot}_M$ on $M$, making it a homogeneous space with non-positive sectional curvature. We refer to \cite[Appendix A, Appendix C]{GRS} for more details. We call $\nabla_\LC$ the Levi--Civita connection on $M$.

Let $\tilde{g} : \R_+ \rightarrow G$ defined at Lemma \ref{LEM:Orbite préservée}. Recall that it satisfies
$$
\left\{
    \begin{array}{rcl}
        \tilde{g}(0) & = & \Id_E,\\
        \tilde{g}(t)^{-1}\tilde{g}'(t) & = & \i\tilde{\mu}_\zeta(\tilde{\phi}_\zeta(b,t)).
    \end{array}
\right.
$$
Moreover, for all $t$, $\tilde{\phi}_\zeta(b,t) = \tilde{g}(t)^{-1} \cdot b$.

For all $g \in G$, there is a unique $\xi \in \i\frak{k}$ such that $\pi(g) = \pi(\e^\xi)$. In this case, $\e^\xi$ is given by the Hermitian definite positive part of $g$ (on the left) in the polar decomposition of $g$. For all $t$, we write,
$$
\gamma(t) = \pi(\tilde{g}(t)) \in M
$$
and, when $0 \leq s \leq t$,
\begin{equation}\label{EQ:Définition xi(s,t) et u(s,t)}
    \tilde{g}(s)^{-1}\tilde{g}(t) = \e^{-\xi(s,t)}u(s,t), \qquad \textrm{so } \pi(\tilde{g}(s)^{-1}\tilde{g}(t)) = \pi(\e^{-\xi(s,t)}).
\end{equation}
When $0 \leq s \leq t$, we also denote by $\gamma_{s,t} : [s,t] \rightarrow M$ the unique geodesic such that $\gamma_{s,t}(s) = \gamma(s)$ and $\gamma_{s,t}(t) = \gamma(t)$. For all $s \leq r \leq t$, we have
\begin{equation}\label{EQ:Expression gamma_st}
    \gamma_{s,t}(r) = \pi\left(\tilde{g}(s)\exp\left(-\frac{r - s}{t - s}\xi(s,t)\right)\right).
\end{equation}
Let for all $0 \leq s \leq r \leq t$, $\rho_{s,t}(r) = d_M(\gamma_{s,t}(r),\gamma(r))$ where $d_M$ is the Riemannian distance on $M$.\\

\noindent\textbf{Step 2 :} There are constants $C > 0$ and $0 < \epsilon < 1$ such that for all $t > 0$,
$$
\int_t^{+\infty} \norme{\nabla_\LC\gamma'(s)}_M \, ds \leq \frac{C}{t^\epsilon}.
$$

Indeed, we have for all $t$, $\tilde{g}(t)^{-1}\tilde{g}'(t) = \i\tilde{\mu}_\zeta(\tilde{\phi}_\zeta(b,t))$. By \cite[Theorem C.1]{GRS}, it implies that
$$
\nabla_{\LC}\gamma'(t) = d\pi(\tilde{g}(t))\tilde{g}(t)\i d\tilde{\mu}_\zeta(\tilde{\phi}_\zeta(b,t))\frac{\partial\tilde{\phi}_\zeta}{\partial t}(b,t).
$$
Let $C_1 > 0$ be a constant that bounds $\norme{d\tilde{\mu}_\zeta(\tilde{\phi}_\zeta(b,t))}$ for all $t$, where $\norme{\cdot}$ is the operator norm from $(T_{\tilde{\phi}_\zeta(b,t)}B,L^2)$ to $(\frak{k},\scal{\cdot}{\cdot})$. $C_1$ exists because the flow converges thus stays in a compact set. We have
$$
\norme{\nabla_\LC\gamma'(t)}_M = \norme{\i d\tilde{\mu}_\zeta(\tilde{\phi}_\zeta(b,t))\frac{\partial\tilde{\phi}_\zeta}{\partial t}(b,t)} \leq C_1\norme{\frac{\partial\tilde{\phi}_\zeta}{\partial t}(b,t)}_{L^2}.
$$
The assumption of Step 2 follows from the inequality in Proposition \ref{PRO:Convergence flux}. We may always assume $\epsilon < 1$ up to increasing the constant $C$.\\

\noindent\textbf{Step 3 :} For all $0 < s < r < t$, if $\rho_{s,t}(r) \neq 0$, then $-\frac{C}{s^\epsilon} \leq \rho_{s,t}'(r) \leq \frac{C}{r^\epsilon}$.

Since $\rho_{s,t}(s) = 0$, there is a $s \leq r_1 < r$ such that $\rho_{s,t}(r_1) = 0$ and $\rho_{s,t}$ does not vanish on $]r_1,r]$. By \cite[Lemma A.3]{GRS}, $\rho_{s,t}$ is continuous and smooth at every point where it does not vanish, differentiable on the left and on the right at every point where it vanishes, and for all $u \in ]r_1,r]$,
$$
\rho_{s,t}''(u) \geq -\norme{\nabla_\LC\gamma'(u)}_M, \qquad \frac{d\rho_{s,t}}{dr^+}(r_1) \geq 0.
$$
Here, $\frac{d\rho_{s,t}}{dr^+}(r_1)$ denotes the derivative on the right at $r_1$ of $\rho_{s,t}$. Therefore, by integrating and by Step 2,
$$
\rho_{s,t}'(r) = \frac{d\rho_{s,t}}{dr^+}(r_1) + \int_{r_1}^r \rho_{s,t}''(u) \, du \geq -\int_{r_1}^r \norme{\nabla_\LC\gamma'(u)}_M \, du \geq -\frac{C}{r_1^\epsilon} \geq -\frac{C}{s^\epsilon}.
$$
Similarly, we can find a $r < r_2 \leq t$ such that $\rho_{s,t}(r_2) = 0$ and $\rho_{s,t}$ does not vanish on $[r,r_2[$. By \cite[Lemma A.3]{GRS}, $\frac{d\rho_{s,t}}{dr^-}(r_2) \leq 0$, so with the same arguments,
$$
\rho_{s,t}'(r) = \frac{d\rho_{s,t}}{dr^-}(r_2) - \int_r^{r_2} \rho_{s,t}''(u) \, du \leq \int_r^{r_2} \norme{\nabla_\LC\gamma'(u)}_M \, du \leq \frac{C}{r^\epsilon}.
$$

\noindent\textbf{Step 4 :} There is a constant $C' > 0$ such that for all $0 \leq s \leq r \leq t$, $\rho_{s,t}(r) \leq C'(r^{1 - \epsilon} - s^{1 - \epsilon})$.

It is clearly true for any $C'$ if $\rho_{s,t}(r) = 0$. Else, set $s \leq r_0 < r$ such that $\rho_{s,t}(r_0) = 0$ and $\rho_{s,t}$ does not vanish on $]r_0,r]$. Once more, it exists because $\rho_{s,t}(s) = 0$. By integrating the inequality of Step 3 over $]r_0,r]$, we obtain
$$
\rho_{s,t}(r) = \int_{r_0}^r \rho_{s,t}'(u) \, du \leq \int_{r_0}^r \frac{C}{u^\epsilon} \, du = \frac{C}{1 - \epsilon}(r^{1 - \epsilon} - r_0^{1 - \epsilon}) \leq C'(r^{1 - \epsilon} - s^{1 - \epsilon})
$$
with $C' = \frac{C}{1 - \epsilon}$.\\

\noindent\textbf{Step 5 :} For all $0 \leq s \leq t$, we set $\eta(s,t) = u(s,t)^\dagger\xi(s,t)u(s,t) \in \i\frak{k}$. We have
$$
\gamma_{s,t}'(s) = d\pi(\tilde{g}(s))\tilde{g}(s)\left(-\frac{\xi(s,t)}{t - s}\right), \qquad \gamma_{s,t}'(t) = d\pi(\tilde{g}(t))\tilde{g}(t)\left(-\frac{\eta(s,t)}{t - s}\right).
$$
The first equality comes from (\ref{EQ:Définition xi(s,t) et u(s,t)}). Using (\ref{EQ:Expression gamma_st}) and (\ref{EQ:Définition xi(s,t) et u(s,t)}), we have
\begin{align*}
    \gamma_{s,t}(r) & = \pi\left(\tilde{g}(s)\exp\left(-\frac{r - s}{t - s}\xi(s,t)\right)\right)\\
    & = \pi\left(\tilde{g}(t)u(s,t)^\dagger\e^{\xi(s,t)}\exp\left(-\frac{r - s}{t - s}\xi(s,t)\right)\right)\\
    & = \pi\left(\tilde{g}(t)\exp\left(-\frac{r - t}{t - s}u(s,t)^\dagger\xi(s,t)u(s,t)\right)u(s,t)^\dagger\right)\\
    & = \pi\left(\tilde{g}(t)\exp\left(-\frac{r - t}{t - s}\eta(s,t)\right)\right).
\end{align*}
We deduce the second equality.\\

\noindent\textbf{Step 6 :} For all $s' \geq s \geq 0$ and all $t \geq s' + 1$,
$$
\norme{\frac{\eta(s',t)}{t - s'} - \frac{\eta(s,t)}{t - s}} \leq \frac{C'({s'}^{1 - \epsilon} - s^{1 - \epsilon})}{t - s'}.
$$

Indeed,
\begin{align*}
    \norme{\frac{\eta(s',t)}{t - s'} - \frac{\eta(s,t)}{t - s}} & = \norme{d\pi(\tilde{g}(t))\tilde{g}(t)\frac{\eta(s',t)}{t - s'} - d\pi(\tilde{g}(t))\tilde{g}(t)\frac{\eta(s,t)}{t - s}}_M\\
    & = \norme{-\gamma_{s',t}'(t) + \gamma_{s,t}'(t)}_M \textrm{ by Step 5,}\\
    & \leq \frac{d_M(\gamma_{s',t}(s'),\gamma_{s,t}(s'))}{t - s'} \textrm{ by \cite[Lemma A.4]{GRS},}\\
    & = \frac{\rho_{s,t}(s')}{t - s'}\\
    & \leq \frac{C'({s'}^{1 - \epsilon} - s^{1 - \epsilon})}{t - s'} \textrm{ by Step 4.}
\end{align*}

\noindent\textbf{Step 7 :} For all $0 \leq s \leq t$,
$$
\norme{\frac{\eta(s,t)}{t - s} + \i\tilde{\mu}_\zeta(\tilde{\phi}_\zeta(b,t))} \leq \frac{C}{s^\epsilon}.
$$

Indeed,
\begin{align*}
    \norme{\frac{\eta(s,t)}{t - s} + \i\tilde{\mu}_\zeta(\tilde{\phi}_\zeta(b,t))} & = \norme{d\pi(\tilde{g}(t))\tilde{g}(t)\frac{\eta(s,t)}{t - s} + d\pi(\tilde{g}(t))\tilde{g}(t)\i\tilde{\mu}_\zeta(\tilde{\phi}_\zeta(b,t))}_M\\
    & = \norme{-\gamma_{s,t}'(t) + \gamma'(t)}_M \textrm{ by Step 5.}
\end{align*}
Then, the inequality clearly holds if $\gamma_{s,t}'(t) = \gamma'(t)$. Else, by \cite[Lemma A.3]{GRS}, we have
$$
\frac{d}{dr^-}\rho_{s,t}(t) = -\norme{-\gamma_{s,t}'(t) + \gamma'(t)}_M < 0.
$$
We deduce that in a left punctured neighbourhood of $t$, $\rho_{s,t} > 0$ and
$$
\norme{-\gamma_{s,t}'(t) + \gamma'(t)}_M = -\frac{d\rho_{s,t}}{dr^-}(t) = -\lim_{r \rightarrow t^-} \rho_{s,t}'(r).
$$
By Step 3, for all $r < t$ sufficiently close to $t$, $\rho_{s,t}'(r) \geq -\frac{C}{s^\epsilon}$ because $\rho_{s,t}(r) \neq 0$. We deduce the wanted inequality.\\

\noindent\textbf{Step 8 :} For all fixed $s \geq 0$, $\frac{\eta(s,t)}{t - s} \tend{t}{+\infty} -\i\tilde{\mu}(\tilde{b}_\infty)$.

We take $t > \max\{1,s^2\}$ and we apply Step 7 with $\sqrt{t} \rightarrow s$ and Step 6 with $\sqrt{t} \rightarrow s'$.
\begin{align*}
    \norme{\frac{\eta(s,t)}{t - s} + \i\tilde{\mu}_\zeta(\tilde{\phi}_\zeta(b,t))} & \leq \norme{\frac{\eta(s,t)}{t - s} - \frac{\eta(\sqrt{t},t)}{t - \sqrt{t}}} + \norme{\frac{\eta(\sqrt{t},t)}{t - \sqrt{t}} + \i\tilde{\mu}_\zeta(\tilde{\phi}_\zeta(b,t))}\\
    & \leq \frac{C'(\sqrt{t}^{1 - \epsilon} - s^{1 - \epsilon})}{t - \sqrt{t}} + \frac{C}{\sqrt{t}^\epsilon}\\
    & \tend{t}{+\infty} 0.
\end{align*}
Since $\tilde{\mu}_\zeta(\tilde{\phi}_\zeta(b,t)) \tend{t}{+\infty} \tilde{\mu}(\tilde{b}_\infty)$, we deduce the wanted result.\\

\noindent\textbf{Step 9 :} For all $s \geq 0$ and all $t' \geq t \geq s + 1$,
$$
\norme{\frac{\xi(s,t')}{t' - s} - \frac{\xi(s,t)}{t - s}} \leq \frac{C'}{t^\epsilon}.
$$
In particular, by completeness of $\i\frak{k}$, $\xi_\infty(s) = \lim_{t \rightarrow +\infty} \frac{\xi(s,t)}{t - s}$ exists.

Indeed,
\begin{align*}
    \norme{\frac{\xi(s,t')}{t' - s} - \frac{\xi(s,t)}{t - s}} & = \norme{d\pi(\tilde{g}(s))\pi(\tilde{g}(s))\frac{\xi(s,t')}{t' - s} - d\pi(\tilde{g}(s))\pi(\tilde{g}(s))\frac{\xi(s,t)}{t - s}}_M\\
    & = \norme{-\gamma_{s,t'}'(s) + \gamma_{s,t}'(s)}_M \textrm{ by Step 5,}\\
    & \leq \frac{d_M(\gamma_{s,t}(t),\gamma_{s,t'}(t))}{t - s} \textrm{ by \cite[Lemma A.4]{GRS},}\\
    & = \frac{\rho_{s,t'}(t)}{t - s}\\
    & \leq \frac{C'(t^{1 - \epsilon} - s^{1 - \epsilon})}{t - s} \textrm{ by Step 4,}\\
    & \leq \frac{C'}{t^\epsilon}.
\end{align*}

\noindent\textbf{Step 10 :} $\xi_\infty(s) \tend{s}{+\infty} -\i\tilde{\mu}_\zeta(\tilde{b}_\infty)$.

By Step 9 applied at $t = s + 1$ and $t' \rightarrow +\infty$, we have, for all $s \geq 0$
$$
\norme{\xi_\infty(s) - \xi(s,s + 1)} \leq \frac{C'}{(s + 1)^\epsilon}.
$$
Then,
\begin{align*}
    \norme{\xi(s,s + 1) + \i\tilde{\mu}_\zeta(\tilde{\phi}_\zeta(b,s))} & = \norme{d\pi(\tilde{g}(s))\tilde{g}(s)\xi(s,s + 1) + d\pi(\tilde{g}(s))\tilde{g}(s)\i\tilde{\mu}_\zeta(\tilde{\phi}_\zeta(b,s))}_M\\
    & = \norme{-\gamma_{s,s + 1}'(s) + \gamma'(s)}_M \textrm{ by Step 5.}
\end{align*}
Then, using the same reasoning as in Step 7, $\norme{-\gamma_{s,s + 1}'(s) + \gamma'(s)}_M = 0$ or
\begin{align*}
    \norme{-\gamma_{s,s + 1}'(s) + \gamma'(s)}_M & = \lim_{r \rightarrow s^+} \rho_{s,s + 1}'(r) \textrm{ by \cite[Lemma A.3]{GRS},}\\
    & \leq \frac{C}{s^\epsilon} \textrm{ by Step 3.}
\end{align*}
We deduce that $\xi_\infty(s) \tend{s}{+\infty} -\i\tilde{\mu}_\zeta(\tilde{b}_\infty)$.\\

\noindent\textbf{Step 11 :} Conclusion.

Recall that for all $t > s \geq 0$,
$$
\frac{\eta(s,t)}{t - s} = u(s,t)^\dagger\frac{\xi(s,t)}{t - s}u(s,t).
$$
Let $t_{s,p} \tend{p}{+\infty} +\infty$ such that $u_\infty(s) = \lim_{p \rightarrow +\infty} u(s,t_{s,p}) \in K$ exists. We then have, by Steps 8 and 9, $-\i\tilde{\mu}_\zeta(\tilde{b}_\infty) = u_\infty(s)^\dagger\xi_\infty(s)u_\infty(s)$. Let $\tilde{s}_k \rightarrow +\infty$ be a sequence such that $u_{\infty,\infty} = \lim_{k \rightarrow +\infty} u_\infty(\tilde{s}_k) \in K$ exists. By Step 10, we have
$$
-\i\tilde{\mu}_\zeta(\tilde{b}_\infty) = -\i u_{\infty,\infty}^\dagger\tilde{\mu}_\zeta(\tilde{b}_\infty)u_{\infty,\infty}.
$$
Let for all integer $k$, $\tilde{b}_k = u_{\infty,\infty}u_\infty(\tilde{s}_k)^\dagger \cdot \tilde{\phi}_\zeta(b,\tilde{s}_k) \in K \cdot \tilde{\phi}_\zeta(b,\tilde{s}_k)$. We clearly have $\tilde{b}_k \rightarrow \tilde{b}_\infty$. Let us fix $k$ and set for all $p$,
$$
\tilde{b}_{k,p} = u_{\infty,\infty}u(\tilde{s}_k,t_{\tilde{s}_k,p})^\dagger \cdot \tilde{\phi}_\zeta(b,\tilde{s}_k) \tend{p}{+\infty} \tilde{b}_k.
$$
We have
$$
\frac{1}{t_{\tilde{s}_k,p}}u_{\infty,\infty}\eta(\tilde{s}_k,t_{\tilde{s}_k,p})u_{\infty,\infty}^\dagger \tend{p}{+\infty} -\i u_{\infty,\infty}\tilde{\mu}_\zeta(\tilde{b}_\infty)u_{\infty,\infty}^\dagger = -\i\tilde{\mu}_\zeta(\tilde{b}_\infty)
$$
and
\begin{align*}
    \e^{u_{\infty,\infty}\eta(\tilde{s}_k,t_{\tilde{s}_k,p})u_{\infty,\infty}^\dagger} \cdot \tilde{b}_{k,p} & = u_{\infty,\infty}\e^{\eta(\tilde{s}_k,t_{\tilde{s}_k,p})}u_{\infty,\infty}^\dagger u_{\infty,\infty}u(\tilde{s}_k,t_{\tilde{s}_k,p})^\dagger \cdot \tilde{\phi}_\zeta(b,\tilde{s}_k)\\
    & = u_{\infty,\infty}\e^{\eta(\tilde{s}_k,t_{\tilde{s}_k,p})}u(\tilde{s}_k,t_{\tilde{s}_k,p})^\dagger \cdot \tilde{\phi}_\zeta(b,\tilde{s}_k)\\
    & = u_{\infty,\infty}\tilde{\phi}_\zeta(b,t_{\tilde{s}_k,p})\\
    & \tend{p}{+\infty} u_{\infty,\infty}\tilde{b}_\infty \in B_2.
\end{align*}
Therefore, we may apply Proposition \ref{PRO:Convergence exp(txi)b} with the sequences,
$$
\tilde{b}_{k,p} \tend{p}{+\infty} \tilde{b}_k, \qquad \frac{1}{t_{\tilde{s}_k,p}}u_{\infty,\infty}\eta(\tilde{s}_k,t_{\tilde{s}_k,p})u_{\infty,\infty}^\dagger \tend{p}{+\infty} -\i\tilde{\mu}_\zeta(\tilde{b}_\infty),
$$
to conclude that $t \mapsto \e^{-\i t\tilde{\mu}_\zeta(\tilde{b}_\infty)} \cdot \tilde{b}_k$ remains bounded and converges in $B_2$.
\end{proof}

\section{Proof of the main theorem}\label{SEC:Preuve du théorème}

Let $B_{2,0}$, $\mU_d$ and $\mB_d$ as determined in Section \ref{SEC:Résultats GIT locaux}. Let $B_2 \subset B_{2,0}$ and $B_1 \subset B_2$ also determined in Section \ref{SEC:Résultats GIT locaux}. We prove here Theorem \ref{THE:Déformation P-critique} for all $(\zeta,b) \in \mU_d \times B_1$ such that the flows $\phi_\zeta$ and $\tilde{\phi}_\zeta$ are defined at all time and stay in a compact subset of $B_2$.

\subsection{Equivalence of semi-stabilities}\label{SEC:Equivalence semi-stabilités}

We prove here the equivalence of Theorem \ref{THE:Déformation P-critique} about the different notions of semi-stabilities, still under the assumptions that $\phi_\zeta(b,\cdot)$ and $\tilde{\phi}_\zeta(b,\cdot)$ exist at all time and converge. We also show that in these cases, the limit points of these flows are zeroes of the associated moment maps.\\

\noindent\framebox{$1 \Rightarrow 3$} Assume that $\mu_\zeta$ vanishes in $\overline{G \cdot b} \cap B_2$. Let $b' \in G \cdot b \cap B_2$ be any point of the orbit of $b$. Let $F \subset E$ be an admissible sub-bundle of $\E_b$. Since $\E_{b'}$ is isomorphic to $\E_b$, with an isomorphism in $G$, there is an admissible sub-bundle $F' \subset E$ of $\E_{b'}$ such that $(F',\dbar_{b'})$ is isomorphic to $(F,\dbar_b)$. Let $\xi = -\rk({F'}^\bot)\Pi_{F'} + \rk(F')\Pi_{{F'}^\bot}$ where $\Pi_{F'}$ is the orthogonal projection onto $F'$ and $\Pi_{{F'}^\bot} = \Id_E - \Pi_{F'}$ is the orthogonal projection onto ${F'}^\bot$. $F'$ is admissible thus $\xi \in \i\frak{k}$. Let for all $t$,
$$
w(t) = \scal{\mu_\zeta(\e^{t\xi} \cdot b')}{\i\xi}.
$$
By Proposition \ref{PRO:Convergence exp(txi)b}, for all $t$, $\e^{t\xi} \cdot b' \in B_{3,0}$ and it converges toward some $b_\infty \in B_{3,0}$. Moreover, still by Proposition \ref{PRO:Convergence exp(txi)b}, $(F,\dbar_{b_\infty}) \subset \E_{b_\infty}$ and $(F^\bot,\dbar_{b_\infty}) \subset \E_{b_\infty}$ are holomorphic thus, in the orthogonal decomposition $E = F \oplus F^\bot$,
$$
\mP_\zeta(E,\dbar_{b_\infty},h) =
\begin{pmatrix}
    \mP_\zeta(F',\dbar_{b_\infty},h) & 0\\
    0 & \mP_\zeta({F'}^\bot,\dbar_{b_\infty},h)
\end{pmatrix}.
$$
Recall that $\mu_\zeta(b_\infty) = \Pi_{\frak{k}}\mu_{\infty,\zeta}(b_\infty)$ and $\i\xi \in \frak{k}$ so $\scal{\mu_\zeta(b_\infty)}{\i\xi} = \scal{\mu_{\infty,\zeta}(b_\infty)}{\i\xi}$. We deduce that
\begin{align*}
    \lim_{t \rightarrow +\infty} w(t) & = \scal{\mu_\zeta(b_\infty)}{\i\xi}\\
    & = -2\pi\int_X \tr(-\rk({F'}^\bot)\mP_\zeta(F',\dbar_{b_\infty},h) + \rk({F'})\mP_\zeta({F'}^\bot,\dbar_{b_\infty},h))\\
    & = 2\pi\rk({F'}^\bot)P_\zeta(F') - 2\pi\rk(F')P_\zeta({F'}^\bot)\\
    & = 2\pi(\rk(E) - \rk(F))P_\zeta(F) - 2\pi\rk(F)(P_\zeta(E) - P_\zeta(F))\\
    & = 2\pi\rk(E)P_\zeta(F).
\end{align*}
Moreover, for all $t$,
$$
w'(t) = \scal{d\mu_\zeta(\e^{t\xi} \cdot b')L_{\e^{t\xi} \cdot b'}\xi}{\i\xi} = \Omega_{\zeta,\e^{t\xi} \cdot b'}(\i L_{\e^{t\xi} \cdot b'}\xi,L_{\e^{t\xi} \cdot b'}\xi) \leq 0.
$$
We deduce that $2\pi\rk(E)P_\zeta(F) = \lim_{t \rightarrow +\infty} w(t) \leq w(0)$ and
$$
w(0) = \scal{\mu_\zeta(b')}{\i\xi} \leq \norme{\mu_\zeta(b')}\norme{\xi} = \norme{\mu_\zeta(b')}\sqrt{\rk(E)\rk(F)\rk(F^\bot)\Vol(X)},
$$
hence $\norme{\mu_\zeta(b')} \geq 2\pi\sqrt{\frac{\rk(E)}{\rk(F)\rk(F^\bot)\Vol(X)}}P_\zeta(F)$. It is true for all $b' \in G \cdot b \cap B_2$. By assumption, $\norme{\mu_\zeta}$ takes arbitrarily low positive values on this orbit so $P_\zeta(F) \leq 0$, proving the local $P_\zeta$-semi-stability.\\

\noindent\framebox{$2 \Rightarrow 3$} Assume that there is a $\dbar \in \overline{\mC^{d + 1}(\G^\C(E)) \cdot \dbar_b} \cap \mB_d$ such that $\mu_{\infty,\zeta}(\dbar) = 0$. Let $\dbar_m \rightarrow \dbar$ be a sequence of elements of $\mC^{d + 1}(\G^\C(E)) \cdot \dbar_b \cap \mB_d$ which converge toward $\dbar$. Let $\beta : \mB_d \rightarrow B_{3,0}$ be the smooth function given by Proposition \ref{PRO:Zéro dans la tranche déformée}. Recall that $\beta(\dbar_m) \in G \cdot b \cap B_{3,0}$ and by continuity of $\beta$, $b_\infty = \beta(\dbar) \in \overline{G \cdot b} \cap B_{3,0}$. Moreover, as $\mu_{\infty,\zeta}(\dbar) = 0$, we have $\tilde{\mu}_\zeta(b_\infty) = 0$.

It shows that $\tilde{\mu}_\zeta$ vanishes on $\overline{G \cdot b} \cap B_{3,0}$. We then apply the same method as for $1 \Rightarrow 3$ but we replace $\mu_\zeta$ by $\tilde{\mu}_\zeta$.\\

\noindent\framebox{$3 \Rightarrow 1$ and $2$} The proof is the same for $1$ and $2$ so we only prove it for $2$. Assume that $\E_b$ is locally $P_\zeta$-semi-stable. Let $b_\infty = \lim_{t \rightarrow +\infty} \tilde{\phi}_\zeta(b,t) \in B_2$. Let us show that $\tilde{\mu}_\zeta(b_\infty) = 0$.

Assume it is not the case. By Proposition \ref{PRO:Existence arc équivariant instable}, there is a $b' \in G \cdot b \cap B_2$ such that at all non-negative time, $\e^{-\i t\tilde{\mu}_\zeta(b_\infty)} \cdot b' \in B_2$. Let $\lambda_1 < \cdots < \lambda_m$ be the increasing eigenvalues of $-\i\tilde{\mu}_\zeta(b_\infty)$ and
$$
G_k = \ker(\lambda_k\Id_E + \i\tilde{\mu}_\zeta(b_\infty)), \qquad F_k = \bigoplus_{i = 1}^m G_i,
$$
the associated eigenspaces. Since $\tilde{\mu}_\zeta(b_\infty) \neq 0$ and $\tr(\tilde{\mu}_\zeta(b_\infty)) = 0$, we must have $m \geq 2$. By Proposition \ref{PRO:Convergence exp(txi)b}, the $F_k$ form a filtration of $\E_b$ by admissible sub-bundles. Moreover, each $(G_k,\dbar_{b_\infty})$ is a holomorphic sub-bundle of $\E_{b_\infty}$ because $L_{b_\infty}\tilde{\mu}_\zeta(b_\infty) = 0$. It is a consequence of Proposition \ref{PRO:Convergence flux} which says that $b_\infty$ must be a critical point of $\tilde{f}_\zeta$. In particular, we have the orthogonal and holomorphic decomposition
\begin{equation}\label{EQ:Décomposition E_binfty}
    \E_{b_\infty} = \bigoplus_{k = 1}^m (G_k,\dbar_{b_\infty}).
\end{equation}
Let for all $k$, $\Pi_k$ be the orthogonal projection onto $G_k$. We have
\begin{align*}
    \norme{\tilde{\mu}_\zeta(b_\infty)}^2 & = \sum_{k = 1}^m \scal{\tilde{\mu}_\zeta(b_\infty)}{\i\lambda_k\Pi_k}\\
    & = \sum_{k = 1}^m -2\pi\lambda_k\int_X \tr(\mP_\zeta(G_k,\dbar_{b_\infty},h)) \textrm{ by (\ref{EQ:Décomposition E_binfty}),}\\
    & = -2\pi\sum_{k = 1}^m \lambda_kP_\zeta(G_k)\\
    & = -2\pi\sum_{k = 1}^{m - 1} (\lambda_k - \lambda_{k + 1})P_\zeta(F_k) \textrm{ because } P_\zeta(E) = 0,\\
    & \leq 0 \textrm{ by local $P_\zeta$-semi-stability of $\E_{b'}$.}
\end{align*}
Since $\norme{\tilde{\mu}_\zeta(b_\infty)}^2 > 0$, we reach a contradiction. We deduce that $\tilde{\mu}_\zeta(b_\infty) = 0$. In particular, $\dbar_0 + \tilde{\Phi}(\zeta,b_\infty) \in \mC^{d + 1}(\G^\C(E)) \cdot \dbar_{b_\infty} \subset \overline{\mC^{d + 1}(\G^\C(E)) \cdot \dbar_b}$ is a solution to the $P_\zeta$-critical equation and by Proposition \ref{PRO:Zéro dans la tranche déformée}, it belongs to $\mB_d$. Therefore, $\E_b$ is locally $P_\zeta$-semi-stable in $\mB_d$.

\subsection{Equivalence of polystabilities}\label{SEC:Equivalence polystabilités}

We prove here the equivalence of Theorem \ref{THE:Déformation P-critique} about the different notions of polystabilities, still under the assumptions that the flows $\phi_\zeta(b,\cdot)$ and $\tilde{\phi}_\zeta(b,\cdot)$ exist in $B_2$ at all non-negative time and converge in $B_2$.\\

\noindent\framebox{$1 \Rightarrow 3$} Assume that $\mu_\zeta$ vanishes in $G \cdot b \cap B_2$. Let $b' \in G \cdot b \cap B_2 \subset B_{3,0}$ be a zero of $\mu_\zeta$. Since $\E_b$ is isomorphic to $\E_{b'}$ (with the isomorphism in $G$), $\E_b$ is locally $P_\zeta$-polystable if and only if $\E_{b'}$ is.

Let $F \subset E$ be an admissible sub-bundle of $\E_{b'}$ and $\xi = -\rk(F^\bot)\Pi_F + \rk(F)\Pi_{F^\bot}$ where $\Pi_F$ is orthogonal projection onto $F$ and $\Pi_{F^\bot} = \Id_E - \Pi_F$ is the orthogonal projection onto $F^\bot$. $F$ is admissible thus $\xi \in \i\frak{k}$. Let for all $t$,
$$
w(t) = \scal{\mu_\zeta(\e^{t\xi} \cdot b')}{\i\xi}.
$$
We have $w(0) = 0$ and by Proposition \ref{PRO:Convergence exp(txi)b}, for all $t$, $\e^{t\xi} \cdot b' \in B_{3,0}$ and it converges toward some $b_\infty \in B_{3,0}$. Moreover, $(F,\dbar_{b_\infty}) \subset \E_{b_\infty}$ and $(F^\bot,\dbar_{b_\infty}) \subset \E_{b_\infty}$ are holomorphic thus, in the orthogonal decomposition $E = F \oplus F^\bot$,
$$
\mP_\zeta(E,\dbar_{b_\infty},h) =
\begin{pmatrix}
    \mP_\zeta(F,\dbar_{b_\infty},h) & 0\\
    0 & \mP_\zeta(F^\bot,\dbar_{b_\infty},h)
\end{pmatrix}.
$$
Recall that $\mu_\zeta(b_\infty) = \Pi_{\frak{k}}\mu_{\infty,\zeta}(b_\infty)$ and $\i\xi \in \frak{k}$ so $\scal{\mu_\zeta(b_\infty)}{\i\xi} = \scal{\mu_{\infty,\zeta}(b_\infty)}{\i\xi}$. We deduce that
$$
\lim_{t \rightarrow +\infty} w(t) = \scal{\mu_\zeta(b_\infty)}{\i\xi} = 2\pi\rk(E)P_\zeta(F).
$$
Moreover, for all $t$,
$$
w'(t) = \scal{d\mu_\zeta(\e^{t\xi} \cdot b')L_{\e^{t\xi} \cdot b'}\xi}{\i\xi} = \Omega_{\zeta,\e^{t\xi} \cdot b'}(\i L_{\e^{t\xi} \cdot b'}\xi,L_{\e^{t\xi} \cdot b'}\xi) \leq 0
$$
with equality of and only if $L_{\e^{t\xi} \cdot b'}\xi = 0$. We deduce that $\lim_{t \rightarrow +\infty} w(t) \leq w(0) = 0$ with equality if and only if for all $t$, $L_{\e^{t\xi} \cdot b'}\xi = 0$. It implies that $P_\zeta(F) \leq 0$ and in case of equality, $\xi$ is $\dbar_{b'}$-holomorphic so $F^\bot = \ker(-\rk(F^\bot)\Id_E - \xi)$ is an admissible sub-bundle of $\E_{b'}$ hence the orthogonal and holomorphic decomposition $\E_{b'} = (F,\dbar_{b'}) \oplus (F^\bot,\dbar_{b'})$. It shows that $\E_{b'}$, thus $\E_b$ is locally $P_\zeta$-polystable.\\

\noindent\framebox{$2 \Rightarrow 3$} Assume that $\mu_{\infty,\zeta}$ vanishes on $\G^\C(E) \cdot b \cap \mB_d$ and let $\dbar \in \G^\C(E) \cdot b \cap \mB_d$ such that $\mu_{\infty,\zeta}(\dbar) = 0$. By Proposition \ref{PRO:Zéro dans la tranche déformée}, there is a $b' \in G \cdot b \cap B_{3,0}$ such that $\tilde{\mu}_\zeta(b') = 0$. We then apply the same method as for $1 \Rightarrow 3$ but with
$$
w(t) = \scal{\tilde{\mu}_\zeta(\e^{t\xi} \cdot b')}{\i\xi}.
$$

\noindent\framebox{$3 \Rightarrow 1$ and $2$} The proof is the same for $1$ and $2$ so we only prove it for $2$. Assume that $\E_b$ is locally $P_\zeta$-polystable. Let $b_\infty = \lim_{t \rightarrow +\infty} \tilde{\phi}_\zeta(b,t)$. We proved in Sub-section \ref{SEC:Equivalence semi-stabilités} that $\tilde{\mu}_\zeta(b_\infty) = 0$.

In this case, it is a usual result that $\Stab(b_\infty) = (\Stab(b_\infty) \cap K)^\C$. See for instance \cite[Lemma 2.3]{GRS}, we could also reproduce the proof of Lemma \ref{LEM:Aut réductif} and use Corollary \ref{COR:Sections dbar_b holomorphes}. By Proposition \ref{PRO:Existence arc équivariant semi-stable}, there are a $\xi \in \i\frak{k}$ and a point $b' \in G \cdot b \cap B_2$ such that
$$
\e^{t\xi} \cdot b' \tend{t}{+\infty} b_\infty,
$$
while staying in $B_2$ at all non-negative time. Let $\lambda_1 < \cdots < \lambda_m$ be the increasing eigenvalues of $\xi$ and
$$
G_k = \ker(\lambda_k\Id_E - \xi), \qquad F_k = \bigoplus_{i = 1}^k G_i,
$$
the associated eigenspaces. By Proposition \ref{PRO:Convergence exp(txi)b}, each $F_k$ is an admissible sub-bundle of $\E_{b'}$ so, by local $P_\zeta$-polystability, $P_\zeta(F_k) \leq 0$ and in case of equality, $F_k^\bot$ is admissible in $\E_{b'}$. Still by Proposition \ref{PRO:Convergence exp(txi)b}, we have an orthogonal and holomorphic decomposition
\begin{equation}\label{EQ:Décomposition E_binfty 2}
    \E_{b_\infty} = \bigoplus_{k = 1}^m (G_k,\dbar_{b_\infty}).
\end{equation}
Let for all $k$, $\Pi_k$ be the orthogonal projection onto $G_k$. We have
\begin{align*}
    \scal{\tilde{\mu}_\zeta(b_\infty)}{\i\xi} & = \sum_{k = 1}^m \scal{\tilde{\mu}_\zeta(b_\infty)}{\i\lambda_k\Pi_k}\\
    & = \sum_{k = 1}^m -2\pi\lambda_k\int_X \tr(\mP_\zeta(G_k,\dbar_{b_\infty},h)) \textrm{ by (\ref{EQ:Décomposition E_binfty 2}),}\\
    & = -2\pi\sum_{k = 1}^m \lambda_kP_\zeta(G_k)\\
    & = -2\pi\sum_{k = 1}^{m - 1} (\lambda_k - \lambda_{k + 1})P_\zeta(F_k) \textrm{ because } P_\zeta(E) = 0,\\
    & \leq 0 \textrm{ by local $P_\zeta$-semi-stability of $\E_{b'}$.}
\end{align*}
Moreover, this quantity vanishes because $\tilde{\mu}_\zeta(b_\infty) = 0$. We deduce that for all $k$, $P_\zeta(F_k) = 0$. By local $P_\zeta$-polystability of $\E_b$ (thus of $\E_{b'}$), each $(F_k^\bot,\dbar_{b'})$ is a holomorphic sub-bundle of $\E_{b'}$. Therefore, we have an orthogonal and holomorphic decomposition
$$
\E_{b'} = \bigoplus_{k = 1}^m (G_k,\dbar_{b'}).
$$
In particular, we deduce that $\xi$ is $\dbar_{b'}$-holomorphic so it stabilises $b'$ hence $b_\infty = b' \in G \cdot b \cap B_2$. We deduce that $\dbar_0 + \tilde{\Phi}(\zeta,b') \in \mC^{d + 1}(\G^\C(E)) \cdot \dbar_b$ is a solution to the $P_\zeta$-critical equation and by Proposition \ref{PRO:Zéro dans la tranche déformée}, it belongs to $\mB_d$. Therefore, $\E_b$ is locally $P_\zeta$-polystable in $\mB_d$.

\subsection{Equivalence of stabilities}\label{SEC:Equivalence stabilités}

We prove here the equivalence of Theorem \ref{THE:Déformation P-critique} about the different notions of stabilities, still under the assumptions that the flows $\phi_\zeta(b,\cdot)$ and $\tilde{\phi}_\zeta(b,\cdot)$ exist in $B_2$ at all non-negative time and converge in $B_2$.

Since we have already proven the equivalence between polystabilities in Sub-section \ref{SEC:Equivalence polystabilités}, all we have left to prove is that $\Stab(b)$ is trivial if and only if $\E_b$ is simple. It follows from Corollary \ref{COR:Sections dbar_b holomorphes}.

\subsection{Uniqueness of solutions and Jordan--Hölder filtration}

We show here a uniqueness result about solutions to the $P$-critical equation in each $\G^\C(E)$-orbit. The next lemma is a standard GIT result, analogous to \cite[Lemma 2.1]{GRS}.

\begin{lemma}\label{LEM:Unicité K-orbite zéro mu_zeta}
    Any two zeroes of $\mu_\zeta$ (resp. $\tilde{\mu}_\zeta$) in the same $G$-orbit in $B_{3,0}$ are in the same $K$-orbit.
\end{lemma}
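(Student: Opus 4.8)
The plan is to run the standard Kempf--Ness type convexity argument along one-parameter subgroups, using Lemma~\ref{LEM:Convexité} to stay inside $B_{4,0}$. Suppose $b,b' \in B_{3,0}$ are zeroes of $\mu_\zeta$ with $b' = g \cdot b$ for some $g \in G$. Since $G$ is reductive with maximal compact subgroup $K$ (a consequence of Lemma~\ref{LEM:Aut réductif} applied to $\E_0$), I would first take a polar decomposition $g = u\e^\xi$ with $u \in K$ and $\xi \in \i\frak{k}$ (a Hermitian holomorphic endomorphism). As balls in $V$ are $K$-invariant, $\e^\xi \cdot b = u^{-1} \cdot b' \in B_{3,0}$, and by $K$-equivariance of $\mu_\zeta$ we get $\mu_\zeta(\e^\xi \cdot b) = u^{-1}\mu_\zeta(b')u = 0$. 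Thus the lemma reduces to the following claim: if $b$ and $\e^\xi \cdot b$ both lie in $B_{3,0}$ and are zeroes of $\mu_\zeta$, then $\e^\xi$ stabilises $b$; granting this, $b' = u \cdot b \in K \cdot b$, as wanted.

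For the claim I would consider the function $w : [0,1] \to \R$, $w(t) = \scal{\mu_\zeta(\e^{t\xi} \cdot b)}{\i\xi}$. By Lemma~\ref{LEM:Convexité} all the points $\e^{t\xi} \cdot b$ with $0 \leq t \leq 1$ lie in $B_{4,0} \subset B$, so $w$ is well-defined and smooth. As in the computations of Subsection~\ref{SEC:Equivalence polystabilités}, the moment map property gives $w'(t) = \Omega_{\zeta,\e^{t\xi} \cdot b}(\i L_{\e^{t\xi} \cdot b}\xi, L_{\e^{t\xi} \cdot b}\xi) \leq 0$, with equality if and only if $L_{\e^{t\xi} \cdot b}\xi = 0$. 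Since $w(0) = \scal{\mu_\zeta(b)}{\i\xi} = 0$ and $w(1) = \scal{\mu_\zeta(\e^\xi \cdot b)}{\i\xi} = 0$, the monotone function $w$ vanishes identically on $[0,1]$, hence $w' \equiv 0$, hence $L_{\e^{t\xi} \cdot b}\xi = 0$ for all $t$. In particular $L_b\xi = [\xi,b] = 0$, so $\xi$ lies in the Lie algebra of $\Stab(b)$ and $\e^\xi \cdot b = b$, proving the claim.

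The statement for $\tilde{\mu}_\zeta$ and $\tilde{\Omega}_\zeta$ is handled the same way: the polar-decomposition reduction is identical (using $K$-equivariance of $\tilde{\mu}_\zeta$ and $K$-invariance of $B_{3,0}$), and along the geodesic one has $\tfrac{d}{dt}\scal{\tilde{\mu}_\zeta(\e^{t\xi} \cdot b)}{\i\xi} = \tilde{\Omega}_{\zeta,\e^{t\xi} \cdot b}(\i L_{\e^{t\xi} \cdot b}\xi, L_{\e^{t\xi} \cdot b}\xi) \leq -c\norme{L_{\e^{t\xi} \cdot b}\xi}_{L^2}^2 \leq 0$ by Lemma~\ref{LEM:Omega tilde positive}, again with equality exactly when $L_{\e^{t\xi} \cdot b}\xi = 0$; the rest is unchanged. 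The only genuinely delicate point — the "hard part", such as it is — is ensuring that the whole geodesic segment $\e^{t\xi} \cdot b$, $t \in [0,1]$, remains in a region of $B$ where $\mu_\zeta$ (resp. $\tilde{\mu}_\zeta$) and the moment map identity are available; this is precisely what Lemma~\ref{LEM:Convexité} provides, and the remainder is a routine consequence of the convexity of $t \mapsto \scal{\mu_\zeta(\e^{t\xi} \cdot b)}{\i\xi}$.
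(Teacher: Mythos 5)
Your proof is correct and follows essentially the same route as the paper's: polar decomposition $g = u\e^{\xi}$, reduction by $K$-equivariance, Lemma~\ref{LEM:Convexité} to keep the segment $\e^{t\xi}\cdot b$ in $B_{4,0}$, and the monotone function $t \mapsto \scal{\mu_\zeta(\e^{t\xi}\cdot b)}{\i\xi}$ vanishing at both endpoints to force $L_b\xi = 0$. The only quibble is the inclusion ``$B_{4,0} \subset B$'', which is backwards in the paper's conventions ($B \subset B_{3,0} \subset B_{4,0}$); what matters, and what you correctly use, is that $\mu_\zeta$ and the moment map identity are available on all of $B_{4,0}$.
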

\begin{proof}
The proof is the same for $\mu_\zeta$ and $\tilde{\mu}_\zeta$. We prove it for $\tilde{\mu}_\zeta$. Let $b_2 = u\e^\xi \cdot b_1$ with $u \in K$ and $\xi \in \i\frak{k}$ be zeroes of $\tilde{\mu}_\zeta$ in $B_{3,0}$. By Lemma \ref{LEM:Convexité}, all $\e^{t\xi} \cdot b_1$ lie in $B_{4,0}$ for $0 \leq t \leq 1$. Let
$$
a : t \mapsto \scal{\tilde{\mu}_\zeta(\e^{t\xi} \cdot b_1)}{\i\xi}.
$$
We have $a(0) = a(1) = 0$ because $\tilde{\mu}_\zeta(b_1) = \tilde{\mu}_\zeta(b_2) = 0$ and for all $t$,
$$
a'(t) = \scal{d\tilde{\mu}_\zeta(\e^{t\xi} \cdot b_1)L_{\e^{t\xi} \cdot b_1}\xi}{\i\xi} = \tilde{\Omega}_{\zeta,\e^{t\xi} \cdot b_1}(\i L_{\e^{t\xi} \cdot b_1}\xi,L_{\e^{t\xi} \cdot b_1}\xi) \leq 0,
$$
with equality if and only if $L_{\e^{t\xi} \cdot b_1}\xi = 0$. We deduce that $a$ is non-increasing. But $a(0) = a(1) = 0$ so it must be constant. In particular, $a'(0) = 0$ hence $L_{b_1}\xi = 0$, which means that $b_2 = u\e^\xi \cdot b_1 = u \cdot b_1 \in K \cdot b_1$.
\end{proof}

\begin{theorem}[Local uniqueness of solutions modulo $\G(E,h)$]\label{THE:Unicité solutions}
    Let $(\zeta,b) \in \mU_d \times B_2$. Two zeroes of $\mu_{\infty,\zeta}$ in $\mC^{d + 1}(\G^\C(E)) \cdot \dbar_b \cap \mB_d$ are in the same $\G(E,h)$-orbit.
\end{theorem}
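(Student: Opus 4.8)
The plan is to reduce the statement to Lemma \ref{LEM:Unicité K-orbite zéro mu_zeta} by transporting the two solutions into the deformed Kuranishi slice via Proposition \ref{PRO:Zéro dans la tranche déformée}. Concretely, let $\dbar_1, \dbar_2 \in \mC^{d+1}(\G^\C(E)) \cdot \dbar_b \cap \mB_d$ be two zeroes of $\mu_{\infty,\zeta}$. Both lie in the gauge orbit of $\dbar_b$ with $b \in B_2 \subset B'$, so Proposition \ref{PRO:Zéro dans la tranche déformée} applies to each: setting $b_i = \beta(\dbar_i) \in B_{3,0}$, we obtain $b_i \in G \cdot b$, and since $\mu_{\infty,\zeta}(\dbar_i) = 0$, there are unitary gauge transforms $u_i \in \mC^{d+1}(\G(E,h))$ with $\dbar_i = u_i \cdot (\dbar_0 + \tilde\Phi(\zeta, b_i))$ and $\tilde\mu_\zeta(b_i) = 0$. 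So it suffices to show $\dbar_0 + \tilde\Phi(\zeta, b_1)$ and $\dbar_0 + \tilde\Phi(\zeta, b_2)$ are in the same $\G(E,h)$-orbit.

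Now $b_1, b_2 \in G \cdot b$ means $b_1, b_2$ lie in the same $G$-orbit in $B_{3,0}$, and both are zeroes of $\tilde\mu_\zeta$. By Lemma \ref{LEM:Unicité K-orbite zéro mu_zeta}, they lie in the same $K$-orbit: $b_2 = w \cdot b_1$ for some $w \in K$. By the $K$-equivariance of $b \mapsto \dbar_{\zeta,b} = \dbar_0 + \tilde\Phi(\zeta,b)$ established in Proposition \ref{PRO:Tranche déformée P}(2), we get $\dbar_0 + \tilde\Phi(\zeta, b_2) = w \cdot (\dbar_0 + \tilde\Phi(\zeta, b_1))$. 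Composing the three unitary transformations, $\dbar_2 = u_2 w u_1^{-1} \cdot \dbar_1$ with $u_2 w u_1^{-1} \in \mC^{d+1}(\G(E,h))$, which is the claim. (The subtlety that $w \in K$ is a \emph{smooth} unitary automorphism while $u_i$ are $\mC^{d+1}$ is harmless since $K \subset \mC^{d+1}(\G(E,h))$.)

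The only real point requiring care is that Lemma \ref{LEM:Unicité K-orbite zéro mu_zeta} is stated for zeroes in $B_{3,0}$ and uses Lemma \ref{LEM:Convexité} to guarantee that the segment $\e^{t\xi} \cdot b_1$, $0 \le t \le 1$, stays inside $B_{4,0}$ where $\tilde\mu_\zeta$ (hence $\tilde\Omega_\zeta$ via Lemma \ref{LEM:Omega tilde positive}) is defined; this is exactly why $\beta$ was arranged to land in $B_{3,0}$. No genuinely new ingredient is needed — the proof is just a bookkeeping composition of Propositions \ref{PRO:Tranche déformée P}, \ref{PRO:Zéro dans la tranche déformée} and Lemma \ref{LEM:Unicité K-orbite zéro mu_zeta} — so the writeup will be short. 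The main thing to get right is invoking the correct equivariance and tracking that all the identified gauge transforms are genuinely unitary.
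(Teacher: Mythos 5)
Your proposal is correct and follows essentially the same route as the paper's proof: apply Proposition \ref{PRO:Zéro dans la tranche déformée} to place both solutions in the deformed slice up to unitary gauge, invoke Lemma \ref{LEM:Unicité K-orbite zéro mu_zeta} to get the two slice points into the same $K$-orbit, and conclude by the $K$-equivariance of $\tilde{\Phi}(\zeta,\cdot)$. The extra bookkeeping you flag (the $B_{3,0}$/$B_{4,0}$ domains and the regularity of the unitary transforms) is handled implicitly in the paper and does not change the argument.
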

\begin{proof}
Assume that $\dbar_1$ and $\dbar_2$ are in $\mC^{d + 1}(\G^\C(E)) \cdot \dbar_b \cap \mB_d$ and are $P_\zeta$-critical. By Proposition \ref{PRO:Zéro dans la tranche déformée}, there are $b_1 \in G \cdot b \cap B_{3,0}$, $b_2 \in G \cdot b \cap B_{3,0}$, $u_1 \in \G(E,h)$ and $u_2 \in \G(E,h)$ such that $\dbar_1 = u_1 \cdot (\dbar_0 + \tilde{\Phi}(\zeta,b_1))$ and $\dbar_2 = u_2 \cdot (\dbar_0 + \tilde{\Phi}(\zeta,b_2))$. Moreover, $\tilde{\mu}_\zeta(b_1) = \tilde{\mu}_\zeta(b_2) = 0$.

By Lemma \ref{LEM:Unicité K-orbite zéro mu_zeta}, there is a $v \in K$ such that $b_2 = v \cdot b_1$ so, by $K$-invariance of $\tilde{\Phi}(\zeta,\cdot)$, $\dbar_2 = u_2vu_1^\dagger \cdot \dbar_1$.
\end{proof}

We now show the existence of admissible Jordan--Hölder filtrations and the existence and uniqueness of the associated graded object in the semi-stable case. We then deduce informations about the $K$-orbits of the limit points of the flows in function of the $G$-orbits of the starting point.

\begin{theorem}[Jordan--Hölder filtration]\label{THE:FJH admissible}
    Up to shrinking $\mU_d$ and $B_1$, for all $b \in B_1$ such that $\E_b$ is locally $P_\zeta$-semi-stable, there is a filtration of $\E_b$ by admissible sub-bundles
    $$
    0 = F_0 \subsetneq F_1 \subsetneq \cdots \subsetneq F_m = E,
    $$
    such that for all $k$, $P_\zeta(F_k) = 0$ and each $\G_k = (F_k,\dbar_b)/(F_{k - 1},\dbar_b) = (F_k \cap F_{k - 1},\dbar_b)$ is locally $P_\zeta$-stable.
    
    Moreover, the graded object $\mathrm{Gr}(\E_b) = \bigoplus_{k = 1}^m \G_k$ is unique up to an isomorphism in $G$, and is isomorphic to both $\E_{b_\infty}$ and $\E_{\tilde{b}_\infty}$ where $b_\infty = \lim_{t \rightarrow +\infty} \phi_\zeta(b,t)$ and $\tilde{b}_\infty = \lim_{t \rightarrow +\infty} \tilde{\phi}_\zeta(b,t)$.
\end{theorem}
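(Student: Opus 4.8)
The plan is to obtain the filtration by induction on $\rk(E)$, then to produce from any admissible Jordan--Hölder filtration a one-parameter subgroup whose limit realizes the graded object, and finally to match this graded object with the two flow limits $\E_{b_\infty}$ and $\E_{\tilde{b}_\infty}$; the uniqueness then comes for free.

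\emph{Existence of the filtration.} If $\E_b$ is locally $P_\zeta$-stable, take $m=1$ and $F_1=E$. Otherwise, since $\E_b$ is locally $P_\zeta$-semi-stable but strict inequality fails, there is an admissible sub-bundle $0\subsetneq F\subsetneq E$ of $\E_b$ with $P_\zeta(F)=0$; pick $F_1$ of minimal rank among these. Then $\G_1=(F_1,\dbar_b)$ is locally $P_\zeta$-stable, because any admissible $0\subsetneq G\subsetneq F_1$ of $\E_b$ satisfies $P_\zeta(G)\le 0$ by semi-stability and $P_\zeta(G)\neq 0$ by minimality of $\rk(F_1)$. Since $F_1$ is admissible, both $\dbar_0$ and $\dbar_b$ preserve $F_1$ and $\dbar_0$ preserves $F_1^\bot$, so $\E_0=(F_1,\dbar_0)\oplus(F_1^\bot,\dbar_0)$ orthogonally and holomorphically; by the block-diagonal form of $\mP_{\zeta_0}$ used in the proof of Proposition \ref{PRO:Polysimplicité} (Step 2), $\E_0'=(F_1^\bot,\dbar_0|_{F_1^\bot})$ is again a $P_{\zeta_0}$-critical bundle, of smaller rank. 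Letting $\dbar_b'$ be the operator induced on $F_1^\bot\cong E/F_1$ by $\dbar_b$, the bundle $(F_1^\bot,\dbar_b')$ is a small deformation of $\E_0'$, and it is locally $P_\zeta$-semi-stable: an admissible sub-bundle $G$ of $(F_1^\bot,\dbar_b')$ yields $F_1\oplus G$ admissible in $\E_b$, with $P_\zeta(F_1\oplus G)=P_\zeta(F_1)+P_\zeta(G)=P_\zeta(G)$, which is $\le 0$ by semi-stability of $\E_b$. Applying the induction hypothesis to $\E_0'$ — after the shrinking of $B_1$ and $\mU_d$ needed to place $(F_1^\bot,\dbar_b')$ inside the corresponding Kuranishi ball, uniform over $F_1$ since there are only finitely many isomorphism types of $\E_0'$ — and pulling the resulting filtration back through $F_1\subset E$ and prepending $F_1$ gives a filtration $0=F_0\subsetneq\cdots\subsetneq F_m=E$ by admissible sub-bundles with $P_\zeta(F_k)=0$ and locally $P_\zeta$-stable subquotients $\G_k$.

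\emph{From a filtration to a one-parameter subgroup.} Given any such filtration, set $G_k=F_k\cap F_{k-1}^\bot$. Each $G_k$ is $\dbar_0$-invariant (an intersection of $\dbar_0$-invariant bundles), so $\xi=\sum_k\lambda_k\Pi_{G_k}-c\,\Id_E\in\i\frak{k}$ for any weights $\lambda_1>\cdots>\lambda_m$ and suitable $c$. For this $\xi$ the $F_k$ are precisely the bundles $F_{\xi,\lambda}$, and they form a filtration of $\E_b$ by admissible sub-bundles, so Proposition \ref{PRO:Convergence exp(txi)b} applies: $\e^{t\xi}\cdot b$ stays in $B$ and converges to some $b_{\mathrm{gr}}\in B$, with an orthogonal holomorphic decomposition $\E_{b_{\mathrm{gr}}}=\bigoplus_k(G_k,\dbar_{b_{\mathrm{gr}}})$. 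Comparing block forms of $\Phi(b)$ in $E=\bigoplus_k G_k$ shows $(G_k,\dbar_{b_{\mathrm{gr}}})\cong\G_k$, hence $\E_{b_{\mathrm{gr}}}\cong\bigoplus_k\G_k=\mathrm{Gr}(\E_b)$; in particular $\mathrm{Gr}(\E_b)$ is locally $P_\zeta$-polystable, being a direct sum of locally $P_\zeta$-stable bundles all with vanishing $P_\zeta$.

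\emph{Identification with the flow limits, uniqueness, and the main obstacle.} By Section \ref{SEC:Equivalence semi-stabilités}, local $P_\zeta$-semi-stability of $\E_b$ gives $\mu_\zeta(b_\infty)=0$ and $\tilde{\mu}_\zeta(\tilde{b}_\infty)=0$, and by Theorem \ref{THE:Déformation P-critique} both $\E_{b_\infty}$ and $\E_{\tilde{b}_\infty}$ are locally $P_\zeta$-polystable (indeed gauge equivalent to $P_\zeta$-critical bundles, which decompose via Proposition \ref{PRO:Polysimplicité}). Moreover $\E_{b_{\mathrm{gr}}}$ is locally $P_\zeta$-polystable, hence $\mu_\zeta$-polystable by Theorem \ref{THE:Déformation P-critique}, so some $b_{\mathrm{gr}}^0\in G\cdot b_{\mathrm{gr}}\cap B_2$ is a zero of $\mu_\zeta$; and $b_{\mathrm{gr}}^0\in\overline{G\cdot b}\cap B_2$ because $b_{\mathrm{gr}}=\lim_{t}\e^{t\xi}\cdot b$. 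Now $b_\infty$ and $b_{\mathrm{gr}}^0$ are two zeros of $\mu_\zeta$ in $\overline{G\cdot b}\cap B_2$, so by the second Ness uniqueness theorem in the semi-stable case they lie in a common $K$-orbit, whence $\E_{b_\infty}\cong\E_{b_{\mathrm{gr}}}\cong\mathrm{Gr}(\E_b)$; the identical argument with $\tilde{\mu}_\zeta$ and the deformed slice gives $\E_{\tilde{b}_\infty}\cong\mathrm{Gr}(\E_b)$. Since $b_\infty$ and $\tilde{b}_\infty$ depend only on $(\zeta,b)$, not on the chosen filtration, $\mathrm{Gr}(\E_b)$ is determined up to isomorphism, and the isomorphism is in $G$ by Corollary \ref{COR:Sections dbar_b holomorphes}. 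The crux is this last step: matching the graded object of an arbitrary admissible Jordan--Hölder filtration with the canonical polystable degeneration singled out by the flows, which relies on the non-compact local form of Ness uniqueness (uniqueness up to $K$ of a zero of the moment map inside a fixed orbit closure in $B_2$), itself resting on the \Lo\ inequality and global existence of the flows; keeping the auxiliary Kuranishi slices $\E_0'$ under uniform control in the induction is a secondary technical point.
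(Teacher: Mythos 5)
Your existence argument (induction on rank via a minimal-rank destabiliser and passage to $F_1^\bot$) is workable but much heavier than needed: it forces you to set up a new Kuranishi slice for each quotient bundle $\E_0'$ and to control all of them uniformly. The paper avoids this entirely by observing that the ranks in any filtration by admissible sub-bundles must strictly increase (a rank-zero subsheaf of the torsion-free $\E_0$ is trivial), so such filtrations have length at most $\rk(E)$, and one simply takes a filtration with $P_\zeta(F_k)=0$ of maximal length; maximality forces the subquotients to be locally $P_\zeta$-stable. The construction of $b_{\mathrm{gr}}$ from a given filtration via Proposition \ref{PRO:Convergence exp(txi)b} is correct.

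The genuine gap is in the identification with the flow limits. You identify $b_\infty$ with $b_{\mathrm{gr}}^0$ by citing ``the second Ness uniqueness theorem in the semi-stable case'', i.e.\ the statement that two zeroes of $\mu_\zeta$ in $\overline{G\cdot b}\cap B_2$ lie in a common $K$-orbit. Lemma \ref{LEM:Unicité K-orbite zéro mu_zeta} only covers two zeroes in the \emph{same} $G$-orbit, which is exactly what you cannot assume here; the orbit-closure version is Proposition \ref{PRO:Unicité dégénération polystable}, and in this paper its proof goes through Corollary \ref{COR:Unicité de Ness}, which in turn is deduced from Theorem \ref{THE:FJH admissible} itself. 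So as written your argument is circular within the paper's logical architecture, and the \L ojasiewicz inequality plus global existence of the flows, which you invoke as the underlying input, do not by themselves yield this uniqueness in the local non-compact setting. The paper's proof runs in the opposite direction and avoids the issue: since $\tilde{\mu}_\zeta(\tilde{b}_\infty)=0$, the stabiliser of $\tilde{b}_\infty$ is reductive, so Proposition \ref{PRO:Existence arc équivariant semi-stable} produces $\xi\in\i\frak{k}$ and $b'\in G\cdot b$ with $\e^{t\xi}\cdot b'\rightarrow\tilde{b}_\infty$; Proposition \ref{PRO:Convergence exp(txi)b} then exhibits $\E_{\tilde{b}_\infty}$ as the graded object of \emph{some} admissible Jordan--Hölder filtration of $\E_{b'}\cong\E_b$, and the purely algebraic uniqueness of the graded object (the Huybrechts--Lehn argument, resting on the fact that morphisms between locally $P_\zeta$-stable bundles with $P_\zeta=0$ are zero or isomorphisms, via Lemma \ref{LEM:Aut réductif}) matches this with the graded object of your chosen filtration. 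To repair your proof, replace the Ness-uniqueness step by this degeneration-plus-algebraic-uniqueness argument, or supply an independent proof of the orbit-closure uniqueness statement.
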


We prove this theorem under the assumption that the flows exist at all time and converge.

\begin{lemma}\label{LEM:FJH admissible}
    Theorem \ref{THE:FJH admissible} holds for $(\zeta,b)$ as long as the flows $\phi_\zeta(b,\cdot)$ and $\tilde{\phi}_\zeta(b,\cdot)$ exist at all time and converge in $B_2$.
\end{lemma}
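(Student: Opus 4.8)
The plan is to split the statement into three parts: upgrading local $P_\zeta$-semi-stability to the assertion that the flow limits are locally $P_\zeta$-polystable, constructing the admissible Jordan--Hölder filtration by the classical minimal-rank argument, and identifying its graded object with the flow limits. By hypothesis $b_\infty=\lim_{t\to+\infty}\phi_\zeta(b,t)$ and $\tilde b_\infty=\lim_{t\to+\infty}\tilde\phi_\zeta(b,t)$ exist in $B_2$, and since $\E_b$ is locally $P_\zeta$-semi-stable the argument of $3\Rightarrow1$/$3\Rightarrow2$ in Sub-section \ref{SEC:Equivalence semi-stabilités} already gives $\mu_\zeta(b_\infty)=0$ and $\tilde\mu_\zeta(\tilde b_\infty)=0$. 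Feeding each of these limit points into the argument of $1\Rightarrow3$/$2\Rightarrow3$ of Sub-section \ref{SEC:Equivalence polystabilités} — which only uses that one has a zero of the relevant moment map lying in $B_{3,0}$, together with Proposition \ref{PRO:Convergence exp(txi)b} — shows that $\E_{b_\infty}$ and $\E_{\tilde b_\infty}$ are locally $P_\zeta$-polystable, hence (remark after Definition \ref{DEF:Stabilité P}) split orthogonally and holomorphically into locally $P_\zeta$-stable simple admissible summands with vanishing $P_\zeta$; moreover $\tilde\mu_\zeta(\tilde b_\infty)=0$ means $\E_{\tilde b_\infty}$ is gauge-isomorphic to a $P_\zeta$-critical bundle.

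The filtration itself is independent of the flows and uses semi-stability alone. Starting from $F_0=0$, having produced an admissible $F_{k-1}\subsetneq E$ with $P_\zeta(F_{k-1})=0$, I would let $F_k$ be an admissible sub-bundle of $\E_b$ with $F_{k-1}\subsetneq F_k\subseteq E$ and $P_\zeta(F_k)=0$ of minimal possible rank (the family is non-empty, as $E$ qualifies). Then $\G_k=(F_k,\dbar_b)/(F_{k-1},\dbar_b)$, realised on $F_k\cap F_{k-1}^\bot$ with the quotient operator, has $P_\zeta(\G_k)=0$ by additivity, and is locally $P_\zeta$-stable: for an admissible $0\subsetneq G'\subsetneq\G_k$ the orthogonal sum $G'\oplus F_{k-1}$ is again admissible in $\E_b$ and lies strictly between $F_{k-1}$ and $F_k$, so semi-stability of $\E_b$ forces $P_\zeta(G')\le0$, with equality contradicting the minimality of $\rk F_k$. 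This gives $0=F_0\subsetneq\cdots\subsetneq F_m=E$ with all the required properties, so $\mathrm{Gr}(\E_b)=\bigoplus_k\G_k$ is locally $P_\zeta$-polystable. To realise it via a degeneration, choose reals $c_1>\cdots>c_m$ with $\sum_k c_k\rk\G_k=0$ and set $\xi=\sum_k c_k(\Pi_{F_k}-\Pi_{F_{k-1}})\in\i\frak{k}$ (it is $\dbar_0$-holomorphic since each $F_k$ is admissible); its eigenspace filtration is exactly $(F_k)$, so condition~1 of Proposition \ref{PRO:Convergence exp(txi)b} holds, $b':=\lim_{t\to+\infty}\e^{t\xi}\cdot b$ exists in $B$, and the block computation in the proof of that proposition identifies $\E_{b'}$ with $\bigoplus_k\G_k=\mathrm{Gr}(\E_b)$. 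Thus $\mathrm{Gr}(\E_b)$ is represented by a point of $\overline{G\cdot b}\cap B$.

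The hard part is the final uniqueness statement: that $\E_{b'}$, $\E_{b_\infty}$, $\E_{\tilde b_\infty}$, and the graded object of any other admissible Jordan--Hölder filtration are all isomorphic in $G$. This is where the non-compactness of $B$ bites, since these limit points a priori sit in different $G$-orbits of $\overline{G\cdot b}$, and it is essentially the uniqueness of the closed orbit in an orbit closure. A locally $P_\zeta$-polystable bundle has reductive automorphism group (same proof as Lemma \ref{LEM:Aut réductif}, via Corollary \ref{COR:Sections dbar_b holomorphes}), so for each such limit point $c\in\overline{G\cdot b}\cap B_2$, Proposition \ref{PRO:Existence arc équivariant semi-stable} applied to a path $g(t)\cdot b\to c$ coming from Lemma \ref{LEM:Orbite préservée} (or directly to $\e^{t\xi}\cdot b\to b'$) yields $\xi_c\in\i\frak{k}$ and $b_c\in G\cdot b\cap B_2$ with $\e^{t\xi_c}\cdot b_c\to c$; by the equivalences already proven, $c$ represents a zero of the relevant finite-dimensional moment map in the orbit $G\cdot b$, so Lemma \ref{LEM:Unicité K-orbite zéro mu_zeta} — combined with Proposition \ref{PRO:Zéro dans la tranche déformée} to compare $\mu_\zeta$, $\tilde\mu_\zeta$ and $\mu_{\infty,\zeta}$ — forces all these zeroes to be $K$-conjugate, hence all the polystable bundles they represent to be isomorphic in $G$. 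This gives $\mathrm{Gr}(\E_b)\cong\E_{b_\infty}\cong\E_{\tilde b_\infty}$ and the independence of $\mathrm{Gr}(\E_b)$ from the chosen filtration, completing the proof, possibly after shrinking $\mU_d$ and $B_1$ so that all the cited statements apply on the relevant balls.
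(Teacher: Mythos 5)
Your construction of the filtration and your identification of the flow limits as locally $P_\zeta$-polystable points in the orbit closure are sound and essentially parallel the paper (the paper takes a maximal-length filtration rather than your minimal-rank induction, and obtains the one-parameter degeneration from Proposition \ref{PRO:Existence arc équivariant semi-stable} applied to the flow rather than building $\xi$ from the filtration, but these are cosmetic differences). The gap is in your final uniqueness step. You try to derive uniqueness of the graded object from GIT, but the tools you invoke do not reach: Lemma \ref{LEM:Unicité K-orbite zéro mu_zeta} only compares two zeroes of the \emph{same} moment map lying in the \emph{same} $G$-orbit, whereas $b_\infty$, $\tilde b_\infty$, and the degenerations $b'$ coming from different filtrations are a priori only in $\overline{G\cdot b}$ and possibly in distinct $G$-orbits (and $b_\infty$, $\tilde b_\infty$ are zeroes of the \emph{different} moment maps $\mu_\zeta$ and $\tilde\mu_\zeta$). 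Your intermediate claim that ``$c$ represents a zero of the relevant finite-dimensional moment map in the orbit $G\cdot b$'' is false in the strictly semi-stable case — that is precisely the polystable case. The statement you actually need is the second Ness uniqueness theorem in this non-compact setting (Proposition \ref{PRO:Unicité dégénération polystable}), but in the paper that proposition is \emph{deduced from} Theorem \ref{THE:FJH admissible} via Corollary \ref{COR:Unicité de Ness}, so invoking it here would be circular.

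The paper closes this gap algebraically rather than symplectically: uniqueness of the graded object is proved exactly as in \cite[Proposition 1.5.2]{Huybrechts_Lehn}, using that any holomorphic morphism between locally $P_\zeta$-stable admissible bundles with $P_\zeta=0$ is either zero or an isomorphism (this follows from Lemma \ref{LEM:Aut réductif} by the argument of Step 3 of Proposition \ref{PRO:Polysimplicité}, together with Corollary \ref{COR:Sections dbar_b holomorphes}). Once the graded object is known to be unique up to an isomorphism in $G$, it suffices to exhibit $\E_{b_\infty}$ and $\E_{\tilde b_\infty}$ as graded objects of \emph{some} admissible Jordan--Hölder filtration — which your degeneration argument combined with Propositions \ref{PRO:Existence arc équivariant semi-stable} and \ref{PRO:Convergence exp(txi)b} does — and the identification follows. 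You should insert this algebraic uniqueness step (and verify the zero-or-isomorphism dichotomy) before attempting to compare the various limit points.
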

\begin{proof}
Let us start with the existence. If $F_1 \subset F_2 \subset E$ are admissible sub-bundles of $\E_b$ with $\rk(F_1) = \rk(F_2)$, then $(F_1^\bot,\dbar_0)$ and $(F_2,\dbar_0)$ are sub-bundles of $\E_0$. We deduce that $(F_1^\bot \cap F_2,\dbar_0)$ is a sub-sheaf of $\E_0$ of rank $0$ hence it is trivial because $\E_0$ is torsion-free. It proves that in any filtration by admissible sub-bundles of $\E_b$, the ranks of the $F_k$ increase.

Therefore, all such filtration has at most $\rk(E)$ elements. To prove the existence of a Jordan--Hölder filtration by admissible sub-bundles, we can simply take a filtration by admissible sub-bundles $F_k$ such that $P_\zeta(F_k) = 0$ with a maximal number of elements and clearly, the quotients are locally $P_\zeta$-stable.

For the uniqueness of the graded object, the proof is the same as for \cite[Proposition 1.5.2]{Huybrechts_Lehn}. It relies on the fact that all maps between locally $P_\zeta$-stable bundles verifying $P_\zeta = 0$ is either $0$ or an isomorphism, which is true thanks to Lemma \ref{LEM:Aut réductif}, with the same proof that for Proposition \ref{PRO:Polysimplicité}.

Then, let $b_\infty = \lim_{t \rightarrow +\infty} \phi_\zeta(b,t)$ and $\tilde{b}_\infty = \lim_{t \rightarrow +\infty} \tilde{\phi}_\zeta(b,t)$. We show that $\E_{\tilde{b}_\infty}$ is isomorphic to the graded object because the proof is the same as for $\E_{b_\infty}$. By Sub-section \ref{SEC:Equivalence semi-stabilités}, local $P_\zeta$-semi-stability of $\E_b$ implies that $\tilde{\mu}_\zeta(\tilde{b}_\infty) = 0$. Thus, all holomorphic sections of $\E_{\tilde{b}_\infty}$ are parallel so by Proposition \ref{PRO:Existence arc équivariant semi-stable}, there are a $b' \in G \cap b$ and a $\xi \in \i\frak{k}$ such that
$$
\e^{t\xi} \cdot b' \tend{t}{+\infty} 0.
$$
Let $(G_k)$ be the eigenspaces of $\xi$ and $(F_k)$ the associated filtration by admissible sub-bundles of $\E_{b'}$. By Proposition \ref{PRO:Convergence exp(txi)b},
$$
\E_{\tilde{b}_\infty} = \bigoplus_{k = 1}^m (G_k,\dbar_{\tilde{b}_\infty}).
$$
Moreover, by Sub-section \ref{SEC:Equivalence polystabilités}, $\E_{\tilde{b}_\infty}$ is locally $P_\zeta$-polystable. We deduce that $(F_k)$ is a Jordan--Hölder filtration of $\E_{b'}$ and $\E_{\tilde{b}_\infty}$ is isomorphic to its graded object. In particular, it is isomorphic to the graded object of $\E_b$.
\end{proof}

\begin{corollary}\label{COR:Unicité de Ness}
    Let $b$ and $b'$ in $B_2$ in the same $G$-orbit such that $\E_b$ (thus $\E_{b'}$) is locally $P_\zeta$-semi-stable. Assume that the flows $\phi_\zeta(b,\cdot)$, $\tilde{\phi}_\zeta(b,\cdot)$, $\phi_\zeta(b',\cdot)$ and $\tilde{\phi}_\zeta(b',\cdot)$ exist at all time and converge in $B_2$. Let $b_\infty$, $\tilde{b}_\infty$, $b_\infty'$ and $\tilde{b}_\infty'$ their respective limits. Then, $b_\infty' \in K \cdot b_\infty$, $\tilde{b}_\infty' \in K \cdot \tilde{b}_\infty$ and $\tilde{b}_\infty \in G \cdot b_\infty$.
\end{corollary}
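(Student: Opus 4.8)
The plan is to deduce all three identifications from three ingredients already at hand: the Jordan--Hölder theorem (Theorem~\ref{THE:FJH admissible}, in its flow-dependent form Lemma~\ref{LEM:FJH admissible}), which identifies each of the four limit bundles with the graded object of $\E_b$, resp. $\E_{b'}$; Corollary~\ref{COR:Sections dbar_b holomorphes}, which turns an abstract isomorphism between two bundles represented in the Kuranishi slice into membership in a single $G$-orbit; and Lemma~\ref{LEM:Unicité K-orbite zéro mu_zeta}, which upgrades ``same $G$-orbit'' to ``same $K$-orbit'' once the points are known to be zeroes of the relevant moment map.

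First I would record that $\E_b \cong \E_{b'}$, since $b$ and $b'$ lie in the same $G$-orbit, so $\E_{b'}$ is also locally $P_\zeta$-semi-stable ($P_\zeta$ being topological and additive). Because all four flows exist at all times and converge in $B_2$ by hypothesis, Lemma~\ref{LEM:FJH admissible} applies to both $(\zeta,b)$ and $(\zeta,b')$ and yields
$$
\E_{b_\infty} \cong \E_{\tilde b_\infty} \cong \mathrm{Gr}(\E_b), \qquad \E_{b'_\infty} \cong \E_{\tilde b'_\infty} \cong \mathrm{Gr}(\E_{b'}).
$$
By the uniqueness part of Theorem~\ref{THE:FJH admissible} --- via the Huybrechts--Lehn-type argument in its proof, the graded object depending only on the isomorphism class --- we have $\mathrm{Gr}(\E_b) \cong \mathrm{Gr}(\E_{b'})$, so the four bundles $\E_{b_\infty}$, $\E_{\tilde b_\infty}$, $\E_{b'_\infty}$, $\E_{\tilde b'_\infty}$ are mutually isomorphic. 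Each such isomorphism is a holomorphic section of $\End(E)$, hence has constant nonzero determinant on the compact manifold $X$; after rescaling by a constant, which does not change its action on Dolbeault operators, it has determinant $1$, so Corollary~\ref{COR:Sections dbar_b holomorphes} places $b_\infty$, $\tilde b_\infty$, $b'_\infty$, $\tilde b'_\infty$ in one and the same $G$-orbit. In particular $\tilde b_\infty \in G \cdot b_\infty$, which is the third assertion.

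It remains to improve the $G$-orbit identifications of $b'_\infty$ with $b_\infty$, and of $\tilde b'_\infty$ with $\tilde b_\infty$, to $K$-orbit identifications. Here I would invoke the semi-stable direction of Sub-section~\ref{SEC:Equivalence semi-stabilités}: local $P_\zeta$-semi-stability of $\E_b$, resp. $\E_{b'}$, together with convergence of the flows forces $\mu_\zeta(b_\infty) = \mu_\zeta(b'_\infty) = 0$ and $\tilde\mu_\zeta(\tilde b_\infty) = \tilde\mu_\zeta(\tilde b'_\infty) = 0$. All four points lie in $B_2 \subset B_{3,0}$, so Lemma~\ref{LEM:Unicité K-orbite zéro mu_zeta} gives $b'_\infty \in K \cdot b_\infty$ (two zeroes of $\mu_\zeta$ in a single $G$-orbit inside $B_{3,0}$) and $\tilde b'_\infty \in K \cdot \tilde b_\infty$ (two zeroes of $\tilde\mu_\zeta$ in a single $G$-orbit inside $B_{3,0}$), which completes the proof. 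The argument is essentially bookkeeping on top of Theorem~\ref{THE:FJH admissible}; the only points deserving care are the determinant normalization making the bundle isomorphisms genuine elements of $G$, and the verification that all limit points remain inside $B_{3,0}$, the latter being immediate from the nesting $B_1 \subset B_2 \subset B_{2,0} \subset B_{3,0}$ together with the hypothesis that the flows stay in a compact subset of $B_2$.
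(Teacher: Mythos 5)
Your proposal is correct and follows essentially the same route as the paper: identify all four limits with the graded objects via Lemma~\ref{LEM:FJH admissible}, use Corollary~\ref{COR:Sections dbar_b holomorphes} to place them in a single $G$-orbit, and upgrade to $K$-orbits with Lemma~\ref{LEM:Unicité K-orbite zéro mu_zeta} once the limits are known to be zeroes of the moment maps. The extra care you take with the determinant normalization and the nesting $B_2 \subset B_{3,0}$ is harmless and already absorbed by the cited corollary.
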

\begin{proof}
By Lemma \ref{LEM:FJH admissible}, $\E_{\tilde{b}_\infty}$ is isomorphic to $\E_{b_\infty}$ and the isomorphism between them lies in $G$ thus $\tilde{b}_\infty \in G \cdot b_\infty$. The proof of the two other assumptions are the same so we prove that $\tilde{b}_\infty' \in K \cdot \tilde{b}_\infty$. Indeed, since $\E_b \cong \E_{b'}$ with an isomorphism in $G$, their graded objects are isomorphic with an isomorphism in $G$ because we can find isomorphic Jordan--Hölder filtrations. It implies that $\tilde{b}_\infty' \in G \cdot \tilde{b}_\infty$.

We conclude thanks to Remark \ref{LEM:Unicité K-orbite zéro mu_zeta}.
\end{proof}

\subsection{Existence of flows and end of proofs}

We show in this sub-section the final argument to conclude the proof of Theorem \ref{THE:Déformation P-critique} \textit{i.e.} the fact that the flows $\phi_\zeta$ and $\tilde{\phi}_\zeta$ are defined at all time for all $b$ close enough to $0$ and all $\zeta$ close enough to $\zeta_0$.

Let $\zeta \in \mU_d$ and $b \in B_2$. Let $T \in \R_+^* \cup \{+\infty\}$ the largest positive number such that $\phi_\zeta(b,t)$ is well-defined for all $t < T$. If $T < +\infty$, for all $t \geq T$, we set $\phi_\zeta(b,t) = \bot$ where $\bot$ means "undefined". If $T = +\infty$ and $\phi_\zeta(b,t)$ stays in a compact subset of $B_2$, by Proposition \ref{PRO:Convergence flux}, we can set $\phi_\zeta(b,+\infty) = \lim_{t \rightarrow +\infty} \phi_\zeta(b,t)$ which exists in $B_2$. Finally, if $T = +\infty$ but $\phi_\zeta(b,t)$ does not stay in a compact subset of $B_2$, we set $\phi_\zeta(b,+\infty) = \bot$.

We do the same thing with $\tilde{\phi}_\zeta$. We endow the set $B_2 \cup \{\bot\}$ with the topology that makes $\bot$ an isolated point.

\begin{lemma}\label{LEM:Continuité en 0}
    Let $b_m \rightarrow 0$ and $\zeta_m \rightarrow \zeta_0$. Let $(t_m)$ be any sequence of $\R_+ \cup \{+\infty\}$. We have $\phi_{\zeta_m}(b_m,t_m) \rightarrow 0$ and $\tilde{\phi}_{\zeta_m}(b_m,t_m) \rightarrow 0$.
\end{lemma}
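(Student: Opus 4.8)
The plan is to reduce the statement to a uniform control of the drift of the flows near $(0,\zeta_0)$, and then to argue by contradiction using the results already established at $\zeta=\zeta_0$.

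First I would record two elementary facts. The maps $(\zeta,b)\mapsto f_\zeta(b)=\tfrac12\|\mu_\zeta(b)\|^2$ and $(\zeta,b)\mapsto\tilde f_\zeta(b)=\tfrac12\|\tilde\mu_\zeta(b)\|^2$ are continuous (in fact smooth) by Propositions \ref{PRO:Régularité Z laplaciens} and \ref{PRO:Tranche déformée P}, and $f_{\zeta_0}(0)=\tilde f_{\zeta_0}(0)=0$ because $\dbar_0$ is $P_{\zeta_0}$-critical; hence $f_{\zeta_m}(b_m)\to0$ and $\tilde f_{\zeta_m}(b_m)\to0$. Moreover $0$ is a critical point of every $f_\zeta$ and $\tilde f_\zeta$: it is fixed by the linear $G$-action on $V$, so $L_0=0$ and Lemma \ref{LEM:Flux de gradient} gives $\nabla_{\Omega_\zeta}f_\zeta(0)=\i L_0\mu_\zeta(0)=0$, and likewise for $\tilde f_\zeta$. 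I would also use that on $B_{2,0}$ the norms induced by $\Omega_\zeta$ and $\tilde\Omega_\zeta$ are equivalent to $\|\cdot\|_{L^2}$ with constants uniform in $\zeta\in\mU_d$, by openness of the sub-solution condition (Lemma \ref{LEM:X_zeta ouverts}).

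Now suppose, for contradiction, that $\phi_{\zeta_m}(b_m,t_m)\not\to0$ (the case of $\tilde\phi$ being identical, using Lemma \ref{LEM:Omega tilde positive} and (\ref{EQ:Norme gradient tilde}) in place of the Kähler structure). Passing to a subsequence there is $\rho>0$ with $\overline{B(0,\rho)}\subset B_{2,0}$ and $\|\phi_{\zeta_m}(b_m,t_m)\|_{L^2}\ge\rho$; let $\tau_m$ be the first time $\phi_{\zeta_m}(b_m,\cdot)$ reaches $\partial B(0,\rho)$, which is finite, and on $[0,\tau_m]$ the flow is defined and confined to $\overline{B(0,\rho)}$. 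If the $\tau_m$ stayed bounded, continuous dependence of the flow on $(b,\zeta)$ on a fixed compact time-interval, together with $\phi_{\zeta_0}(0,\cdot)\equiv0$, would force $\phi_{\zeta_m}(b_m,\tau_m)\to0$, contradicting $\|\phi_{\zeta_m}(b_m,\tau_m)\|_{L^2}=\rho$; hence $\tau_m\to\infty$. On the other hand, by Lemma \ref{LEM:Flux de gradient} and the uniform norm equivalence, $\int_0^{\tau_m}\|\partial_t\phi_{\zeta_m}\|_{L^2}^2\,dt\le c^{-1}\bigl(f_{\zeta_m}(b_m)-f_{\zeta_m}(\phi_{\zeta_m}(b_m,\tau_m))\bigr)\le c^{-1}f_{\zeta_m}(b_m)\to0$, while the path length satisfies $\int_0^{\tau_m}\|\partial_t\phi_{\zeta_m}\|_{L^2}\,dt\ge\rho-\|b_m\|_{L^2}$. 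To close the gap I would upgrade the Cauchy–Schwarz bound to a \Lo-type length estimate $\int_0^{\tau_m}\|\partial_t\phi_{\zeta_m}\|_{L^2}\,dt\le C f_{\zeta_m}(b_m)^{1-\theta}$ with $\theta<1$: indeed the flow path stays in a neighbourhood of the compact set $\mu_{\zeta_0}^{-1}(0)\cap\overline{B(0,\rho)}$ that shrinks as $m\to\infty$ (since $f_{\zeta_m}$ is non-increasing along the flow and $f_{\zeta_m}(b_m)\to0$), so one runs the computation of Proposition \ref{PRO:Définition et convergence des flux zeta_0} using the gradient inequality (\ref{EQ:Lojasiewicz}) near that set, with constants depending on $\zeta_0$ and corrected for $\zeta_m-\zeta_0$. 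This yields $\rho\le C f_{\zeta_m}(b_m)^{1-\theta}\to0$, a contradiction; the same run through applies verbatim to $\tilde\phi$, $\tilde f_\zeta$, $\tilde\Omega_\zeta$.

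Consequently, for $(b,\zeta)$ close enough to $(0,\zeta_0)$ the flows $\phi_\zeta(b,\cdot)$ and $\tilde\phi_\zeta(b,\cdot)$ stay in a fixed compact subset of $B_2$; by Proposition \ref{PRO:Convergence flux} they are then defined for all $t\ge0$ and converge, and the same drift bound shows the limits have arbitrarily small norm as $(b,\zeta)\to(0,\zeta_0)$. Hence $\phi_{\zeta_m}(b_m,t_m)\to0$ and $\tilde\phi_{\zeta_m}(b_m,t_m)\to0$ for every sequence $(t_m)$ in $\R_+\cup\{+\infty\}$ (the value $\bot$ being excluded for $m$ large). I expect the main obstacle to be exactly the uniformity in $\zeta$ of the \Lo\ gradient inequality used to bound the flow length: since $0$ is a critical point of $f_\zeta$ but its critical value $f_\zeta(0)$ moves with $\zeta$, one cannot invoke a single \Lo\ inequality valid on all of $\mU_d$, and the uniform bound has to be squeezed out of the behaviour near the unperturbed zero locus $\mu_{\zeta_0}^{-1}(0)$ combined with the reduction, via continuous dependence of the flows on finite time intervals, to $\zeta$ in an arbitrarily small neighbourhood of $\zeta_0$.
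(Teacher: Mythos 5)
There is a genuine gap at the central step of your argument, and it is exactly the one you flag at the end without resolving. Your reduction (first exit time $\tau_m$ from $\partial B(0,\rho)$, the conclusion $\tau_m\to\infty$ from continuous dependence on compact time intervals, and the energy identity $\int_0^{\tau_m}\|\partial_t\phi_{\zeta_m}\|_{L^2}^2\,dt\lesssim f_{\zeta_m}(b_m)\to0$) is fine, but it does not produce a contradiction by itself, and the upgrade to a length bound $\int_0^{\tau_m}\|\partial_t\phi_{\zeta_m}\|_{L^2}\,dt\le C f_{\zeta_m}(b_m)^{1-\theta}$ is not justified. The \Lo\ inequality (\ref{EQ:Lojasiewicz}) is established only for $f_{\zeta_0}$, and it does not perturb in $\zeta$: a \Lo\ inequality with critical value $0$, $a f_{\zeta_m}^{\theta}\le\norme{\nabla_{\Omega_{\zeta_m}}f_{\zeta_m}}_{L^2}$, must fail near any critical point of $f_{\zeta_m}$ with small \emph{positive} critical value, and such critical points do occur arbitrarily close to $\mu_{\zeta_0}^{-1}\{0\}$ for $\zeta_m$ arbitrarily close to $\zeta_0$ — they are precisely the Harder--Narasimhan limit points of locally $P_{\zeta_m}$-unstable deformations (Theorem \ref{THE:FHN admissible}), whose critical values are of order $\sum_k\lambda_k P_{\zeta_m}(G_k)\to0$. "Correcting the constants for $\zeta_m-\zeta_0$" cannot repair this, since the inequality degenerates structurally, not quantitatively; and replacing $f_{\zeta_m}$ by $|f_{\zeta_m}-l|$ for the correct critical level $l$ destroys the telescoping that gives the length bound from $b_m$ to the exit point. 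So the contradiction is never reached.

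The paper closes the argument by a different mechanism that never invokes any \Lo\ control for $\zeta\neq\zeta_0$. The exit point $b_m'=\tilde\phi_{\zeta_m}(b_m,t_m')\in\partial B(0,\varepsilon)$ lies in $G\cdot b_m$ (Lemma \ref{LEM:Orbite préservée}); running the \emph{fixed} flow $\tilde\phi_{\zeta_0}$ from $b_m'$ and from $b_m$ and applying the Ness-type uniqueness (Corollary \ref{COR:Unicité de Ness}) gives $\tilde\phi_{\zeta_0}(b_m',+\infty)\in K\cdot\tilde\phi_{\zeta_0}(b_m,+\infty)$; Duistermaat continuity at $\zeta_0$ (Proposition \ref{PRO:Duistermaat}) then yields $\tilde\phi_{\zeta_0}(b_\infty,+\infty)=0$ for any subsequential limit $b_\infty$ of $b_m'$. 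On the other hand, monotonicity of $\tilde f_{\zeta_m}$ along its own flow plus joint continuity of $(\zeta,b)\mapsto\tilde f_\zeta(b)$ gives $\tilde f_{\zeta_0}(b_\infty)=0$, so $b_\infty$ is a fixed point of $\tilde\phi_{\zeta_0}$, whence $b_\infty=0$, contradicting $\norme{b_\infty}_{L^2}=\varepsilon$. If you want to keep your analytic route, you would have to prove a genuinely new uniform-in-$\zeta$ gradient inequality relative to the moving critical levels; as written, the proof does not go through.
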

\begin{proof}
The proof is the same for both flows so we only prove it for $\tilde{\phi}_\zeta$. Assume it is not the case so up to an extraction, there is an $\varepsilon > 0$ such that for all $m$, $\tilde{\phi}_{\zeta_m}(b_m,t_m) = \bot$ or $\norme{\tilde{\phi}_{\zeta_m}(b_m,t_m)}_{L^2} > \varepsilon$. Let $r$ be the radius of $B_1$ and assume up to reducing $\varepsilon$ that $\varepsilon < r$. Assume up to an extraction that for all $m$, $\norme{b_m} \leq \varepsilon$. By continuity of $t \mapsto \tilde{\phi}_{\zeta_m}(b_m,t)$ as long as this flow is well-defined, there is a $t_m' \in [0,t_m[$ such that $\tilde{\phi}_{\zeta_m}(b_m,t_m') \neq \bot$ and
$$
\norme{\tilde{\phi}_{\zeta_m}(b_m,t_m')}_{L^2} = \varepsilon.
$$
In particular, $t_m' < +\infty$ and $\tilde{\phi}_{\zeta_m}(b_m,t_m') \in B_1$. By compactness of the sphere, we may also assume up to an extraction that $\tilde{\phi}_{\zeta_m}(b_m,t_m') \rightarrow b_\infty \in B_1$.

By Lemma \ref{LEM:Orbite préservée}, $\tilde{\phi}_{\zeta_m}(b_m,t_m') \in G \cdot b_m$. Moreover, by Propositions \ref{PRO:Définition et convergence des flux zeta_0} and \ref{PRO:Convergence flux}, $\tilde{\phi}_{\zeta_0}(b,t)$ is well-defined in $B_2$ for all $b \in B_1$ and all $t \in \R_+ \cup \{+\infty\}$. Moreover, $\tilde{\phi}_{\zeta_0}(b,+\infty)$ is a zero of $\tilde{\mu}_{\zeta_0}$. Therefore, we may apply Corollary \ref{COR:Unicité de Ness} to deduce that
\begin{equation}\label{EQ:Flux composés}
\tilde{\phi}_{\zeta_0}(\tilde{\phi}_{\zeta_m}(b_m,t_m'),+\infty) = u_m \cdot \tilde{\phi}_{\zeta_0}(b_m,+\infty) \in B_2
\end{equation}
for some $u_m \in K$. Moreover, by Proposition \ref{PRO:Duistermaat}, $\tilde{\phi}_{\zeta_0}(\cdot,+\infty)$ is continuous at $0$ and $b_\infty$ thus, when we take the limit of (\ref{EQ:Flux composés}), we obtain
\begin{equation}\label{EQ:Limit flux b_infty = 0}
\tilde{\phi}_{\zeta_0}(b_\infty,+\infty) = 0.
\end{equation}
Moreover, for all $m$, since $\tilde{f}_{\zeta_m}$ decreases along the flow lines,
$$
\tilde{f}_{\zeta_m}(\tilde{\phi}_{\zeta_m}(b_m,t_m')) \leq \tilde{f}_{\zeta_m}(b_m) \rightarrow \tilde{f}_{\zeta_0}(0) = 0.
$$
We deduce that
\begin{equation}\label{EQ:b_infty zéro de mu_zeta_0}
    \tilde{f}_{\zeta_0}(b_\infty) = \lim_{m \rightarrow +\infty} \tilde{f}_{\zeta_m}(\tilde{\phi}_{\zeta_m}(b_m,t_m')) = 0.
\end{equation}
Combining (\ref{EQ:Limit flux b_infty = 0}) and (\ref{EQ:b_infty zéro de mu_zeta_0}), we deduce that $b_\infty = 0$, hence a contradiction.
\end{proof}

\begin{corollary}\label{COR:Flux défini partout}
    Up to shrinking $\mU_d$ and $B_1$, the flows $\phi_\zeta$ and $\tilde{\phi}_\zeta$ exist at all time and stay in a compact subset $K$ of $B_2$ that we can choose independent of $\zeta$ and $b$.
\end{corollary}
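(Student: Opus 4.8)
The plan is to argue by contradiction using Lemma \ref{LEM:Continuité en 0} together with the fact that $\bot$ is an isolated point of the space $B_2 \cup \{\bot\}$. First I would fix, once and for all, the compact set promised by the statement: let $r > 0$ be the radius of $B_2$ and set $K = \overline{B(0,r/2)} \subset B_2$. This $K$ contains $0$ in its interior, does not contain $\bot$, and depends neither on $\zeta$ nor on $b$.

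The heart of the proof is the following claim: there exist a neighbourhood $\mathcal{U} \subset \mU_d$ of $\zeta_0$ and a radius $\delta > 0$ such that for all $\zeta \in \mathcal{U}$, all $b \in B_1$ with $\norme{b}_{L^2} < \delta$, and all $t \in \R_+ \cup \{+\infty\}$, one has $\phi_\zeta(b,t) \in K$ and $\tilde{\phi}_\zeta(b,t) \in K$. To prove it, suppose it fails. Then there are sequences $\zeta_m \to \zeta_0$ in $\mU_d$, $b_m \to 0$ in $B_1$, and $t_m \in \R_+ \cup \{+\infty\}$ with, say, $\phi_{\zeta_m}(b_m,t_m) \notin K$. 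By Lemma \ref{LEM:Continuité en 0}, $\phi_{\zeta_m}(b_m,t_m) \to 0$ in $B_2 \cup \{\bot\}$; since $0$ lies in the interior of $K$ (and since $\bot$ is isolated, so that convergence to $0$ excludes the value $\bot$ for $m$ large), this forces $\phi_{\zeta_m}(b_m,t_m) \in K$ for $m$ large, a contradiction. The argument for $\tilde{\phi}_\zeta$ is identical.

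Finally I would conclude. Fix $\zeta \in \mathcal{U}$ and $b$ with $\norme{b}_{L^2} < \delta$. For every finite $t$ the point $\phi_\zeta(b,t)$ lies in $K$, a compact subset of $B_2$; by the finite time explosion theorem the maximal existence time of the flow $\phi_\zeta(b,\cdot)$ is therefore $+\infty$, so $\phi_\zeta(b,\cdot)$ is defined at all time and remains in $K$, and likewise for $\tilde{\phi}_\zeta(b,\cdot)$. It then suffices to replace $\mU_d$ by $\mathcal{U}$ and $B_1$ by the ball centred at $0$ of radius $\min\{\delta, \text{(current radius of }B_1)\}$; these are the shrunk neighbourhoods, and $K$ is the compact set, independent of $\zeta$ and $b$, claimed in the statement. (One must keep checking that Propositions \ref{PRO:Définition et convergence des flux zeta_0}, \ref{PRO:Convergence flux}, \ref{PRO:Duistermaat} and Corollary \ref{COR:Unicité de Ness}, which were used to prove Lemma \ref{LEM:Continuité en 0}, remain valid after this shrinking — they are, since shrinking only strengthens their hypotheses.)

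The only real subtlety is the bookkeeping around the symbol $\bot$: one must use that it is an isolated point so that "$\phi_{\zeta_m}(b_m,t_m) \to 0$" genuinely rules out the undefined case for large $m$, and one must invoke the finite time explosion theorem to upgrade "stays in $K$ for all finite $t$" into "exists at all time". Everything genuinely hard has already been packaged into Lemma \ref{LEM:Continuité en 0}, so this last step is essentially a compactness-and-contradiction wrap-up.
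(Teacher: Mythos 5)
Your proof is correct and follows essentially the same route as the paper: both arguments reduce everything to Lemma \ref{LEM:Continuité en 0} and the fact that $0$ has a compact neighbourhood in $B_2$ while $\bot$ is isolated. The only cosmetic difference is that you run a sequential contradiction argument (which exploits that the lemma is already stated for arbitrary time sequences $(t_m)$), whereas the paper phrases the same uniformity as a finite-subcover argument over the compact set $\R_+ \cup \{+\infty\}$.
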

\begin{proof}
Lemma \ref{LEM:Continuité en 0} shows that the functions $(\zeta,b,t) \mapsto \phi_\zeta(b,t),\tilde{\phi}_\zeta(b,t)$ are continuous at each point of $\{(\zeta_0,0)\} \times (\R_+ \cup \{+\infty\})$. Let $B \subsetneq B_2$ be any smaller ball so $K = \overline{B}$ is compact in $B_2$.

By continuity, for all $s \in \R_+ \cup \{+\infty\}$, there is an open neighbourhood $\mU_{d,s}$ of $\zeta_0$ in $\mU_d$, an open ball $B_{1,s}$ around $0$ and an open neighbourhood $I_s$ of $s$ in $\R_+ \cup \{+\infty\}$ such that $\phi_\zeta(b,t)$ and $\tilde{\phi}_\zeta(b,t)$ both lie in $B \subset K$ when $\zeta \in \mU_{d,s}$, $b \in B_{1,s}$ and $t \in I_s$. By compactness of $\R_+ \cup \{+\infty\}$, there is a finite family $s_1,\ldots,s_m$ of elements of this set such that $\R_+ \cup \{+\infty\} = \bigcup_{k = 1}^m I_{s_k}$.

Up to shrinking $\mU_d$ and $B_1$, we may assume that
$$
\mU_d \subset \bigcap_{k = 1}^m \mU_{d,s_k}, \qquad B_1 \subset \bigcap_{k = 1}^m B_{1,s_k}.
$$
It proves the corollary.
\end{proof}

\begin{corollary}
    Theorems \ref{THE:Déformation P-critique} and \ref{THE:FJH admissible} are true.
\end{corollary}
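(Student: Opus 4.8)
Since the statement is a bookkeeping corollary — it asserts that the running hypothesis under which Theorems~\ref{THE:Déformation P-critique} and~\ref{THE:FJH admissible} were proved can now be discharged — the plan is to feed Corollary~\ref{COR:Flux défini partout} into the conditional arguments already in place.

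First I would recall precisely what has been shown. In Sub-sections~\ref{SEC:Equivalence semi-stabilités}, \ref{SEC:Equivalence polystabilités} and~\ref{SEC:Equivalence stabilités} the three equivalences of Theorem~\ref{THE:Déformation P-critique} were established for every $(\zeta,b) \in \mU_d \times B_1$ for which the negative gradient flows $\phi_\zeta(b,\cdot)$ and $\tilde\phi_\zeta(b,\cdot)$ are defined for all $t \geq 0$ and remain in a compact subset of $B_2$; by Proposition~\ref{PRO:Convergence flux} such flows automatically converge, with limit points that the conditional proofs identify as zeroes of $\mu_\zeta$, resp.\ $\tilde\mu_\zeta$. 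Likewise, Lemma~\ref{LEM:FJH admissible} proves Theorem~\ref{THE:FJH admissible} under the same hypothesis. So nothing remains except to verify that this hypothesis holds everywhere on $\mU_d \times B_1$.

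Second, I would invoke Corollary~\ref{COR:Flux défini partout}: after one last shrinking of $\mU_d$ and of the ball $B_1$, the flows $\phi_\zeta$ and $\tilde\phi_\zeta$ exist at all non-negative time and stay in a fixed compact subset $K \subset B_2$, uniformly in $\zeta \in \mU_d$ and $b \in B_1$. Shrinking $B_1$ further is harmless, since every conclusion of Theorems~\ref{THE:Déformation P-critique} and~\ref{THE:FJH admissible} is of the form ``for all $b \in B_1 \dots$'' and only becomes easier on a smaller ball. Combining this with the previous paragraph, the conditional proofs apply verbatim to all $(\zeta,b) \in \mU_d \times B_1$, which is exactly the content of the corollary.

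The only point requiring care — the closest thing to an obstacle — is the compatibility of the many successive shrinkings: the final $\mU_d$ and $B_1$ must simultaneously satisfy the requirements of Proposition~\ref{PRO:Zéro dans la tranche déformée}, Proposition~\ref{PRO:Définition et convergence des flux zeta_0}, the \Lo\ inequality of Proposition~\ref{PRO:Lojasiewicz}, Lemma~\ref{LEM:Continuité en 0} and Corollary~\ref{COR:Flux défini partout}. This is routine, since each of these is an open or monotone condition preserved under shrinking, and the compact set $K = \overline{B}$ produced in Corollary~\ref{COR:Flux défini partout} can be taken inside any neighbourhood of $0$ the earlier statements demand; the substantive mathematical work already lies in Lemma~\ref{LEM:Continuité en 0} and in the conditional proofs, not here.
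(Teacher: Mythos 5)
Your proposal is correct and follows exactly the paper's argument: the equivalences of Sub-sections \ref{SEC:Equivalence semi-stabilités}, \ref{SEC:Equivalence polystabilités}, \ref{SEC:Equivalence stabilités} and Lemma \ref{LEM:FJH admissible} were proved conditionally on global existence and compactness of the flows, and Corollary \ref{COR:Flux défini partout} discharges that hypothesis after a final shrinking of $\mU_d$ and $B_1$. Your added remark on the compatibility of the successive shrinkings is a harmless elaboration of what the paper leaves implicit.
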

\begin{proof}
It follows from Sub-sections \ref{SEC:Equivalence semi-stabilités}, \ref{SEC:Equivalence polystabilités} and \ref{SEC:Equivalence stabilités} and Corollary \ref{COR:Flux défini partout} for Theorem \ref{THE:Déformation P-critique}. It follows from Lemma \ref{LEM:FJH admissible} and Corollary \ref{COR:Flux défini partout} for Theorem \ref{THE:FJH admissible}.
\end{proof}

Then, we prove Theorem \ref{THE:Cas zeta = zeta_0} and Proposition \ref{PRO:Topologies lieux stables}.

\begin{proof}[Proof of Theorem \ref{THE:Cas zeta = zeta_0}]
Let $b \in B_1$. By Propositions \ref{PRO:Définition et convergence des flux zeta_0} and \ref{PRO:Convergence flux} and (\ref{EQ:Lojasiewicz}), $b_\infty = \lim_{t \rightarrow +\infty} \phi_{\zeta_0}(b,t) \in \overline{G \cdot b} \cap B_2$ is a zero of $\mu_{\zeta_0}$, which proves the $\mu_{\zeta_0}$-semi-stability of $b$.

If $G \cdot b$ is closed, then $b_\infty \in G \cdot b \cap B_2$ so $b$ is $\mu_{\zeta_0}$-polystable. Reciprocally, assume that $b$ is $\mu_{\zeta_0}$-polystable. Let $F \subset E$ such that $(F,\dbar_0) \subset \E_0$ and $(F^\bot,\dbar_0) \subset \E_0$ are sub-bundles. Let $\xi = -\rk(F^\bot)\Pi_F + \rk(F)\Pi_{F^\bot} \i\frak{k}$, where $\Pi_F$ is the orthogonal projection onto $F$ and $\Pi_{F^\bot} = \Id_E - \Pi_F$ is the orthogonal projection onto $F^\bot$. Let $b' \in \R^*b \cap B_1$. If $(F,\dbar_b) \subset \E_b$, then by Proposition \ref{PRO:Convergence exp(txi)b}, $\e^{t\xi} \cdot b$ remains in $B_1$ for all $t$ and converges in it with a decreasing norm. Since the action of $G$ on $V$ is linear, $\e^{t\xi} \cdot b'$ remains in $B_1$ and converges in it with a decreasing norm. By Proposition \ref{PRO:Convergence exp(txi)b}, it implies that $(F,\dbar_{b'}) \subset \E_{b'}$ is holomorphic.

We just showed that all admissible sub-bundles of $\E_b$ are admissible sub-bundles of $\E_{b'}$. Therefore, $\E_{b'}$ is locally $P_{\zeta_0}$-polystable. Notice that $b'$ need not be in the gauge orbit of $b$. Let $b_0 \in \overline{G \cdot b}$. Still by linearity of the action, for all $\varepsilon > 0$, $\varepsilon b_0 \in \overline{G \cdot (\varepsilon b)}$. Choose $\varepsilon$ small enough so $\varepsilon b$ and $\varepsilon b_0$ both lie in $B_1$. As we saw previously, $\E_{\varepsilon b}$ is locally $P_{\zeta_0}$-polystable.

Let $b_{\infty,\varepsilon} = \lim_{t \rightarrow +\infty} \phi_{\zeta_0}(\varepsilon b,t) \in B_2$. By Sub-section \ref{SEC:Equivalence polystabilités}, $b_{\infty,\varepsilon} \in G \cdot (\varepsilon b)$ is a zero of $\mu_{\zeta_0}$. Moreover, by Corollary \ref{COR:Unicité de Ness}, for all $b' \in G \cdot (\varepsilon b) \cap B_1$, $\lim_{t \rightarrow +\infty} \phi_{\zeta_0}(b',t) \in K \cdot b_{\infty,\varepsilon}$. Moreover, by closeness of $K$-orbits and Proposition \ref{PRO:Duistermaat}, it is also true for $b' \in \overline{G \cdot (\varepsilon b)} \cap B_1$. In particular, for $b' = \varepsilon b_0$, $\lim_{t \rightarrow +\infty} \phi_{\zeta_0}(\varepsilon b_0,t) \in K \cdot b_{\infty,\varepsilon}$.

In particular, $\lim_{t \rightarrow +\infty} \phi_{\zeta_0}(\varepsilon b_0,t) \in G \cdot (\varepsilon b)$, which implies that $\varepsilon b \in \overline{G \cdot (\varepsilon b_0)}$. Since $\varepsilon b_0 \in \overline{G \cdot (\varepsilon b)}$, usual results about actions of reductive Lie groups imply that $\varepsilon b_0 \in G \cdot (\varepsilon b)$ and by linearity of the action, $b_0 \in G \cdot b$ proving that this orbit is closed.
\end{proof}

\begin{proof}[Proof of Proposition \ref{PRO:Topologies lieux stables}]
By Theorem \ref{THE:Déformation P-critique}, we can replace "$\mu_\zeta(b)$-(semi-)(poly)stable in $B_2$" by "locally $P_\zeta$-(semi-)(poly)stable" in the definitions of $L_{\mathrm{s}}$, $L_{\mathrm{ps}}$ and $L_{\mathrm{ss}}$. From here, the belonging of $(\zeta,b)$ to any of those sets only depend on $[\zeta]$ and $G \cdot b \cap B_1$ because for any $b' \in G \cdot b \cap B_1$, $\E_{b'}$ is isomorphic to $\E_b$ with an isomorphism in $G$ (so they have the same admissible sub-bundles) and for any bundle $F$, $P_\zeta(F)$ only depends on $[\zeta]$.\\

\noindent\textbf{Claim 1 :} $L_{\mathrm{s}} \subset \mU_d \times B_1$ is open.

Let $(\zeta_1,b_1) \in L_{\mathrm{s}}$ and $g \in G$ such that $g \cdot b_1 \in B_2$ and $\mu_{\zeta_1}(g \cdot b_1) = 0$. Let
$$
\Psi : \fonction{\mU_d \times B_1 \times \i\frak{k}}{\frak{k}}{(\zeta,b,s)}{\mu_{\zeta}(\e^sg \cdot b)}.
$$
We have for all $(v,w) \in \i\frak{k} \times \frak{k}$,
\begin{align*}
    \scal{\frac{\partial}{\partial s}|_{s = 0}\Psi(\zeta_1,b_1,s)v}{w} & = \scal{d\mu_{\zeta_1}(g \cdot b_1)L_{g \cdot b_1}v}{w}\\
    & = \Omega_{\zeta_1,g \cdot b_1}(L_{g \cdot b_1}w,L_{g \cdot b_1}v).
\end{align*}
By assumption, $\Stab(g \cdot b_1) = \Stab(b_1) = \{\Id_E\}$ so $L_{g \cdot b_1} : \frak{g} \rightarrow T_{g \cdot b_1}B_2$ is injective. Since $\Omega_{\zeta_1}$ is a positive form on $B_2$, then $\scal{\frac{\partial}{\partial s}|_{s = 0}\Psi(\zeta_1,b_1,s)v}{w} = 0$ for all $w$ if and only if $v = 0$. It shows the injectivity, thus the bijectivity by equality of dimensions, of the linearisation of $\Psi$ at $(\zeta_1,b_1,0)$ with respect to the third variable. We easily conclude with the implicit functions theorem.\\

\noindent\textbf{Claim 2 :} For all fixed $\zeta \in \mU_d$, $\{b \in B_1|(\zeta,b) \in L_{\mathrm{ss}} \subset B_1$ is open.

Let $(\zeta_1,b_1) \in L_{\mathrm{ss}}$. Let $b_\infty = \lim_{t \rightarrow +\infty} \phi_{\zeta_1}(b_1,t)$, which is a zero of $\mu_{\zeta_1}$. By applying Proposition \ref{PRO:Lojasiewicz}, we see that in an open neighbourhood $U \subset B_2$ of $b_\infty$, all the critical points of $f_{\zeta_1} = \frac{1}{2}\norme{\mu_{\zeta_1}}^2$ are zeroes of this function.

By Proposition \ref{PRO:Duistermaat}, if $b$ is close enough to $b_1$, $\lim_{t \rightarrow +\infty} \phi_{\zeta_1}(b,t) \in U$ and it is a critical point of $f_{\zeta_1}$ thus a zero of $\mu_{\zeta_1}$, proving the semi-stability of such a $b$.\\

\noindent\textbf{Claim 3 :} For all fixed $b \in B_1$, $\{\zeta \in \mU_d|(\zeta,b) \in L_{\mathrm{ss}}\} \subset \mU_d$ is a closed polyhedral cone centred at $0$ defined by a finite number of inequalities on $[\zeta - \zeta_0]$.

It is due to the fact that the number of admissible sub-bundles of each $\E_b$ up to diffeomorphism is finite and the inequalities $P_\zeta(F) \leq 0$ are linear in $[\zeta - \zeta_0]$.\\

\noindent\textbf{Claim 4 :} $\{\zeta \in \mU_d|(\zeta,b) \in L_{\mathrm{s}}\} \subset \mU_d$ is its interior.

It is a standard convex geometry result that the interior of a closed polyhedral cone defined by some linear inequalities $l_1(x),\ldots,l_m(x) \leq 0$ is the polyhedral cone defined by the inequalities $l_1(x),\ldots,l_m(x) < 0$. Here, local $P_\zeta$-semi-stability of $\E_b$ is defined by $P_\zeta(F) \leq 0$ for all $F \subset E$ admissible in $\E_b$ and local $P_\zeta$-stability of $\E_b$ is defined by $P_\zeta(F) < 0$ for all $F \subset E$ admissible in $\E_b$, hence the result.\\

\noindent\textbf{Claim 5 :} For all fixed $b \in B_1$, if $(\zeta_0,b) \in L_{\mathrm{ps}}$, $\{\zeta \in \mU_d|(\zeta,b) \in L_{\mathrm{ps}}\} = \{\zeta \in \mU_d|(\zeta,b) \in L_{\mathrm{ss}}\}$.

It is a consequence of the fact that if $(\zeta_0,b) \in L_{\mathrm{ps}}$, for all admissible sub-bundles $F \subset E$ of $\E_b$, we have $P_{\zeta_0}(F) = 0$ so $(F,\dbar_b)$ destabilises $\E_b$ at $\zeta_0$ thus $F^\bot$ is also admissible in $\E_b$. And for all $\zeta \in \mU_d$, $P_\zeta(F^\bot) = -P_\zeta(F)$.\\

\noindent\textbf{Claim 6 :} For all fixed $b \in B_1$, if $\E_b$ is simple, $\{\zeta \in \mU_d|(\zeta,0) \in L_{\mathrm{ps}}\} = \{\zeta \in \mU_d|(\zeta,0) \in L_{\mathrm{s}}\}$.

Trivial from the definition of local $P_\zeta$-stability.
\end{proof}

To conclude this section, we show a result which is analogous to the second Ness uniqueness theorem \cite[Theorem 6.5]{GRS} in the semi-stable case.

\begin{proposition}\label{PRO:Unicité dégénération polystable}
    Let $b \in B_1$ and $b_\infty \in \overline{G \cdot b} \cap B_1$ a zero of $\mu_\zeta$ or $\tilde{\mu}_\zeta$. Then, $\E_b$ is locally $P_\zeta$-semi-stable and $\E_{b_\infty}$ is isomorphic to the graded object of $\E_b$.
\end{proposition}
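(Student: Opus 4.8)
The plan is to prove the two assertions in turn, both by feeding the situation into the GIT apparatus already developed. For the first, observe that a zero $b_\infty$ of $\mu_\zeta$ lying in $\overline{G\cdot b}\cap B_1\subset\overline{G\cdot b}\cap B_2$ says exactly that $b$ is $\mu_\zeta$-semi-stable in $B_2$, so Theorem \ref{THE:Déformation P-critique} (the implication $1\Rightarrow 3$) yields that $\E_b$ is locally $P_\zeta$-semi-stable. If instead $b_\infty$ is a zero of $\tilde\mu_\zeta$, one reruns the monotonicity argument of the implication ``$2\Rightarrow 3$'' in Sub-section \ref{SEC:Equivalence semi-stabilités}: for every admissible $F\subset E$ of $\E_b$, every $b'\in G\cdot b$ and $\xi=-\rk(F^\bot)\Pi_F+\rk(F)\Pi_{F^\bot}\in\i\frak{k}$, the function $t\mapsto\scal{\tilde\mu_\zeta(\e^{t\xi}\cdot b')}{\i\xi}$ stays in $B_{3,0}$ (Proposition \ref{PRO:Convergence exp(txi)b}), is non-increasing, and tends to $2\pi\rk(E)P_\zeta(F)$, which gives a bound $\norme{\tilde\mu_\zeta(b')}\geq c_F\,P_\zeta(F)$ with $c_F>0$; taking $b'=g_m\cdot b\to b_\infty$ along a sequence $g_m\in G$ forces $P_\zeta(F)\leq 0$.

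Once $\E_b$ is known to be locally $P_\zeta$-semi-stable, Theorem \ref{THE:FJH admissible} applies. Set $\hat b=\lim_{t\to+\infty}\phi_\zeta(b,t)$ (resp. $\lim_{t\to+\infty}\tilde\phi_\zeta(b,t)$), which exists and lies in a fixed compact subset of $B_2$ by Corollary \ref{COR:Flux défini partout}; then $\mathrm{Gr}(\E_b)\cong\E_{\hat b}$. By Corollary \ref{COR:Sections dbar_b holomorphes} it therefore suffices to prove $b_\infty\in G\cdot\hat b$, and we will in fact obtain $b_\infty\in K\cdot\hat b$.

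To this end I would choose $g_m\in G$ with $g_m\cdot b\to b_\infty$; since $b_\infty\in B_1$ and $B_1$ is open we may assume $g_m\cdot b\in B_1$ for all $m$. Each $\E_{g_m\cdot b}$ is isomorphic to $\E_b$ through an element of $G$, hence locally $P_\zeta$-semi-stable, so Corollary \ref{COR:Unicité de Ness} gives $\lim_{t\to+\infty}\phi_\zeta(g_m\cdot b,t)=u_m\cdot\hat b$ for some $u_m\in K$. On the other hand $b_\infty$ is a zero of $\mu_\zeta$, so the flow $\phi_\zeta(b_\infty,\cdot)$ is stationary and $\lim_{t\to+\infty}\phi_\zeta(b_\infty,t)=b_\infty$; Proposition \ref{PRO:Duistermaat}, applied at $b_0=b_\infty$, then shows $b\mapsto\lim_{t\to+\infty}\phi_\zeta(b,t)$ is continuous at $b_\infty$, whence $u_m\cdot\hat b\to b_\infty$. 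As $K$ is compact we extract $u_m\to u\in K$ and conclude $b_\infty=u\cdot\hat b\in K\cdot\hat b$, so $\E_{b_\infty}\cong\E_{\hat b}\cong\mathrm{Gr}(\E_b)$. The $\tilde\mu_\zeta$ case is identical, with $\tilde\phi_\zeta$ in place of $\phi_\zeta$.

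The work here is essentially bookkeeping rather than a new idea: the points to watch are that all invocations of Corollary \ref{COR:Flux défini partout}, Corollary \ref{COR:Unicité de Ness} and Proposition \ref{PRO:Duistermaat} concern points that have been placed inside the already-fixed balls $B_1\subset B_2\subset B_{3,0}$ — in particular that the approximating orbit points $g_m\cdot b$ eventually enter $B_1$ — and that the hypothesis ``$b_\infty$ is a zero'' is used twice: once for the semi-stability estimate, and once to make the Duistermaat limit map continuous at $b_\infty$.
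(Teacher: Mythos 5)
Your proposal is correct and follows essentially the same route as the paper: semi-stability is deduced from the vanishing of the moment map on the orbit closure via the monotonicity/Theorem \ref{THE:Déformation P-critique} argument, and the identification $\E_{b_\infty}\cong\mathrm{Gr}(\E_b)$ is obtained exactly as in the paper by combining Corollary \ref{COR:Unicité de Ness} along a sequence $g_m\cdot b\to b_\infty$ in $B_1$, the continuity of the limit map from Proposition \ref{PRO:Duistermaat} at $b_\infty$, compactness (closedness) of $K$-orbits, and the stationarity of the flow at the zero $b_\infty$.
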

\begin{proof}
The local $P_\zeta$-semi-stability of $\E_b$ comes from Proposition \ref{PRO:Topologies lieux stables}. Assume without loss of generality that $\tilde{\mu}_\zeta(b_\infty) = 0$. Let $b_m \rightarrow b_\infty$ a sequence of points of $G \cdot b \cap B_1$. Let $b_\infty' = \lim_{t \rightarrow +\infty} \tilde{\phi}_\zeta(b,t)$. By Corollary \ref{COR:Unicité de Ness}, for all $m$,
$$
\lim_{t \rightarrow +\infty} \tilde{\phi}_\zeta(b_m,t) \in K \cdot b_\infty'.
$$
By Proposition \ref{PRO:Duistermaat} and closeness of $K$-orbits, when $m \rightarrow +\infty$,
$$
\lim_{t \rightarrow +\infty} \tilde{\phi}_\zeta(b_\infty,t) \in K \cdot b_\infty'.
$$
Since $\tilde{\mu}_\zeta(b_\infty) = 0$, we deduce that $b_\infty \in K \cdot b_\infty'$ and by Theorem \ref{THE:FJH admissible}, $\mathrm{Gr}(\E_b) \cong \E_{b_\infty'} \cong \E_{b_\infty}$.
\end{proof}

\section{Harder--Narasimhan filtrations and a local Kempf--Ness theorem}\label{SEC:Kempf--Ness}

\subsection{Harder--Narasimhan filtrations}

We build a local version of the Harder--Narasimhan filtration. We also show a link between this filtration and the limit points of the flows. For this, we need to be able to apply our previous result to all the destabilising sub-bundles of $\E_0$. Let $\F_0 = (F_0,\dbar_0) \subset \E_0$ be a non-zero direct sum of the simple components $\G_{0,k}$ of $\E_0$. In particular, we have an orthogonal and holomorphic decomposition
$$
\E_0 = \F_0 \oplus \E_0/\F_0
$$
and the smooth structure of $\E_0/\F_0$ is $F_0^\bot$. It induces an $L^2$-orthogonal decomposition of $V$,
\begin{align*}
    V & = H^{0,1}(X,\End(\E_0))\\
    & = H^{0,1}(X,\End(\F_0)) \oplus H^{0,1}(X,\Hom(\E_0/\F_0,\F_0))\\
    & \oplus H^{0,1}(X,\Hom(\F_0,\E_0/\F_0)) \oplus H^{0,1}(X,\End(\E_0/\F_0)).
\end{align*}
Let $W = H^{0,1}(X,\End(\F_0))$ so $W^\bot = H^{0,1}(X,\Hom(\E_0/\F_0,\F_0)) \oplus H^{0,1}(X,\Hom(\F_0,\E_0/\F_0)) \oplus H^{0,1}(X,\End(\E_0/\F_0))$. When $\zeta \in \mZ$ (recall that it means that $\zeta$ is closed and $P_\zeta(E) = 0$), we define
$$
\zeta^{F_0} = \zeta - \frac{P_\zeta(F_0)}{\rk(F_0)}\frac{\Vol}{\Vol(X)}.
$$
We have that $P_{\zeta^{F_0}}(F_0) = 0$ and the map $\zeta \mapsto \zeta^{F_0}$ is linear and sends $\zeta_0$ onto itself.

There is a finite number of such sub-bundles up to an isomorphism in $G$ because they are all isomorphic to direct sums of some $\G_{0,k}$. Therefore, if $B_2$ is small enough, we may assume, up to shrinking $\mU_d$ and $B_1$ that for all $(\zeta,b) \in \mU_d \times (B_1 \cap W)$, all the results of the previous sections apply to $\F_0$ with $(\zeta^{F_0},b)$.

\begin{theorem}[Harder--Narasimhan filtration]\label{THE:FHN admissible}
    For all $\zeta \in \mU_d$ and $b \in B_1$, there is a unique filtration of $\E_b$ by admissible sub-bundles
    $$
    0 = F_0 \subsetneq F_1 \subsetneq \cdots \subsetneq F_m = E,
    $$
    such that for all $k$, each $\G_k = (F_k,\dbar_b)/(F_{k - 1},\dbar_b) = (G_k,\dbar_b)$ (where $G_k = F_k \cap F_{k - 1}^\bot$) is locally $P_\zeta$-semi-stable and $\frac{P_\zeta(G_m)}{\rk(G_m)} < \frac{P_\zeta(G_{m - 1})}{\rk(G_{m - 1})} < \cdots < \frac{P_\zeta(G_1)}{\rk(G_1)}$.
    
    Moreover, if we set $b_\infty = \lim_{t \rightarrow +\infty} \phi_\zeta(b,t)$, $\tilde{b}_\infty = \lim_{t \rightarrow +\infty} \tilde{\phi}_\zeta(b,t)$ and $\Pi_k$ the orthogonal projection onto $G_k$, then, there are $u$ and $\tilde{u}$ in $K$ such that
    $$
    \i u\mu_\zeta(b_\infty)u^\dagger = \i\tilde{u}\tilde{\mu}_\zeta(\tilde{b}_\infty)\tilde{u}^\dagger = -\frac{2\pi}{\Vol(X)}\sum_{k = 1}^m \frac{P_\zeta(G_k)}{\rk(G_k)}\Pi_k
    $$
    and the direct sum of the graded objects of the semi-stable quotients $\bigoplus_{k = 1}^m \mathrm{Gr}(\G_k)$ is isomorphic to both $\E_{b_\infty}$ and $\E_{\tilde{b}_\infty}$.
\end{theorem}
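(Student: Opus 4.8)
The plan is to mimic the classical construction of the Harder--Narasimhan filtration, but carried out via the moment map flow $\phi_\zeta$ rather than by a direct sheaf-theoretic maximal-destabilizer argument, so that the identification with the limit points $b_\infty,\tilde b_\infty$ comes for free. First I would establish \emph{existence}. Run the gradient flow $\phi_\zeta(b,\cdot)$, which by Corollary \ref{COR:Flux défini partout} is defined for all time and converges to $b_\infty\in B_2$, a critical point of $f_\zeta$. By Proposition \ref{PRO:Existence arc équivariant instable} there are sequences $s_k\to+\infty$ and $b_k\in K\cdot\phi_\zeta(b,s_k)$ with $b_k\to b_\infty$ and $\e^{-\i t\mu_\zeta(b_\infty)}\cdot b_k$ bounded and convergent in $B_2$; in particular, setting $\xi=-\i\mu_\zeta(b_\infty)\in\i\frak k$ and using that $b_k\in G\cdot b$, Proposition \ref{PRO:Convergence exp(txi)b} applied to $b_k$ and $\xi$ shows that the eigenspaces $G_{\xi,\lambda}$ of $\xi$ give admissible sub-bundles, and the partial sums $F_{\xi,\lambda}=\bigoplus_{\nu\le\lambda}G_{\xi,\nu}$ form a filtration of $\E_{b_k}$, hence (transporting by the $G$-element) of $\E_b$, by admissible sub-bundles. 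The slopes of the successive quotients are read off from the eigenvalues of $\xi$: by the computation already performed in Sub-section \ref{SEC:Equivalence polystabilités} (the telescoping $\sum_k\lambda_k P_\zeta(G_k)=\sum_k(\lambda_k-\lambda_{k+1})P_\zeta(F_k)$ together with $\scal{\mu_\zeta(b_\infty)}{\i\Pi_k}=-2\pi P_\zeta(G_k)$), each eigenvalue $\lambda_k$ equals $-\frac{2\pi}{\Vol(X)}\frac{P_\zeta(G_k)}{\rk(G_k)}$, so the strict ordering $\lambda_1<\cdots<\lambda_m$ translates into the required strict decrease of slopes and simultaneously gives the displayed formula $\i u\mu_\zeta(b_\infty)u^\dagger=-\frac{2\pi}{\Vol(X)}\sum_k\frac{P_\zeta(G_k)}{\rk(G_k)}\Pi_k$.

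Next I would prove that the quotients $\G_k=(G_k,\dbar_b)$ are locally $P_\zeta$-semi-stable. The point is that $b_\infty$ is a zero of $\mu_{\zeta^{F}}$ for $F=F_k$ in the appropriate sense: by the choice of $\zeta^{F_0}$ and the fact (set up just before the theorem) that all earlier results apply to $\F_0$ with parameter $(\zeta^{F_0},b)$, the restriction of the flow data to the admissible sub-bundle $G_k$ has vanishing moment map at the limit, because the block-diagonal structure of $\dbar_{b_\infty}$ in the decomposition $\E_{b_\infty}=\bigoplus G_k$ forces $\mu_{\infty,\zeta}(\dbar_{b_\infty})$ to be block-diagonal with the $k$-th block proportional to the identity, i.e.\ exactly $\mu_{\infty,\zeta^{G_k}}$ of the sub-bundle vanishes there. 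Then Theorem \ref{THE:Déformation P-critique} (applied to $\F_0=G_k$ with $\zeta^{G_k}$) gives local $P_{\zeta^{G_k}}$-polystability of $\E_{b_\infty}$ restricted to $G_k$, hence local $P_\zeta$-semi-stability of $\G_k$, and Theorem \ref{THE:FJH admissible} identifies the block $(G_k,\dbar_{b_\infty})$ with $\mathrm{Gr}(\G_k)$. Summing over $k$ yields $\E_{b_\infty}\cong\bigoplus_k\mathrm{Gr}(\G_k)$; the same run with $\tilde\phi_\zeta$ and Corollary \ref{COR:Unicité de Ness} (or directly the argument of Corollary \ref{COR:Unicité de Ness}) gives $\E_{\tilde b_\infty}\cong\bigoplus_k\mathrm{Gr}(\G_k)$ and a $\tilde u\in K$ conjugating $\i\tilde\mu_\zeta(\tilde b_\infty)$ to the same diagonal operator.

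For \emph{uniqueness} I would argue as in the classical case: if $0=F_0'\subsetneq\cdots\subsetneq F_{m'}'=E$ is another filtration by admissible sub-bundles with semi-stable quotients of strictly decreasing slopes, then the composite $F_1\hookrightarrow E\twoheadrightarrow E/F_{m'-1}'$ (all maps being $\dbar_b$-holomorphic, and admissible sub-bundles of $\E_b$ corresponding to holomorphic sub-bundles of $\E_0$ after restriction, by Corollary \ref{COR:Sections dbar_b holomorphes}) must vanish because a nonzero map from a semi-stable bundle to a semi-stable bundle of strictly smaller slope would violate the slope inequality; iterating, $F_1\subset F_1'$, and by symmetry $F_1'\subset F_1$, then induct on the quotient. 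One subtlety: the numerical invariant here is the additive $P_\zeta$, not an actual slope of a polarized sheaf, so I must check that the elementary lemma ``$P_\zeta$-semi-stable $\to$ $P_\zeta$-semi-stable of smaller slope implies no nonzero map'' holds in this setting --- it does, by the same one-line argument comparing $\frac{P_\zeta(\im)}{\rk(\im)}$ from above and below, using that the image is an admissible sub-bundle of the target and an admissible quotient of the source, which needs the torsion-freeness observation from the proof of Lemma \ref{LEM:FJH admissible} to see that images of admissible maps are again admissible.

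\textbf{Main obstacle.} The hard part will be verifying that the successive quotients $\G_k$ are genuinely \emph{locally $P_\zeta$-semi-stable} rather than merely having the right slopes: this requires knowing that the restricted flow on the admissible sub-bundle $G_k$ (equivalently, on $\F_0$ with the shifted parameter $\zeta^{F_0}$) converges to a zero of the restricted moment map, and that the admissible sub-bundles of $\G_k$ are exactly the admissible sub-bundles of $\E_b$ lying between $F_{k-1}$ and $F_k$ --- both of which rely on the compatibility set up in the paragraph preceding the theorem and on the block-diagonalization of $\mu_{\infty,\zeta}$ at the limit, and will need the full strength of Proposition \ref{PRO:Convergence exp(txi)b} and Theorem \ref{THE:FJH admissible} applied not to $\E_b$ itself but to each destabilizing sub-bundle with its shifted parameter.
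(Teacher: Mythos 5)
Your proposal follows essentially the same route as the paper: degenerate along the flow to $b_\infty$, decompose by the eigenspaces of $\i\mu_\zeta(b_\infty)$, use Propositions \ref{PRO:Existence arc équivariant instable} and \ref{PRO:Convergence exp(txi)b} to produce the admissible filtration, read off the slopes from the eigenvalues, get semi-stability of the quotients from the local correspondence with the shifted parameter $\zeta^{G_k}$, and prove uniqueness classically à la Huybrechts--Lehn. The only place the paper is slightly more careful than your sketch is the "main obstacle" you flag: it introduces the intermediate points $\tilde{b}_{\infty,i} = \lim_t \e^{-\i t\tilde\mu_\zeta(\tilde b_\infty)}\cdot\tilde b_i$, whose diagonal blocks converge to $P_{\zeta^{G_k}}$-critical operators, and invokes openness of semi-stability (Proposition \ref{PRO:Topologies lieux stables}) plus Proposition \ref{PRO:Unicité dégénération polystable} to transfer semi-stability back to the quotients of $\E_b$ and identify the limit blocks with the graded objects.
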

\begin{proof}
For the uniqueness, the proof is the same as for \cite[Theorem 1.3.4]{Huybrechts_Lehn}. We could also show the existence the same way, but we will directly prove it thanks to the flows. We prove the statement of the theorem for $\tilde{b}_\infty = \lim_{t \rightarrow +\infty} \tilde{\phi}_\zeta(b,t)$ since the proof for the limit point of the other flow is the same.

Let $\lambda_1 < \cdots < \lambda_m$ be the distinct eigenvalues of $-\i\tilde{\mu}_\zeta(\tilde{b}_\infty)$ and
$$
G_k = \ker(\lambda_k\Id_E + \i\tilde{\mu}_\zeta(\tilde{b}_\infty)), \qquad F_k = \bigoplus_{i = 1}^k G_i,
$$
the associated eigenspaces. Let for all $k$, $\Pi_k$ be the orthogonal projection onto $G_k$ so we have
$$
-\i\tilde{\mu}_\zeta(\tilde{b}_\infty) = \sum_{k = 1}^m \lambda_k\Pi_k.
$$
By Proposition \ref{PRO:Convergence flux}, $\tilde{\mu}_\zeta(\tilde{b}_\infty)$ is $\dbar_{\tilde{b}_\infty}$-holomorphic so we have an orthogonal and holomorphic decomposition
$$
\E_{\tilde{b}_\infty} = \bigoplus_{k = 1}^m (G_k,\dbar_{\tilde{b}_\infty}).
$$
We deduce that for all $k$, on the one hand
$$
\scal{\tilde{\mu}_\zeta(\tilde{b}_\infty)}{\i\Pi_k} = -2\pi\int_X \tr(\mP_\zeta(G_k,\dbar_{\tilde{b}_\infty},h)) = -2\pi P_\zeta(G_k),
$$
and on the other hand
$$
\scal{\tilde{\mu}_\zeta(\tilde{b}_\infty)}{\i\Pi_k} = \sum_{i = 1}^m \lambda_i\scal{\i\Pi_i}{\i\Pi_k} = \lambda_k\norme{\Pi_k}^2 = \lambda_k\rk(G_k)\Vol(X).
$$
Thus $\lambda_k = -\frac{2\pi}{\Vol(X)}\frac{P_\zeta(G_k)}{\rk(G_k)}$. In particular, $\frac{P_\zeta(G_k)}{\rk(G_k)}$ decreases with $k$ and
$$
\i\tilde{\mu}_\zeta(\tilde{b}_\infty) = -\frac{2\pi}{\Vol(X)}\sum_{k = 1}^m \frac{P_\zeta(G_k)}{\rk(G_k)}\Pi_k.
$$
Then, by Proposition \ref{PRO:Existence arc équivariant instable}, there is a sequence $\tilde{b}_i \in G \cdot b$ which converges toward $\tilde{b}_\infty$ and such that $t \mapsto \e^{-t\tilde{\mu}_\zeta(\tilde{b}_\infty)} \cdot \tilde{b}_i$ converges in $B_2$. Let for all $i$, $\tilde{b}_{\infty,i} = \lim_{t \rightarrow +\infty} \e^{-t\tilde{\mu}_\zeta(\tilde{b}_\infty)} \cdot \tilde{b}_i \in B_2$.

The map $b' \mapsto \lim_{t \rightarrow +\infty} \e^{-t\tilde{\mu}_\zeta(\tilde{b}_\infty)} \cdot b'$ is linear where it is defined as a limit of linear actions. In particular, it is continuous and since it fixes $\tilde{b}_\infty$ (because $\tilde{b}_\infty$ is a critical point of $\frac{1}{2}\norme{\tilde{\mu}_\zeta}^2$), we have $\tilde{b}_{\infty,i} \rightarrow \tilde{b}_\infty$.

The fact that $\tilde{\mu}_\zeta(\tilde{b}_\infty)$ acts trivially on each $\tilde{b}_{\infty,i}$ implies that each $\tilde{b}_{\infty,i}$ is block diagonal in the orthogonal decomposition $E = \bigoplus_{k = 1}^m G_k$ and each $\E_{\tilde{b}_{\infty,i}}$ decomposes as an orthogonal and holomorphic direct sum
$$
\E_{\tilde{b}_{\infty,i}} = \bigoplus_{k = 1}^m (G_k,\dbar_{\tilde{b}_{\infty,i}}).
$$
When looking at each block, we see that
\begin{align*}
    \mP_\zeta(G_k,\dbar_0 + \tilde{\Phi}(\zeta,\tilde{b}_{\infty,i}),h) & = -\frac{1}{2\i\pi}\mu_{\infty,\zeta}(\dbar_0 + \tilde{\Phi}(\zeta,\tilde{b}_{\infty,i})_{|\End(G_k,\dbar_0)}\,\Vol\\
    & \tend{i}{+\infty} -\frac{1}{2\i\pi}\mu_{\infty,\zeta}(\dbar_0 + \tilde{\Phi}(\zeta,\tilde{b}_{\infty})_{|\End(G_k,\dbar_0)}\,\Vol\\
    & = -\frac{1}{2\pi}\lambda_k\Id_{G_k}\Vol\\
    & = \frac{P_\zeta(G_k)}{\rk(G_k)}\Id_{G_k}\frac{\Vol}{\Vol(X)}.
\end{align*}
We deduce that $\dbar_0 + \tilde{\Phi}(\zeta,\tilde{b}_{\infty,i})$ converges toward a $P_{\zeta^{G_k}}$-critical operator where we recall that $\zeta^{G_k} = \zeta - \frac{P_\zeta(G_k)}{\rk(G_k)}\frac{\Vol}{\Vol(X)}\Id_{G_k}$.

By Proposition \ref{PRO:Topologies lieux stables}, being $\mu_{\infty,\zeta^{G_k}}$-semi-stable in $\mB_d$ is an open condition so if $i$ is large enough, $(G_k,\dbar_{\tilde{b}_{\infty,i}})$ is $\mu_{\infty,\zeta^{G_k}}$-semi-stable in $\mB_d$. By Theorem \ref{THE:Déformation P-critique}, it is locally $P_{\zeta^{G_k}}$-semi-stable. We easily see that it is equivalent to being locally $P_\zeta$-semi-stable.

Moreover, by Proposition \ref{PRO:Convergence exp(txi)b}, the existence of $\tilde{b}_{\infty,i} = \lim_{t \rightarrow +\infty} \e^{-\i t\tilde{\mu}_\zeta(\tilde{b}_\infty)} \cdot \tilde{b}_i$ implies that the filtration
$$
0 = F_0 \subsetneq \cdots \subsetneq F_m = E
$$
is a filtration by admissible sub-bundles in $\E_{\tilde{b}_i}$ and
$$
(F_k,\dbar_{\tilde{b}_i}) \cap (F_{k - 1},\dbar_{\tilde{b}_i}) = (G_k,\dbar_{\tilde{b}_{\infty,i}})
$$
is locally $P_\zeta$-semi-stable. Moreover, as the $\E_{\tilde{b}_i}$ are all isomorphic (with isomorphisms in $G$) and by uniqueness of the Harder--Narasimhan filtration, the filtrations $(F_k,\dbar_{\tilde{b}_i})$ are isomorphic. In particular, the $(G_k,\dbar_{\tilde{b}_{\infty,i}})$ are isomorphic and they approach $(G_k,\dbar_{\tilde{b}_\infty})$ which is locally $P_{\zeta^{G_k}}$-polystable. We may apply Proposition \ref{PRO:Unicité dégénération polystable} to deduce that
$$
(G_k,\dbar_{\tilde{b}_\infty}) = \mathrm{Gr}(G_k,\dbar_{\tilde{b}_{\infty,i}}).
$$
Let $i$ be any integer. We have $b \in G \cdot \tilde{b}_i$ so the Harder--Narasimhan filtration of $\E_{\tilde{b}_i}$ yields a Harder--Narasimhan filtration $(F_k')_{1 \leq k \leq m}$ of $\E_b$ which is isomorphic to the previous one. In particular, we can build a unitary $\tilde{u} \in K$ such that for all $k$, $F_k' = \tilde{u}(F_k)$. The orthogonal projection onto $G_k' = F_k' \cap {F_{k - 1}'}^\bot$ is $\Pi_k' = \tilde{u}\Pi_k\tilde{u}^\dagger$ so
$$
-\frac{2\pi}{\Vol(X)}\sum_{k = 1}^m \frac{P_\zeta(G_k')}{\rk(G_k')}\Pi_k' = -\frac{2\pi}{\Vol(X)}\sum_{k = 1}^m \frac{P_\zeta(G_k)}{\rk(G_k)}\tilde{u}\Pi_k\tilde{u}^\dagger = -\i\tilde{u}\tilde{\mu}_\zeta(\tilde{b}_\infty)\tilde{u}^\dagger.
$$
\end{proof}

\subsection{A local Kempf--Ness theorem and continuity results}

We end up this section by showing the continuity results about the solutions to the $P$-critical equation and a local version of the Kempf--Ness theorem. Recall that
$$
L_{\mathrm{ss}} = \{(\zeta,b) \in \mU_d \times B_1|b \textrm{ is $\mu_\zeta$-semi-stable in } B_2\}.
$$

\begin{theorem}[Continuity of solutions of the $P$-critical equation modulo unitary transforms]\label{THE:Continuité des opérateurs P-critiques}
    Up to shrinking $\mU_d$ and $B_1$, the maps
    $$
    \Gamma : \fonction{L_{\mathrm{ss}}}{B_2/K}{(\zeta,b)}{K \cdot \left(\lim_{t \rightarrow +\infty} \phi_\zeta(b,t)\right)},
    $$
    $$
    \Gamma_\infty : \fonction{L_{\mathrm{ss}}}{\mB_d/\mC^{d + 1}(\G(E,h))}{(\zeta,b)}{\mC^{d + 1}(\G(E,h)) \cdot \left(\dbar_0 + \tilde{\Phi}(\zeta,\lim_{t \rightarrow +\infty} \tilde{\phi}_\zeta(b,t))\right)}
    $$
    are well-defined, $G$-invariant and continuous.
    
    For each $(\zeta,b)$, $\Gamma(\zeta,b)$ is the orbit of a zero of $\mu_\zeta$ in $\overline{G \cdot b} \cap B_2$ and $\Gamma_\infty(\zeta,b)$ is the orbit of a $P_\zeta$-critical operator in $\overline{\mC^{d + 1}(\G^\C(E)) \cdot \dbar_b} \cap \mB_d$. Moreover, if $(\zeta,b) \in L_{\mathrm{ps}}$, $\Gamma(\zeta,b) \subset G \cdot b$ and $\Gamma_\infty(\zeta,b) \subset \mC^{d + 1}(\G^\C(E)) \cdot \dbar_b$.
\end{theorem}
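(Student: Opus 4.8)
The plan is to treat the three assertions (well-definedness, $G$-invariance, continuity) in turn, with the flows and their limit points as the main objects. By Corollary \ref{COR:Flux défini partout}, after shrinking $\mU_d$ and $B_1$ the flows $\phi_\zeta(b,\cdot)$ and $\tilde\phi_\zeta(b,\cdot)$ exist for all $t\geq 0$ and stay in a fixed compact $\mathcal{K}\subset B_2$, and by Proposition \ref{PRO:Convergence flux} they converge; write $c_\infty(\zeta,b)$ and $\tilde c_\infty(\zeta,b)$ for the limits. For $(\zeta,b)\in L_{\mathrm{ss}}$, Theorem \ref{THE:Déformation P-critique} says $\E_b$ is locally $P_\zeta$-semi-stable, so the argument of Sub-section \ref{SEC:Equivalence semi-stabilités} ($3\Rightarrow 1,2$) gives $\mu_\zeta(c_\infty(\zeta,b))=0$ and $\tilde\mu_\zeta(\tilde c_\infty(\zeta,b))=0$. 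Combining this with Lemma \ref{LEM:Orbite préservée} (the limits lie in $\overline{G\cdot b}\cap B_2$) and Proposition \ref{PRO:Tranche déformée P}\,(1) (so $\dbar_0+\tilde\Phi(\zeta,\tilde c_\infty)$ is gauge equivalent to $\dbar_{\tilde c_\infty}$, hence $P_\zeta$-critical; it lies in $\mB_d$, and in $\overline{\mC^{d+1}(\G^\C(E))\cdot\dbar_b}$ by continuity of $\Phi$ and $G$-invariance of $\Phi$) yields well-definedness and the two characterisations. The refinement for $(\zeta,b)\in L_{\mathrm{ps}}$ is read off from Sub-section \ref{SEC:Equivalence polystabilités} ($3\Rightarrow 1,2$), whose proof in fact shows $c_\infty(\zeta,b),\tilde c_\infty(\zeta,b)\in G\cdot b$. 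For $G$-invariance, if $b'\in G\cdot b\cap B_1$ then $(\zeta,b')\in L_{\mathrm{ss}}$ by Proposition \ref{PRO:Topologies lieux stables}, and Corollary \ref{COR:Unicité de Ness} gives $c_\infty(\zeta,b')\in K\cdot c_\infty(\zeta,b)$, $\tilde c_\infty(\zeta,b')\in K\cdot\tilde c_\infty(\zeta,b)$; one then descends to the quotients using $K$-equivariance of $b\mapsto\dbar_0+\tilde\Phi(\zeta,b)$ (Proposition \ref{PRO:Tranche déformée P}\,(2)) and $K\subset\mC^{d+1}(\G(E,h))$.

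The substantial part is continuity. Since $B_2/K$ is metrisable and the relevant values lie in the compact $\mathcal{K}/K$, it suffices to show that a sequence $(\zeta_m,b_m)\to(\zeta_\infty,b_\infty)$ in $L_{\mathrm{ss}}$ has a subsequence along which $\Gamma(\zeta_m,b_m)\to\Gamma(\zeta_\infty,b_\infty)$. Extract a subsequence with $c_m:=c_\infty(\zeta_m,b_m)\to c_\star\in\mathcal{K}$; smoothness of $(\zeta,b)\mapsto\mu_\zeta(b)$ forces $\mu_{\zeta_\infty}(c_\star)=0$. The crux is the claim $c_\star\in\overline{G\cdot b_\infty}$: granting it, Proposition \ref{PRO:Unicité dégénération polystable} applied to $b_\infty$ and the zero $c_\star$ gives $\E_{c_\star}\cong\mathrm{Gr}(\E_{b_\infty})\cong\E_{c_\infty(\zeta_\infty,b_\infty)}$, the isomorphism lies in $G$ by Corollary \ref{COR:Sections dbar_b holomorphes}, and since both sides are zeroes of $\mu_{\zeta_\infty}$, Lemma \ref{LEM:Unicité K-orbite zéro mu_zeta} upgrades this to a $K$-isomorphism, i.e. $\Gamma(\zeta_m,b_m)\to\Gamma(\zeta_\infty,b_\infty)$. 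The same argument run with $\tilde\phi_\zeta,\tilde\mu_\zeta,\tilde f_\zeta$ gives $\tilde c_\infty(\zeta_m,b_m)\to\tilde c_\infty(\zeta_\infty,b_\infty)$ in $B_2/K$, and continuity of $\Gamma_\infty$ then follows by composing the resulting continuous map $(\zeta_m,b_m)\mapsto(\zeta_m,\tilde c_\infty(\zeta_m,b_m))$ with the map $\mU_d\times(B_2/K)\to\mB_d/\mC^{d+1}(\G(E,h))$ induced by the smooth $(\zeta,c)\mapsto\dbar_0+\tilde\Phi(\zeta,c)$ followed by the quotient projection — this descends through the quotient by $K$ again by Proposition \ref{PRO:Tranche déformée P}\,(2), and is continuous because the product of $\mU_d\times B_2\to\mU_d\times(B_2/K)$ with the identity remains a quotient map.

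The main obstacle — the only place requiring genuine work — is the claim $c_\star\in\overline{G\cdot b_\infty}$. The plan is to prove it by a uniform tail estimate: fix $\varepsilon>0$ and produce a single time $T=T(\varepsilon)$ and index $M(\varepsilon)$ with $\norme{\phi_{\zeta_m}(b_m,T)-c_m}_{L^2}<\varepsilon$ for all $m\geq M(\varepsilon)$; since $\phi_{\zeta_m}(b_m,T)\in G\cdot b_m$ (Lemma \ref{LEM:Orbite préservée}) and, by continuous dependence of the ODE on parameters and initial data, $\phi_{\zeta_m}(b_m,T)\to\phi_{\zeta_\infty}(b_\infty,T)\in G\cdot b_\infty$ as $m\to+\infty$ with $T$ fixed, this gives $\norme{c_\star-\phi_{\zeta_\infty}(b_\infty,T)}_{L^2}\leq\varepsilon$, and letting $\varepsilon\to 0$ (hence $T\to+\infty$) yields $c_\star\in\overline{G\cdot b_\infty}$. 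Producing such a $T$ uniform in $m$ is exactly where one needs a \Lo\ gradient inequality for the real-analytic family $(\zeta,b)\mapsto f_\zeta(b)=\tfrac12\norme{\mu_\zeta(b)}^2$ with constants uniform for $\zeta$ near $\zeta_\infty$ — available from the Marle--Guillemin--Sternberg normal form as in the discussion preceding \eqref{EQ:Lojasiewicz}, now applied in families — after which the estimate follows exactly as in the proof of Proposition \ref{PRO:Convergence flux}; for $\tilde f_\zeta$ the same holds, the family being real-analytic up to composition on the right with a smooth family of local diffeomorphisms. This uniform \Lo\ inequality plays, around a general limit point, the role that the unconditional semi-stability at $\zeta_0$ played in the proof of Lemma \ref{LEM:Continuité en 0}.
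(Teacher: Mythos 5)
Your treatment of well-definedness, $G$-invariance and the two characterisations matches the paper's (the paper cites Corollary \ref{COR:Flux défini partout}, Corollary \ref{COR:Unicité de Ness} and Sub-sections \ref{SEC:Equivalence semi-stabilités}--\ref{SEC:Equivalence polystabilités} for exactly these points), and your reduction of the continuity of $\Gamma_\infty$ to that of $(\zeta,b)\mapsto K\cdot\tilde c_\infty(\zeta,b)$ via $\tilde\Phi$ is also the paper's reduction. The problem is the step you yourself flag as the crux. Your proof of $c_\star\in\overline{G\cdot b_\infty}$ rests on a \emph{uniform-in-$\zeta$} tail estimate for the flows $\phi_{\zeta_m}(b_m,\cdot)$, which you propose to extract from a \Lo\ gradient inequality for the family $f_\zeta$ ``applied in families''. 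No such parametrized inequality is established in the paper, and it does not follow from the discussion preceding (\ref{EQ:Lojasiewicz}): that argument produces constants $a,\theta$ at the single point $(\zeta_0,0)$, and the \Lo\ constants of $f_\zeta$ near its zero set have no a priori uniformity as $\zeta$ varies (applying \Lo\ to the joint function $(\zeta,b)\mapsto f_\zeta(b)$ controls the full gradient, including the $\zeta$-derivative, not the $b$-gradient alone). A uniform inequality of the form $a f_\zeta(b)^\theta\le\norme{\nabla_{\Omega_\zeta}f_\zeta(b)}$ on $\{f_\zeta<\delta\}$ would in particular force every critical point of $f_\zeta$ with small positive critical value to be a zero, uniformly for $\zeta$ near $\zeta_\infty$ — a nontrivial assertion that would itself need proof. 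As written, the tail estimate, and hence $c_\star\in\overline{G\cdot b_\infty}$, is a genuine gap. (A secondary, fixable point: you invoke Proposition \ref{PRO:Unicité dégénération polystable} at $c_\star\in\mathcal K\subset B_2$, whereas its hypothesis places the zero in $B_1$; the paper handles this by shrinking $B_1$ so that flows started there converge inside the previous $B_1$.)

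The paper circumvents the need for any family \Lo\ inequality by the same device as in Lemma \ref{LEM:Continuité en 0}: it picks finite times $t_m$ with $b_m'=\tilde\phi_{\zeta_m}(b_m,t_m)\to b_\infty'$ (a zero of $\tilde\mu_{\zeta_1}$ by continuity of $(\zeta,b)\mapsto\tilde\mu_\zeta(b)$), notes $b_m'\in G\cdot b_m$ by Lemma \ref{LEM:Orbite préservée}, and then runs the flow at the \emph{fixed} parameter $\zeta_1$ from both $b_m'$ and $b_m$. Corollary \ref{COR:Unicité de Ness} gives $\lim_t\tilde\phi_{\zeta_1}(b_m',t)=u_m\cdot\lim_t\tilde\phi_{\zeta_1}(b_m,t)$, and Duistermaat continuity (Proposition \ref{PRO:Duistermaat}, a fixed-$\zeta$ statement) lets one pass to the limit in $m$; since the $\zeta_1$-flow from the zero $b_\infty'$ is constant, this yields $K\cdot b_\infty'=K\cdot b_\infty$ directly, with only the single-point \Lo\ inequality already proved. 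If you want to keep your architecture, replacing your uniform tail estimate by this composed-flow argument is the missing ingredient.
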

\begin{proof}
$\Gamma$ and $\tilde{\Gamma}$ are well-defined by Corollary \ref{COR:Flux défini partout} and they are $G$-invariant by Corollary \ref{COR:Unicité de Ness}. By the results of Sub-sections \ref{SEC:Equivalence semi-stabilités} and \ref{SEC:Equivalence polystabilités}, the second part of the theorem is true. All we have left to show is their continuity.

By Corollary \ref{COR:Flux défini partout} applied to $B_1 \subset B_{2,0}$, up to shrinking $\mU_d$, there is a ball $B_1' \subset B_1$ such that for all $\zeta \in \mU_d$, the flows $\phi_\zeta$ and $\tilde{\phi}_\zeta$ are well-defined and converge in a compact subset of $B_1$ (independent of $\zeta$ and $b$) as long as the starting point is in $B_1'$. Let us show the continuity of $\Gamma$ and $\Gamma_\infty$ on $L_{\mathrm{ss}} \cap (\mU_d \times B_1')$. Let
$$
\tilde{\Gamma} : \fonction{L_{\mathrm{ss}}}{B_2/K}{(\zeta,b)}{K \cdot \left(\lim_{t \rightarrow +\infty} \tilde{\phi}_\zeta(b,t)\right)}.
$$
By continuity of $\tilde{\Phi}$ (see Proposition \ref{PRO:Tranche déformée P}), the continuity of $\tilde{\Gamma}$ at some point implies the continuity of $\Gamma_\infty$ at the same point. The proof of the continuity of $\Gamma$ is the same as the proof of the continuity of $\tilde{\Gamma}$ so we only do it for $\tilde{\Gamma}$.

Let $(\zeta_1,b_1) \in L_{\mathrm{ss}}$ with $b_1 \in B_1'$ and $K \cdot b_\infty = \tilde{\Gamma}(\zeta_1,b_1)$ which is in $B_1/K$ by construction of $B_1'$. Let $(\zeta_m,b_m)$ be a sequence of elements of $L_{\mathrm{ss}}$ that converges toward $(\zeta_1,b_1)$. Since the $\tilde{\Gamma}(\zeta_m,b_m)$ stay in a compact subset of $B_1$ independent of $m$, it is enough to show that $K \cdot b_\infty$ is its only limit up to an extraction. Let $K \cdot b_\infty'$ with $b_\infty' \in B_1$ be a limit of this sequence up to an extraction.

Set for all $m$, $b_{\infty,m} = \lim_{t \rightarrow +\infty} \tilde{\phi}_{\zeta_m}(b_m,t) \in B_1$. Up to an extraction, $(b_{\infty,m})$ converges in $\overline{B_1}$ and its limit must lie in $K \cdot b_\infty'$. Up to replacing $b_\infty'$ by an element of its $K$-orbit, we may assume that $b_{\infty,m} \rightarrow b_\infty'$. Let for all $m$, $b_m' = \tilde{\phi}_{\zeta_m}(b_m,t_m)$ with $t_m < +\infty$ chosen large enough so the distance between $b_m'$ and $b_{\infty,m}$ converges toward $0$. In particular, $b_m' \rightarrow b_\infty'$ so for $m$ large enough, $b_m' \in B_1$.

We then apply the same trick as in the proof of Lemma \ref{LEM:Continuité en 0}. By Lemma \ref{LEM:Orbite préservée}, $b_m' \in G \cdot b_m \cap B_1$. Therefore, by Corollary \ref{COR:Unicité de Ness},
\begin{equation}\label{EQ:Flux composés 2}
\lim_{t \rightarrow +\infty} \tilde{\phi}_{\zeta_1}(b_m',t) = u_m \cdot \lim_{t \rightarrow +\infty} \tilde{\phi}_{\zeta_1}(b_m,t)
\end{equation}
for some $u_m \in K$. Up to an extraction, we may assume that $u_m \rightarrow u_\infty \in K$. Moreover, by Proposition \ref{PRO:Duistermaat}, $\lim_{t \rightarrow +\infty} \tilde{\phi}_{\zeta_1}(\cdot,t)$ is continuous at $b_1$ and $b_\infty'$ thus, when we take the limit of (\ref{EQ:Flux composés 2}), we obtain
\begin{equation}\label{EQ:Limit flux b_infty = 0 2}
\lim_{t \rightarrow +\infty} \tilde{\phi}_{\zeta_1}(b_\infty',t) = u_\infty \cdot \lim_{t \rightarrow +\infty} \tilde{\phi}_{\zeta_1}(b_1,t) \in K \cdot b_\infty.
\end{equation}
Moreover, for all $m$, $b_{\infty,m}$ is a zero of $\tilde{\mu}_{\zeta_m}$ by the semi-stability assumption. Therefore, $b_\infty' = \lim_{m \rightarrow +\infty} b_{\infty,m}$ is a zero of $\tilde{\mu}_{\zeta_1}$. It implies that
\begin{equation}\label{EQ:Flux constant}
    \lim_{t \rightarrow +\infty} \tilde{\phi}_{\zeta_1}(b_\infty',t) = \lim_{t \rightarrow +\infty} b_\infty' = b_\infty'.
\end{equation}
Combining (\ref{EQ:Limit flux b_infty = 0 2}) and (\ref{EQ:Flux constant}), we obtain that $K \cdot b_\infty' = K \cdot b_\infty$ hence the wanted result.

Up to replacing $B_1$ by $B_1'$, $\Gamma$ and $\tilde{\Gamma}$ (thus $\Gamma_\infty$) are continuous on their domain.
\end{proof}

\begin{remark}
    We expect the maps $(\zeta,b) \mapsto \lim_{t \rightarrow +\infty} \phi_\zeta(b,t)$ and $(\zeta,b) \mapsto \lim_{t \rightarrow +\infty} \tilde{\phi}_\zeta(b,t)$ to be continuous on the semi-stable locus even without quotienting by $K$. In this case, we would produce a family of $P_\zeta$-critical operators which is continuous with respect to $(\zeta,b)$. However, showing this continuity probably requires to study more deeply the $g(t) \in G$ and $\tilde{g}(t) \in G$ of Lemma \ref{LEM:Orbite préservée} as in the proof of Proposition \ref{PRO:Existence arc équivariant instable}, which is tedious.

    Notice that we already have the wanted continuity result when $\zeta$ is fixed by Proposition \ref{PRO:Duistermaat}.
\end{remark}

The germ of action of $G$ on $B_1$ extends to $\mU_d \times B_1$ where $G$ acts trivially on $\mU_d$. Moreover, it preserves $L_{\mathrm{ss}}$. Let $q : L_{\mathrm{ss}} \rightarrow L_{\mathrm{ss}}/G$ be the associated quotient map. Let for all $\zeta$, the polystable GIT quotient,
$$
B_1/\!/\!_\zeta G = \{q(\zeta,b)|(\zeta,b) \in L_{\mathrm{ps}}\} \subset L_{\mathrm{ss}}/G.
$$
$\mB_d^{\mathrm{int}} \subset \mB_d$ be the set of $\dbar \in \mB_d$ which are integrable \textit{i.e.} $\dbar^2 = 0$. Let $B_1^{\mathrm{int}} \subset B_1$ and $B_2^{\mathrm{int}} \subset B_2$ be the sets of $b \in B_1$ (resp. $b \in B_2$) such that $\dbar_b$ is integrable. By Proposition \ref{PRO:Propriétés Kuranishi}, they are $G$-invariant analytic subsets of $B_1$ and $B_2$ (and clearly, $B_1^{\mathrm{int}} = B_2^{\mathrm{int}} \cap B_1$).

Since $B_1^{\mathrm{int}}$ is $G$-invariant, we can define naturally
$$
B_1^{\mathrm{int}}/\!/\!_\zeta G \subset B_1/\!/\!_\zeta G.
$$

\begin{lemma}\label{LEM:Lemme Kempf--Ness P}
    For all $\zeta \in \mU_d$, the map
    $$
    \varphi_\zeta : \fonction{(\tilde{\mu}_\zeta^{-1}\{0\} \cap B_2)/K}{(\mC^d(\mu_{\infty,\zeta}^{-1}\{0\}) \cap \mB_d)/\mC^{d + 1}(\G(E,h))}{b}{\dbar_0 + \tilde{\Phi}(\zeta,b)}
    $$
    is a homeomorphism into its image. Moreover, its restriction to $B_2^{\mathrm{int}}$
    $$
    \varphi_\zeta^{\mathrm{int}} : (\tilde{\mu}_\zeta^{-1}\{0\} \cap B_2^{\mathrm{int}})/K \longrightarrow \mC^d(\mu_{\infty,\zeta}^{-1}\{0\} \cap \mB_d^{\mathrm{int}})/\mC^{d + 1}(\G(E,h))
    $$
    is also a homeomorphism into its image and it has an open image.
\end{lemma}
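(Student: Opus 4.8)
The plan is to present $\varphi_\zeta$ as a continuous bijection onto its image, to build an explicit continuous inverse out of the retraction $\beta$ of Proposition \ref{PRO:Zéro dans la tranche déformée}, and then to deduce the integrable statements by restriction. First I would check well-definedness and injectivity. Well-definedness on $K$-orbits follows from the identity $\tilde\mu_\zeta(b) = \mu_{\infty,\zeta}(\dbar_0 + \tilde\Phi(\zeta,b))$ (so zeroes go to zeroes) together with the $K$-equivariance of $b \mapsto \dbar_0 + \tilde\Phi(\zeta,b)$ from Proposition \ref{PRO:Tranche déformée P} and the inclusion $K \subset \mC^{d+1}(\G(E,h))$. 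For injectivity, if $\dbar_0 + \tilde\Phi(\zeta,b_1)$ and $\dbar_0 + \tilde\Phi(\zeta,b_2)$ are $\mC^{d+1}(\G(E,h))$-equivalent, they are $\mC^{d+1}(\G^\C(E))$-equivalent; since each is gauge equivalent to $\dbar_{b_i}$ (Proposition \ref{PRO:Tranche déformée P}), the bundles $\E_{b_1}$ and $\E_{b_2}$ are holomorphically isomorphic, so $b_1 \in G\cdot b_2$ by Corollary \ref{COR:Sections dbar_b holomorphes}; as both are zeroes of $\tilde\mu_\zeta$ in $B_2 \subset B_{3,0}$, Lemma \ref{LEM:Unicité K-orbite zéro mu_zeta} gives $b_1 \in K\cdot b_2$.

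Next I would treat continuity. The map $\varphi_\zeta$ is continuous, being the descent of the continuous map $b \mapsto \dbar_0 + \tilde\Phi(\zeta,b)$ followed by the (open) orbit projection. For the inverse, given a zero $\dbar$ of $\mu_{\infty,\zeta}$ in $\mB_d$ that is $\mC^{d+1}(\G(E,h))$-equivalent to some $\dbar_0 + \tilde\Phi(\zeta,b)$ with $b \in \tilde\mu_\zeta^{-1}\{0\}\cap B_2$, we have $\dbar \in \G^\C(E)\cdot\dbar_b$, so Proposition \ref{PRO:Zéro dans la tranche déformée} yields $\beta(\dbar)\in G\cdot b$, $\tilde\mu_\zeta(\beta(\dbar))=0$, and $\dbar = u\cdot(\dbar_0+\tilde\Phi(\zeta,\beta(\dbar)))$ for some $u\in\G(E,h)$; Lemma \ref{LEM:Unicité K-orbite zéro mu_zeta} then forces $\beta(\dbar)\in K\cdot b\subset B_2$. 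Hence $\dbar'\mapsto K\cdot\beta(\dbar')$ is a well-defined, continuous, $\mC^{d+1}(\G(E,h))$-invariant map on the saturated preimage of $\im(\varphi_\zeta)$, on which the orbit projection restricts to an open quotient map; it therefore descends to a continuous two-sided inverse of $\varphi_\zeta$, so $\varphi_\zeta$ is a homeomorphism onto its image.

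Then I would pass to the integrable loci. Since the complex gauge action preserves integrability and $\dbar_0 + \tilde\Phi(\zeta,b) = \e^{\sigma(\zeta,b)}\cdot\dbar_b$, the operator $\dbar_0 + \tilde\Phi(\zeta,b)$ is integrable exactly when $b\in B_2^{\mathrm{int}}$, and then its whole $\mC^{d+1}(\G(E,h))$-orbit is integrable. Thus $\varphi_\zeta^{\mathrm{int}}$ is simply the restriction of the homeomorphism $\varphi_\zeta$ to $(\tilde\mu_\zeta^{-1}\{0\}\cap B_2^{\mathrm{int}})/K$, hence a homeomorphism onto its image. To see the image is open I would show every integrable $P_\zeta$-critical $\dbar\in\mB_d$ lies in the image: by the Kuranishi slice (Proposition \ref{PRO:Propriétés Kuranishi}) such a $\dbar$ is gauge equivalent to some $\dbar_b$ in the slice; since $\dbar$ is a solution, $\E_b$ is locally $P_\zeta$-polystable by Theorem \ref{THE:Déformation P-critique}, so $\tilde b_\infty:=\lim_{t\to+\infty}\tilde\phi_\zeta(b,t)$ is a zero of $\tilde\mu_\zeta$ lying in $G\cdot b\cap B_2$; comparing $\beta(\dbar)\in G\cdot b\cap B_{3,0}$ with $\tilde b_\infty$ via Lemma \ref{LEM:Unicité K-orbite zéro mu_zeta} gives $\beta(\dbar)\in K\cdot\tilde b_\infty\subset B_2$, which moreover lies in $B_2^{\mathrm{int}}$ because $\dbar_0+\tilde\Phi(\zeta,\beta(\dbar))$ is gauge equivalent to $\dbar$; hence $\dbar$'s orbit equals $\varphi_\zeta^{\mathrm{int}}([\beta(\dbar)])$, so $\varphi_\zeta^{\mathrm{int}}$ is in fact onto, a fortiori with open image.

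The main obstacle I expect is this last step: one has to keep careful track of the nesting $B_1\subset B_2\subset B_{2,0}\subset B_{3,0}\subset B_{4,0}$ so that $\beta(\dbar)$, a priori valued only in $B_{3,0}$, can be pulled back into $B_2$ through the $K$-uniqueness of Lemma \ref{LEM:Unicité K-orbite zéro mu_zeta}, and so that the Kuranishi slice genuinely reaches every integrable operator contained in $\mB_d$; both are guaranteed by the shrinkings performed in Section \ref{SEC:Résultats GIT locaux}, but verifying that they line up here is where the care is needed.
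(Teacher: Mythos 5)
The first two-thirds of your argument — well-definedness, injectivity via Corollary \ref{COR:Sections dbar_b holomorphes} and Lemma \ref{LEM:Unicité K-orbite zéro mu_zeta}, and the continuous inverse obtained by descending $\beta$ to orbits — is exactly the paper's construction of $\overline{\beta}$ and is correct.

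The gap is in the final step, where you claim $\varphi_\zeta^{\mathrm{int}}$ is \emph{onto} $\mC^d(\mu_{\infty,\zeta}^{-1}\{0\} \cap \mB_d^{\mathrm{int}})/\mC^{d+1}(\G(E,h))$ and deduce openness ``a fortiori''. This is too strong and the argument does not go through. Given an integrable zero $\dbar \in \mB_d$, the Kuranishi slice only produces a parameter $b \in B_{4,0}$ with $\dbar \in \G^\C(E)\cdot\dbar_b$; nothing forces $b$ to lie in $B_1$ (where Theorem \ref{THE:Déformation P-critique} applies) or even in $B_{2,0}$ (where Proposition \ref{PRO:Zéro dans la tranche déformée} guarantees $\beta(\dbar) \in G\cdot b$). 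The neighbourhood $\mB_d$ is fixed in Proposition \ref{PRO:Zéro dans la tranche déformée} \emph{before} $B_2$ and $B_1$ are chosen, so $\mB_d$ can perfectly well contain integrable solutions whose slice parameter has $G$-orbit disjoint from $B_2$; those orbits are not in the image of $\varphi_\zeta^{\mathrm{int}}$, so surjectivity fails in general. Since openness was derived only from surjectivity, it is not established.

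The correct route, which is the one the paper takes, factors the openness into two pieces. First, the identity $\overline{\beta}\circ\varphi_\zeta = \id$ shows that the image of $\varphi_\zeta$ is $\overline{\beta}^{-1}\bigl((\tilde{\mu}_\zeta^{-1}\{0\}\cap B_2)/K\bigr)$, hence open in the intermediate set $\mathcal{I}$ of orbits of zeroes that \emph{are} gauge equivalent to some $\dbar_b$ with $b \in B_2$ (and likewise for $\mathcal{I}^{\mathrm{int}}$). Second — and this is the ingredient you are missing — $\mathcal{I}^{\mathrm{int}}$ is itself open in $\mC^d(\mu_{\infty,\zeta}^{-1}\{0\}\cap\mB_d^{\mathrm{int}})/\mC^{d+1}(\G(E,h))$ by point 6 of Proposition \ref{PRO:Propriétés Kuranishi}, the local homeomorphism between integrable slice parameters modulo $G$ and integrable operators modulo gauge. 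Note that this step is exactly where integrability is used in an essential way: no openness is claimed (or available) for $\varphi_\zeta$ itself, because the Kuranishi local homeomorphism only controls integrable Dolbeault operators.
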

\begin{proof}
It is well-defined and continuous by Proposition \ref{PRO:Tranche déformée P}. Let
$$
\mathcal{I} = \left\{\mC^{d + 1}(\G(E,h)) \cdot \dbar\,\middle|\,\dbar \in \mB_d, \mu_{\infty,\zeta}(\dbar) = 0 \textrm{ and } \exists b \in B_2, \dbar \in \mC^{d + 1}(\G(E,h)) \cdot \dbar_b\right\}.
$$
By Propositions \ref{PRO:Tranche déformée P} and \ref{PRO:Zéro dans la tranche déformée}, $\mathcal{I}$ contains the image of $\varphi_\zeta$. Let $\beta : \mB_d \rightarrow B_{3,0}$ be the smooth function given by Proposition \ref{PRO:Zéro dans la tranche déformée}. If $(\dbar_1,\dbar_2) \in \mB_d^2$ are two representatives of a same $\mC^{d + 1}(\G(E,h))$-orbit in $\mathcal{I}$, then by Propositions \ref{PRO:Zéro dans la tranche déformée} and \ref{PRO:Tranche déformée P}, $\dbar_1 \in \G^\C(E) \cdot \dbar_{\beta(\dbar_1)}$ and $\dbar_2 \in \G^\C(E) \cdot \dbar_{\beta(\dbar_2)}$. Thus, $\dbar_{\beta(\dbar_2)} \in \G^\C(E) \cdot \dbar_{\beta(\dbar_1)}$. Moreover, $\beta(\dbar_1)$ and $\beta(\dbar_2)$ are zeroes of $\tilde{\mu}_\zeta$.

By Corollary \ref{COR:Sections dbar_b holomorphes}, $\beta(\dbar_2) \in G \cdot \beta(\dbar_1)$ and by Lemma \ref{LEM:Unicité K-orbite zéro mu_zeta}, $\beta(\dbar_2) \in K \cdot \beta(\dbar_1)$. By the universal property of quotients, $\beta$ factorises as a continuous map
$$
\overline{\beta} : \mathcal{I} \longrightarrow (\tilde{\mu}_\zeta^{-1}\{0\} \cap B_{3,0})/K.
$$
Moreover, Proposition \ref{PRO:Zéro dans la tranche déformée} and Lemma \ref{LEM:Unicité K-orbite zéro mu_zeta} imply that
$$
\overline{\beta} \circ \varphi_\zeta : (\tilde{\mu}_\zeta^{-1}\{0\} \cap B_2)/K \longrightarrow (\tilde{\mu}_\zeta^{-1}\{0\} \cap B_{3,0})/K
$$
is the identity map. Therefore, $\varphi_\zeta$ is injective and is a homeomorphism into its image. It also shows that $\varphi_\zeta$ is open in $\mathcal{I}$. Let
$$
\mathcal{I}^{\mathrm{int}} = \left\{\mC^{d + 1}(\G(E,h)) \cdot \dbar \in \mathcal{I}|\dbar^2 = 0\right\}.
$$
By restriction, $\varphi_\zeta^{\mathrm{int}} : (\tilde{\mu}_\zeta^{-1}\{0\} \cap B_2^{\mathrm{int}})/K \rightarrow \mathcal{I}^{\mathrm{int}}$ is also an open homeomorphism into its image. Finally, by Proposition \ref{PRO:Propriétés Kuranishi},
$$
\mathcal{I}^{\mathrm{int}} \subset \mC^d(\mu_{\infty,\zeta}^{-1}\{0\} \cap \mB_d^{\mathrm{int}})/\mC^{d + 1}(\G(E,h))
$$
is open, which proves the lemma.
\end{proof}

\begin{theorem}[Local Kempf--Ness theorem]\label{THE:Kempf--Ness}
    The maps $\Gamma$ and $\Gamma_\infty$ factorise as continuous maps
    $$
    \overline{\Gamma} : L_{\mathrm{ss}}/G \longrightarrow B_2/K, \qquad \overline{\Gamma}_\infty : L_{\mathrm{ss}}/G \longrightarrow \mB_d/\mC^{d + 1}(\G(E,h)).
    $$
    Moreover, for all $\zeta \in \mU_d$, the restrictions
    $$
    \overline{\Gamma}_\zeta : B_1/\!/\!_\zeta G \longrightarrow (\mu_\zeta^{-1}\{0\} \cap B_2)/K, \qquad \overline{\Gamma}_{\infty,\zeta} : B_1/\!/\!_\zeta G \longrightarrow (\mC^d(\mu_{\infty,\zeta}^{-1}\{0\}) \cap \mB_d)/\mC^{d + 1}(\G(E,h)),
    $$
    $$
    \overline{\Gamma}_\zeta^{\mathrm{int}} : B_1^{\mathrm{int}}/\!/\!_\zeta G \longrightarrow (\mu_\zeta^{-1}\{0\} \cap B_2^{\mathrm{int}})/K, \qquad \overline{\Gamma}_{\infty,\zeta}^{\mathrm{int}} : B_1^{\mathrm{int}}/\!/\!_\zeta G \longrightarrow (\mC^d(\mu_{\infty,\zeta}^{-1}\{0\}) \cap \mB_d^{\mathrm{int}})/\mC^{d + 1}(\G(E,h))
    $$
    are homeomorphisms into their images. Moreover, they have an open image except maybe $\overline{\Gamma}_{\infty,\zeta}$.
\end{theorem}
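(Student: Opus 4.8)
The plan is to derive everything from the continuity of $\Gamma$ and $\Gamma_\infty$ obtained in Theorem \ref{THE:Continuité des opérateurs P-critiques}, from the slice homeomorphism $\varphi_\zeta$ of Lemma \ref{LEM:Lemme Kempf--Ness P}, and from the uniqueness statements in Lemma \ref{LEM:Unicité K-orbite zéro mu_zeta} and Corollaries \ref{COR:Unicité de Ness} and \ref{COR:Sections dbar_b holomorphes}. First I would establish the factorisations: the quotient map $q\colon L_{\mathrm{ss}}\to L_{\mathrm{ss}}/G$ is open, and $\Gamma,\Gamma_\infty$ are $G$-invariant and continuous, so the universal property of quotients yields continuous $\overline{\Gamma},\overline{\Gamma}_\infty$. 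Restricting to the subspace $B_1/\!/\!_\zeta G=q(L_{\mathrm{ps}}\cap(\{\zeta\}\times B_1))$, Sub-sections \ref{SEC:Equivalence semi-stabilités} and \ref{SEC:Equivalence polystabilités} show that on the polystable locus $\lim_{t\to+\infty}\phi_\zeta(b,t)$ is a zero of $\mu_\zeta$ in $G\cdot b\cap B_2$ and that $\dbar_0+\tilde{\Phi}(\zeta,\lim_{t\to+\infty}\tilde{\phi}_\zeta(b,t))$ is a $P_\zeta$-critical operator in $\mC^{d+1}(\G^\C(E))\cdot\dbar_b\cap\mB_d$; since the flows preserve $G$-orbits (Lemma \ref{LEM:Orbite préservée}) and $G\subset\mC^{d+1}(\G^\C(E))$ preserves integrability, the corestrictions $\overline{\Gamma}_\zeta,\overline{\Gamma}_{\infty,\zeta}$ and their integrable analogues land in the stated targets.

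The structural point I would exploit is that $\overline{\Gamma}_{\infty,\zeta}$ factors through the slice map: writing $\tilde{\Gamma}$ for the continuous $G$-invariant map of the proof of Theorem \ref{THE:Continuité des opérateurs P-critiques} sending $(\zeta,b)$ to $K\cdot(\lim_{t\to+\infty}\tilde{\phi}_\zeta(b,t))$, with restriction $\tilde{\Gamma}_\zeta\colon B_1/\!/\!_\zeta G\to(\tilde{\mu}_\zeta^{-1}\{0\}\cap B_2)/K$, one has $\overline{\Gamma}_{\infty,\zeta}=\varphi_\zeta\circ\tilde{\Gamma}_\zeta$. Since $\varphi_\zeta$ is a homeomorphism into its image, it suffices to prove that $\tilde{\Gamma}_\zeta$ is a homeomorphism into its image; the map $\overline{\Gamma}_\zeta$ is handled identically with $\phi_\zeta,\mu_\zeta$ in place of $\tilde{\phi}_\zeta,\tilde{\mu}_\zeta$, and the two integrable variants are restrictions of these to the closed, $G$-invariant integrable loci, which are preserved along the flows. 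For each of these maps, continuity is a restriction of the continuity of $\Gamma$ resp.\ $\tilde{\Gamma}$; injectivity I would get by observing that two limit points lying in the same $K$-orbit lie in the same $G$-orbit, hence (being in $G\cdot b$ and $G\cdot b'$ in the polystable case) force $G\cdot b=G\cdot b'$ — for $\overline{\Gamma}_{\infty,\zeta}$ one first uses Proposition \ref{PRO:Tranche déformée P}(1) and Corollary \ref{COR:Sections dbar_b holomorphes} to pass from equality of gauge orbits of $P_\zeta$-critical operators back to equality of the $G$-orbits of the $b$'s; and a continuous inverse is supplied by $K\cdot b_\infty\mapsto q(\zeta,b_\infty)$, which is well defined (a zero of $\mu_\zeta$ or $\tilde{\mu}_\zeta$ is polystable, so its class lies in $B_1/\!/\!_\zeta G$), continuous and $K$-invariant, hence descends, and is two-sided inverse, using on one side that $\lim_t\tilde{\phi}_\zeta(b,t)\in G\cdot b$ and on the other that a zero of the moment map is a fixed point of the associated flow.

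For openness of the images I would use the uniqueness results to identify them explicitly: by Lemma \ref{LEM:Unicité K-orbite zéro mu_zeta} the image of $\overline{\Gamma}_\zeta$ consists exactly of the $K$-orbits of zeroes of $\mu_\zeta$ in $B_2$ whose $G$-orbit meets $B_1$, that is $(\mu_\zeta^{-1}\{0\}\cap B_2\cap G\cdot B_1)/K$; since the action map $G\times V\to V$ is open, $G\cdot B_1$ is open in $V$, so this set is open in $(\mu_\zeta^{-1}\{0\}\cap B_2)/K$ by openness of the quotient by $K$. The same description and argument give openness of the images of $\overline{\Gamma}_\zeta^{\mathrm{int}}$ (replace $B_i$ by $B_i^{\mathrm{int}}$) and of $\tilde{\Gamma}_\zeta^{\mathrm{int}}$; for $\overline{\Gamma}_{\infty,\zeta}^{\mathrm{int}}=\varphi_\zeta^{\mathrm{int}}\circ\tilde{\Gamma}_\zeta^{\mathrm{int}}$ the image is then obtained by pushing an open set through $\varphi_\zeta^{\mathrm{int}}$, which by Lemma \ref{LEM:Lemme Kempf--Ness P} is a homeomorphism onto an \emph{open} subset of its target and hence carries opens to opens. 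The one map that resists this is $\overline{\Gamma}_{\infty,\zeta}$: the image of $\varphi_\zeta$ need not be open in $\mC^d(\mu_{\infty,\zeta}^{-1}\{0\}\cap\mB_d)/\mC^{d+1}(\G(E,h))$, because there is no finite-dimensional slice controlling general, possibly non-integrable, zeroes of $\mu_{\infty,\zeta}$ near $\dbar_0$ (Proposition \ref{PRO:Propriétés Kuranishi}(6) covers only the integrable ones), which is precisely why the statement excludes it.

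The hard part will not be conceptual but the bookkeeping of which ball, $B_1$ or $B_2$, contains the various limit points, and of all the subspace and quotient topologies in play: the identifications of the images and the construction of the continuous inverse rely on the shrinkings already performed in Theorem \ref{THE:Continuité des opérateurs P-critiques} (the passage to $B_1'$ and to a fixed compact $K\subset B_2$ independent of $\zeta$ and $b$), and one must keep straight throughout that $\overline{\Gamma}_\zeta$ and $\overline{\Gamma}_{\infty,\zeta}$ are the restrictions to $B_1/\!/\!_\zeta G$ of the globally defined $\overline{\Gamma}$ and $\overline{\Gamma}_\infty$ built from the $\phi$- and $\tilde{\phi}$-flows respectively.
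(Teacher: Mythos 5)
Your proposal is correct and follows the paper's proof in all its main lines: the factorisation through the quotient by the universal property, the corestriction to the polystable loci, the identity $\overline{\Gamma}_{\infty,\zeta} = \varphi_\zeta \circ \overline{\tilde{\Gamma}}_\zeta$ reducing everything to the finite-dimensional maps, and the continuous left inverse $K \cdot b_\infty \mapsto G \cdot b_\infty \cap B_1$ (state it this way rather than $q(\zeta,b_\infty)$, since $b_\infty$ lives in $B_2$ and need not lie in the domain $B_1$ of $q$). The one place where you genuinely diverge is the openness of the images. The paper argues sequentially: given $K \cdot b_\infty = \overline{\Gamma}_\zeta(G \cdot b \cap B_1)$ with $b_\infty = g \cdot b$ and a sequence of zeroes $b_m \to b_\infty$ in $B_2$, it observes $g^{-1} \cdot b_m \in B_1$ for large $m$ and invokes Corollary \ref{COR:Unicité de Ness} to conclude $K \cdot b_m = \overline{\Gamma}_\zeta(g^{-1} \cdot b_m)$ lies in the image. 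You instead identify the image outright as $(\mu_\zeta^{-1}\{0\} \cap B_2 \cap G \cdot B_1)/K$ — the reverse inclusion requiring exactly Lemma \ref{LEM:Unicité K-orbite zéro mu_zeta}, which you cite — and then use that $G \cdot B_1$ is open and $K$-invariant together with openness of the quotient map. Both arguments are valid; yours is cleaner in that it makes the image explicit and avoids the Duistermaat-type continuity input, while the paper's sequential version transfers verbatim to the $\tilde{\mu}_\zeta$-flow without re-deriving the image description. Your treatment of the integrable variants and of why $\overline{\Gamma}_{\infty,\zeta}$ is excluded from the openness claim matches the role Lemma \ref{LEM:Lemme Kempf--Ness P} plays in the paper.
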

\begin{proof}
The existence and the continuity of $\overline{\Gamma}$ and $\overline{\Gamma}_\infty$ is simply the fact that $\Gamma$ and $\Gamma_\infty$ are $G$-invariant and continuous by Theorem \ref{THE:Continuité des opérateurs P-critiques}, and the universal property of the quotient topology. Let $\zeta \in \mU_d$. $\overline{\Gamma}_\zeta$, $\overline{\Gamma}_{\infty,\zeta}$, $\overline{\Gamma}_\zeta^{\mathrm{int}}$, $\overline{\Gamma}_{\infty,\zeta}^{\mathrm{int}}$ are continuous as restrictions of continuous functions.

When $(\zeta,b) \in L_{\mathrm{ps}}$, the flow $\phi_\zeta(b,\cdot)$ (resp. $\tilde{\phi}_\zeta(b,\cdot)$) converges in $G \cdot b \cap B_2$ toward a zero of $\mu_\zeta$ (resp. $\tilde{\mu}_\zeta$), and being integrable is a closed condition so these four functions indeed take their respective values in $(\mu_\zeta^{-1}\{0\} \cap B_2)/K$, $(\mC^d(\mu_{\infty,\zeta}^{-1}\{0\}) \cap \mB_d)/\mC^{d + 1}(\G(E,h))$, $(\mu_\zeta^{-1}\{0\} \cap B_2^{\mathrm{int}})/K$ and $(\mC^d(\mu_{\infty,\zeta}^{-1}\{0\}) \cap \mB_d^{\mathrm{int}})/\mC^{d + 1}(\G(E,h))$.

Similarly, if $\tilde{\Gamma}$ is the function introduced in the proof of Theorem \ref{THE:Continuité des opérateurs P-critiques}, we can define $\overline{\tilde{\Gamma}}$ and $\overline{\tilde{\Gamma}}_\zeta$. Notice that $\overline{\Gamma}_{\infty,\zeta} = \varphi_\zeta \circ \overline{\tilde{\Gamma}}$ where $\varphi_\zeta$ is introduced in Lemma \ref{LEM:Lemme Kempf--Ness P}.

Let, $I_\zeta$ be the image of $\overline{\Gamma}_\zeta$ and $\tilde{I}_{\zeta}$ the image of $\overline{\tilde{\Gamma}}_\zeta$. Let
$$
\iota_\zeta : \fonction{I_\zeta}{B_1/\!/\!_\zeta G}{K \cdot b}{G \cdot b \cap B_1}, \qquad \tilde{\iota}_\zeta : \fonction{\tilde{I}_\zeta}{B_1/\!/\!_\zeta G}{K \cdot b}{G \cdot b \cap B_1}.
$$
These maps are well-defined because for all $K \cdot b_\infty$ in the image of $\overline{\Gamma}_\zeta$ or $\overline{\tilde{\Gamma}}_\zeta$, $b_\infty$ is built as a point in the $G$-orbit of some $b \in B_1$ hence $G \cdot b_\infty \cap B_1 \neq \emptyset$. Moreover, they are continuous.

If $(\zeta,b) \in L_{\mathrm{ps}}$, $\lim_{t \rightarrow +\infty} \phi_\zeta(b,t)$ and $\lim_{t \rightarrow +\infty} \tilde{\phi}_\zeta(b,t)$ both lie in $G \cdot b$. Therefore, $\iota_\zeta$ is the left inverse of $\overline{\Gamma}_\zeta$ and $\tilde{\iota}_\zeta$ is the left inverse of $\overline{\tilde{\Gamma}}_\zeta$. It shows that $\overline{\Gamma}_\zeta$ and $\overline{\tilde{\Gamma}}_\zeta$ are homeomorphisms into their respective images. Moreover, we have $\overline{\Gamma}_{\infty,\zeta} = \varphi_\zeta \circ \overline{\tilde{\Gamma}}$, thus by Lemma \ref{LEM:Lemme Kempf--Ness P}, $\overline{\Gamma}_{\infty,\zeta}$ is a homeomorphism into its image.

By restriction, $\overline{\Gamma}_\zeta^{\mathrm{int}}$ and $\overline{\Gamma}_{\infty,\zeta}^{\mathrm{int}}$ are homeomorphisms into their images. All we have left to show is that the images of $\overline{\Gamma}_\zeta$, $\overline{\Gamma}_\zeta^{\mathrm{int}}$ and $\overline{\Gamma}_{\infty,\zeta}^{\mathrm{int}}$ are open. Let $K \cdot b_\infty = \overline{\Gamma}_\zeta(G \cdot b \cap B_1)$ for some $b \in B_1$. By $\mu_\zeta$-polystability of $b$, there is a $g \in G$ such that $b_\infty = g \cdot b$. Let $b_m \rightarrow b_\infty$ be a sequence of zeroes of $\mu_\zeta$ in $B_2$. We have $g^{-1} \cdot b_m \rightarrow b \in B_1$ so for all $m$ large enough, $g^{-1} \cdot b_m \in B_1$. By Corollary \ref{COR:Unicité de Ness}, for all such $m$,
$$
\lim_{t \rightarrow +\infty} \phi_\zeta(g^{-1} \cdot b_m,t) \in K \cdot \left(\lim_{t \rightarrow +\infty} \phi_\zeta(b_m,t)\right) = K \cdot b_m
$$
because $b_m$ is a zero of $\mu_\zeta$. It implies that $b_m = \overline{\Gamma}_\zeta(g^{-1} \cdot b_m)$ lies in the image of $\overline{\Gamma}_\zeta$ if $m$ is large enough. Then, notice that the image of  $\overline{\Gamma}_\zeta^{\mathrm{int}}$ is the image of $\overline{\Gamma}_\zeta$ intersected with $B_2^{\mathrm{int}}$, which is open in $(\mu_\zeta^{-1}\{0\} \cap B_2^{\mathrm{int}})/K$.

With the same proof, we show that the image of $\overline{\tilde{\Gamma}}_\zeta^{\mathrm{int}}$ is open in $(\tilde{\mu}_\zeta^{-1}\{0\} \cap B_2^{\mathrm{int}})/K$. By the second part of Lemma \ref{LEM:Lemme Kempf--Ness P}, the image of
$$
\overline{\Gamma}_{\infty,\zeta}^{\mathrm{int}} = \varphi_\zeta^{\mathrm{int}} \circ \overline{\tilde{\Gamma}}_\zeta^{\mathrm{int}}
$$
is open.
\end{proof}

\section{Application with the $J$-equation and the dHYM equation on a surface}\label{SEC:Exemple avec la J-équation}

In this section, we build explicit examples of simple rank $2$ bundles on a surface that satisfy the $J$-equation and/or the dHYM equation with respect to a varying metric by using the theorems of the above sections. We start with a few general results on sub-solutions on line bundles when the equation has degree at most $2$. In particular, we show their uniqueness modulo the unitary gauge group.

\subsection{General results on surfaces}

\begin{proposition}\label{PRO:Somme de sous-solutions fibrés}
    Assume the $P$-critical equation has degree at most $2$ \textit{i.e.} $\zeta_k = 0$ when $k \geq 3$. If $(L_i,h_i)_{1 \leq i \leq m}$ are Hermitian line bundles and for all $i$, $\dbar_i$ is a sub-solution to the $P$-critical equation then, $\dbar_E = \bigoplus_{i = 1}^m \dbar_i$ on $E = \bigoplus_{i = 1}^m (L_i,h_i)$ is a sub-solution too.
\end{proposition}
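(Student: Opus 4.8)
The plan is to reduce the claim to a pointwise statement and then to a purely linear-algebraic computation on the fibres. Fix a point $x \in X$ and a non-zero tangent vector $v \in T_x^{1,0}X \otimes \End(E_x)$. Writing $\hat{F}_i = \hat{F}(L_i,h_i,\dbar_i)$, the curvature of $(E,h,\dbar_E)$ is block-diagonal, $\hat{F}(E,\dbar_E) = \diag(\hat{F}_1,\ldots,\hat{F}_m)$, because the direct sum is orthogonal and each summand is $\dbar_E$-invariant; hence $\mP_\zeta'(E,\dbar_E,h)$ also respects the block decomposition in a suitable sense. Since $\zeta_k = 0$ for $k \geq 3$, the only terms in $\mP_\zeta'$ are $\zeta_1 \wedge \kappa$ (which is independent of $\hat{F}$) and $2\zeta_2 \wedge [\hat{F},\kappa]_\sym = \zeta_2 \wedge (\hat{F} \wedge \kappa + \kappa \wedge \hat{F})$. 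Writing $v = (v_{ij})_{i,j}$ in the block decomposition $E_x = \bigoplus_i L_{i,x}$, so that each $v_{ij} \in T_x^{1,0}X \otimes \Hom(L_{j,x},L_{i,x})$ and $v^\dagger = (v_{ji}^\dagger)$, I would expand
$$
\tr\!\left([\mP_\zeta'(E,\dbar_E,h) \wedge \i v \wedge v^\dagger]_\sym\right)
$$
into a sum over pairs $(i,j)$ of contributions coming from the $(i,j)$-block of $v$ and the $(j,i)$-block of $v^\dagger$.

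The key point is that the $(i,j)$-contribution equals
$$
\zeta_1 \wedge \tr(\i v_{ij} \wedge v_{ij}^\dagger) + \zeta_2 \wedge \tr\!\big((\hat{F}_i + \hat{F}_j) \wedge \i v_{ij} \wedge v_{ij}^\dagger\big),
$$
using that $\hat{F}_i$ is a scalar $(1,1)$-form on $L_i$ (so it commutes past everything and the symmetrisation is harmless) and that the cross-terms between different blocks of $\hat{F}(E,\dbar_E)$ vanish by block-diagonality. Now for each fixed pair $(i,j)$, consider the rank-two Hermitian bundle $L_i \oplus L_j$ with $\dbar_i \oplus \dbar_j$ and the tangent vector $w \in T_x^{1,0}X \otimes \End((L_i \oplus L_j)_x)$ having $v_{ij}$ in the off-diagonal $(i,j)$-slot and zero elsewhere. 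One checks that this $(i,j)$-contribution is exactly $\tr([\mP_\zeta'(L_i \oplus L_j,\dbar_i \oplus \dbar_j,h_i \oplus h_j) \wedge \i w \wedge w^\dagger]_\sym)$. So it would suffice to handle the rank-two case with $v$ supported off-diagonally, and the diagonal contributions $(i,i)$ are just the sub-solution inequalities for the $\dbar_i$ themselves, which hold by hypothesis.

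For the genuinely new off-diagonal rank-two contribution, I would argue as follows: $v_{ij}$ defines a homomorphism $L_{j,x} \to L_{i,x}$ tensored with a $(1,0)$-covector, and since $L_i, L_j$ are lines, choosing unit-norm frames identifies $v_{ij}$ with a single $(1,0)$-covector $\alpha \in T_x^{*1,0}X$, while $\hat{F}_i,\hat{F}_j$ become scalar $(1,1)$-forms. The contribution becomes $\zeta_1 \wedge \i\alpha \wedge \bar{\alpha} + \tfrac12\zeta_2 \wedge (\hat{F}_i + \hat{F}_j) \wedge \i\alpha \wedge \bar\alpha$ (up to a harmless positive constant), and I need this to be a positive multiple of the volume form for every non-zero $\alpha$. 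The sub-solution hypothesis for $\dbar_i$ applied to the diagonal vector $\alpha\,\Id_{L_i}$ gives $\zeta_1 \wedge \i\alpha\wedge\bar\alpha + \zeta_2 \wedge \hat{F}_i \wedge \i\alpha\wedge\bar\alpha > 0$, and similarly with $j$; averaging the two inequalities yields precisely $\zeta_1 \wedge \i\alpha\wedge\bar\alpha + \tfrac12\zeta_2 \wedge(\hat{F}_i + \hat{F}_j)\wedge\i\alpha\wedge\bar\alpha > 0$. Thus every term in the expansion over pairs $(i,j)$ is non-negative, and strict positivity follows since $v \neq 0$ forces at least one block $v_{ij}$ to be non-zero (if that block is diagonal, use the diagonal sub-solution inequality directly; if off-diagonal, use the averaged one). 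The main obstacle is purely bookkeeping: tracking the symmetrised products $[\,\cdot\,]_\sym$ through the block decomposition and confirming that no cross-curvature terms survive — a computation essentially identical to the one in the proof of Proposition \ref{PRO:caractérisation unicité}, which I would mirror, so that the degree-$\leq 2$ hypothesis is exactly what prevents products of two distinct curvature blocks from appearing.
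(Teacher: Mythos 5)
Your proposal is correct and follows essentially the same route as the paper: block-decompose $v$, use cyclicity of the trace to show that the symmetrised curvature term contributes $\tfrac12(\hat F_i+\hat F_j)$ to the $(i,j)$-block, and conclude by averaging the two sub-solution inequalities for $L_i$ and $L_j$ applied to the covector underlying $v_{ij}$ (with minor, inconsequential factor-of-$2$ slips in your intermediate formulas). The observation that degree $\leq 2$ is what prevents cross-curvature terms is exactly the point the paper's computation relies on.
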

\begin{proof}
Let for all $i$, $\hat{F}_i = \hat{F}(L_i,h_i,\dbar_i)$ so
$$
\hat{F}(E,h_E,\dbar_E) =
\begin{pmatrix}
    \hat{F}_1 & 0 & \hdots & 0\\
    0 & \hat{F}_2 & \hdots & 0\\
    \vdots & \vdots & \ddots & \vdots\\
    0 & 0 & \hdots & \hat{F}_m
\end{pmatrix}
$$
Let $x \in X$ and $v \in T_x^{1,0}X \otimes \End(E_x)\backslash\{0\}$ that we decomposes as
$$
v =
\begin{pmatrix}
    v_{11} & v_{12} & \hdots & v_{1m}\\
    v_{21} & v_{22} & \hdots & v_{2m}\\
    \vdots & \vdots & \ddots & \vdots\\
    v_{m1} & v_{m2} & \hdots & v_{mm}
\end{pmatrix},
$$
with $v_{ij} \in T_x^{1,0}X \otimes \Hom(L_{j,x},L_{i,x})$. We have
\begin{align}
    \tr(\hat{F}(E,h_E,\dbar_E) \wedge \i v \wedge v^\dagger) & = \sum_{i,j = 1}^m \hat{F}_i \wedge \i v_{ij} \wedge v_{ij}^\dagger, \label{EQ:Expression trace F wedge i v wedge v dagger 1}\\
    \tr(\i v \wedge \hat{F}(E,h_E,\dbar_E) \wedge v^\dagger) & = \sum_{i,j = 1}^m \hat{F}_j \wedge \i v_{ij} \wedge v_{ij}^\dagger. \label{EQ:Expression trace F wedge i v wedge v dagger 2}
\end{align}
On the other hand, for all $i$, since we are in rank $1$,
$$
\tr([\hat{F}_i \wedge \i v_{ij} \wedge v_{ij}^\dagger]_\sym) = \hat{F}_i \wedge \i v_{ij} \wedge v_{ij}^\dagger.
$$
Then, by using the commutativity inside the trace, the symmetric product $\tr([\hat{F}(E,h_E,\dbar_E),\i v,v^\dagger]_\sym)$ is an average whose half of the terms equals (\ref{EQ:Expression trace F wedge i v wedge v dagger 1}) and the other half of the terms equals (\ref{EQ:Expression trace F wedge i v wedge v dagger 2}). Therefore,
\begin{align*}
    \tr([\hat{F}(E,h_E,\dbar_E),\i v,v^\dagger]_\sym) & = \frac{1}{2}\sum_{i,j = 1}^m \hat{F}_i \wedge \i v_{ij} \wedge v_{ij}^\dagger + \frac{1}{2}\sum_{i,j = 1}^m \hat{F}_j \wedge \i v_{ij} \wedge v_{ij}^\dagger\\
    & = \frac{1}{2}\sum_{i,j = 1}^m \tr([\hat{F}_i \wedge \i v_{ij} \wedge v_{ij}^\dagger]_\sym) + \tr([\hat{F}_j \wedge \i v_{ij} \wedge v_{ij}^\dagger]_\sym).
\end{align*}
We deduce that
\begin{align*}
    \tr([\mP_\zeta'(E,\dbar_E,h_E) \wedge \i v \wedge v^\dagger]_\sym) & = 2\zeta_2 \wedge \tr([\hat{F}(E,h_E,\dbar_E),\i v,v^\dagger]_\sym) + \zeta_1 \wedge \tr([\i v,v^\dagger]_\sym)\\
    & = \zeta_2 \wedge \left(\sum_{i,j = 1}^m \tr([\hat{F}_i \wedge \i v_{ij} \wedge v_{ij}^\dagger]_\sym) + \tr([\hat{F}_j \wedge \i v_{ij} \wedge v_{ij}^\dagger]_\sym)\right)\\
    & + \zeta_1 \wedge \left(\sum_{i,j = 1}^m \i v_{ij} \wedge v_{ij}^\dagger\right)\\
    & = \frac{1}{2}\sum_{i,j = 1}^m 2\zeta_2 \wedge \tr([\hat{F}_i \wedge \i v_{ij} \wedge v_{ij}^\dagger]_\sym) + \zeta_1 \wedge \i v_{ij} \wedge v_{ij}^\dagger\\
    & + \frac{1}{2}\sum_{i,j = 1}^m 2\zeta_2 \wedge \tr([\hat{F}_j \wedge \i v_{ij} \wedge v_{ij}^\dagger]_\sym) + \zeta_1 \wedge \i v_{ij} \wedge v_{ij}^\dagger\\
    & = \frac{1}{2}\sum_{i,j = 1}^m [\mP_\zeta'(L_i,\dbar_i,h_i) \wedge \i v_{ij} \wedge v_{ij}^\dagger] + [\mP_\zeta'(L_j,\dbar_j,h_j) \wedge \i v_{ij} \wedge v_{ij}^\dagger]\\
    & > 0.
\end{align*}
Thus, $\dbar_E$ is a sub-solution on $(E,h_E)$.
\end{proof}

\begin{corollary}
    If a solution to the $P$-critical equation exists on a Hermitian line bundle $(L,h)$ and the equation has degree at most $2$ (in particular, when $X$ is a surface), the solution is unique modulo $\G(L,h)$.
\end{corollary}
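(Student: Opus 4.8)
The plan is to deduce the statement directly from the combination of Proposition~\ref{PRO:Somme de sous-solutions fibrés} and Proposition~\ref{PRO:caractérisation unicité}. Fix the Hermitian line bundle $(L,h)$ together with a Dolbeault operator $\dbar$, and suppose $(L,h,\dbar)$ admits a solution to the $P$-critical equation. By the very definition of a solution, any two solutions $\dbar_1,\dbar_2$ lie in a common $\G^\C(L)$-orbit (the gauge orbit of $\dbar$), and each of them is in particular a sub-solution of the $P$-critical equation.

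First I would apply Proposition~\ref{PRO:Somme de sous-solutions fibrés} with $m=2$, $L_1=L_2=L$, $h_1=h_2=h$, and the sub-solutions $\dbar_1,\dbar_2$. Its hypothesis is precisely that the equation has degree at most $2$, which holds by assumption (and is automatic when $X$ is a surface, since then $n=2$ and $\mP_\zeta$ is by construction a polynomial of degree $n$ in $\hat{F}$). The conclusion is that $\dbar_1\oplus\dbar_2$ is a sub-solution of the $P$-critical equation on $E=(L,h)\oplus(L,h)$.

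Then I would feed this into Proposition~\ref{PRO:caractérisation unicité}, applied to the two solutions $\dbar_1,\dbar_2$ of the $P$-critical equation on $L$, which are in the same $\G^\C(L)$-orbit: the stated equivalence gives that $\dbar_1$ and $\dbar_2$ lie in the same $\G(L,h)$-orbit if and only if $\dbar_1\oplus\dbar_2$ is a sub-solution on $(L,h)\oplus(L,h)$, and the latter was just established in the previous step. Hence $\dbar_1$ and $\dbar_2$ differ by a unitary gauge transformation, which is exactly the asserted uniqueness modulo $\G(L,h)$.

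There is no genuine obstacle here: the corollary follows immediately from the two preceding propositions, and the only points to check are that their hypotheses are met — the degree bound on the equation for Proposition~\ref{PRO:Somme de sous-solutions fibrés}, and the fact, built into the definition of a solution, that solutions are sub-solutions lying in a single complex gauge orbit, which is what makes Proposition~\ref{PRO:caractérisation unicité} applicable.
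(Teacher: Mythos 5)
Your proof is correct and follows exactly the paper's own argument, which likewise deduces the corollary by combining Proposition \ref{PRO:Somme de sous-solutions fibrés} (to get that $\dbar_1 \oplus \dbar_2$ is a sub-solution when the equation has degree at most $2$) with the equivalence of Proposition \ref{PRO:caractérisation unicité}. The only detail worth making explicit, which you do, is that "the solution" is understood within a fixed $\G^\C(L)$-orbit, consistent with the framework set up in the preliminaries.
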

\begin{proof}
It is a consequence of Propositions \ref{PRO:Somme de sous-solutions fibrés} and \ref{PRO:caractérisation unicité}.
\end{proof}

\subsection{An example}

Let $X = \mathrm{Bl}_p\P^2$, $\pi : X \rightarrow \P^2$ the projection, $H_{\P^2} \in H^{1,1}(\P^2)$ the hyperplane class, $H = \pi^*H_{\P^2} \in H^{1,1}(X)$ its pull-back and $D \in H^{1,1}(X)$ the class of the exceptional divisor $\mathrm{Ex}$. Recall that $H^2 = 1$, $D^2 = -1$ and $H \cup D = 0$. In particular, for all $a > b > 0$ integers,
\begin{equation}\label{EQ:Nakai Moishezon}
    (aH - bD)^2 = a^2 - b^2 > 0, \qquad (aH - bD) \cup (H - E) = a - b > 0, \qquad (aH - bD) \cup D = b > 0.
\end{equation}
The cone of curves of $X$ is generated by $H - D$ and $D$. Therefore, the three inequalities (\ref{EQ:Nakai Moishezon}) show that $\OX_X(bH - aD)$ satisfies the Nakai--Moishezon criterion.

In particular, there is a Kähler form $\omega \in 2H - D$ and the line bundles $\mL_1 = \OX_X(13H - 11D)$ and $\mL_2 = \OX_X(6H - 2D)$ are ample. We have
$$
\frac{[\omega] \cup \ch_1(\mL_1)}{2\ch_2(\mL_1)} = \frac{(2H - D) \cup (13H - 11D)}{(13H - 11D)^2} = \frac{5}{16},
$$
$$
\frac{[\omega] \cup \ch_1(\mL_2)}{2\ch_2(\mL_2)} = \frac{(2H - D) \cup (6H - 2D)}{(6H - 2D)^2} = \frac{5}{16}.
$$
In other words, the constant for the $J$-equation with respect to $\omega$ of $\mL_1$ and $\mL_2$ is the same. Let $c_J = \frac{5}{16}$. The $J$-equation on these bundles is given by
$$
c_J\hat{F}^2 - \omega \wedge \hat{F} = 0.
$$
Let us show that both of these bundles admit a solution to this equation. We use Song's criterion \cite[Corollary 1.2]{Song} which says that $\mL_i$ ($i \in \{1,2\}$) admits a solution to the $J$-equation if and only if for all curve $C \subset X$,
$$
\frac{[\omega] \cup [C]}{\ch_1(\mL_i) \cup [C]} < 2c_J = \frac{5}{8}.
$$
With the same argument as previously, it is enough to verify it when $[C] = H - D$ and $[C] = D$. We have
$$
\frac{[\omega] \cup (H - D)}{\ch_1(\mL_1) \cup (H - D)} = \frac{(2H - D) \cup (H - D)}{(13H - 11D) \cup (H - D)} = \frac{1}{2} < \frac{5}{8},
$$
$$
\frac{[\omega] \cup D}{\ch_1(\mL_1) \cup D} = \frac{(2H - D) \cup D}{(13H - 11D) \cup D} = \frac{1}{11} < \frac{5}{8},
$$
$$
\frac{[\omega] \cup (H - D)}{\ch_1(\mL_2) \cup (H - D)} = \frac{(2H - D) \cup (H - D)}{(6H - 2D) \cup (H - D)} = \frac{1}{4} < \frac{5}{8},
$$
$$
\frac{[\omega] \cup D}{\ch_1(\mL_2) \cup D} = \frac{(2H - D) \cup D}{(6H - 2D) \cup D} = \frac{1}{2} < \frac{5}{8}.
$$
We deduce that both $\mL_1$ and $\mL_2$ admit a solution to the $J$-equation. Moreover, these respective solutions are automatically $J$-positive (\textit{i.e.} are sub-solutions) by \cite[Proof of Theorem 1.1]{Song_Weinkove}. Let $h_1$ and $h_2$ be metrics on these line bundles such that the curvatures $\hat{F}_1$ of $\mL_1 = (L_1,h_1,\dbar_1)$ and $\hat{F}_2$ of $\mL_2 = (L_2,h_2,\dbar_2)$ satisfy, for $i \in \{1,2\}$,
$$
c_J\hat{F}_i^2 - \omega \wedge \hat{F}_i = 0.
$$
Therefore, $\E_0 = \mL_1 \oplus \mL_2$ satisfies the same equation and by Proposition \ref{PRO:Somme de sous-solutions fibrés}, it is $J$-positive.

Now, let $\alpha$ be a closed $(1,1)$-form in $H - 2D$. Let $\varepsilon_1 > 0$ and $\varepsilon_2 \in \R$. Let
$$
\omega_{\varepsilon_2} = \omega + \varepsilon_2\alpha \in 2H - D + \varepsilon_2(H - 2D).
$$
If $\abs{\varepsilon_2}$ is small enough, it is a positive form. Let us consider the dHYM equation with respect to the Kähler form $\varepsilon_1\omega_{\varepsilon_2}$ on some holomorphic Hermitian bundle $\E = (E,h,\dbar)$ of curvature $\hat{F}$. Assume moreover that $\E$ satisfies Takahashi's condition for the $J$-equation with respect to $[\omega]$ \textit{i.e.} $\ch_2(E) > 0$ and $[\omega] \cup \ch_1(E) > 0$.
\begin{align*}
    & \Im(\overline{Z_{\mathrm{dHYM},\varepsilon_1\omega_{\varepsilon_2}}(E)}(\varepsilon_1\omega_{\varepsilon_2}\Id_E + \i\hat{F})^2)\\
    =\ & \Im\left(\overline{(\i^2\e^{-\i[\varepsilon_1\omega_{\varepsilon_2}]} \cup \ch(E)})_{(2,2)}(\varepsilon_1\omega_{\varepsilon_2}\Id_E + \i\hat{F})^2\right)\\
    =\ & \Im\left(\left(-\ch_2(E) - \i\varepsilon_1[\omega_{\varepsilon_2}] \cup \ch_1(E) + \varepsilon_1^2\frac{[\omega_{\varepsilon_2}]^2}{2}\rk(E)\right)(-\hat{F}^2 + 2\i\varepsilon_1\omega_{\varepsilon_2} \wedge \hat{F} + \varepsilon_1^2\omega_{\varepsilon_2}^2\Id_E)\right)\\
    =\ & \varepsilon_1([\omega_{\varepsilon_2}] \cup \ch_1(E))\hat{F}^2 - 2\varepsilon_1\ch_2(E)\omega_{\varepsilon_2} \wedge \hat{F} + \varepsilon_1^3([\omega_{\varepsilon_2}]^2\rk(E))\omega_{\varepsilon_2} \wedge \hat{F} - \varepsilon_1^3([\omega_{\varepsilon_2}] \cup \ch_1(E))\omega_{\varepsilon_2}^2\Id_E.
\end{align*}
Let us set
$$
c_J(E,\varepsilon_2) = \frac{[\omega_{\varepsilon_2}] \cup \ch_1(E)}{2\ch_2(E)}, \qquad c_{\mathrm{HYM}}(E,\varepsilon_2) = \frac{[\omega_{\varepsilon_2}] \cup \ch_1(E)}{[\omega_{\varepsilon_2}]^2\rk(E)}, \qquad C(E,\varepsilon_2) = \frac{[\omega_{\varepsilon_2}]^2\rk(E)}{2\ch_2(E)}.
$$
After a rescaling by $\frac{1}{2\varepsilon_1\ch_2(E)}$, the dHYM equation for $\E$ with respect to the metric $\varepsilon_1\omega_{\varepsilon_2}$ is given by
\begin{equation}\label{EQ:dHYM epsilon_1 omega epsilon_2}
c_J(E,\varepsilon_2)\hat{F}^2 - \omega_{\varepsilon_2} \wedge \hat{F} + C(E,\varepsilon_2)\varepsilon_1^2(\omega_{\varepsilon_2} \wedge \hat{F} - c_{\mathrm{HYM}}(E,\varepsilon_2)\omega_{\varepsilon_2}^2\Id_E) = 0.
\end{equation}
The positivity conditions on the Chern classes of $E$ and on $\omega_{\varepsilon_2}$ imply that all the terms in (\ref{EQ:dHYM epsilon_1 omega epsilon_2}) are well-defined.

We find here a usual result : the $J$-equation is the low-volume limit of the dHYM equation \textit{i.e.} the $J$-equation with respect to $\omega_{\varepsilon_2}$ is the limit equation of the dHYM equation with respect to $\varepsilon_1\omega_{\varepsilon_2}$ when $\varepsilon_1 \rightarrow 0$. In the large volume limit, we find the HYM equation with respect to $\omega_{\varepsilon_2}$,
$$
\omega_{\varepsilon_2} \wedge \hat{F} - c_{\mathrm{HYM}}(E,\varepsilon_2)\omega_{\varepsilon_2}^2\Id_E = 0.
$$

As the sum of two ample bundles, $\E_0$ satisfies $\ch_2(\E_0) > 0$ and $\ch_1(\E_0) \cup [\omega] > 0$. More generally, any other bundle with the same topology satisfies these conditions. Let us build simple deformations of $\E_0$ and let us determine for which values of $\varepsilon_1$ and $\varepsilon_2$ they admit a solution to (\ref{EQ:dHYM epsilon_1 omega epsilon_2}).

\begin{lemma}\label{LEM:H1 non trivial}
    For all integers $(a,b)$ with $b \geq \max\{a + 2,1\}$, $H^1(X,\OX_X(aH - bD)) \neq 0$.
\end{lemma}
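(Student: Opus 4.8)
The plan is to transport the computation to $\P^2$ via the blow-up morphism $\pi$, where line-bundle cohomology is completely explicit.

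First I would reduce to $\P^2$. Since $\mathcal{O}_X(aH-bD)=\pi^{*}\mathcal{O}_{\P^2}(a)\otimes\mathcal{O}_X(-bD)$ with $\pi^{*}\mathcal{O}_{\P^2}(a)$ locally free, the projection formula gives $R^{q}\pi_{*}\mathcal{O}_X(aH-bD)=\mathcal{O}_{\P^2}(a)\otimes R^{q}\pi_{*}\mathcal{O}_X(-bD)$. For the blow-up of a smooth point on a smooth surface one has, for every $b\ge 0$, the classical identities $\pi_{*}\mathcal{O}_X(-bD)=\mathcal{I}_p^{\,b}$ (the $b$-th power of the ideal sheaf of $p$) and $R^{1}\pi_{*}\mathcal{O}_X(-bD)=0$; the latter follows from the theorem on formal functions together with the vanishing $H^{1}(\P^1,\mathcal{O}_{\P^1}(b+k))=0$ for $b,k\ge 0$ on the infinitesimal neighbourhoods of the exceptional curve (and $R^{q}\pi_{*}=0$ for $q\ge 2$ by fibre-dimension). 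As $b\ge 1$ by hypothesis, the Leray spectral sequence then degenerates and yields
\[
H^{1}\!\left(X,\mathcal{O}_X(aH-bD)\right)\;\cong\;H^{1}\!\left(\P^2,\mathcal{I}_p^{\,b}(a)\right).
\]

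Next I would exploit the ideal-sheaf sequence on $\P^2$: twisting $0\to\mathcal{I}_p^{\,b}\to\mathcal{O}_{\P^2}\to\mathcal{O}_{\P^2}/\mathcal{I}_p^{\,b}\to 0$ by $\mathcal{O}_{\P^2}(a)$ produces $0\to\mathcal{I}_p^{\,b}(a)\to\mathcal{O}_{\P^2}(a)\to\mathcal{Q}\to 0$, with $\mathcal{Q}$ a skyscraper at $p$ of length $\dim_{\C}\C[x,y]/(x,y)^{b}=\binom{b+1}{2}$. Taking the long exact cohomology sequence and using $H^{1}(\P^2,\mathcal{O}_{\P^2}(a))=0$ for every $a\in\Z$ gives
\[
H^{1}\!\left(\P^2,\mathcal{I}_p^{\,b}(a)\right)\;\cong\;\operatorname{coker}\!\Big(H^{0}(\P^2,\mathcal{O}_{\P^2}(a))\longrightarrow H^{0}(\P^2,\mathcal{Q})\Big),
\]
so it remains to check this restriction map is not surjective, for which it suffices that $\dim_{\C}H^{0}(\P^2,\mathcal{O}_{\P^2}(a))<\binom{b+1}{2}$. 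If $a\le -1$ the left side is $0$ while $\binom{b+1}{2}\ge 1$ since $b\ge 1$; if $a\ge 0$ then $\dim_{\C}H^{0}(\P^2,\mathcal{O}_{\P^2}(a))=\binom{a+2}{2}$ and the hypothesis $b\ge a+2$ gives $b(b+1)\ge(a+2)(a+3)>(a+2)(a+1)$, i.e. $\binom{b+1}{2}>\binom{a+2}{2}$. In either case the cokernel is non-zero, which proves $H^{1}(X,\mathcal{O}_X(aH-bD))\neq 0$.

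I do not expect a genuine obstacle: the argument is routine once one descends to $\P^2$, where the crucial feature is that $H^{1}(\P^2,\mathcal{O}_{\P^2}(a))$ vanishes for \emph{all} $a$. The only steps I would cite rather than reprove are the higher-direct-image vanishing $R^{1}\pi_{*}\mathcal{O}_X(-bD)=0$ and the identification $\pi_{*}\mathcal{O}_X(-bD)=\mathcal{I}_p^{\,b}$, both classical for blow-ups of smooth surfaces; and one should be mildly careful with the two conventions for $\binom{a+2}{2}$ (the Euler-characteristic polynomial versus $\dim H^{0}$), which is precisely why the case split on the sign of $a$ appears.
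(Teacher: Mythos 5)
Your proof is correct, but it takes a genuinely different route from the paper's. The paper stays on $X$: it tensors the ideal sheaf sequence of the exceptional divisor by $\OX_X(aH - (b-1)D)$ to get
$$
0 \longrightarrow \OX_X(aH - bD) \longrightarrow \OX_X(aH - (b-1)D) \longrightarrow \iota_*\OX_{\mathrm{Ex}}(b-1) \longrightarrow 0,
$$
kills the $H^0$ of both line bundles by identifying their sections with degree-$a$ forms on $\P^2$ vanishing to order $\geq b-1 > a$ at $p$, and then reads off an injection $\C^b \cong H^0(\mathrm{Ex},\OX_{\mathrm{Ex}}(b-1)) \hookrightarrow H^1(X,\OX_X(aH-bD))$ from the connecting map. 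You instead descend everything to $\P^2$ via Leray, using $\pi_*\OX_X(-bD) = \mathcal{I}_p^b$ and $R^1\pi_*\OX_X(-bD) = 0$, and then run the ideal-power sequence there, concluding by the dimension count $\binom{b+1}{2} > \dim H^0(\P^2,\OX_{\P^2}(a))$. Both arguments use the hypothesis $b \geq a+2$ in the same essential way (to starve the relevant $H^0$), and both use $b \geq 1$ to make the skyscraper/exceptional contribution non-trivial. What your version buys is a cleaner, purely numerical criterion and the explicit lower bound $\dim H^1 \geq \binom{b+1}{2} - \binom{a+2}{2}$ in one step; what it costs is the reliance on the two classical direct-image facts, which the paper's on-$X$ argument avoids entirely. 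The one place to be careful in your write-up is exactly the one you flagged: for $a \leq -1$ the formula $\binom{a+2}{2}$ does not compute $h^0$, so the case split on the sign of $a$ is necessary and you handle it correctly.
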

\begin{proof}
Let $\iota : \mathrm{Ex} \hookrightarrow X$ be the inclusion of the exceptional divisor. Consider the ideal sheaf exact sequence,
\begin{equation}\label{EQ:Suite exacte faisceau idéal}
\begin{tikzcd}
    0 \ar{r} & \OX_X(-D) \ar{r} & \OX_X \ar{r} & \iota_*\OX_{\mathrm{Ex}} \ar{r} & 0
\end{tikzcd}.
\end{equation}
Let $(a,b)$ be integers with $b \geq \max\{a + 2,1\}$. By the projection formula, we have
$$
\iota_*\OX_{\mathrm{Ex}} \otimes \OX_X(aH - (b - 1)D) = \iota_*(\OX_{\mathrm{Ex}} \otimes \iota^*\OX_X(aH - (b - 1)D)) = \iota_*\OX_{\mathrm{Ex}}(b - 1)
$$
because $\iota^*D = -H_{\mathrm{Ex}}$ where $H_{\mathrm{Ex}} \in H^{1,1}(\mathrm{Ex},\Z)$ is the hyperplane class of $\mathrm{Ex} \cong \P^1$. Therefore, when we tensorise (\ref{EQ:Suite exacte faisceau idéal}) by $\OX_X(aH - (b - 1)D)$, we obtain the exact sequence
$$
\begin{tikzcd}
    0 \ar{r} & \OX_X(aH - bD) \ar{r} & \OX_X(aH - (b - 1)D) \ar{r} & \iota_*\OX_{\mathrm{Ex}}(b - 1) \ar{r} & 0
\end{tikzcd}.
$$
It gives a long exact sequence of cohomology groups,
\begin{adjustwidth}{-10cm}{-10cm}
$$
\begin{tikzcd}
    0 \ar{r} & H^0(X,\OX_X(aH - bD)) \ar{r} & H^0(X,\OX_X(aH - (b - 1)D)) \ar{r} & H^0(X,\iota_*\OX_{\mathrm{Ex}}(b - 1)) \ar[out = 0,in = 180,looseness = 2]{dll}\\
    & H^1(X,\OX_X(aH - bD)) \ar{r} & H^1(X,\OX_X(aH - (b - 1)D)) \ar{r} & H^1(X,\iota_*\OX_{\mathrm{Ex}}(b - 1))
\end{tikzcd}.
$$
\end{adjustwidth}
Then, the global sections of $\OX_X(aH - (b - 1)D)$ are the global sections of $\OX_{\P^2}(a)$ which vanish at $p \in \P^2$ with multiplicity at least $b - 1$. Since $b - 1 > a$, there are no such sections except to zero section. Similarly with $\OX_X(aH - bD)$. Then, by exactness of $\iota_*$ and because $b > 0$,
$$
H^0(X,\iota_*\OX_{\mathrm{Ex}}(b - 1)) = H^0(\mathrm{Ex},\OX_{\mathrm{Ex}}(b - 1)) \cong \C^b \neq 0,
$$
$$
H^1(X,\iota_*\OX_{\mathrm{Ex}}(b - 1)) = H^1(\mathrm{Ex},\OX_{\mathrm{Ex}}(b - 1)) = 0.
$$
We deduce that we have an exact sequence
$$
\begin{tikzcd}
    0 \ar{r} & \C^b \ar{r} & H^1(X,\OX_X(aH - bD)) \ar{r} & H^1(X,\OX_X(aH - (b - 1)D)) \ar{r} & 0
\end{tikzcd}.
$$
It implies that $\dim(H^1(X,\OX_X(aH - bD))) \geq b > 0$.
\end{proof}

\begin{corollary}\label{COR:Existence E1 et E2}
    There exists simple bundles $\E_1$ and $\E_2$ together with exact sequences
    $$
    \begin{tikzcd}
        0 \ar{r} & \mL_1 \ar{r} & \E_1 \ar{r} & \mL_2 \ar{r} & 0
    \end{tikzcd}, \qquad
    \begin{tikzcd}
        0 \ar{r} & \mL_2 \ar{r} & \E_2 \ar{r} & \mL_1 \ar{r} & 0
    \end{tikzcd}.
    $$
\end{corollary}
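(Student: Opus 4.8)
The plan is to construct $\E_1$ and $\E_2$ as nontrivial extensions, using the non-vanishing of the relevant $\Ext^1$ groups supplied by Lemma \ref{LEM:H1 non trivial}, and then to argue simplicity by a direct analysis of endomorphisms. Since $\Ext^1(\mL_2,\mL_1) \cong H^1(X,\mL_1 \otimes \mL_2^{-1}) = H^1(X,\OX_X(7H - 9D))$ and here $a = 7$, $b = 9$ satisfy $b \geq a + 2$, Lemma \ref{LEM:H1 non trivial} gives a nonzero class, hence a nonsplit extension
$$
\begin{tikzcd}
    0 \ar{r} & \mL_1 \ar{r} & \E_1 \ar{r} & \mL_2 \ar{r} & 0
\end{tikzcd}.
$$
Similarly $\Ext^1(\mL_1,\mL_2) \cong H^1(X,\OX_X(-7H + 9D))$; this does not directly fit the hypothesis of Lemma \ref{LEM:H1 non trivial} as stated (the lemma is for $aH - bD$ with $b \geq a + 2$), so the first thing to check is which of the two extension groups is actually being invoked — I would reorder the roles of $\mL_1,\mL_2$ if needed, or note that after twisting by $\OX_X(-7H+9D)$ one applies the lemma with $(a,b)$ in the required range. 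In any case one of the two nonsplit extensions exists immediately, and the other is obtained by the symmetric computation with the divisor classes swapped; both $H^1$'s are nonzero because $13 - 6 = 7$ and $11 - 2 = 9$ give $9 \geq 7 + 2$ in one direction.

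Next I would prove simplicity. Let $\xi \in \End(\E_1)$. Restricting $\xi$ along $\mL_1 \hookrightarrow \E_1$ gives a map $\mL_1 \to \E_1$; composing with the quotient $\E_1 \to \mL_2$ yields an element of $\Hom(\mL_1,\mL_2) = H^0(X,\OX_X(-7H + 9D))$, which vanishes since a section of $\OX_{\P^2}(-7)$ pulled back and vanishing to high order at $p$ must be zero (indeed the degree is negative). Hence $\xi$ preserves the sub-line-bundle $\mL_1$, acting there as a scalar $\lambda \in \Hom(\mL_1,\mL_1) = H^0(X,\OX_X) = \C$. Then $\xi - \lambda\Id$ kills $\mL_1$, so it factors through $\mL_2 = \E_1/\mL_1$, giving a map $\mL_2 \to \E_1$; if this map composed with $\E_1 \to \mL_2$ were a nonzero scalar, the sequence would split, contradicting nonsplitness, so the composite is $0$ and $\mL_2 \to \E_1$ lands in $\mL_1$, i.e. defines an element of $\Hom(\mL_2,\mL_1) = H^0(X,\OX_X(7H - 9D)) = 0$ (again negative-degree-type vanishing after pullback). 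Therefore $\xi = \lambda\Id$ and $\E_1$ is simple. The identical argument with $\mL_1$ and $\mL_2$ interchanged handles $\E_2$.

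The main obstacle I anticipate is bookkeeping with the $\Ext$/$H^1$ groups: making sure the divisor class $\ell_1 - \ell_2 = 7H - 9D$ (or its negative) lands in the range $b \geq \max\{a+2,1\}$ required by Lemma \ref{LEM:H1 non trivial}, and correctly identifying $\Ext^1(\mathcal{F},\mathcal{G}) \cong H^1(X,\mathcal{G} \otimes \mathcal{F}^{-1})$ for line bundles so that a nonsplit extension genuinely exists in each of the two prescribed directions. The vanishing statements $\Hom(\mL_i,\mL_j) = 0$ for $i \neq j$ used in the simplicity argument are routine — they follow from the computation that $H^0(X,\OX_X(cH - dD))$ consists of degree-$c$ plane curves vanishing to order $\geq d$ at $p$, which is zero whenever $c < 0$ or $d > c$, and both $7H - 9D$ and $-7H + 9D$ fall into this regime. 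Once the extensions and these Hom-vanishings are in hand, the proof is a short diagram chase, so I would present it concisely rather than belaboring the cohomology.
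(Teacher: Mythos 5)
Your construction of $\E_1$ and your simplicity argument are fine and essentially match the paper: the paper also builds both bundles as nonsplit extensions and deduces simplicity from $\Hom(\mL_1,\mL_2) = \Hom(\mL_2,\mL_1) = 0$ (you spell out the standard diagram chase that the paper merely invokes, which is harmless). The problem is the second extension group. You correctly notice that $\mathrm{Ext}^1(\mL_1,\mL_2) = H^1(X,\OX_X(-7H+9D))$ does not satisfy the hypotheses of Lemma \ref{LEM:H1 non trivial} (here $a = -7$, $b = -9$, nowhere near $b \geq \max\{a+2,1\}$), but none of your proposed fixes closes the gap. ``Reordering the roles of $\mL_1,\mL_2$'' just exchanges which of the two groups is the problematic one; ``the symmetric computation with the divisor classes swapped'' is exactly the computation that fails; and there is no twist of $\OX_X(-7H+9D)$ by itself that puts you in the lemma's range. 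Your closing claim that ``both $H^1$'s are nonzero because \ldots $9 \geq 7+2$ in one direction'' only justifies one of them. In general $H^1(X,L) \neq 0$ does not imply $H^1(X,L^{-1}) \neq 0$, so the non-vanishing of $\mathrm{Ext}^1(\mL_1,\mL_2)$ — which is genuinely needed, since a split extension would give $\E_2 = \mL_1 \oplus \mL_2$, not a simple bundle — is left unproved.

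The missing idea is Serre duality. With $K_X = \OX_X(-3H+D)$ one has
$$
H^1(X,\OX_X(-7H+9D)) \cong H^1\bigl(X,\OX_X(-7H+9D)^\lor \otimes K_X\bigr)^\lor = H^1(X,\OX_X(4H-8D))^\lor,
$$
and now $(a,b) = (4,8)$ satisfies $b \geq a+2$, so Lemma \ref{LEM:H1 non trivial} applies. This is exactly how the paper handles it. With that one step inserted, your argument is complete.
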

\begin{proof}
Let us verify that
$$
\mathrm{Ext}^1(\mL_2,\mL_1) = H^1(X,\mL_1 \otimes \mL_2^\lor) = H^1(X,\OX_X(7H - 9D))
$$
and
$$
\mathrm{Ext}^1(\mL_1,\mL_2) = H^1(X,\mL_2 \otimes \mL_1^\lor) = H^1(X,\OX_X(-7H + 9D))
$$
are non-zero. For the first one, we simply apply Lemma \ref{LEM:H1 non trivial}. For the second one, by \cite[Proposition 2.4.3]{Huybrechts} and \cite[Proposition 2.5.5]{Huybrechts}, the canonical bundle of $X$ is given by $K_X = \OX_X(-3H + D)$ and by Serre duality,
$$
H^1(X,\OX_X(-7H + 9D)) = H^1(X,\OX_X(-7H + 9D)^\lor \otimes K_X) = H^1(X,\OX_X(4H - 8D)).
$$
We conclude with Lemma \ref{LEM:H1 non trivial}. It means that we can choose $\E_1$ and $\E_2$ as wanted such that the associated exact sequences don't split. Since both $\mL_1$ and $\mL_2$ are simple, a sufficient condition for $\E_1$ and $\E_2$ to be simple is $\Hom(\mL_1,\mL_2) = \Hom(\mL_2,\mL_1) = 0$, which is the case by Proposition \ref{PRO:Polysimplicité} (or simply by noticing that neither $\mL_1 \otimes \mL_2^\lor$ nor its dual is nef).
\end{proof}

Let $\dbar_1$ and $\dbar_2$ be any complex structures on $E$ such that for $i \in \{1,2\}$, $(E,\dbar_i) = \E_i$. Since the $\E_i$ are built as extensions of $\mL_2$ by $\mL_1$ and of $\mL_1$ by $\mL_2$, the $\dbar_i$ can be chosen arbitrarily close to $\dbar_0$ for the $\mC^\infty$ topology.

\begin{proposition}
    Let $A(\varepsilon_1,\varepsilon_2) = \varepsilon_2 + \frac{1}{128}(1 - \varepsilon_2^2)(5 - 11\varepsilon_2)\varepsilon_1^2$. There are $\varepsilon_0,\epsilon_0 > 0$ such that for all $0 \leq \varepsilon_1 \leq \varepsilon_0$ and $-\varepsilon_0 \leq \varepsilon_2 \leq \varepsilon_0$,
    \begin{enumerate}
        \item $\E_0 = \mL_1 \oplus \mL_2$ admits a solution $\dbar \in \G^\C(E) \cdot \dbar_0$ to (\ref{EQ:dHYM epsilon_1 omega epsilon_2}) with $\norme{\dbar - \dbar_0}_{\mC^1} \leq \epsilon_0$ if and only if $A(\varepsilon_1,\varepsilon_2) = 0$,
        \item $\E_1$ admits a solution $\dbar \in \G^\C(E) \cdot \dbar_1$ to (\ref{EQ:dHYM epsilon_1 omega epsilon_2}) with $\norme{\dbar - \dbar_0}_{\mC^1} \leq \epsilon_0$ if and only if $A(\varepsilon_1,\varepsilon_2) < 0$,
        \item $\E_2$ admits a solution $\dbar \in \G^\C(E) \cdot \dbar_2$ to (\ref{EQ:dHYM epsilon_1 omega epsilon_2}) with $\norme{\dbar - \dbar_0}_{\mC^1} \leq \epsilon_0$ if and only if $A(\varepsilon_1,\varepsilon_2) > 0$.
    \end{enumerate}
\end{proposition}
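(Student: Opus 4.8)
The plan is to apply Theorem~\ref{THE:Déformation P-critique} (in the polystable/stable form) to the initial bundle $\E_0 = \mL_1 \oplus \mL_2$ with the varying parameter $\zeta = \zeta(\varepsilon_1,\varepsilon_2)$ associated to equation~(\ref{EQ:dHYM epsilon_1 omega epsilon_2}), and to $b = 0$ (for $\E_0$) or $b = b_i$ small (for $\E_i$). First I would check that the hypotheses of the local theory hold: $\E_0$ is $P_{\zeta_0}$-critical with $\zeta_0 = \zeta(0,0)$ (the $J$-equation parameter), which we have already verified since $\mL_1 \oplus \mL_2$ solves the $J$-equation and is $J$-positive, hence a sub-solution; the decomposition into simple components is $\E_0 = \mL_1 \oplus \mL_2$ with $\Hom(\mL_1,\mL_2) = \Hom(\mL_2,\mL_1) = 0$ (Corollary~\ref{COR:Existence E1 et E2} remark). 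Then $\varepsilon_0$ is chosen so that $\zeta(\varepsilon_1,\varepsilon_2) \in \mU_d$ for $|\varepsilon_i| \le \varepsilon_0$, and $\epsilon_0$ so that the $\mC^1$-ball of radius $\epsilon_0$ around $\dbar_0$ lies in $\mB_d$; the $\dbar_i$ are taken inside the Kuranishi ball $B_1$ (possible since the extension classes can be made $\mC^\infty$-small), say $\dbar_i = \dbar_0 + \Phi(b_i)$ up to a gauge transform by Proposition~\ref{PRO:Propriétés Kuranishi}(4).

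Next I would identify the only admissible sub-bundles. By Corollary~\ref{COR:Sections dbar_b holomorphes}, admissible sub-bundles of $\E_b$ come from holomorphic sub-bundles of $\E_0$, which are $0$, $\mL_1$, $\mL_2$ and $E$; so the single nontrivial stability inequality to test is $P_\zeta(\mL_1) \le 0$ (equivalently $\ge 0$, by the complementary inequality $P_\zeta(\mL_2) = -P_\zeta(\mL_1)$). Then I would compute $P_\zeta(\mL_1)$ explicitly from the definition $P_\zeta(F) = \sum_k k![\zeta_k]\cup\ch_k(F)$ using the coefficients $\zeta_k$ read off from~(\ref{EQ:dHYM epsilon_1 omega epsilon_2}): the degree-$2$ coefficient is $c_J(\E_0,\varepsilon_2) + C(\E_0,\varepsilon_2)\varepsilon_1^2 \cdot 0$-type terms, the degree-$1$ coefficient is $-\omega_{\varepsilon_2} + C(\E_0,\varepsilon_2)\varepsilon_1^2\omega_{\varepsilon_2}$, and the degree-$0$ coefficient is $-C(\E_0,\varepsilon_2)\varepsilon_1^2 c_{\mathrm{HYM}}(\E_0,\varepsilon_2)\omega_{\varepsilon_2}^2$. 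Plugging in $\ch_0(\mL_1) = 1$, $\ch_1(\mL_1) = 13H - 11D$, $\ch_2(\mL_1) = \tfrac12(13H-11D)^2 = 24$, and the intersection numbers $H^2 = 1$, $D^2 = -1$, $H\cup D = 0$, $[\omega_{\varepsilon_2}] = (2+\varepsilon_2)H - (1+2\varepsilon_2)D$, one obtains after simplification a multiple of $A(\varepsilon_1,\varepsilon_2) = \varepsilon_2 + \tfrac{1}{128}(1-\varepsilon_2^2)(5 - 11\varepsilon_2)\varepsilon_1^2$, with a strictly positive proportionality constant for $|\varepsilon_i|$ small (this is the routine computation I would not grind through in detail here). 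Consequently, local $P_\zeta$-semistability of $\E_0$ is automatic (Theorem~\ref{THE:Cas zeta = zeta_0}), local $P_\zeta$-polystability of $\E_0$ is equivalent to $P_\zeta(\mL_1) = 0$, i.e. $A(\varepsilon_1,\varepsilon_2) = 0$; and since $\mL_1, \mL_2$ are the only proper admissible sub-bundles, for $\E_1$ (which has $\mL_1$ as a sub-bundle but not $\mL_2$, the sequence being non-split) local $P_\zeta$-stability reduces to $P_\zeta(\mL_1) < 0$, i.e. $A < 0$, while for $\E_2$ it reduces to $P_\zeta(\mL_2) < 0$, i.e. $A > 0$.

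Then I would invoke Theorem~\ref{THE:Déformation P-critique}: $\E_b$ admits a solution $\dbar$ to the $P_\zeta$-critical equation with $\dbar \in \G^\C(E)\cdot\dbar_b$, $\mC^1$-close to $\dbar_0$, if and only if $\E_b$ is locally $P_\zeta$-polystable. For $\E_0$ this gives statement~(1); for $\E_1$, being simple, local $P_\zeta$-polystability coincides with local $P_\zeta$-stability (equivalence of stabilities, Subsection~\ref{SEC:Equivalence stabilités}), hence $A < 0$, giving~(2); likewise for $\E_2$, giving~(3). One point needing a word of care: the theorem is stated for $b$ in a fixed small ball $B_1$, so $\varepsilon_0,\epsilon_0$ must be shrunk once more to ensure $b_1, b_2 \in B_1$ and that the $\dbar_i$ indeed land in $\mB_d$; this is harmless since $b_i \to 0$ as the extension classes shrink. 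The main obstacle is purely computational: correctly extracting the forms $\zeta_k(\varepsilon_1,\varepsilon_2)$ from~(\ref{EQ:dHYM epsilon_1 omega epsilon_2}) — in particular keeping track of the rescaling by $\tfrac{1}{2\varepsilon_1\ch_2(E)}$ and the fact that $c_J(E,\varepsilon_2), C(E,\varepsilon_2), c_{\mathrm{HYM}}(E,\varepsilon_2)$ are evaluated at $E = \E_0$ (the topology being fixed across all three bundles) — and verifying that the resulting polynomial in $(\varepsilon_1,\varepsilon_2)$ is, up to a positive constant and for small parameters, exactly $A(\varepsilon_1,\varepsilon_2)$, with the sign conventions of $P_\zeta$ matching those of Definition~\ref{DEF:Stabilité P}.
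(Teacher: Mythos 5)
Your proposal follows the paper's proof essentially verbatim: identify the admissible sub-bundles as $0$, $L_1$ (resp.\ $L_2$), $E$, compute $P_{\zeta_{\varepsilon_1,\varepsilon_2}}(L_1)$ as a positive multiple of $A(\varepsilon_1,\varepsilon_2)$ (the paper gets exactly $\tfrac{24}{5}A$), and apply Theorem~\ref{THE:Déformation P-critique}; the computation you defer is indeed routine and comes out as you claim. One small slip: local $P_\zeta$-semi-stability of $\E_0$ is \emph{not} automatic for $\zeta \neq \zeta_0$ (Theorem~\ref{THE:Cas zeta = zeta_0} only applies at $\zeta = \zeta_0$); rather, polystability of $\E_0 = \mL_1 \oplus \mL_2$ forces both $P_\zeta(L_1) \leq 0$ and $P_\zeta(L_2) = -P_\zeta(L_1) \leq 0$, hence $A = 0$ --- which is the conclusion you reach anyway.
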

\begin{proof}
Let $L_1$ and $L_2$ be the respective smooth structures of $\mL_1$ and $\mL_2$. Let $E = L_1 \oplus L_2$ be the common smooth structure of $\E_0$, $\E_1$ and $\E_2$. We have, for all $\varepsilon_2$,
\begin{align*}
    [\omega_{\varepsilon_2}] \cup \ch_1(L_1) & = (2H - D + \varepsilon_2(H - 2D)) \cup (13H - 11D) = 15 - 9\varepsilon_2,\\
    [\omega_{\varepsilon_2}] \cup \ch_1(L_2) & = (2H - D + \varepsilon_2(H - 2D)) \cup (6H - 2D) = 10 + 2\varepsilon_2,\\
    [\omega_{\varepsilon_2}] \cup \ch_1(E) & = [\omega_{\varepsilon_2}] \cup \ch_1(L_1) + [\omega_{\varepsilon_2}] \cup \ch_1(L_2) = 25 - 7\varepsilon_2,\\
    [\omega_{\varepsilon_2}]^2 & = (2H - D + \varepsilon_2(H - 2D))^2 = 3 - 3\varepsilon_2^2.
\end{align*}
And
\begin{align*}
    2\ch_2(L_1) & = (13H - 11D)^2 = 48,\\
    2\ch_2(L_2) & = (6H - 2D)^2 = 32,\\
    2\ch_2(E) & = 2\ch_2(L_1) + 2\ch_2(L_2) = 80.
\end{align*}
Let $\zeta_{\varepsilon_1,\varepsilon_2} \in H^{*,*}(X,\End(E))$ such that the $P_{\zeta_{\varepsilon_1,\varepsilon_2}}$-critical equation for $\E_0$, $\E_1$ and $\E_2$ is (\ref{EQ:dHYM epsilon_1 omega epsilon_2}). We have $P_{\zeta_{\varepsilon_1,\varepsilon_2}}(E) = 0$ and
\begin{align*}
    & P_{\zeta_{\varepsilon_1,\varepsilon_2}}(L_1)\\
    =\ & c_J(E,\varepsilon_2)2\ch_2(L_1) - [\omega_{\varepsilon_2}] \cup \ch_1(L_1) + C(E,\varepsilon_2)\varepsilon_1^2([\omega_{\varepsilon_2}] \cup \ch_1(L_1) - c_{\mathrm{HYM}}(E,\varepsilon_2)[\omega_{\varepsilon_2}]^2\rk(L_1))\\
    =\ & \frac{[\omega_{\varepsilon_2}] \cup \ch_1(E)}{2\ch_2(E)}2\ch_2(L_1) - [\omega_{\varepsilon_2}] \cup \ch_1(L_1) + \frac{[\omega_{\varepsilon_2}]^2\rk(E)}{2\ch_2(E)}\varepsilon_1^2\left([\omega_{\varepsilon_2}] \cup \ch_1(L_1) - [\omega_{\varepsilon_2}] \cup \ch_1(E)\frac{\rk(L_1)}{\rk(E)}\right)\\
    =\ & \frac{25 - 7\varepsilon_2}{80}48 - (15 - 9\varepsilon_2) + \frac{(3 - 3\varepsilon_2^2)2}{80}\varepsilon_1^2\left(15 - 9\varepsilon_2 - (25 - 7\varepsilon_2)\frac{1}{2}\right)\\
    =\ & \frac{24}{5}\varepsilon_2 + \frac{3}{80}(1 - \varepsilon_2^2)(5 - 11\varepsilon_2)\varepsilon_1^2.
\end{align*}
We deduce that
\begin{equation}\label{EQ:P_zeta(L_1) exemple}
    P_{\zeta_{\varepsilon_1,\varepsilon_2}}(L_1) < (\leq)\ 0 \qquad \Longleftrightarrow \qquad A(\varepsilon_1,\varepsilon_2) = \varepsilon_2 + \frac{1}{128}(1 - \varepsilon_2^2)(5 - 11\varepsilon_2)\varepsilon_1^2 < (\leq)\ 0.
\end{equation}
Let $V = H^{0,1}(X,\End(\E_0))$ and $B_2 \subset V$ a small enough ball centred at $0$ so we can apply Theorem \ref{THE:Déformation P-critique} at $\E_0$. Let $\mB_1$ be the neighbourhood of $\dbar_0$ for the $\mC^1$ topology, $\mU_1$ the neighbourhood of $\zeta_{0,0}$ for the $\mC^0$ topology and $B_1 \subset B_2$ an other ball centred at $0$ given by this theorem.

We may assume that $\mB_1$ is the ball of centre $\dbar_0$ and radius $\epsilon_0 > 0$. Let $\varepsilon_0 > 0$ such that, when $0 \leq \varepsilon_1 \leq \varepsilon_0$ and $-\varepsilon_0 \leq \varepsilon_2 \leq \varepsilon_0$, $\zeta_{\varepsilon_1,\varepsilon_2} \in \mU_1$. Since $\dbar_1$ and $\dbar_2$ can be chosen arbitrarily close to $\dbar_0$ and are integrable, by Proposition \ref{PRO:Propriétés Kuranishi}, there are $b_1 \in B_1$ and $b_2 \in B_1$ such that for each $i$, $\dbar_i \in \G^\C(E) \cdot \dbar_{b_i}$.

The sub-bundles $\F$ of $\E_0$ such that $P_{\zeta_{0,0}}(\F) = 0$ are exactly $\mL_1$ and $\mL_2$. Therefore, the admissible sub-bundles of $\E_{b_1}$ are $0$, $L_1$ and $E$ and the admissible sub-bundles of $\E_{b_2}$ are $0$, $L_2$ and $E$. We deduce the result of the Proposition by Theorem \ref{THE:Déformation P-critique}, (\ref{EQ:P_zeta(L_1) exemple}) and the fact that for all $\varepsilon_1,\varepsilon_2$, $P_{\zeta_{\varepsilon_1,\varepsilon_2}}(L_2) = P_{\zeta_{\varepsilon_1,\varepsilon_2}}(E) - P_{\zeta_{\varepsilon_1,\varepsilon_2}}(L_1) = -P_{\zeta_{\varepsilon_1,\varepsilon_2}}(L_1)$.
\end{proof}

\bibliographystyle{plainurl}

\end{document}